  \pgfplotsset{compat=newest}
\newtheorem{theorem}{Theorem}[section]
\newtheorem{corollary}[theorem]{Corollary}
\newtheorem{proposition}[theorem]{Proposition}
\newtheorem{lemma}[theorem]{Lemma}
\numberwithin{equation}{section}
\theoremstyle{definition}
\newenvironment{definition}
  {\pushQED{\qed}\definitiony}
  {\popQED\enddefinitiony}
\theoremstyle{remark}
\newenvironment{remark}
  {\pushQED{\qed}\remarkx}
  {\popQED\endremarkx}
\newcommand\bc[1]{\left({#1}\right)}
\newcommand\cbc[1]{\left\{{#1}\right\}}
\newcommand\brk[1]{\left\lbrack{#1}\right\rbrack}
\newcommand\abc[1]{\left\langle{#1}\right\rangle}
\newcommand\abs[1]{\left|{#1}\right|}
\newcommand\dabs[1]{\left\|{#1}\right\|}
\newcommand\supp[1]{{\rm supp}\left({#1}\right)}
\newcommand\rk[1]{\rank_{\mathbb{F}}\left({#1}\right)}
\newcommand\trk[1]{\rank_{\mathbb{F}}}
\newcommand\bin[1]{{\rm Bin}\left({#1}\right)}
\newcommand{\sss}{\scriptscriptstyle}
\newcommand\dtv[1]{{\rm d}_{\rm \sss TV}\left({#1}\right)}
\newcommand*{\dif}{\mathop{}\!\mathrm{d}}
\newcommand\poi[1]{{\rm Po}\left({#1}\right)}
\newcommand\syn{{\rm Sym}_n(\FF^*)}
\newcommand\syN{{\rm Sym}_N(\FF^*)}
\newcommand\teo[1]{\ensuremath{\mathds{1}}{\left\{{#1}\right\}}}
\DeclareRobustCommand{\VAN}[3]{#2}
\newcommand\rd{{\rm d}}
\newcommand\vw{\bm w}
\newcommand\vx{\bm x}
\newcommand\vi{\bm i}
\newcommand\vj{\bm j}
\newcommand\vu{\bm u}
\newcommand\vv{\bm v}
\newcommand\vy{\bm y}
\newcommand\vz{\bm z}
\newcommand\vze{\bm \zeta}
\newcommand\bvet{\bar{\bm \eta}}
\newcommand\vtu{\bm \tau}
\newcommand\bvta{\bar{\bm \tau}}
\newcommand\val{\bm{\alpha}}
\newcommand\vbe{\bm{\beta}}
\newcommand\vth{\bm{\theta}}
\newcommand\hval{\bm{\alpha}^T}
\newcommand\bfh{\mathbf h}
\newcommand\oone{\bar{o}_{\PP}(1)}
\newcommand\fB{ {\mathfrak F}^c}
\newcommand\fBT{{\mathfrak F}_{{\rm tr}}^c}
\newcommand\ffB{{\mathfrak F}}
\newcommand\ffBT{ {\mathfrak F}_{{\rm tr}}}
\newcommand\cF{\mathcal{F}}
\newcommand\cX{\mathcal{X}}
\newcommand\cY{\mathcal{Y}}
\newcommand\cZ{\mathcal{Z}}
\newcommand\cU{\mathcal{U}}
\newcommand\cV{\mathcal{V}}
\newcommand\cW{\mathcal{W}}
\newcommand\THETA{\mathbf\Theta}
\newcommand\eul{\mathrm{e}}
\newcommand\eps{\varepsilon}
\newcommand\ZZ{\mathbb{Z}}
\newcommand\FF{\mathbb{F}}
\newcommand\RR{\mathbb{R}}
\newcommand\QQ{\mathbb{Q}}
\newcommand\NN{\mathbb{N}}
\newcommand\PP{\mathbb{P}}
\newcommand\Erw{\mathbb{E}}
\newcommand{\ind}{\ensuremath{\mathds{1}}}
\DeclareMathOperator{\nul}{nul}
\DeclareMathOperator{\rank}{rk}
\DeclareMathOperator{\PR}{PR}
\DeclareSymbolFont{extraup}{U}{zavm}{m}{n}
\DeclareMathSymbol{\varheart}{\mathalpha}{extraup}{86}
\DeclareMathSymbol{\vardiamond}{\mathalpha}{extraup}{87}
\title{The rank of sparse symmetric matrices over arbitrary fields}
\author{Remco van der Hofstad, Noela Müller, Haodong Zhu}
\begin{document}
\maketitle
\begin{abstract}
Let $\FF$ be an arbitrary field and $(\bm{G}_{n,d/n})_n$ be a sequence of sparse weighted Erd\H{o}s-Rényi random graphs on $n$ vertices with edge probability $d/n$, where weights from $\FF \setminus\{0\}$ are assigned to the edges according to a fixed matrix $J_n$. We show that the normalised rank of the adjacency matrix of $(\bm{G}_{n,d/n})_n$ converges in probability to a constant, and derive the limiting expression. Our result shows that for the general class of sparse symmetric matrices under consideration, the asymptotics of the normalised rank are independent of the edge weights and even the field, in the sense that the limiting constant for the general case coincides with the one previously established for adjacency matrices of sparse (non-weighted) Erd\H{o}s-Rényi matrices over $\RR$ from \cite{bordenave2011rank}. Our proof, which is purely combinatorial in its nature, is based on an intricate extension of the novel perturbation approach from \cite{coja2022rank} to the symmetric setting.


\end{abstract}
\keywords{Rank \and Random matrix \and Erd\H{o}s-Rényi graph}

\section{Introduction}

\subsection{Background and motivation} \label{sec_intro}
The study of matrices with random entries, going back to the $1950$'s \cite{wigner1955characteristic}, is an important and lively field of modern probability and combinatorics with close ties to a multitude of other scientific disciplines such as theoretical physics, mathematical statistics, computer science, neuroscience or machine learning. Up to this day, the theory of random matrices has developed into a mature field and advanced to a very precise understanding of classical models such as Gaussian Ensembles, Bernoulli matrices or Wishart matrices.

Moreover, in the last decade, there has been a burst of progress in the theoretical understanding of random matrices which appear naturally in  the study of random graphs, such as their adjacency matrices. Especially the adjacency matrix of the classical Erd\H{o}s-Rényi random graph model and its spectral properties have attracted a great deal of attention \cite{benaych2019largest,erdos2012spectral,erdos2013spectral}.
The Erd\H{o}s-Rényi graph $\bm{G}_{n,p_n} = (\{1, \ldots, n\}, \bm{E}_n)$, which is arguably the simplest random graph model, is a graph on $n$ vertices, where each edge is present independently with probability $p_n$. Its adjacency matrix $\bm{A}_{n,p_n}$ is a symmetric $n \times n$-matrix with entries $\bm{A}_{n,p_n}(i,j) = \ind\cbc{\{i,j\} \in \bm{E}_n}$\footnote{For an event $B$, $\ensuremath{\mathds{1}}\{B\}$ denotes the indicator function of $B$. When appropriate, we also use $\ensuremath{\mathds{1}}B$.}.
In particular, it is a symmetric Bernoulli matrix, which, depending on the limiting behaviour of the edge probability $p_n$, displays different asymptotic behaviour: Results by Costello, Tao and Vu \cite{costello2006random} and later by Basak and Rudelson \cite{basak2021sharp} have shown that there is a sharp transition in the invertibility of the adjacency matrix around $\ln n/n+k(n)/n$, for a function $k(n)$ that tends slowly to infinity: When $p_n>\ln n/n+k(n)/n$, with high probability (w.h.p.) the adjacency matrix is nonsingular, while it is singular w.h.p. for $p_n<\ln n/n-k(n)/n$.


Following this threshold result, a natural question is to determine the rank of the adjacency matrix $\bm{A}_{n,p_n}$ when $p_n$ is small enough such that the matrix is singular w.h.p.  In the regime where $p_n\in[c\ln n/n,1/2]$ for $c>1/2$, Costello and Vu \cite{costello2008rank} show that w.h.p., the rank of $\bm{A}_{n,p_n}$ is exactly equal to $n$ minus the number of isolated vertices in the underlying Erd\H{o}s-Rényi random graph. They extend their result to $c>0$ 
and arbitrary deterministic non-zero entries (instead of $1$) in \cite{costello2010rank}. This result shows that w.h.p., the rank only depends on the structure of the graph, regardless of the precise value of the nonzero entries of the adjacency matrix. Finally, when $p_n=d/n$ for fixed $d>0$, Bordenave, Lelarge and Salez \cite{bordenave2011rank} derive an asymptotic rank formula for $\bm{A}_{n,d/n}$ (see \cref{e01b} below). 

While all these results naturally consider the rank of the adjacency matrix $\bm{A}_{n,p_n}$ over $\RR$ (or equivalently, $\QQ$), we will be interested in the rank of $\bm{A}_{n,p_n}$ over arbitrary fields $\FF$ in the sparse regime where $p_n=d/n$ (interpreting a $1$-entry as the multiplicative identity of the field, and  a $0$-entry as its additive identity). Moreover, inspired by \cite{costello2010rank}, we consider the more general class of matrices where the non-zero entries of $\bm{A}_{n,p_n}$ are arbitrary deterministic non-zero elements of $\FF$. Our main \Cref{t0} shows that even under this vast generalisation, the asymptotic rank formula of Bordenave, Lelarge and Salez still remains valid. This result suggests that the rank indeed only depends on the positions of the non-zero entries of the adjacency values, which is reflected in our proof strategy.

Indeed, thanks to observations of Bauer and Golinelli \cite{bauer2001exactly}, there is a by now well-known and purely combinatorial upper bound on the asymptotic rank of $\bm A_{n,d/n}$, which is based on the Karp-Sipser algorithm for finding large matchings \cite{karp1981maximum}: Start with $\bm G_{n,p}$. At each step of the algorithm, recursively, a vertex of degree one along with its unique neighbor is removed. The process stops once only isolated vertices and vertices of degree at least two, the so-called Karp-Sipser core, are left. It is straightforward to check that this ``leaf-removal'' leaves the nullity of the graph invariant (for a proof, see \cite{bauer2001exactly}). Since the nullity of the reduced graph is apparently lower bounded by its number of isolated vertices, this number of isolated vertices provides an upper bound on the rank of the original graph that is completely oblivious to the field or the precise values of the non-zero entries.
 Karp and Sipser \cite{karp1981maximum} also derive a formula for the asymptotic number of isolated vertices in the reduced graph. Moreover, for $d \leq \eul$, all but a vanishing proportion of vertices become isolated after running the Karp-Sipser algorithm on $\bm{G}_{n,d/n}$.
 Thus, for $d \leq \eul$, 
 the question is already completely settled. 
 However, when $d> \eul$, w.h.p., the Karp-Sipser core is not negligible, which complicates matters significantly. 

Since there is already the rank formula of \cite{bordenave2011rank} in the sparse case, a natural take on the problem of the missing lower bound would be to turn to the proof methods of Bordenave, Lelarge and Salez and adapt them to our setting. However, their analysis makes heavy use of spectral properties of \textit{real} symmetric matrices, so to the best of our knowledge, there is no possibility to follow their approach. 


On the other hand, inspired by insights from statistical physics, 
 Coja-Oghlan, Ergür, Gao, Hetterich and Rolvien \cite{coja2022rank} found a new combinatorial approach to derive an asymptotic rank formula for a broad class of \textit{asymmetric} sparse random matrices, generalising earlier results by Cooper, Frieze and Pegden for $\FF_2$ \cite{cooper2019rank}. 
 Correspondingly, the results of \cite{coja2022rank} are valid over any field, regardless of the distribution of the non-zero entries. However, their approach cannot straightforwardly be applied to symmetric random matrices, since these retain much less independence among the positions of their non-zero entries.
 Indeed, the authors note that ``an intriguing question for future research is to extend the techniques from the present paper to symmetric random matrices.'' 

In this paper, we build on several of the core concepts of \cite{coja2022rank} to develop a corresponding combinatorial approach towards rank formulas for sparse symmetric matrices. As in \cite{coja2022rank}, instead of investigating the rank of $\bm A_{n, d/n}$ directly, we work with a {\em perturbed} version of $\bm A_{n,d/n}$. Moreover, as in \cite{coja2022rank}, we use a telescoping argument to lower bound the expected rank and relate the rank difference of matrices whose sizes differ by one to so-called ``frozen'' variables.
However, the symmetry of our matrices poses serious obstructions to any attempt to literally follow in the footsteps of \cite{coja2022rank}, and we therefore introduce quite a number of changes and adaptations.
These changes allow us to give a precise characterization of the rank increase when we add a row and a column, and therefore 
to show that the asymptotic behavior of the rank of a broad class of random matrices, whose non-zero entries are prescribed by the adjacency structure of a sparse Erd\H{o}s-Rényi random graph, over any field $\FF$, is indeed the same as the rank of the simple $0/1$-adjacency matrix of $\bm{G}_{n,d/n}$ over the field $\mathbb{R}$.

This paper is organised as follows: In \Cref{sec_mainr}, we introduce our precise model and main result. A proof overview, together with the most important intermediate steps, can be found in \Cref{sec_prov}. \Cref{sec_mper} collects results on our matrix perturbation. In \Cref{sec_prof}, we investigate various properties of the different variable (or vertex) types introduced earlier, and their relation to the rank. We then derive the fixed point equations for the asymptotic proportions of some of the different types in \Cref{sec_pc}. \Cref{sec_newapp} uses these fixed point equations to derive the desired lower bound on the asymptotic rank. 
In Appendix \ref{app_proal}, we provide important properties of the various functions related to the rank formula. Appendix \ref{app_leaf} explains how to derive an upper bound on the normalised rank from results on the Karp-Sipser leaf-removal algorithm. Finally, Appendix \ref{app_b} contains a proposition which is used to compare different conditional expectations.

\begin{remark}[Notation for random variables]
Throughout the article, we use bold letters to indicate random variables and regular letters to indicate deterministic quantities.
\end{remark} 

\subsection{Main results}\label{sec_mainr}
Let $\FF$ be an arbitrary field and $\FF^\ast:=\FF \setminus\{0\}$ its multiplicative group. For a general matrix $A \in \FF^{m \times n}$, $\rank_\FF(A)$ specifically denotes the rank of $A$ over $\FF$, i.e. the dimension of the linear subspace of $\FF^{n}$ spanned by the columns of $A$. Moreover, we use $\syn$ for the set of all symmetric $n \times n$ matrices with entries in $\FF^\ast$.

In the present article, we study adjacency matrices of sparse Erd\H{o}s-Rényi random graphs with arbitrary non-zero edge weights over $\FF$. 
To define the precise model, let $(J_n)_{n\geq 1}$ be any deterministic sequence of ``template'' matrices such that for all $n \geq 1$, $J_n \in \syn$, and $(\bm{q}(i,j))_{i,j\geq 1}$ be an array of i.i.d.\ uniform random variables in $[0, 1]$. For $p \in [0,1]$, we then define the matrix $\bm{A}_{n,p}$ by setting 
\begin{equation}\label{ec1}
  \bm{A}_{n,p}(i,j)= \begin{cases}
\ensuremath{\mathds{1}}{\cbc{\bm{q}(i,j)<p}}J_n(i,j),&\quad i<j; \\
\ensuremath{\mathds{1}}{\cbc{\bm{q}(j,i)<p}}J_n(j,i),&\quad i>j; \\
0,&\quad i=j. \\
\end{cases}
\end{equation}
$\bm{A}_{n,p}$ can be alternatively regarded as the adjacency matrix of a weighted Erd\H{o}s-Rényi random graph on the vertex set $[n]$, where each potential edge $\{i,j\}$ is present independently with probability $p$. If it is present, it is assigned edge weight $J_n(i,j) = J_n(j,i)$. The construction (\ref{ec1}) also incorporates a natural coupling of the positions of the nonzero entries of the matrices $\bm{A}_{n,p}$ for all choices of $n$ and $p$. 

In the important special case where $J_n(i,j)\equiv 1$ for all $i,j\in\cbc{1,2,\ldots,n}$, $\bm{A}_{n,p}$  coincides with the adjacency matrix of an unweighted Erd\H{o}s-Rényi graph with $n$ vertices and edge probability $p$. An asymptotic rank formula for this model over $\FF=\RR$ in the regime where $p=d/n$ was given by Bordenave, Lelarge and Salez in \cite{bordenave2011rank}: For any $d> 0$, let $\phi_d: [0,1] \to \RR, \phi_d(\alpha):=\exp(d(\alpha-1))$ be the probability generating function of a Poisson random variable with parameter $d$ and 
$R_d\colon [0,1]\to \mathbb{R}$ be defined by setting
    \begin{equation}\label{em0}
    R_d(\alpha)=2-\phi_d\bc{1-\phi_d(\alpha)}-(1+d(1-\alpha))\phi_d(\alpha).
    \end{equation}
 Bordenave, Lelarge and Salez \cite{bordenave2011rank} then show that for any $d>0$, in the coupling given above,
    \begin{equation}\label{e01b}
    \lim_{n\to \infty}\frac{1}{n}\rank_{\RR}\bc{\bm{A}_{n,d/n}}=\min_{\alpha\in [0,1]}R_d(\alpha)\qquad\text{a.s.}
    \end{equation}
The article \cite{bordenave2011rank} also provides asymptotic rank formulas for the adjacency matrices of any sequence of random graphs that converges locally to a rooted Galton-Watson tree whose degree distribution has a finite second moment. 

For general fields $\FF$, of course, $\rank_{\FF}(\bm{A}_{n,d/n})$ need not be identical to $\rank_{\RR}(\bm{A}_{n,d/n})$ (even in the case where  $J_n(i,j)\equiv 1$). For example, if $\FF=\FF_p$ is the finite field with $p$ elements, then generally only the upper bound $\rank_{\FF_p}(\bm{A}_{n,d/n})\leq\rank_{\RR}(\bm{A}_{n,d/n})$ holds true. Moreover, the proof of the rank formula (\ref{e01b}) is based on the rank-nullity theorem and the fact that $\nul_{\FF}(\bm{A}_{n,d/n})$ is identical to the dimension of the eigenspace of $A$ corresponding to $0$. Since for real symmetric matrices, the geometric and algebraic multiplicities of all eigenvalues coincide, the dimension of the eigenspace of $A$ corresponding to $0$ can be studied through an associated spectral measure in this case. On the other hand, for symmetric matrices over $\FF_p$, there is no reason to assume the matrix to be diagonalisable.

Pursuing a purely combinatorial approach that does not rely on the analysis of a spectral measure, our main result generalises the asymptotic rank formula of \cite{bordenave2011rank} to arbitrary fields $\FF$ and general  non-zero entries:

\begin{theorem}\label{t0}
For any $d>0$ and any field $\FF$, $\rank_\FF\bc{\bm{A}_{n,d/n}}/n$ converges in probability to $\min_{\alpha\in [0,1]}R_d(\alpha)$ uniformly in $\bc{J_n}_{n\geq 1}$ in the sense that for any $\varepsilon>0$,
\begin{equation}\label{e01a}
\lim_{n\to\infty}\sup_{J_n\in \syn}\PP\bc{\abs{\frac{1}{n}\rank_\FF\bc{\bm{A}_{n,d/n}}-\min_{\alpha\in [0,1]}R_d(\alpha)}\geq \varepsilon}=0.
\end{equation}
\end{theorem}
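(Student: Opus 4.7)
The plan is to establish Theorem \ref{t0} through matching asymptotic upper and lower bounds on $n^{-1}\rank_\FF(\bm{A}_{n,d/n})$, both uniform in $J_n$. The upper bound $\limsup_n n^{-1}\rank_\FF(\bm{A}_{n,d/n}) \leq \min_{\alpha\in[0,1]} R_d(\alpha)$ is purely combinatorial, via the Karp-Sipser leaf-removal procedure indicated in the introduction. Recursively removing a vertex of degree one together with its unique neighbour preserves $\nul_\FF(\bm{A}_{n,d/n})$ for any field and any nonzero template $J_n$, since each step encodes valid row and column operations that do not involve dividing by any $J_n$-entry. Every isolated vertex of the resulting Karp-Sipser core contributes an additional dimension to the kernel, and the standard fluid-limit analysis of Karp-Sipser on $\bm{G}_{n,d/n}$ identifies the normalized size of this set of isolated vertices with $1 - \min_\alpha R_d(\alpha)$, whence the bound follows from the rank-nullity theorem (this is the content deferred to Appendix \ref{app_leaf}).

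The substantial work lies in the matching lower bound. I would adopt the perturbation strategy of \cite{coja2022rank}, suitably modified to the symmetric setting. First, I would replace $\bm{A}_{n,d/n}$ by a symmetrically perturbed matrix $\bm{A}_{n,d/n}^*$ (for instance, obtained by zeroing out the $i$-th row and column simultaneously on a small random set of indices, or a related device that preserves symmetry) and verify that the perturbation changes $n^{-1}\rank_\FF$ by a quantity that is arbitrarily small in probability. The role of the perturbation is to force kernel vectors to display a clean "frozen versus free" dichotomy at almost all coordinates, a prerequisite for any cavity-style local analysis. Next, I would expand $\Erw[\rank_\FF(\bm{A}_{n,d/n}^*)]$ as a telescoping sum over submatrices indexed by the number of vertices revealed and identify each rank increment with the probability that the newly revealed vertex falls into a specific combinatorial type determined by the kernel of the surrounding submatrix.

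The principal obstruction is the symmetry itself. In the asymmetric case of \cite{coja2022rank}, inserting a single row produces a rank increment in $\{0,1\}$ controlled by a single freezing event that is conditionally independent across rows given the kernel. In the symmetric case, the simultaneous insertion of a row and its transposed column yields an increment in $\{0,1,2\}$ whose two halves are entangled through the shared off-diagonal entries. To disentangle this, I would introduce a refined classification of variables (frozen, free, and further intermediate types capturing the symmetric coupling) and derive a closed system of fixed-point equations on the asymptotic proportions of these types via the local weak convergence of $\bm{G}_{n,d/n}$ to a Poisson-$d$ Galton-Watson tree. Solving the system should reduce to a one-parameter family indexed by $\alpha \in [0,1]$, with the assembled per-type rank contributions reproducing $R_d(\alpha)$; minimization over $\alpha$ then delivers the matching lower bound. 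Uniformity in $J_n$ is preserved throughout, because every step of the argument depends only on the sparsity pattern of $\bm{A}_{n,d/n}$ and not on the specific nonzero values in $J_n$, as already observed for the combinatorial upper bound.
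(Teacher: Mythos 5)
Your high-level skeleton is correct and largely matches the paper's: Karp--Sipser leaf-removal for the upper bound, perturbation plus a telescoping sum plus a refined classification of variables for the lower bound, and the observation that the entire argument only sees the sparsity pattern, not the field or the $J_n$-entries. You also correctly identified the central difficulty — the symmetric row-column insertion produces a rank increment in $\{0,1,2\}$ whose two halves are entangled. However, there are two concrete gaps.

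First, your proposed perturbation (zeroing out the $i$-th row \emph{and} column for a small random set of indices, so as to keep the perturbed matrix symmetric) is very unlikely to do the job. The entire point of the perturbation is to delete almost all short proper relations in both the perturbed matrix and its transpose (Proposition~\ref{p1}), so that ``$\supp{b}\subseteq\cF(A)$'' becomes essentially equivalent to ``$b$ lies in the row span.'' Zeroing out a row and column simply removes a vertex from the underlying graph, which does not obviously destroy short proper relations and can in fact create new ones. The paper's device — appending a bounded random number of unit rows \emph{and} unit columns, Definition~\ref{d34} — deliberately breaks symmetry, and the remark after the proof of Proposition~\ref{p1} explains that a symmetrised variant (choosing the column perturbation to be the transpose of the row perturbation) provably fails to give the required freeness, because the argument needs the positions of the row-perturbation entries to be uniform \emph{conditionally on} the column-perturbation block. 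So ``a related device that preserves symmetry'' is not a safe placeholder; it is precisely the route the authors show cannot work.

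Second, you propose to derive the fixed-point equations for the type proportions ``via the local weak convergence of $\bm{G}_{n,d/n}$ to a Poisson-$d$ Galton--Watson tree'' and then ``minimize over $\alpha$.'' Both halves are problematic. Local weak convergence is the engine of Bordenave--Lelarge--Salez over $\RR$ \emph{because} there one can pass through the spectral measure; over a general field there is no spectral measure, and whether the freezing status of a coordinate is determined by a bounded-radius neighbourhood (in a sense strong enough to yield a recursion on the limiting tree) is exactly what you would have to prove, not assume. The paper instead derives the fixed-point equations (Proposition~\ref{al11}) purely combinatorially, from exchangeability of $\bm{T}_{n,t/n}[\bm{\theta}]$ under joint row-column relabelling, the $(\delta,\ell)$-freeness supplied by the perturbation, and the type stability of Proposition~\ref{l91}. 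Moreover, a naive minimization over $\alpha$ of the resulting per-step contribution is explicitly flagged as insufficient in Section~\ref{sec_rifpm}: the point of the fixed-point analysis is to \emph{restrict} the possible asymptotic values of the frozen proportion to the zeroes $\alpha_\star(t),\alpha_0(t),\alpha^\star(t)$ of $G_t$, and then to argue (Proposition~\ref{imp_pro}) that $h_t(\val_{n,t/n})$ is lower bounded by $h_t$ at one of the extremal zeroes, after which the integral identity of Lemma~\ref{fact_1} — not a minimization — delivers $\min_\alpha R_d(\alpha)$.
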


\begin{remark}[Almost sure convergence]
In the case where $J_n(i,j)\equiv 1$ and one is interested in convergence of the sequence $(\bm{A}_{n,d/n})_{n \geq 1}$ of adjacency matrices of a sparse Erd\H{o}s-Rényi random graph, the convergence in probability can easily be lifted to almost sure convergence by a standard martingale argument as given in \cite[Appendix 1]{bordenave2011rank}.
\end{remark}

In line with previous results on the rank of sparse random asymmetric matrices \cite{coja2022rank}, \Cref{t0} illustrates that (within the specified framework) the rank formula \cref{e01a} solely depends on $d$, but not on the field $\FF$ or the choice of the sequence $(J_n)_{n\geq 1}$.

\section{Proof overview}\label{sec_prov}
On the following pages, we present an overview of the proof of \Cref{t0}. After fixing some notation, we first reduce the uniform convergence in probability in \cref{e01a} to an upper bound in probability and a lower bound in expectation in \Cref{sec_redu}. While the upper bound is based on the leaf-removal algorithm and the results of \cite{ aronson1998maximum,karp1981maximum}, the lower bound constitutes the main contribution of our article. To lower bound the expected rank of $\bm{A}_{n,d/n}$, we transform it to a ``symmetrised'' matrix and grow the modified matrix from $\varepsilon n$ to $n$ step by step. An essential ingredient in the quantification of the described one-step rank change are the powerful techniques developed in \cite{coja2022rank}, which allow us to focus on the positions of the nonzero entries in the target matrix rather than their precise values. Finally, the rank formula follows by interpreting the sum of the lower bounds as the Riemann sum of an integral, which is analytically tractable.

\subsection{Notation}\label{sec_pre}
This section can be used as a reference for recurring notation that is used throughout the article. 

\paragraph{Sets.}
We write $[\ell] = \cbc{1,2,\ldots,\ell}$ and denote the cardinality of a set $B$ by $|B|$. For two sets $B_1$ and $B_2$, we denote their symmetric difference as 
$B_1 \Delta B_2$ 
and use $\uplus_{i\in I} B_i$ to indicate the union over pairwise disjoint sets $(B_i)_{i\in I}$. If $B$ is a set and $\ell \leq |B|$, we write $\binom{B}{\ell}$ for the collection of $\ell$-subsets of $B$.

\paragraph{Real numbers and fields.} For $a, b \in \RR$, we write $a\vee b = \max\cbc{a,b}$ and $a\wedge b = \min\cbc{a,b}$. $\FF$ is reserved to denote a generic field, and $\FF^\ast = \FF \setminus \{0\}$ its multiplicative group.

\paragraph{Vectors and matrices.} For $A \in \FF^{m \times n}$, we denote its transpose by $A^T$. For a vector $b=\bc{b_1,b_2,\ldots,b_n}\in \mathbb{F}^{1\times n}$, we let ${\rm supp}(b)={\rm supp}(b^T)=\left\{i\in [n]\colon b_i\neq 0\right\}$. We denote by $e_n(i)$ the $i$th standard unit vector in $ \mathbb{F}^{1\times n}$.

For $s=(s_1,s_2,\ldots,s_\ell) \in \RR^{1 \times \ell}$, define $\|s\|_\infty=\sup_{i\in [\ell]}|s_i|$ and $\|s\|_k=(\sum_{i=1}^{\ell}|s_i|^k)^{1/k}$.

For $A \in \FF^{m \times n}$, we denote
\begin{enumerate}[label=(\roman*)]
    \item the $i$th row of $A$ by $A(i,)$ and the $j$th column of $A$ by $A(,j)$.
    
    \item the matrix obtained by removing rows $\ell_1,\ell_2,\ldots,\ell_s$ and columns $\ell_1',\ell_2',\ldots,\ell_t'$ from $A$ by 
    $A\abc{\ell_1,\ell_2,\ldots,\ell_s;\ell_1',\ell_2',\ldots,\ell_t'}$. By a slight abuse of indexing, the $i$th row in the diminished matrix $A\abc{\ell_1,\ell_2,\ldots,\ell_s;\ell_1',\ell_2',\ldots,\ell_t'}$ refers to the row vector $A(i,)\abc{;\ell_1',\ell_2',\ldots,\ell_t'}$, i.e., the \textit{$i$th row of $A$} (minus the entries corresponding to columns $\ell_1',\ell_2',\ldots,\ell_t'$). We use an analogous convention for columns.
\end{enumerate}

\paragraph{Functions.} For a function $f:\Omega \to \RR$, we denote by $f^+$ its positive and by $f^-$ its negative part, i.e. $f^+(x)=0\vee f(x)$ and $f^-(x)=0\vee (-f(x))$ for $x \in \Omega$.

\paragraph{Random variables.} For a finite set $B$, we write $\text{Unif}(B)$ to denote a discrete uniform random variable on $B$, $\bin{n,p}$ to denote a binomial random variable with $n$ trials and success probability $p$ and $\poi{d}$ to denote a Poisson variable with parameter $d$.


For two random variables $X,Y$ taking values in $(\Omega,\mathcal{G})$, we denote the total variation distance between $X$ and $Y$ as
    \[
    {\rm d}_{\sss \rm TV}(X,Y)=\sup_{B\in \mathcal{G}}\abs{\mathbb{P}\bc{X\in B} -\mathbb{P}\bc{Y\in B}}.
    \]

\paragraph{Notions of convergence.}
Throughout the article, 
the order in which limits are taken matters significantly.
For families of real numbers $(a_{n,P,N,J_N})_{n, P,N\in \ZZ^+,J_N\in \syN}$, we write 
\begin{enumerate}[label=(\roman*)]
  \item  \makebox[7em][l]{\mbox{$a_{n,P,N,J_N}=o_n(1)$}} $\qquad \Longleftrightarrow \qquad $ For all $P\geq 1: \quad $ $\lim_{n\to\infty}\sup_{N\geq n,J_N\in \syN}\abs{a_{n,P,N,J_N}}=0$;
  \item \makebox[7em][l]{\mbox{$a_{n,P,N,J_N}=o_{n,P}(1)$}} $ \qquad \Longleftrightarrow \qquad  \limsup_{P\to\infty}\limsup_{n\to\infty}\sup_{N\geq n,J_N\in \syN}\abs{a_{n,P,N,J_N}}=0$.
\end{enumerate}
Given a family of real numbers $(c_{n,P,N,J_N,t})_{n, P,N\in \ZZ^+,J_N\in \syN,t\in [0,d]}$, we say that 
\begin{enumerate}[label=(\roman*)]
  \item  \makebox[18em][l]{\mbox{$c_{n,P,N,J_N,t}=o_n(1)$ uniformly in $t\in [0,d]$}} $\qquad \Longleftrightarrow \qquad $ $\sup_{t\in [0,d]}c_{n,P,N,J_N,t}=o_n(1)$;
  \item \makebox[18em][l]{\mbox{$c_{n,P,N,J_N,t}=o_{n,P}(1)$ uniformly in $t\in [0,d]$}} $ \qquad \Longleftrightarrow \qquad  \sup_{t\in [0,d]}c_{n,P,N,J_N,t}=o_{n,P}(1)$.
\end{enumerate}
For a family of \textit{uniformly bounded} random variables $(\bm{b}_{n,P,N,J_N,t})_{n, P,N\in \ZZ^+,J_N\in \syN, t\in [0,d]}$, we write 
\begin{enumerate}[label=(\roman*)]
  \item $  \bm{b}_{n,P,N,J_N,t}=\oone\qquad \Longleftrightarrow \qquad \mathbb{E}\abs{\bm{b}_{n,P,N,J_N,t}}=o_{n,P}(1)$ uniformly in $t\in [0,d]$;
  \item $\bm{b}_{n,P,N,J_N,t}\geq \oone\qquad \Longleftrightarrow \qquad \bc{\bm{b}_{n,P,N,J_N,t}}^-=\oone$.
  \item $\bm{b}_{n,P,N,J_N,t}\leq \oone\qquad \Longleftrightarrow \qquad \bc{\bm{b}_{n,P,N,J_N,t}}^+ = \oone$.
\end{enumerate}
For a family of events $(\mathfrak B_{n,P,N,J_N,t})_{n, P,N\in \ZZ^+,J_N\in \syN,t\in[0,d]}$, we say that $\mathfrak B_{n,P,N,J_N,t}$ occurs w.h.p. if  $\mathbb{P}\bc{\mathfrak B_{n,P,N,J_N,t}}=1+o_{n,P}(1)$ uniformly in $t\in [0,d]$.

We extend the above notions of convergence to families of numbers and events that only depend on subsets of the parameters. For example, for a family of real numbers $(c_{n,P})_{n, P\in\ZZ^+}$, by treating it as constant on the unspecified parameters, we write $c_{n,P}=o_{n,P}(1)$ whenever $\limsup_{P\to\infty}\lim_{n\to\infty}c_{n,P}=0$.

\subsection{Deduction of \Cref{t0} from suitable upper and lower bounds}\label{sec_redu}
Our main result, \Cref{t0}, is a statement about convergence in probability of the normalised rank sequence $\rk{\bm{A}_{n,d/n}}/n$ that holds uniformly in $(J_n)_{n \geq 1}$. 
In this section, we show how \Cref{t0} readily follows from the following upper bound in probability and the subsequent lower bound in expectation: 

\begin{theorem}[Upper bound in probability]\label{t_upper}
Let $d>0$ and $\FF$ be any field. Then for any $\varepsilon>0$,
\begin{equation}\label{e01a_upper}
\lim_{n\to \infty}\PP\bc{\sup_{J_n\in\syn} \frac{\rk{\bm{A}_{n,d/n}}}{n}\leq \min_{\alpha\in[0,1]}R_d(\alpha) +\varepsilon}=1.
    \end{equation}
\end{theorem}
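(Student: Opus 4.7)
I would derive \Cref{t_upper} by a purely combinatorial leaf-removal argument that bounds $\rk{\bm A_{n,d/n}}$ from above in terms of the graph structure of $\bm G_{n,d/n}$ alone. This is the natural route, because the uniform statement \cref{e01a_upper} calls for a bound that does not see $J_n$: under the coupling \cref{ec1}, the support of $\bm A_{n,d/n}$ is a deterministic function of the uniforms $\bm q(i,j)$ and is the same for every choice of $J_n$, so any bound driven purely by this support holds simultaneously for every $J_n\in\syn$.

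\textbf{Step 1 (field-independent Bauer--Golinelli).} First I would verify a field-independent version of the observation from \cite{bauer2001exactly}: if $A$ is a symmetric $N\times N$ matrix over $\FF$ with zero diagonal whose off-diagonal support corresponds to a graph $G$ on $[N]$, and $v\in[N]$ has a unique neighbor $u$ in $G$, then $\nul_\FF(A)=\nul_\FF(A\abc{u,v;u,v})$. The proof is a one-line null-space bijection: in any null vector $x$, the row-$v$ equation reads $A(v,u)\,x_u=0$ and forces $x_u=0$; for any $w\notin\{u,v\}$ one has $A(w,v)=0$, so (using $x_u=0$) the row-$w$ equation collapses to the row-$w$ equation of $A\abc{u,v;u,v}$; and the row-$u$ equation uniquely determines $x_v$ from the remaining coordinates via the invertible scalar $A(u,v)\in\FF^\ast$. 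Only the invertibility of $A(u,v)$ is used, so this step is oblivious to the field and to the specific nonzero values.

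\textbf{Step 2 (Karp--Sipser core and isolated count).} Iterating Step 1 is precisely the Karp--Sipser leaf-removal procedure \cite{karp1981maximum} and shows that the nullity of $\bm A_{n,d/n}$ is invariant under passage to the principal submatrix $\bm A^{\mathrm{KS}}_{n,d/n}$ indexed by the Karp--Sipser core. Since the isolated vertices of the core contribute zero rows, I obtain
\[
\rk{\bm A_{n,d/n}}=n-\nul_\FF(\bm A_{n,d/n})=n-\nul_\FF(\bm A^{\mathrm{KS}}_{n,d/n})\leq n-\bm I_n,
\]
where $\bm I_n$ denotes the number of isolated vertices produced by leaf-removal on $\bm G_{n,d/n}$. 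The classical Karp--Sipser / Aronson--Frieze--Pittel analysis \cite{aronson1998maximum,karp1981maximum} then provides $\bm I_n/n\to \iota(d)$ in probability for an explicit constant $\iota(d)$ determined by the fixed-point structure of the map $\alpha\mapsto 1-\phi_d(\alpha)$ on $[0,1]$, and this concentration statement is measurable with respect to $\bm G_{n,d/n}$ only.

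\textbf{Step 3 (identification with $\min R_d$ and uniform conclusion).} Finally, I would verify the analytic identity $1-\iota(d)=\min_{\alpha\in[0,1]} R_d(\alpha)$. Using $\phi_d'=d\phi_d$ and differentiating \cref{em0}, the stationarity condition $R_d'(\alpha)=0$ reduces to $\phi_d\bc{1-\phi_d(\alpha)}=1-\alpha$, which is exactly the fixed-point equation governing the Karp--Sipser limit; substituting this $\alpha^\ast$ into $R_d$ then reproduces the closed form for $1-\iota(d)$. Granting this identification, on the event $\{\bm I_n/n\geq \iota(d)-\varepsilon\}$, which depends only on $\bm G_{n,d/n}$ and has probability tending to $1$, the bound from Step 2 gives $n^{-1}\rk{\bm A_{n,d/n}}\leq \min_\alpha R_d(\alpha)+\varepsilon$ simultaneously for every $J_n\in\syn$, so the supremum in \cref{e01a_upper} can be moved inside the probability. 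The step I expect to require the most care is the analytic identification in Step 3: the Karp--Sipser process naturally yields a variational expression involving iterated fixed points of $\phi_d$, and one has to patiently verify that the correct branch coincides with the minimizer of $R_d$. This identification is exactly what Appendix~\ref{app_leaf} carries out; the remainder of the argument is soft.
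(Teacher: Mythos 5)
Your proposal follows essentially the same route as the paper: the paper also reduces \Cref{t_upper} to the Bauer--Golinelli nullity invariance under leaf removal, the Karp--Sipser/Aronson--Frieze--Pittel limit theorem for the number of isolated vertices in the core, and an analytic identification of $1-\iota(d)$ with $\min_{\alpha\in[0,1]}R_d(\alpha)$, all carried out in Appendix~\ref{app_leaf} using the properties of $G_d$ and $R_d$ recorded in \Cref{lem_proal}. The only difference is that you spell out the short null-space bijection proof of the leaf-removal step over a general field with arbitrary nonzero weights (which is correct and slightly more self-contained), whereas the paper leaves this to a citation of \cite{bauer2001exactly}; the uniformity over $J_n\in\syn$ is handled in both cases by observing that the leaf-removal bound depends on $\bm G_{n,d/n}$ alone.
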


\begin{theorem}[Lower bound in expectation]
\label{t1}
For any $d>0$ and any field $\FF$, \begin{equation}\label{e01}
    \liminf_{n\to \infty}\inf_{J_n\in \syn}\mathbb{E}\brk{\frac{\rk{\bm{A}_{n,d/n}}}{n}}\geq \min_{\alpha\in [0,1]}R_d(\alpha).
    \end{equation}
\end{theorem}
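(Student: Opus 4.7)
My plan is to implement the strategy indicated at the end of \Cref{sec_prov}. I would first pass from $\bm{A}_{n,d/n}$ to a perturbed matrix $\bm{A}^{(P)}_{n,d/n}\in\syn$ obtained by resampling a small symmetric set of $O(P)$ entries using fresh uniform coins. Because this modification alters the rank by at most $O(P)$, its normalized effect is $o_{n,P}(1)$ and it is harmless for the $\liminf$ in \cref{e01}. As in \cite{coja2022rank}, the role of this perturbation is to break non-generic alignments of the kernels of principal submatrices, so that ``frozen'' indicators behave in an essentially tree-like fashion.

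Next, I would set up a telescoping identity by growing a symmetric principal submatrix. Letting $\bm{M}_k$ denote the leading $k\times k$ principal submatrix of $\bm{A}^{(P)}_{n,d/n}$, we have
\begin{equation*}
\rk{\bm{M}_n}\;=\;\rk{\bm{M}_{\lfloor \varepsilon n\rfloor}}+\sum_{k=\lfloor\varepsilon n\rfloor}^{n-1}\bc{\rk{\bm{M}_{k+1}}-\rk{\bm{M}_k}}.
\end{equation*}
For symmetric submatrices, each per-step increment lies in $\{0,1,2\}$ and can be characterized through where the new column $\bm{M}_{k+1}(,k+1)$ sits relative to the column span of $\bm{M}_k$, together with a self-consistency condition on the (forced-zero) diagonal entry. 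Declaring a vertex $i\in[k]$ to be \emph{frozen} if $e_k(i)$ lies in the row span of $\bm{M}_k$, the jump is governed by which neighbours of vertex $k+1$ in $[k]$ are frozen versus not, and is determined purely by the combinatorial pattern of non-zero entries.

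Then, combining the perturbation with local weak convergence of $\bm{G}_{n,d/n}$ to a $\poi{td}$ Galton--Watson tree at scale $k=tn$, I would show that the empirical fraction $\bm\alpha_{n,k}$ of frozen vertices concentrates around a deterministic limit $\alpha_t$ satisfying a Karp--Sipser-type fixed point equation in terms of $\phi_{td}$, and that the probability that vertex $k+1$ attaches only to frozen vertices converges to $\phi_{td}(\alpha_t)$. Crucially, this analysis depends only on the \emph{positions} of the nonzero entries and not on their values in $\FF^\ast$, yielding the desired uniformity in $(J_n)$ and $\FF$. Turning the telescoping sum into a Riemann integral $\int_{\varepsilon}^{1}r(t)\,dt$ for an explicit integrand $r$, and simplifying using the fixed-point relation for $\alpha_t$, I expect to recognize the result as $R_d(\alpha^\ast)$ at the minimizing fixed point $\alpha^\ast$; sending $\varepsilon\downarrow 0$ and $P\to\infty$ then yields the claimed lower bound $\min_{\alpha\in[0,1]} R_d(\alpha)$.

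The main obstacle will be the symmetric adaptation of the frozen-variable analysis of \cite{coja2022rank}. In the asymmetric setting one may resample a full row independently of the columns, which decouples ``column in the span'' from any constraint on the rows. In our symmetric setting, adjoining a row automatically fixes the corresponding column through the common pattern $J_n(i,k+1)=J_n(k+1,i)$ and through the forced-zero diagonal entry, so row-frozenness and column-frozenness are tightly coupled rather than independent. Handling this likely requires introducing refined vertex types (e.g., row-frozen, column-frozen, and ``diagonally anomalous'' vertices) and a self-consistent system of fixed-point equations for their proportions that nevertheless collapses back to the scalar variational problem defining $R_d$. Verifying this collapse, together with the concentration of each type proportion uniformly along the growth process and over $(J_n)\in\syn$, is where I expect the bulk of the technical difficulty to lie.
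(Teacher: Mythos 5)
Your high-level plan shares some of the paper's architecture --- telescoping over a growing matrix, recognizing that the increment is controlled by whether the support of the new row/column is frozen, the need for refined vertex types, and a Riemann-integral identification of the limit with $R_d$. However, there are three genuine gaps that go beyond technical details, and each one would derail the argument as written.

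\textbf{The symmetric perturbation.} You propose to perturb by resampling a small symmetric set of $O(P)$ entries so that $\bm{A}^{(P)}_{n,d/n}\in\syn$, claiming this breaks ``non-generic alignments.'' The role of the perturbation in \cite{coja2022rank} and in this paper is sharply specific: it makes the matrix $(\delta,\ell)$-free, i.e.\ kills almost all short proper relations, so that the implication ``$\supp{b}\subseteq\cF(A)$ $\Longleftrightarrow$ $b$ in the row span of $A$'' becomes (approximately) an equivalence. Resampling entries inside a symmetric matrix has no obvious mechanism to do this; in \cite{coja2022rank} the perturbation is a pinning operation that appends unit rows at the bottom, thereby explicitly freezing random coordinates. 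The paper shows that in the symmetric setting you must append both unit rows \emph{and} unit columns (\Cref{d34}), and explicitly remarks after \Cref{p1} that this \emph{necessarily destroys the symmetry} of the matrix: the non-zero positions of the appended rows must remain uniform conditionally on the appended columns for the proof of $(\delta,\ell)$-freeness (\Cref{l0}) to go through. So a perturbation that keeps the matrix in $\syn$ is not a detail one can fix later; it is incompatible with the $(\delta,\ell)$-freeness argument your plan relies on.

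\textbf{The concentration assumption.} You propose to show that the empirical fraction $\bm\alpha_{n,k}$ of frozen vertices concentrates around a deterministic limit $\alpha_t$ satisfying a scalar fixed-point equation, and then integrate. The paper explicitly addresses this: after \Cref{l91}, the authors note that ``this conjecture turns out to be incorrect, and one of the implications of our present proof is that $\val_{n,d/n}$ does not converge for $d>\eul$.'' The scalar fixed-point equation $\alpha = 1-\phi_t(1-\phi_t(\alpha))$ has up to three roots for $t>\eul$, and one cannot decide which one (if any) the proportion concentrates on. The paper circumvents this by (i) deriving an approximate fixed-point \emph{system} for the finer types $\vy,\vu,\vv,\vz$ (\Cref{al11}) rather than for $\val$ alone, and (ii) proving only the one-sided inequality $h_t(\val_{n,t/n})\geq h_t(\alpha^\star(t))+\oone$ (\Cref{imp_pro}), which suffices for a lower bound. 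Your plan's step ``show $\bm\alpha_{n,k}$ concentrates'' is not just technically hard --- it is a false statement in the regime $d>\eul$ that matters.

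\textbf{Exchangeability.} You grow the leading principal submatrix $\bm{M}_k$ of $\bm{A}^{(P)}_{n,d/n}$. The template $J_n$ is a fixed deterministic matrix, so rows/columns of $\bm{M}_k$ are not distributionally exchangeable, and the identities of the form ``$\PP(k+1\in\cW)=\Erw[\vw_k]$'' that you need to translate per-step increments into type proportions will not hold without further work. The paper's fix is the auxiliary matrix $\bm{T}^{(N)}_{n,p}$ (\cref{ec1_2}) obtained by applying a uniformly random permutation $\vtu$ to $[N]$ before restricting to a principal submatrix, which gives exactly the joint row-and-column exchangeability needed (\Cref{lem_exc0}, \Cref{cor_proportions}). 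This construction is also why the paper proves the stronger statement with $\inf_{N\geq n}$ rather than $N=n$.

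Finally, you invoke local weak convergence to a Galton--Watson tree; the paper's approach is deliberately purely combinatorial and does not pass through any local limit (and there is no spectral measure available over general $\FF$), so this step would need its own independent derivation and would again run into the multiplicity of fixed points. In short: the telescoping skeleton and the intuition about refined frozen types are correct and match the paper in spirit, but the specific perturbation, the concentration claim, and the exchangeability mechanism are each a substantive gap, and the concentration claim in particular is provably false for $d>\eul$.
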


While \Cref{t_upper} straightforwardly follows from the fact that  the nullity of an adjacency matrix remains invariant under ``leaf-removal'' (see \cite{bauer2001exactly}) and the results of \cite{karp1981maximum}\footnote{See Appendix \ref{app_leaf}.}, the derivation of \Cref{t1} is the main contribution of our work. The central steps towards \cref{e01} are laid out in the remainder of \Cref{sec_prov}.
With \Cref{t1,t_upper} in hand, we are in the position to prove \Cref{t0}:

\begin{proof}[Proof of \Cref{t0} subject to \Cref{t1,t_upper}]
Let 
\[\bm{s}_n=\bm{s}_n(J_n)=\frac{\rk{\bm{A}_{n,d/n}}}{n}- \min_{\alpha\in[0,1]}R_d(\alpha).\]
Then $\abs{\bm{s}_n}\leq 1+\abs{\min_{\alpha\in[0,1]}R_d(\alpha)}$. By \Cref{t_upper}, for any $\varepsilon>0$, 
\begin{align*}
  >0   \limsup_{n\to \infty}\sup_{J_n\in\syn}\Erw\brk{\bm{s}_n^+}&\leq\limsup_{n\to \infty}\Erw\brk{\sup_{J_n\in\syn}\bm{s}_n^+}\\
    &\leq \varepsilon+\bc{1+\abs{\min_{\alpha\in[0,1]}R_d(\alpha)}}\limsup_{n\to \infty}\PP\bc{\sup_{J_n\in\syn} \bm{s}_n^+\geq \varepsilon}=\varepsilon.
\end{align*}
Since $\varepsilon$ can be chosen arbitrarily small, we conclude that $\limsup_{n\to \infty}\sup_{J_n\in\syn}\Erw\brk{\bm{s}_n^+}=0$.
On the other hand, by \Cref{t1}, $\liminf_{n\to\infty}\inf_{J_n\in \syn}\Erw\brk{\bm{s}_n}\geq 0$. Since $\bm{s}_n=\bm{s}_n^+-\bm{s}_n^-$,
\[\limsup_{n\to\infty}\sup_{J_n\in \syn}\Erw\brk{\bm{s}_n^-}\leq\limsup_{n\to\infty}\sup_{J_n\in \syn}\Erw\brk{\bm{s}_n^+}-\liminf_{n\to\infty}\inf_{J_n\in \syn}\Erw\brk{\bm{s}_n}\leq 0.\]
As a consequence, $\limsup_{n\to\infty}\sup_{J_n\in \syn}\Erw\brk{\abs{\bm{s}_n}}=0$. The uniform convergence in probability now follows from Markov's inequality. 
\end{proof}


We conclude that it remains to prove \Cref{t1} and outline the main steps in the following subsections.

\subsection{The lower bound: Building the matrix}\label{sec_ass}
Instead of proving \Cref{t1} for the sequence $(\bm{A}_{n,d/n})_{n\geq 1}$ directly, we work with a ``symmetrised'' version that possesses a suitable form of joint row and column exchangeability. To define the auxiliary matrices, fix a number $N\in \NN_{\geq 1}$ and let $\vtu$ be a uniform permutation of $[N]$. For $n \in [N]$, define the matrix $\bm{T}_{n,p}^{(N)} \in \FF^{n \times n}$ by setting 
\begin{equation}\label{ec1_2}
  \bm{T}_{n,p}^{(N)}(i,j)= \begin{cases}
\ensuremath{\mathds{1}}{\cbc{\bm{q}(\vtu(i),\vtu(j))<p}}J_N(\vtu(i),\vtu(j)),&\quad i<j; \\
\ensuremath{\mathds{1}}{\cbc{\bm{q}(\vtu(j),\vtu(i))<p}}J_N(\vtu(j),\vtu(i)),&\quad i>j; \\
0,&\quad i=j. \\
\end{cases}
\end{equation}
For any $N \in \NN_{\geq 1}$, this construction yields $N$ matrices $\bm{T}_{1,p}^{(N)}, \bm{T}_{2,p}^{(N)}, \ldots, \bm{T}_{N,p}^{(N)}$ of growing dimension. Specifically, we have $\bm{T}_{N,p}^{(N)}(i,j)=\bm{A}_{N,p}(\vtu(i),\vtu(j))$ and 
$\rk{\bm{T}_{N,p}^{(N)}}=\rk{\bm{A}_{N,p}}$, so that 
\Cref{t1} would follow from the lower bound
\[\liminf_{n\to \infty}\inf_{J_n\in \syn}\mathbb{E}\brk{\frac{1}{n}\rk{\bm{T}_{n,d/n}^{(n)}}}\geq \min_{\alpha\in [0,1]}R_d(\alpha).\]
However, for technical reasons that will become apparent later, we actually show the stronger statement
\[\liminf_{n\to \infty}\inf_{N \geq n}\inf_{J_N\in \syN}\mathbb{E}\brk{\frac{1}{n}\rk{\bm{T}_{n,d/n}^{(N)}}}\geq \min_{\alpha\in [0,1]}R_d(\alpha).\]

Correspondingly, in the following, we focus on the derivation of a lower bound on $\Erw[\text{rk}_{\FF}(\bm{T}_{n,d/n}^{(N)})]/n$ for $N \geq n$. Nonetheless, for a lighter notation, we omit the superscript $N$ in the matrices below. 
The basic idea in this derivation is rather simple: Fix a small number $\varepsilon \in (0,1)$ and trace the rank change when the matrix  $\bm{T}_{\varepsilon n,d/n}$ is grown to $\bm{T}_{n,d/n}$ step by step. Then, by a telescoping sum,
\begin{align}
    \frac{1}{n}\Erw\brk{\rk{\bm{T}_{n,d/n}}}
    \geq &\frac{1}{n}\sum_{m=\varepsilon n}^{n-1}\bc{\Erw\brk{\rk{\bm{T}_{m+1,d/n}}}-\Erw\brk{\rk{\bm{T}_{m,d/n}}}}.\label{eq_rkdifsum0}
\end{align}
The last expression thus reduces the problem of lower bounding $\Erw[\rk{\bm{T}_{n,d/n}}]/n$ to lower bounding $\sum_{m=\varepsilon n}^{n-1}\bc{\Erw\brk{\rk{\bm{T}_{m+1,d/n}}}-\Erw\brk{\rk{\bm{T}_{m,d/n}}}}/n$. 

Since this bound is based on a comparison of the two matrices $\bm{T}_{m+1,d/n}$ and $\bm{T}_{m,d/n}$ whose sizes differ by one, our approach might superficially resemble the Aizenman-Sims-Starr scheme from mathematical physics, which had previously found its application in the study of the rank of random matrices in \cite{coja2022rank}. The Aizenman-Sims-Starr scheme, whose basic idea is to compare a system of $n$ variables to a system of $n+1$ variables and to study the influence on the $(n+1)$st variable, has originally been developed to tackle the Sherrington-Kirkpatrick spin glass model \cite{aizenman2003extended}. However, our approach cannot straightforwardly be interpreted as a cavity computation for the original matrix sequence, since we do not (directly or indirectly) compare two matrices of the form $\bm{A}_{n,d/n}$ and $\bm{A}_{n-1,d/(n-1)}$. Instead, we compare matrices $\bm{T}_{m,d/n}$ and $\bm{T}_{m+1,d/n}$ whose sizes differ by one, but who are of a purely auxiliary nature and do not represent copies of the original matrix model.

\subsection{Taming linear relations} \label{sec_LR}
While a comparison of the rather similar matrices $\bm{T}_{m+1,d/n}$ and $\bm{T}_{m,d/n}$ might look innocuous at first glance, obtaining good control over the ensuing rank change is not a simple task, since it requires detailed knowledge of the intricate linear dependencies of the matrix $\bm{T}_{m,d/n}$. Following and extending core ideas of \cite{coja2022rank}, this section collects the main tools that are necessary to deal with these relations and to accurately describe the change in rank from $\bm{T}_{m,d/n}$ to $\bm{T}_{m+1,d/n}$.

The following definition from \cite{coja2022rank} contains a collection of terminology that will turn out useful in the coming considerations on linear dependencies.
\begin{definition}[Linear relations: {\cite[Definition 2.1]{coja2022rank}}]\label{d2}
  Let $A \in \FF^{m \times n}$.
\begin{enumerate}[label=(\roman*)]
  \item A set $\emptyset \not= I\subseteq [n]$ is a \textbf{relation} of $A$ if there exists a row vector $y\in \mathbb{F}^{1\times m}$ such that $\emptyset\neq{\rm supp}(y A) \subseteq I$. If furthermore ${\rm supp}(y A) = I$, then we call $y$ a \textbf{representation} of $I$ in $A$.
  \item If $I=\cbc{i}$ is a relation of $A$, then we call $i$ a \textbf{frozen variable} in $A$. Let $\mathcal{F}(A)$ be the set of all frozen variables. 
  \item A relation $I \subseteq [n]$ is a \textbf{proper relation} of $A$ if $I\backslash \mathcal{F}(A)$ is a relation of $A$.
  \item For $\delta>0,\ell\geq 2$, we say that $A$ is \textbf{$(\delta,\ell)$-free} if there are no more than $\delta n^\ell$ proper relations $I \subseteq [n]$ of size $\abs{I}=\ell$.
\end{enumerate}
\end{definition}

\begin{remark}[Frozen variables]\label{rem_froz}
\begin{enumerate}[label=(\roman*)]
\item The terminology \textit{frozen variable} refers to the role that the corresponding coordinate plays in the kernel of $A$: 
Frozen variables are exactly those coordinates that are invariably $0$ in all vectors of $\ker_{\FF}(A)$ (see \cite[Fact 2.2]{coja2022rank}). 
\item In \Cref{p4} (also in \cite[Lemma 4.7]{demichele2021rank}), we will see yet another convenient characterization of frozen variables in terms of column removal as follows:
\begin{equation}\label{adf}
i\in \mathcal{F}(A) \Longleftrightarrow  \rank_{\FF}\bc{A}-\rank_{\FF}\bc{A\abc{;i}}=1 .
\end{equation}
\end{enumerate}
\end{remark}

Let $A \in \FF^{m \times n}$ be any matrix and $b \in \FF^{1 \times n}$ be a non-zero row vector, and suppose that we want to attach $b$ to $A$ and characterise the ensuing rank change. This is a simpler operation than what we actually need (attaching both a row and a column), but still instructive. In terms of frozen variables and proper relations, we can say the following about the rank increase of attaching $b$ to $A$: If all variables of $\supp{b}$ are frozen, then surely $b$ lies in the linear span of the rows of $A$, since it can be linearly combined using the representations of its non-zero coordinates. On the other hand, if $b$ is contained in the linear span of the rows of $A$, then because of the existence of a linear combination, either all variables of $\supp{b}$ are frozen or they form a proper relation of $A$. As a consequence, we have the following key implications:
\begin{equation}\label{eq00}
\begin{aligned}
  \supp{b}\subseteq \mathcal{F}(A)\quad\Longrightarrow\quad& \text{$b$ is in the span of the rows of $A$}\\
    \Longrightarrow\quad  & \text{$\supp{b}\subseteq \mathcal{F}(A)$ or $\supp{b}$ is a proper relation of $A$.}
    \end{aligned}
\end{equation}

These implications are useful for our purposes since 
the concept of a relation only 
takes into account the locations of non-zero entries, but not their 
entries. 
However, unfortunately, \cref{eq00} does not come in form of an equivalence, 
since $\supp{b}$ being a proper relation of $A$ does not imply that $b$ lies in the span of the rows of $A$. 

To remedy this issue, based on ideas from \cite{coja2022rank}, we use a matrix perturbation that greatly reduces the overall number of short proper linear relations in the resulting matrix, such that morally, an equivalence of the form ``$\supp{b}\subseteq \mathcal{F}(A) \Longleftrightarrow b$ is in the span of the rows of $A$'' holds. 
While the perturbation from \cite{coja2022rank} is based on the attachment of unit rows, we will augment this definition by the attachment of unit columns to account for the symmetry of our matrices. The basic idea is that the attachment of unit rows at the bottom of a given matrix $A$ can eliminate short proper relations in the augmented matrix, while the attachment of unit columns to the left of $A$ can eliminate short proper relations in its transpose. 

The details of the perturbation are considerably more subtle. We split its definition into two main parts, since it involves two stages of randomness. In the first definition, we present the basic row and column attachment matrices. Their non-zero entries may be confined to fixed \textit{initial segments} of the column set $[n]$ and row set $[m]$, respectively:

\begin{definition}[Perturbation matrices]\label{d1}
\begin{enumerate}[label=(\roman*)]
\item Let $\theta_r, n_1, n_2 \in \mathbb{N}$ with $n_1 \leq n_2$. The \textbf{row-perturbation matrix} $\THETA_r[\theta_r, n_1| n_2] \in \{0,1\}^{\theta_r \times n_2}$ with parameters $\theta_r, n_1, n_2$ is defined by setting exactly one entry in each of its $\theta_r$ rows equal to $1$, where the choice of this entry is uniform among the first $n_1$ out of its $n_2$ columns. More precisely, the unique $1$-entry of row $k \in [\theta_r]$ is in column $\vj_k$, where  $\vj_1, \ldots, \vj_{\theta_r} \in [n_1]$ are i.i.d.\ uniformly distributed random variables.
\item Let $\theta_c, m_1, m_2 \in \mathbb{N}$ with $m_1 \leq m_2$. The \textbf{column-perturbation matrix} $\THETA_c[m_1\vert m_2, \theta_c] \in \{0,1\}^{m_2 \times \theta_c}$ with parameters $\theta_c, m_1, m_2$ is defined by setting exactly one entry in each of its $\theta_c$ columns equal to $1$, where the choice of this entry is uniform among the first $m_1$ out of its $m_2$ rows.  More precisely, the unique $1$-entry of column $k \in [\theta_c]$ is in row $\vi_k$, where $\vi_1, \ldots, \vi_{\theta_c} \in [m_1]$ are i.i.d.\ uniformly distributed random variables.
\end{enumerate}
\end{definition}



\begin{figure}[H]
\centering
\includegraphics[width = 0.5\textwidth]{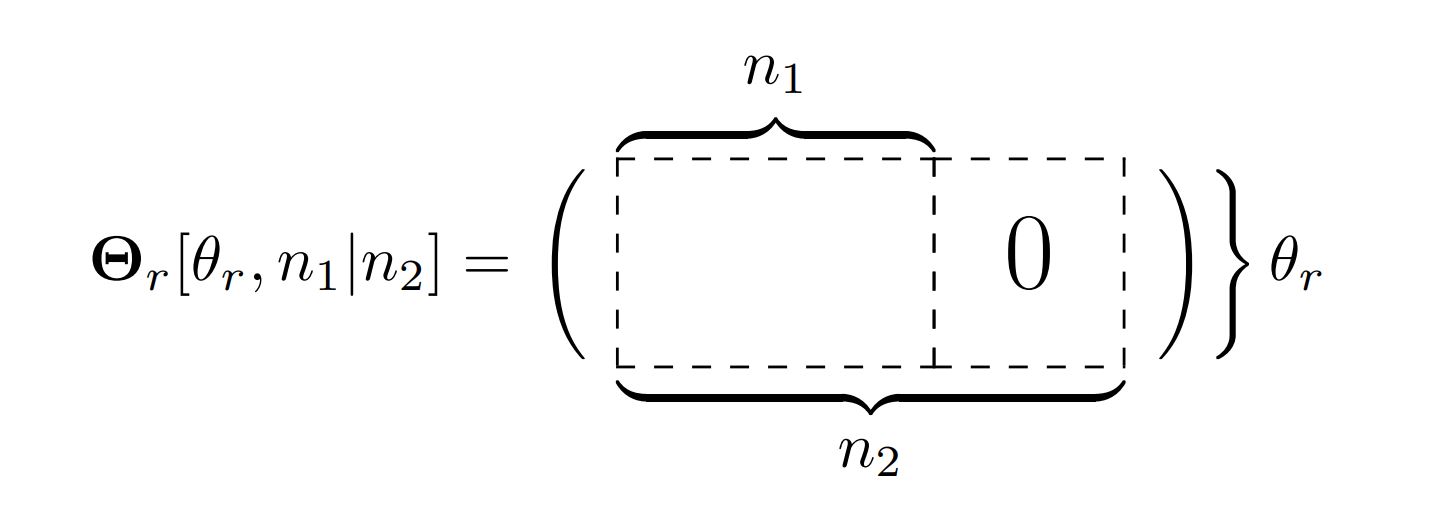}
\caption{Schematic representation of the row-perturbation matrix $\bm{\Theta}_r[\theta_r, n_1| n_2]$.}
\end{figure}

For $A \in \FF^{m \times n}$ and a row perturbation matrix $\THETA_r[\theta_r, n_1|n]$ with non-zero column-coordinates $\vj_1, \ldots, \vj_{\theta_r} \in [n_1]$ of its $\theta_r$ rows, consider the perturbed matrix
$$ \bm{A}' := \begin{pmatrix}
    A \\
    \THETA_r[\theta_r, n_1| n]
    \end{pmatrix}.$$
Then in $\bm{A}'$, the non-zero columns $\vj_1, \ldots, \vj_{\theta_r} \in [n_1]$ of $\THETA_r[\theta_r, n_1|n]$ are part of the set of frozen variables: Since $\vj_s$ is the index of the only non-zero entry in the $(m+s)$th row, the Boolean row vector $e_{m+\theta_r}(m+s)$ is a representation of $\vj_s$ in $\bm{A}'$. In this sense, one can view the attachment of $\THETA_r[\theta_r, n_1| n]$ at the bottom of a matrix as explicitly freezing the variables corresponding to non-zero columns. 

On the other hand, appending $\THETA_c[m_1\vert m, \theta_c]$ to the right of $A$ has quite a contrary and more subtle effect upon the set of frozen variables: In a sense, additional columns have the same impact as row removals and therefore can ``unfreeze'' coordinates (see \Cref{l8.4} for a proof). The necessity of column perturbation matrices constitutes the main difference to the previously employed perturbation from \cite{coja2022rank}.

Before we introduce a second level of randomness to the perturbation, in the next lemma, we construct a coupling of the row-perturbation matrices $\THETA_r[\theta_r,n_1|n_2]$ for all possible sizes $\theta_r \times n_2$ and subsets of \textit{freezable} coordinates $[n_1]\subseteq [n_2]$. The benefit of this coupling is twofold. First, perturbation matrices of increasing size, but with fixed subset of freezable coordinates, will be nested. Second, the probability that matrices of fixed dimension, but with different subsets of freezable coordinates, disagree, can be bounded explicitly. This coupling ensures that with high probability, we can apply the same perturbation to both $\bm{T}_{m,d/n}$ and $\bm{T}_{m+1,d/n}$, and still get the desired properties:

\begin{lemma}[Coupling of perturbation matrices]\label{lc1}
There is a coupling of the family $\{\THETA_r[\theta_r, n_1| n_2]: \theta_r, n_2 \geq 1, n_1\in [n_2]\}$ with the following properties:
\begin{enumerate}[label=(\roman*)]
\item  For any $\theta_r, n_2 \geq 1$ and $n_1 \in [n_2]$, $\THETA_r[\theta_r,n_1|n_2+1]\abc{;n_2+1}=\THETA_r[\theta_r,n_1|n_2]$.
\item  For any $\theta_r, n_2 \geq 1$ and $n_1 \in [n_2]$, $\THETA_r[\theta_r+1,n_1|n_2]\abc{\theta_r+1;}=\THETA_r[\theta_r,n_1|n_2]$.
\item  For any $\theta_r, n_2 \geq 1$ and $n_0 \leq n_1 \leq n_2$, $\mathbb{P}\left(\THETA_r[\theta_r,n_0|n_2]= \THETA_r[\theta_r,n_1|n_2]\right)=(n_0/n_1)^{\theta_r}$.
\end{enumerate}
Similarly, there is a coupling with analogous properties for the family $\{\THETA_c[m_1| m_2,\theta_c]: \theta_c, m_2 \geq 1, m_1\in [m_2]\}$. 
\end{lemma}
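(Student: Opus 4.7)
The plan is to construct the coupling row-by-row, exploiting the row-independence built into \Cref{d1}. I would first build, for each row index $k \in \mathbb{N}$, a coupled family $(\vj_k^{(n_1)})_{n_1 \geq 1}$ of column indices with each $\vj_k^{(n_1)}$ uniform on $[n_1]$ and $\mathbb{P}(\vj_k^{(n_0)} = \vj_k^{(n_1)}) = n_0/n_1$ for all $n_0 \leq n_1$. Taking independent copies across $k$, I would then define $\THETA_r[\theta_r, n_1| n_2]$ as the matrix in $\{0,1\}^{\theta_r \times n_2}$ whose $k$-th row has its unique $1$-entry in column $\vj_k^{(n_1)}$, with all columns in $[n_2]\setminus [n_1]$ identically zero.

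The key ingredient is the following sequential coupling of a single row's column index. Set $\vj_k^{(1)} := 1$ and recursively, given $\vj_k^{(n)}$, draw an independent Bernoulli variable $\bm{b}_k^{(n+1)}$ with success probability $1/(n+1)$ and put $\vj_k^{(n+1)} := n+1$ if $\bm{b}_k^{(n+1)} = 1$, and $\vj_k^{(n+1)} := \vj_k^{(n)}$ otherwise. An easy induction on $n$ shows that $\vj_k^{(n)}$ is uniform on $[n]$. Moreover, the event $\{\vj_k^{(n_0)} = \vj_k^{(n_1)}\}$ coincides with the event that $\bm{b}_k^{(n+1)} = 0$ for every $n \in \{n_0, \ldots, n_1 - 1\}$, whose probability telescopes to
\[
\prod_{n=n_0}^{n_1 - 1} \frac{n}{n+1} \;=\; \frac{n_0}{n_1}.
\]

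Properties (i) and (ii) then follow by inspection: the $k$-th row of $\THETA_r[\theta_r, n_1| n_2]$ depends only on $\vj_k^{(n_1)}$ and is insensitive to both the ambient number of columns $n_2$ and the ambient number of rows $\theta_r$, so restricting $\THETA_r[\theta_r, n_1| n_2+1]$ to its first $n_2$ columns, or removing the last row of $\THETA_r[\theta_r+1, n_1| n_2]$, returns the corresponding smaller perturbation matrix. For property (iii), the equality $\THETA_r[\theta_r, n_0| n_2] = \THETA_r[\theta_r, n_1| n_2]$ holds if and only if $\vj_k^{(n_0)} = \vj_k^{(n_1)}$ for every $k \in [\theta_r]$, and by independence across rows this occurs with probability $(n_0/n_1)^{\theta_r}$. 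The column-perturbation family is handled by an entirely symmetric construction (or by applying the row construction to the transpose).

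I do not anticipate any serious obstacle here. The only nontrivial observation is that one can simultaneously attain uniform marginals and the optimal equality probability $n_0/n_1$ for every pair $n_0 \leq n_1$ via a single nested coupling; note that $n_0/n_1$ is the maximal possible agreement probability, since $d_{\sss\rm TV}(\text{Unif}([n_0]), \text{Unif}([n_1])) = 1 - n_0/n_1$. The Polya-urn-style recursion above supplies exactly such a coupling, and the rest of the verification is bookkeeping.
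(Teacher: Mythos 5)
Your proposal is correct and takes essentially the same route as the paper: the paper defines $\vj_{k,n_1}=\max\{\ell\in[n_1]:\vu_{k,\ell}=\ell\}$ for independent $\vu_{k,\ell}\sim\mathrm{Unif}([\ell])$, which is precisely your recursive construction with $\bm{b}_k^{(\ell)}=\ind\{\vu_{k,\ell}=\ell\}$, and the verification of (i)--(iii) proceeds identically. Your extra remark that $n_0/n_1$ is the optimal (total-variation) agreement probability for a pairwise coupling of $\mathrm{Unif}([n_0])$ and $\mathrm{Unif}([n_1])$ is a pleasant observation the paper does not spell out, but it is not needed for the lemma.
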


\begin{remark}
From now on, we assume that the perturbation families $\{\THETA_r[\theta_r, n_1| n_2]: \theta_r, n_2 \geq 1, n_1\in [n_2]\}$ and $\{\THETA_c[m_1| m_2,\theta_c]: \theta_c, m_2 \geq 1, m_1\in [m_2]\}$ are coupled as in \Cref{lc1} and independent of each other. 
\end{remark}

\Cref{lc1} is proved in \Cref{sec_proof_lc1}. Based on the ensembles $\{\THETA_r[\theta_r, n_1| n_2]: \theta_r, n_2 \geq 1, n_1\in [n_2]\}$ and $\{\THETA_c[m_1| m_2,\theta_c]: \theta_c, m_2 \geq 1, m_1\in [m_2]\}$ from \Cref{lc1}, we finally introduce the central perturbation of this article:

\begin{definition}[Canonical perturbation]\label{d34}
%
%
 For $A \in \FF^{m \times n}$ and $\theta = (\theta_r,\theta_c) \in \NN^2$, we write 
$$A[\theta]=\begin{pmatrix}A&\THETA_c[m|m, \theta_c]\\\THETA_r[\theta_r, n \vert n] & 0_{\theta_r\times \theta_c} \end{pmatrix}.$$
For the canonical choice $\bm{\theta} =(\bm{\theta}_r,\bm{\theta}_c) \sim \text{Unif}([P]^2)$, where $P \in \NN$ is fixed and $\bm \theta$ is independent of the couplings $\{\THETA_r[\theta_r, n_1| n_2]: \theta_r, n_2 \geq 1, n_1\in [n_2]\}$ and $\{\THETA_c[m_1| m_2,\theta_c]: \theta_c, m_2 \geq 1, m_1\in [m_2]\}$, we simply write $A[\bm \theta]$.
\end{definition}

\begin{remark}
In the rest of this paper, $\bm{\theta}$ always denotes a random vector chosen uniformly at random from $[P]^2$. It is important to keep in mind that the random vector $\bm{\theta}$ is always understood to depend on the parameter $P$, even though this is omitted from the notation (in line with the notation in \cite{coja2022rank}). 
\end{remark}
    

As advertised earlier, perturbation typically greatly reduces the number of short proper relations. The next proposition shows that, for any fixed $L\in\NN_{\geq 2}$,  $A[\bm{\theta}]$ and $A[\bm{\theta}]^T$ are w.h.p.  $(\delta,\ell)$-free for all $2\leq \ell\leq L$ (observe that even if $A$ is symmetric, the perturbed matrix $A[\bm\theta]$ generally is not). This makes the matrices $A[\bm{\theta}]$ and $A[\bm{\theta}]^T$ {\em much} more convenient to study in comparison to $A$: 

\begin{proposition}[Perturbation eliminates most short proper relations]\label{p1}
Fix $\delta>0, L\in\NN_{\geq 2}$ and $s\in \ZZ$. Then 
    \begin{equation}\label{e1_rep}
    \sup_{A\in \FF^{(n+s) \times n}}\mathbb{P}\left( \text{$A[\bm{\theta}]$ or $A[\bm{\theta}]^T$ is not $(\delta,\ell)$-free for some $2\leq \ell\leq L$}\right) = o_{n,P}(1).
    \end{equation}
\end{proposition}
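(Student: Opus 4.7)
The plan is to adapt the argument from the asymmetric setting of \cite{coja2022rank} to accommodate the column perturbations $\bm\Theta_c$. Concretely, the key new feature is that $A[\bm\theta]^T$ corresponds to $A^T$ extended by the row perturbation $\bm\Theta_c^T$ and column perturbation $\bm\Theta_r^T$, so the bound for $A[\bm\theta]^T$ follows by a symmetric argument swapping the roles of $\bm\Theta_r$ and $\bm\Theta_c$. Taking a union bound over $\ell \in \{2,\ldots,L\}$ (redefining $\delta$ as $\delta/(2L)$) and over the two matrices, it then suffices to show for each fixed $\ell \in \{2,\ldots,L\}$ that
\[
  \sup_{A \in \FF^{(n+s) \times n}} \PP\bigl(A[\bm\theta] \text{ has more than } \delta n^\ell \text{ proper relations of size } \ell\bigr) \;=\; o_{n,P}(1).
\]

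The proof of this one-sided bound would follow the blueprint of \cite{coja2022rank}: condition on $\bm\theta_c$ and $\bm\Theta_c$, and build $A[\theta_r, \bm\theta_c]$ for $\theta_r = 0,1,\ldots,P$ by appending, one at a time, independent uniform unit rows $e_n(\vj_k)$ (using the nested coupling of \Cref{lc1}(ii)). Let $N_\ell(\theta_r)$ denote the resulting count of proper relations of size $\ell$ in $A[\theta_r,\bm\theta_c]$. The main step is to establish a one-step drift estimate showing that attaching a fresh unit row destroys, in expectation, a substantial portion of proper relations whenever $N_\ell(\theta_r) \geq \delta n^\ell$. The drift would follow from a double-counting argument on incidences between proper relations and \emph{essential} non-frozen columns: a column $v \in [n]$ is essential to $I$ if every representation of $I\setminus\mathcal F(A[\theta_r,\bm\theta_c])$ has support containing $v$, and freezing $v$ via the fresh row $e_n(v)$ then eliminates $I$ from the proper-relation list. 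A pigeonhole argument on the $\gtrsim \delta n^\ell$ essentialness-incidences controls the expected destruction rate under a uniform choice of $\vj \in [n]$, and telescoping across $\theta_r$ combined with averaging over $\bm\theta_r \sim \text{Unif}([P])$ yields the claim.

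The main obstacle, already present in \cite{coja2022rank} but exacerbated here by the column perturbation $\bm\Theta_c$, lies in justifying the essentialness step: a proper relation $I$ may admit multiple representations with disjoint supports, so that freezing a single column $v$ need not kill $I$. To circumvent this, I would partition proper relations according to the dimension of their \emph{witness subspace} $\{y \in \FF^{1\times(m+\theta_r)} : \emptyset \neq \mathrm{supp}(yA[\theta_r,\bm\theta_c]) \subseteq I \setminus \mathcal F\}$ and iteratively exploit the prior row perturbations that have already constrained this subspace, so that after sufficiently many of them the essentialness set of each surviving proper relation becomes large enough for the drift to fire at the required rate. A secondary subtlety is the treatment of relations $I$ that include indices in $[n+1, n+\bm\theta_c]$, i.e.\ touching $\bm\Theta_c$-columns; these contribute at most $O(P \cdot (n+P)^{\ell-1}) = o(n^\ell)$ configurations and can be absorbed into the $o_{n,P}(1)$ error via a separate case split.
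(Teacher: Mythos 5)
Your proposal is correct in outline, but it takes a genuinely different and considerably more labor-intensive route than the paper. You propose to re-derive the full drift/essentialness argument of \cite[Proposition 2.3]{coja2022rank} in the presence of the column perturbation — conditioning on $\bm\Theta_c$, appending unit rows one at a time, establishing a one-step expected-destruction estimate via an incidence/pigeonhole argument on essential columns, and patching up subtleties (witness-subspace dimension, relations touching $\bm\Theta_c$-columns) by hand. The paper instead treats \cite[Proposition 2.3]{coja2022rank} as a black box (Lemma~\ref{l0}) and reduces to it by a simple conditioning-plus-coupling argument: it absorbs $\bm\Theta_c$ into the matrix, writing $A[\bm\theta]=\begin{pmatrix}\bm A'\\\bm\Theta_r[\bm\theta_r,n|n+\bm\theta_c]\end{pmatrix}$ with $\bm A'=(A\ \bm\Theta_c)$, observes that the only mismatch with Lemma~\ref{l0} is that the row perturbation's nonzero entries live in $[n]$ rather than all of $[n+\bm\theta_c]$, and then uses part~(iii) of the coupling Lemma~\ref{lc1} to bound $\PP(\bm\Theta_r[\bm\theta_r,n|n+\bm\theta_c]=\bm\Theta_r[\bm\theta_r,n+\bm\theta_c|n+\bm\theta_c])\geq(n/(n+P))^P\to 1$; the transpose case is identical by the row/column swap you already noted. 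What the paper's route buys is that none of the drift machinery needs to be re-verified in the new setting, and your "secondary subtlety" (relations hitting $\bm\Theta_c$-columns) disappears entirely since Lemma~\ref{l0} applies to arbitrary matrices including $\bm A'$. What your route buys is self-containedness, at the cost of duplicating a long argument and of having to check that the restricted-support row perturbation (nonzero entry uniform over $[n]$, not $[n+\bm\theta_c]$) still produces the needed drift — which it does, since essential columns of a proper relation $I\subseteq[n]$ lie in $I\subseteq[n]$ and the restricted hit rate $1/n$ is no smaller than the unrestricted $1/(n+\bm\theta_c)$, but this is precisely the verification the paper avoids. If you only want the statement rather than a pedagogical re-derivation, the black-box reduction is the way to go.
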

The proof of \Cref{p1} is given in \Cref{sec_proof_p1}.  \Cref{p1} is the symmetric version of \cite[Proposition 2.3]{coja2022rank}. It is remarkable in the sense that it shows that the simple perturbation of attaching a bounded number of unit rows and columns eliminates a large proportion of short proper relations both column- and row-wise. 

Rather than lower bounding $\Erw\brk{\rk{\bm{T}_{n,d/n}}}/n$ as indicated in (\ref{eq_rkdifsum0}), in the next subsections, we will outline how to lower bound the expected normalised rank of the perturbed matrix $\bm{T}_{n,d/n} [\bm{\theta}]$. This also gives a lower bound for $\Erw\brk{\rk{\bm{T}_{n,d/n}}}/n$, 
since if we add a row or a column to a matrix, its rank stays unchanged or increases by $1$, and therefore,
    \begin{equation}\label{ee1}
    \rank(A)\leq \rank(A[\bm\theta]) \leq \rank(A)+\bm\theta_r+\bm\theta_c.
    \end{equation}
Thus, as long as $\bm \theta_r, \bm \theta_c$ are bounded random variables, all results on the asymptotic rank of the perturbed matrices transfer to the unperturbed ones.

\subsection{Rank increase for the perturbed matrix and obstructions due to symmetry}\label{sec_rifpm}
At this point, our strategy rests on lower bounding the differences
\begin{align*}
     \Erw\brk{\rk{\bm{T}_{m+1,d/n} [\bm{\theta}]}}-\Erw\brk{\rk{\bm{T}_{m,d/n}[\bm{\theta}]}}
\end{align*}
for $m \geq \eps n$. Since $m$ grows linearly in $n$, a reparametrisation yields the more convenient expression
\begin{align}\label{eq_rank_difference}
     \Erw\brk{\rk{\bm{T}_{n+1,t/n} [\bm{\theta}]}}-\Erw\brk{\rk{\bm{T}_{n,t/n}[\bm{\theta}]}},
\end{align}
where now $t \in [\varepsilon d,d]$.
While all the perturbed matrices use the same vector $\bm{\theta}$ that fixes the dimensions of the perturbation, the positions of the non-zero entries in the perturbation part may change from matrix to matrix. Conveniently, this does not happen frequently, since thanks to the coupling from \Cref{lc1}, 
with high probability,
    \begin{equation}\label{e001}
    \bm{T}_{n+1,t/n} [\bm{\theta}]\abc{n+1;n+1}=\bm{T}_{n,t/n} [\bm{\theta}].
    \end{equation}
On the event \cref{e001}, observation \cref{adf} on column removal and frozen variables implies that
\begin{align}\label{eq_rank_diff}
&\rk{\bm{T}_{n+1,t/n} [\bm{\theta}]} - \rk{\bm{T}_{n,t/n} [\bm{\theta}]}\\
= &\ind\{n+1\in \mathcal{F}(\bm{T}_{n+1,t/n} [\bm{\theta}]^T)\}+\ind\{n+1\in \mathcal{F}(\bm{T}_{n+1,t/n} [\bm{\theta}]\abc{n+1;})\},\nonumber
\end{align}
where we first remove the $(n+1)$st row and then  the $(n+1)$st column to go from $\bm{T}_{n+1,t/n} [\bm{\theta}]$ to $\bm{T}_{n,t/n} [\bm{\theta}]$. 
In the analysis of \cref{eq_rank_diff}, both the benefits of working with the matrix $\bm{T}_{n,t/n}$ and then its  perturbation $\bm{T}_{n,t/n}[\bm{\theta}]$ become apparent.
We next explain how these ideas can be used effectively in the evaluation of the r.h.s. of \cref{eq_rank_diff}.



For $p \in (0,1)$, let $\bm{\alpha}_{n,p}$ and $\hval_{n,p}$ be the proportions of frozen variables $i\in[n]$ in $\bm{T}_{n,p}[\bm{\theta}]$ and $\bm{T}_{n,p}[\bm{\theta}]^T$, respectively. By the distributional invariance of the matrix $\bm{T}_{n+1,t/n} [\bm{\theta}]$ under joint row- and column-relabelling\footnote{For the precise arguments, see \Cref{sec_exchange}.}, conditionally on $\hval_{n+1,t/n}$,  the probability that $n+1$ is frozen in $\bm{T}_{n+1,t/n} [\bm{\theta}]^T$ is simply given by 
\begin{align*}
\mathbb{P}(n+1\in \mathcal{F}(\bm{T}_{n+1,t/n} [\bm{\theta}]^T)\mid\hval_{n+1,t/n}) = \hval_{n+1,t/n}.
\end{align*}

This provides a simple expression for the first indicator in r.h.s. of \cref{eq_rank_diff}. We next consider the second indicator that $n+1$ is frozen in $\bm{T}_{n+1,t/n} [\bm{\theta}]\abc{n+1;}$. Again by observation \cref{adf}, this event is the same as the event the $(n+1)$st column of $\bm{T}_{n+1,t/n}[\bm{\theta}]\abc{n+1;}$  lies in the span of the columns of $\bm{T}_{n,t/n}[\bm{\theta}]$. Considering the transposed matrix, this translates to the event that the $(n+1)$st row of $\bm{T}_{n+1,t/n}[\bm{\theta}]\abc{n+1;}^T$  lies in the span of the rows of $\bm{T}_{n,t/n}[\bm{\theta}]^T$. By this chain of equivalences, we have turned the original event into one that we can handle very well
thanks to \cref{eq00} and the perturbation: Since the perturbation effectively excludes the possibility that the non-zero components of the $(n+1)$st row of $\bm{T}_{n+1,t/n}[\bm{\theta}]\abc{n+1;}^T$ form a proper relation, the event in question roughly corresponds to the event that all the non-zero components of the $(n+1)$st row of $\bm{T}_{n+1,t/n}[\bm{\theta}]\abc{n+1;}^T$ are frozen in $\bm{T}_{n,t/n}[\bm{\theta}]^T$. 

For a lighter notation, we abbreviate $\bm{b}:=\bm{T}_{n+1,t/n}[\bm \theta](n+1,)$, so that $\bm{b}$ is the $(n+1)$st row of $\bm{T}_{n+1,t/n}[\bm \theta]$. 
Since the positions of the non-zero entries of $\bm b$ are chosen uniformly at random and independently of $\bm{T}_{n,t/n}[\bm{\theta}]$, conditionally on $ {\bm \alpha}_{n,t/n}$ and $|\supp{\bm b}|$, the probability that all the non-zero components of the $(n+1)$st row of $\bm{T}_{n+1,t/n}[\bm{\theta}]\abc{n+1;}^T$ are frozen in $\bm{T}_{n,t/n}[\bm{\theta}]^T$ should be close to
$ 1 - {\bm \alpha}_{n,t/n}^{|\supp{\bm b}|}$. On the other hand, $|\supp{\bm b}|$ asymptotically follows a $\poi{t}$ distribution, so that after taking expectation with respect to $|\supp{\bm b}|$, we arrive at the approximation
\begin{align*}
\mathbb{P}(n+1\in \mathcal{F}(\bm{T}_{n+1,t/n} [\bm{\theta}]\abc{n+1;})|\hval_{n,t/n}) \approx 1-\phi_t(\hval_{n,t/n}).
\end{align*}
In the above, recall that $\phi_t$ is the probability generating function of a $\poi{t}$ variable.
 Thus, on a heuristic level,
\begin{align}\label{heur_alpha_eq}
\Erw\brk{\rk{\bm{T}_{n+1,t/n} [\bm{\theta}]}\vert \hval_{n+1,t/n}} - \Erw\brk{\rk{\bm{T}_{n,t/n} [\bm{\theta}]}\vert \hval_{n,t/n}} \approx \hval_{n+1,t/n} + 1 - \phi_t\bc{\hval_{n,t/n}}. 
\end{align}
This expression has two flaws: First of all, rather than depending on one random variable, it depends on both $\hval_{n,t/n}$ and $\hval_{n+1,t/n}$. Secondly, even though we trace the rank change in $\bm{T}_{n,t/n} [\bm{\theta}]$, the left hand side of \cref{heur_alpha_eq} comes in terms of the proportions in the transposed matrices. Fortunately, in \Cref{sec_prof}, we will show that in expectation, the difference $\val_{n+1,t/n} - \val_{n,t/n}$ is small, which allows us to reduce the r.h.s. of \cref{heur_alpha_eq} to one parameter. On the other hand, $\hval_{n+1,t/n}$ and $\val_{n+1,t/n}$ are identically distributed, so the second problem is solved as well. With $h_t:[0,1] \to \RR$,
\begin{align}\label{def_ht}
h_t\bc{\alpha}:=\alpha+1-\phi_t\bc{\alpha},
\end{align}
we have thus heuristically derived the following result:
\begin{restatable}[The rank increase]{proposition}{randif}\label{lem_rkdif_rank}
For any $d>0$, 
\begin{equation}\label{eq_rkdif_rank}
\begin{aligned}
    \Erw\brk{\rk{\bm{T}_{n+1,t/n}[\bm{\theta}]}-\rk{\bm{T}_{n,t/n}[\bm{\theta}]}}
    =\Erw\brk{h_t\bc{\val_{n,t/n}}}+o_{n,P}(1),\ \mbox{uniformly in $t\in[0,d]$.}
\end{aligned}\end{equation}
\end{restatable}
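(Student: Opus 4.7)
The plan is to work on the high-probability event combining (i) the coupling equality $\mathcal{E}_n := \{\bm{T}_{n+1,t/n}[\bm{\theta}]\abc{n+1;n+1} = \bm{T}_{n,t/n}[\bm{\theta}]\}$ of (\ref{e001}), which holds with probability $1 - o_{n,P}(1)$ by \Cref{lc1}, and (ii) the event $\mathcal{G}_n$ that $\bm{T}_{n,t/n}[\bm{\theta}]^T$ is $(\delta, \ell)$-free for all $2 \leq \ell \leq L$, which holds with probability $1 - o_{n,P}(1)$ by \Cref{p1}, for any fixed $\delta > 0$ and $L \in \NN_{\geq 2}$. On $\mathcal{E}_n$ the identity (\ref{eq_rank_diff}) decomposes the rank difference into two indicators, whose expectations I would estimate separately.

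For the first indicator $\ind\{n+1 \in \mathcal{F}(\bm{T}_{n+1,t/n}[\bm \theta]^T)\}$, joint row-column exchangeability of the data block of $\bm{T}_{n+1,t/n}[\bm \theta]$ (cf.\ \Cref{sec_exchange}) yields $\Erw\brk{\ind\{n+1 \in \mathcal{F}(\bm{T}_{n+1,t/n}[\bm{\theta}]^T)\}} = \Erw\brk{\hval_{n+1,t/n}}$. Since $\bm{T}_{n,t/n}$ is symmetric and the two entries of $\bm \theta$ are i.i.d.\ uniform on $[P]$, one directly verifies that $\bm{T}_{n,t/n}[\bm{\theta}]^T$ and $\bm{T}_{n,t/n}[\bm{\theta}]$ are identically distributed, so $\val_{n,t/n} \stackrel{d}{=} \hval_{n,t/n}$. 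Combined with a one-step stability estimate of the form $|\Erw[\val_{n+1,t/n}] - \Erw[\val_{n,t/n}]| = o_n(1)$ uniformly in $t \in [0,d]$, to be established in \Cref{sec_prof}, the first indicator contributes $\Erw[\val_{n,t/n}] + o_{n,P}(1)$.

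For the second indicator let $\bm{b}$ denote the $(n+1)$st row of $\bm{T}_{n+1,t/n}[\bm{\theta}]$ and $\bm{S}$ be the set of non-zero positions of $\bm{b}$ lying in $[n]$. On $\mathcal{E}_n$, the perturbation block of $\bm{b}$ is zero and the symmetry of $\bm{T}_{n+1,t/n}$ turns the $(n+1)$st column of $\bm{T}_{n+1,t/n}[\bm{\theta}]\abc{n+1;}$ into the ``mirror'' of $\bm{b}$ restricted to $[n]$. Applying (\ref{adf}) and then transposing, the event $\{n+1 \in \mathcal{F}(\bm{T}_{n+1,t/n}[\bm \theta]\abc{n+1;})\}$ becomes the event that the row vector corresponding to this restriction is \emph{not} in the row span of $\bm{T}_{n,t/n}[\bm \theta]^T$. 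The implications (\ref{eq00}) applied to $\bm{T}_{n,t/n}[\bm \theta]^T$ sandwich the probability of this event between $\PP(\bm{S} \not\subseteq \mathcal{F}(\bm{T}_{n,t/n}[\bm \theta]^T))$ and the same quantity minus the chance that $\bm{S}$ is a proper but not fully frozen relation. Conditional on the permutation $\bvta$, the set $\bm{S}$ is independent of $\bm{T}_{n,t/n}[\bm{\theta}]$ and each $j \in [n]$ lies in $\bm{S}$ independently with probability $t/n$, so a direct computation gives
\[
\PP\bc{\bm{S} \subseteq \mathcal{F}(\bm{T}_{n,t/n}[\bm \theta]^T) \mid \bm{T}_{n,t/n}[\bm{\theta}]} = (1 - t/n)^{n(1-\hval_{n,t/n})} = \phi_t(\hval_{n,t/n}) + o_n(1)
\]
uniformly in $t \in [0,d]$. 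To close the sandwich, I would truncate $|\bm{S}| \leq L$ (the tail is negligible by Poisson concentration of $\bin(n,t/n)$ uniformly in $t$) and use $\mathcal{G}_n$ to bound the number of proper relations of each size $2 \leq k \leq L$ by $\delta n^k$; since $\bm{S}$ conditioned on its size is uniform in $\binom{[n]}{k}$, the expected number of proper relations it hits is $O(\delta)$, which is arbitrarily small once $\delta \downarrow 0$ is taken at the end.

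Combining the two contributions gives $\Erw\brk{\hval_{n+1,t/n}} + \Erw\brk{1 - \phi_t(\hval_{n,t/n})} + o_{n,P}(1) + O(\delta)$; together with $\val \stackrel{d}{=} \hval$, the stability estimate, and letting $\delta \downarrow 0$ after $n, P \to \infty$, I obtain $\Erw\brk{h_t(\val_{n,t/n})} + o_{n,P}(1)$, as required. I expect the main technical hurdle to be turning the qualitative implications (\ref{eq00}) into a \emph{quantitative} identity uniformly in $t \in [0,d]$ and in the template $J_N$: combining the $(\delta,\ell)$-freeness of \Cref{p1} with Poisson tail control on $|\bm{S}|$, while keeping the error terms uniform and carefully tracking the interplay between $n$, $P$ and $\delta$ in the various $o_{n,P}$ terms, requires delicate bookkeeping.
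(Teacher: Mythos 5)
Your proposal is correct but follows a genuinely different route from the paper's proof. The paper's proof of Proposition~\ref{lem_rkdif_rank} passes through the five-way type decomposition of Lemma~\ref{lr}, then through Corollary~\ref{cor_proportions}, the stability of \emph{all} types (Proposition~\ref{l91}, which rests on both Lemma~\ref{l7} and the more delicate Lemma~\ref{l71}), and finally through the type fixed-point equations \cref{eq_fpy,eq_fpu} from Proposition~\ref{al11} --- a long chain running through Lemmas \ref{apexp}, \ref{cs1}, \ref{ccs1} and Appendix~\ref{app_b} --- to turn $\Erw[\vx_{n,t/n}+2\vy_{n,t/n}+\vu_{n,t/n}+\vv_{n,t/n}]$ into $\Erw[h_t(\val_{n,t/n})]$. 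You instead stay with the coarse two-indicator split \cref{eq_rank_diff}, evaluate the first term by exchangeability plus a stability estimate for $\val$ alone (Lemma~\ref{l7} suffices), and evaluate the second term \emph{directly}: transpose, sandwich with \cref{eq00}, truncate the Poissonian support size, and apply the $(\delta,\ell)$-freeness from Proposition~\ref{p1} in the spirit of Lemma~\ref{l5} to conclude its probability is $\Erw[1-\phi_t(\hval_{n,t/n})]+o_{n,P}(1)$ once $\delta\downarrow 0$ and $L\uparrow\infty$ are taken last. With the distributional symmetry $\val_{n,t/n}\stackrel{\text{d}}{=}\hval_{n,t/n}$ (valid because $\bm T_{n,t/n}$ is symmetric and $(\bm\theta_r,\bm\theta_c)$ is exchangeable) and linearity of expectation, this indeed yields $\Erw[h_t(\val_{n,t/n})]$. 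Your route is lighter and a faithful implementation of the Section~\ref{sec_rifpm} heuristics, bypassing the typecasting and fixed-point machinery entirely; the paper's detour is nonetheless not wasted, since that machinery is indispensable for Proposition~\ref{imp_pro} and reusing it here costs nothing in the larger argument. Two small points to tighten: the one-step stability estimate you invoke should be $o_{n,P}(1)$, not $o_n(1)$, since the error inherits the $P$-dependence through the good events $\mathfrak{P}_n,\mathfrak{R}_{n,t/n}$; and when you compute $\PP\bc{\bm S\subseteq\cF(\bm T_{n,t/n}[\bm\theta]^T)\mid\bm T_{n,t/n}[\bm\theta]}$ you should formally condition on $\vtu$ as well (both $\bm S$ and the data block of $\bm T_{n,t/n}$ depend on $\vtu$), which is harmless here because the conditional answer $(1-t/n)^{n(1-\hval_{n,t/n})}$ depends only on $\hval_{n,t/n}$, itself measurable with respect to $\bm T_{n,t/n}[\bm\theta]$.
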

We give a full proof of \cref{eq_rkdif_rank} in \Cref{lem_rkdif_rank} in \Cref{sec_newapp}.
\Cref{lem_rkdif_rank} lays the basis for the targeted lower bound on $\Erw[\rank_{\FF}(\bm{T}_{n,d/n})]/n$. In view of the rank formula \cref{e01b}, it might be tempting to just take the minimum over all $\alpha \in [0,1]$ on the r.h.s. of \cref{eq_rkdif_rank}. Unfortunately, this is not sufficient to arrive at \cref{e01b}, and we need means to restrict the potential values of $\val_{n,t/n}$. 


It thus ``only'' remains to get our hands on $\val_{n,t/n}$. With \Cref{eq_rkdif_rank} in mind, it is natural to suspect that $\val_{n,t/n}$ converges and to try to  calculate its limit. However, the situation is not that simple, and based on results for a similar class of asymmetric sparse matrices \cite{coja2022sparse}, it is not reasonable to expect $\val_{n,t/n}$ to stabilise. Instead, our strategy will be to
derive an asymptotic fixed point equation for $\val_{n,t/n}$. 
The ensuing characterisation will finally allow us to make the connection to the rank formula \cref{e01b}. 

To motivate the desired equation for $\val_{n,t/n}$, 
we again take a look at the evaluation of the second indicator in the derivation of \cref{heur_alpha_eq} above:
\begin{align*}
   \mathbb{P}(n+1\in \mathcal{F}(\bm{T}_{n+1,t/n} [\bm{\theta}]\abc{n+1;})|\val_{n,t/n},\hval_{n,t/n}) \approx  1-\phi_t(\hval_{n,t/n}).
\end{align*}
Since the matrix $\bm{T}_{n+1,t/n} [\bm{\theta}]\abc{n+1;}$ is rather similar to $\bm{T}_{n,t/n} [\bm{\theta}]$, one might make the bold assumption that
\begin{align*}
\mathbb{P}(n+1\in \mathcal{F}(\bm{T}_{n+1,t/n} [\bm{\theta}]\abc{n+1;})|\val_{n,t/n},\hval_{n,t/n}) \approx \mathbb{P}(n+1\in \mathcal{F}(\bm{T}_{n+1,t/n} [\bm{\theta}])|\val_{n,t/n},\hval_{n,t/n}).
\end{align*}
On the other hand,
\begin{align*}
\mathbb{P}(n+1\in \mathcal{F}(\bm{T}_{n+1,t/n} [\bm{\theta}])|\val_{n+1,t/n},\hval_{n+1,t/n}) \approx \val_{n+1,t/n}. 
\end{align*}
Based on the previous assumption, we can again argue that $\val_{n+1,t/n}\approx \val_{n,t/n}$ and use a handy proposition on the comparison of conditional expectations\footnote{See \Cref{pexp} in the appendix.} to conclude that 
$$ 1-\phi_t(\hval_{n,t/n}) \approx \val_{n,t/n}.$$
Along the same lines, we can conclude that $1-\phi_t(\val_{n,t/n}) \approx \hval_{n,t/n}$. Combining the two approximations, we heuristically deduce that $\val_{n,t/n}$ should approximately satisfy the equation 
\begin{align}\label{eq_heuristic_fix}
\val_{n,t/n}\approx 1-\phi_t(1-\phi_t(\val_{n,t/n})).
\end{align}
While \cref{eq_heuristic_fix} is surely based on a plausible line of arguments, crucially, the very first step in its derivation might have been too bold. Indeed, this approximation was in essence based on the assumption that w.h.p., for any fixed $i \in [n]$,
\begin{align}\label{conj_fra}
    \mbox{$i\not\in \mathcal{F}(\bm{T}_{n,t/n} [\bm{\theta}]\abc{i;})\Delta \mathcal{F}(\bm{T}_{n,t/n} [\bm{\theta}])$.}
\end{align}

Does \cref{conj_fra} hold w.h.p.? We believe so\footnote{Our belief is underpinned by the fact that removal of row $i$ has the same effect as attachment of a unit column (see \Cref{l8.4}), which is akin to a pinning operation.}. Sadly, we cannot prove it, and therefore \cref{eq_heuristic_fix} is just a conjecture at this point. Nevertheless, the heuristic approximation illustrates the pivotal role of events of the form \cref{conj_fra} for symmetric matrices, which motivates a more fine-grained description of frozen variables as introduced in the following section. This description will finally allow us to find another, more indirect route towards \cref{eq_heuristic_fix}, while still, the belief in \cref{conj_fra} lies at the heart of the argument.

\subsection{Frozen variables revisited} \label{sec_frozen_rev}
As discussed in \Cref{sec_rifpm}, we cannot prove that w.h.p., removal of row $i$ from $\bm{T}_{n,t/n}[\bm \theta]$ does not unfreeze $i$. To keep track of those ``problematic'' variables where removal of row $i$ unfreezes variable $i$, we now give a name to them:
\begin{definition}[Frailly, firmly and completely frozen variables]\label{dsc}
For any matrix $A\in \mathbb{F}^{m\times n}$ and $i \in [m\wedge n]$, we say that
\begin{enumerate}[label=(\roman*)]
  \item $i$ is \textbf{ frailly frozen} in $A$ if $i\in\mathcal{F}(A)\backslash \mathcal{F}\bc{A\abc{i;}}$\footnote{This is equivalent to what we need, see \Cref{r80}.};
  \item $i$ is \textbf{firmly frozen} in $A$ if $i\in \mathcal{F}\bc{A\abc{i;}}$;
  \item $i$ is \textbf{completely frozen} in $A$ if $i$ is firmly frozen in both $A$ and $A^T$.
  \end{enumerate}
\end{definition}
In addition, in \Cref{sec_prelifv} we show that variables which are frailly frozen in $A$ are also frailly frozen in the transpose $A^T$. So indeed, we can partition the set of coordinates into five disjoint sets as follows:
\begin{restatable}{definition}{definitionxyzuv}(Typecasting of variables)\label{dxyzuv}
For any matrix $A\in \mathbb{F}^{m\times n}$, we partition the set $[m\wedge n]$ into
\begin{enumerate}[label=(\roman*)]  
    \item the set $\cX(A)$ of frailly frozen variables;
  \item the set $\cY(A)$ of completely frozen variables;
  \item the set $\cZ(A)$ of variables that are neither frozen in $A$ or $A^T$;
  \item the set $\cU(A)$ of variables that are not frozen in $A$ and firmly frozen in $A^T$;
  \item the set $\cV(A)$ of variables that are firmly frozen in $A$ and not frozen in $A^T$.
\end{enumerate}
For each $i \in [m \wedge n]$, we refer to the category it belongs to with respect to the above partition as its \textbf{type}.  
\end{restatable}
This distinction between different types of frozen variables is a chief ingredient in our calculation of the lower bound, and the main difference with respect to the preceding works \cite{coja2022sparse, coja2022rank}. Notably, it allows us to extend core ideas of these articles to symmetric matrices. For example, with the terminology of \Cref{dxyzuv}, we can now express the rank increase of interest alternatively as
\begin{align*}
    \rk{\bm{T}_{n+1,t/n} [\bm{\theta}]} - \rk{\bm{T}_{n,t/n} [\bm{\theta}]} & = \ind\{n+1\in \mathcal{X}(\bm{T}_{n+1,t/n} [\bm{\theta}])\}+2 \cdot \ind\{n+1\in \mathcal{Y}(\bm{T}_{n+1,t/n} [\bm{\theta}])\} \\
    & \quad +  \ind\{n+1\in \mathcal{U}(\bm{T}_{n+1,t/n} [\bm{\theta}])\} +  \ind\{n+1\in \mathcal{V}(\bm{T}_{n+1,t/n} [\bm{\theta}])\}
    \end{align*}
(for a proof of this identity, see  \Cref{lr}). 

Returning to the discussion at the end of \Cref{sec_rifpm}, the typecasting allows us to the derive fixed point equations not for $\val_{n,t/n}$, but for \textit{some} of the proportions of the finer types. Thereby, we gain a better understanding of the proportion of frailly frozen variables and of $\val_{n,t/n}$. 
And, what is more, these fixed point equations provide enough information to derive the desired lower bound on $h_t(\val_{n,t/n})$ as given below in \Cref{imp_pro}, and therefore to bypass \cref{conj_fra},
which is precisely what we need. The derivation of the fixed point equations is the content of \Cref{sec_pc}.

\subsection{The heuristic fixed point equation and its connection to $R_d(\alpha)$}\label{sec_cripoi}
Let us return to the heuristic fixed point equation (\ref{eq_heuristic_fix}), which suggests that only zeroes of the function $G_d:[0,1] \mapsto \RR$,
\begin{equation}\label{fun_g}
   G_d(\alpha):=\alpha+\phi_d\bc{1-\phi_d\bc{\alpha}}-1,
    \end{equation}
constitute viable candidates for $\val_{n, t/n}$.
And indeed, for any $d\geq 0$, $G_d$ has at least one zero: If $\alpha_0(d) \in [0,1]$ is such that $\alpha_0(d)=1-\phi_d\bc{\alpha_0(d)}$\footnote{The existence and uniqueness of $\alpha_0(d)$ are straightforward to check, see \Cref{lem_easy_fix} in Appendix \ref{app_proal}.}, then 
\begin{align}\label{zero_F}
    G_d(\alpha_0(d))=\alpha_0(d)+\phi_d\bc{1-\phi_d\bc{\alpha_0(d)}}-1=-\phi_d(\alpha) + \phi_d(\alpha) = 0.
\end{align}
 Unfortunately, for some $d \geq 0$, $G_d$ has more zeroes: Let $\alpha_\star(d)$ and $\alpha^\star(d)$ denote the smallest and largest zeroes of $G_d(\alpha)$ in $[0,1]$, respectively. The existence of $\alpha_\star(d)$ and $\alpha^\star(d)$ is guaranteed by \Cref{zero_F}.  A detailed analysis of the function $G_d$, its zeroes and relation to the function $1-R_d$ is carried out in \cite{coja2022sparse}, where the asymmetric counterpart of $\bm{A}_{n,p}$ with all non-zero entries being identical to $1$ was studied.
In \cite{coja2022sparse}, the authors show that $G_d$ has at most the three zeroes $\alpha_\star(d) \leq \alpha_0(d) \leq \alpha^\star(d)$. 

From the analysis of the finer types as described in \Cref{sec_frozen_rev}, it will become apparent that in the limit, only the two zeroes $\alpha_\star(d)$ and $\alpha^\star(d)$ correspond to possible values of $\val_{n,d/n}$. For the asymmetric case, where no perturbation is necessary, the connection between $G_d$ and the proportion of frozen variables has been studied in \cite{coja2022sparse}. While we cannot derive a picture as detailed as in \cite{coja2022sparse}, we can show that $h_t(\val_{n,t/n})$ is no less than $h_t$ evaluated at one of the zeroes, which provides a  sufficient substitute for the exact asymptotic characterisation of $\val_{n,t/n}$:

\begin{restatable}[Lower bound on the rank increase]{proposition}{imp}\label{imp_pro}
For any $d>0$, 
\begin{align}\label{eq_eqeqh}
h_t\bc{\val_{n,t/n}}\geq h_t\bc{\alpha^\star(t)} + \oone.
   \end{align}
\end{restatable}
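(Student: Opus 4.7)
\textbf{Proof plan for \Cref{imp_pro}.} My plan is to build on the asymptotic fixed point equations for the proportions of the five variable types of \Cref{dxyzuv} (developed in \Cref{sec_pc}) and combine them with the analytical study of $G_t$, $h_t$ and $R_t$ from Appendix \ref{app_proal}. I denote by $\vx_n, \vy_n, \vz_n, \vu_n, \vv_n$ the normalised proportions of the types $\cX, \cY, \cZ, \cU, \cV$ in $\bm T_{n,t/n}[\bm\theta]$, so that $\val_{n,t/n} = \vx_n + \vy_n + \vv_n$ and $\hval_{n,t/n} = \vx_n + \vy_n + \vu_n$. Joint row-column exchangeability of the perturbed matrix forces $\vu_n \stackrel{d}{=} \vv_n$ and hence $\Erw[\val_{n,t/n}] = \Erw[\hval_{n,t/n}]$.

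First, I would distill from \Cref{sec_pc} three asymptotic identities, valid in expectation and uniformly in $t\in[0,d]$ up to $\oone$-errors, that refine the heuristic of \Cref{sec_rifpm} to the level of the finer types. Schematically these are
\[
\vy_n + \vv_n \approx 1 - \phi_t(\hval_{n,t/n}), \qquad \vy_n + \vu_n \approx 1 - \phi_t(\val_{n,t/n}),
\]
and
\[
\vy_n \approx 1 - \phi_t(\hval_{n,t/n}) - \phi_t(\val_{n,t/n}) + \phi_t(\vx_n + \vy_n),
\]
coming respectively from the probability that the new $(n{+}1)$-st variable is firmly frozen in $\bm T_{n+1,t/n}[\bm\theta]$, in its transpose, and in both; the last follows by inclusion-exclusion together with $\cF(A) \cap \cF(A^T) = \cX(A) \cup \cY(A)$. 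Combining the two firmly-frozen identities with $\Erw[\vu_n] = \Erw[\vv_n]$ produces $\Erw[\val_{n,t/n} + \phi_t(\val_{n,t/n})] = 1 + \Erw[\vx_n] + \oone$, which together with $\vx_n \geq 0$ and the strict monotonicity of $\alpha \mapsto \alpha + \phi_t(\alpha)$ gives the lower bound $\Erw[\val_{n,t/n}] \geq \alpha_0(t) + \oone$.

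Next, I would combine all three identities to extract the complementary upper bound $\Erw[\val_{n,t/n}] \leq \alpha^\star(t) + \oone$. Substituting $\phi_t(\hval)\approx 1 - \vy - \vv$ and $\phi_t(\val)\approx 1 - \vy - \vu$ into the completely-frozen identity and exploiting the non-negativity of the type proportions yields $\Erw[G_t(\val_{n,t/n})] \leq \oone$; by the analysis of the zeros of $G_t$ in Appendix \ref{app_proal}, this together with the previous step localises $\Erw[\val_{n,t/n}]$ to $[\alpha_0(t), \alpha^\star(t)] + \oone$. The monotonicity results of Appendix \ref{app_proal} then imply that $h_t$ is non-increasing on this interval (both endpoints coinciding, and the proposition becoming trivial, when $t \leq \eul$), so its minimum is attained at $\alpha^\star(t)$. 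This yields $\Erw[h_t(\val_{n,t/n})] \geq h_t(\alpha^\star(t)) + \oone$, and a standard concentration-plus-Lipschitz argument, using boundedness of $\val_{n,t/n}$ and uniform Lipschitz continuity of $h_t$ on $[0,1]$, lifts this expectation bound to the pointwise $\oone$-statement required by the proposition.

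The main obstacle is the upper bound $\Erw[\val_{n,t/n}] \leq \alpha^\star(t) + \oone$: without it, the bound $h_t(\val) \geq h_t(\alpha^\star(t))$ fails, since $h_t$ continues to decrease past $\alpha^\star(t)$ in the regime of interest and, e.g., $h_t(1)=1 < h_t(\alpha^\star(t))$ when $\alpha^\star(t)$ is close to $1$. Extracting this upper bound from the joint completely-frozen identity for $\vy_n$ --- which has no counterpart in the asymmetric setting of \cite{coja2022sparse, coja2022rank} --- is the place where the symmetry of the matrix is most delicate: the identity couples the firmly-frozen events for $\bm T_{n,t/n}[\bm\theta]$ and $\bm T_{n,t/n}[\bm\theta]^T$ in a way that requires careful handling of the interaction between the row- and column-perturbations of \Cref{d34}, while still respecting the constraint $\vx_n + \vy_n \leq \val_{n,t/n}$ that makes $\phi_t(\vx_n + \vy_n)$ controllable.
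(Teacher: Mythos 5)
Your intuition is sound --- the fixed point equations for the finer types are the key input, and the crux is an effective upper bound $\val_{n,t/n}\lesssim\alpha^\star(t)$ --- but the execution through expectations of $\val_{n,t/n}$ cannot work, because every step trades on the wrong Jensen direction, and the lack of concentration of $\val_{n,t/n}$ (acknowledged in the paper, and unavoidable for $t>\eul$) is exactly what makes this fatal rather than a cosmetic issue.

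Concretely, your localisation step fails twice. From $\Xi_t(\val_{n,t/n})=\vx_{n,t/n}+\vv_{n,t/n}-\vu_{n,t/n}+\oone$ and $\Erw[\vu_{n,t/n}]=\Erw[\vv_{n,t/n}]$ you correctly get $\Erw[\Xi_t(\val_{n,t/n})]\geq o_{n,P}(1)$, but $\Xi_t$ is convex ($\Xi_t''=t^2\phi_t>0$), so Jensen gives $\Xi_t(\Erw[\val_{n,t/n}])\leq \Erw[\Xi_t(\val_{n,t/n})]$; this yields no lower bound on $\Xi_t(\Erw[\val_{n,t/n}])$ and hence no lower bound on $\Erw[\val_{n,t/n}]$. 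The claimed bound $\Erw[G_t(\val_{n,t/n})]\leq\oone$ is also unproved: after substituting the fixed points one finds
\begin{align*}
G_t(\val_{n,t/n})=\phi_t(\vy_{n,t/n}+\vu_{n,t/n})-\vz_{n,t/n}-\vu_{n,t/n}+\oone
\leq\phi_t(\vy_{n,t/n}+\vu_{n,t/n})-\phi_t(\vy_{n,t/n})-\vu_{n,t/n}+\oone,
\end{align*}
and the right-hand side is only nonpositive where $\phi_t'<1$, i.e.\ where $\vy_{n,t/n}+\vu_{n,t/n}\leq 1-\ln t/t$; without a priori control this gives $(t-1)\vu_{n,t/n}$ in the worst case, which need not be $\oone$. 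Even granting both localisations, the final step fails for the same structural reason: $h_t$ is strictly concave ($h_t''=-t^2\phi_t<0$), so $\Erw[h_t(\val_{n,t/n})]\leq h_t(\Erw[\val_{n,t/n}])$, and knowing $\Erw[\val_{n,t/n}]\in[\alpha_0(t),\alpha^\star(t)]$, where $h_t$ is nonincreasing, only bounds the larger quantity $h_t(\Erw[\val_{n,t/n}])$ from below --- it says nothing about $\Erw[h_t(\val_{n,t/n})]$, which is what you need. The closing ``concentration-plus-Lipschitz'' lift is also unavailable: with $\val_{n,t/n}$ mixing between neighbourhoods of $\alpha_\star(t)$ and $\alpha^\star(t)$ (which is consistent with everything derived), $\Erw[\val_{n,t/n}]$ can sit anywhere in $(\alpha_\star(t),\alpha^\star(t))$ and an $\Erw$-bound does not imply the $\oone$ (negative-part-in-$L^1$) statement. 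The paper sidesteps all of this by staying at the random-variable level: it introduces $\bvta_n=\teo{h_t'(\vx_{n,t/n}+\vy_{n,t/n})\geq 0}$ and $\bvet_n=1-\bvta_n$, uses a quantitative Taylor argument exploiting $h_t''\leq -b<0$ together with the fixed point inequality $h_t(\vy_{n,t/n})\geq h_t(\vx_{n,t/n}+\vy_{n,t/n})+\oone$ to force $\bvta_n\vx_{n,t/n}=\oone$ (resp.\ $\bvet_n\vu_{n,t/n}=\bvet_n\vv_{n,t/n}=\oone$), and only then deduces $\bvta_n G_t(\val_{n,t/n})=\oone$ resp.\ $\bvet_n\teo{\val_{n,t/n}>\alpha^\star(t)}G_t(\val_{n,t/n})\leq\oone$ pointwise in $L^1$, which the zero-localisation in Lemma 5.1(8) converts into the proposition. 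The case distinction on the sign of $h_t'(\vx_{n,t/n}+\vy_{n,t/n})$ is what replaces the concentration you do not have; it cannot be avoided by averaging first.
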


Recall that the principal aim of (\ref{eq_heuristic_fix}) was to establish a connection to the rank formula \cref{e01b}, which comes in terms of an optimization problem over $[0,1]$. 
The function $R_d$ attains its minimum on $[0,1]$ either for $\alpha \in \{0,1\}$ or for $\alpha \in (0,1)$ such that 
  \begin{equation}\label{eq_difr}
  R_d'(\alpha)=d^2\phi_d\bc{\alpha}\bc{\alpha+\phi_d\bc{1-\phi_d\bc{\alpha}}-1}=0.
  \end{equation}
The little calculation of \cref{eq_difr} shows that $R_d'(\alpha)=0$ if and only if $G_d(\alpha)=0$. 
Indeed, $\alpha_\star(d)$ and $\alpha^\star(d)$ are the two only minimizers of $R_d$ on $[0,1]$:
\begin{align}\label{extre_R}
R_d\bc{\alpha^\star(d)}=R_d\bc{\alpha_\star(d)}=\min_{\alpha\in[0,1]}R_d(\alpha).
\end{align}
Thus doubtlessly, the lower bound \cref{eq_eqeqh} establishes a connection to the minimizers of $R_d$. Given \Cref{lem_rkdif_rank,imp_pro}, it is now a matter of analysis to prove \cref{e01}, which we complete next.



\begin{figure}[H]
\centering
\subfigure{\label{f1}
\scalebox{.5}{\input{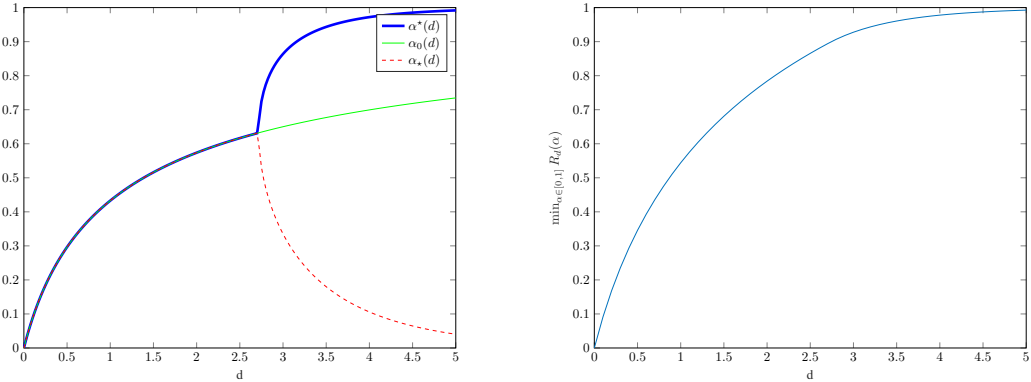}}}
\subfigure{
\scalebox{.5}{
%
%
\definecolor{mycolor1}{rgb}{0.00000,0.44700,0.74100}%
\begin{tikzpicture}

\begin{axis}[%
width=4.521in,
height=3.566in,
at={(0.758in,0.481in)},
scale only axis,
xmin=0,
xmax=5,
xlabel style={font=\color{white!15!black}},
xlabel={d},
ymin=0,
ymax=1,
ylabel style={font=\color{white!15!black}},
ylabel={$\min_{\alpha\in[0,1]}R_d(\alpha)$},
axis background/.style={fill=white}
]
\addplot [color=mycolor1, forget plot]
  table[row sep=crcr]{%
0	0\\
0.1	0.0911554126772786\\
0.2	0.168177234493145\\
0.3	0.234787670787647\\
0.4	0.293389566533976\\
0.5	0.345631947744951\\
0.6	0.392698953393658\\
0.7	0.435471212185482\\
0.8	0.474622220208094\\
0.9	0.510678996956901\\
1	0.544061907323596\\
1.1	0.575111733095978\\
1.2	0.60410862364368\\
1.3	0.631285697574958\\
1.4	0.656839018261486\\
1.5	0.680935049179545\\
1.6	0.703716319102483\\
1.7	0.725305790991791\\
1.8	0.745810275979815\\
1.9	0.765323133050103\\
2	0.783926426954236\\
2.1	0.80169267004409\\
2.2	0.818686240874352\\
2.3	0.83496454907268\\
2.4	0.850578999113114\\
2.5	0.865575793294474\\
2.6	0.879996605093397\\
2.7	0.893879147224236\\
2.8	0.906785552979839\\
2.9	0.917975771442649\\
3	0.927687457885459\\
3.1	0.936138844003126\\
3.2	0.943511979734529\\
3.3	0.949959448118232\\
3.4	0.95560969749858\\
3.5	0.960571303645604\\
3.6	0.964936397559604\\
3.7	0.968783438260272\\
3.8	0.972179467963217\\
3.9	0.975181955702018\\
4	0.977840311818603\\
4.1	0.980197137789812\\
4.2	0.982289262122125\\
4.3	0.984148602468242\\
4.4	0.985802885921016\\
4.5	0.987276253048518\\
4.6	0.988589766222212\\
4.7	0.989761838839015\\
4.8	0.990808598906904\\
4.9	0.991744197970119\\
5	0.992581074354835\\
};
\end{axis}

\begin{axis}[%
width=5.833in,
height=4.375in,
at={(0in,0in)},
scale only axis,
xmin=0,
xmax=1,
ymin=0,
ymax=1,
axis line style={draw=none},
ticks=none,
axis x line*=bottom,
axis y line*=left
]
\end{axis}
\end{tikzpicture}

\caption{Left: Plot of $\alpha^\star(d)$, $\alpha_0(d)$ and $\alpha_\star(d)$, which are distinct for $d>\eul$.  Right: Plot of the function $d \mapsto \min_{\alpha\in [0,1]}R_d(\alpha)$.}
\end{figure}

\subsection{Lower bound on the expected rank: Proof of \Cref{t1} subject to \Cref{lem_rkdif_rank,imp_pro}} \label{sec_lower_exp}
An application of \Cref{lem_rkdif_rank,imp_pro} now gives the following lower bound for $\frac{1}{n}\Erw\brk{\rk{\bm{T}_{n,d/n}[\bm{\theta}]}}$:
\begin{align}\label{eq_rkdifsum}
    \frac{1}{n}\Erw\brk{\rk{\bm{T}_{n,d/n} [\bm{\theta}]}}
    \geq &\frac{1}{n}\sum_{m=\varepsilon n}^{n-1}\bc{\Erw\brk{\rk{\bm{T}_{m+1,(dm/n)/m} [\bm{\theta}]}}-\Erw\brk{\rk{\bm{T}_{m,(dm/n)/m}[\bm{\theta}]}}}\\
    =&\frac{1}{n}\sum_{m=\varepsilon n}^{n-1} \Erw\brk{h_{dm/n}\bc{\val_{m,d/n}}}+o_{n,P}(1)\geq\frac{1}{n}\sum_{m=\varepsilon n}^{n-1} h_{dm/n}\bc{\alpha^\star(dm/n)}+o_{n,P}(1).\nonumber
\end{align}

The sum $\frac{1}{n}\sum_{m=\varepsilon n}^{n-1}h_{md/n}\bc{\alpha^\star(md/n)}$ can be treated as a Riemann sum, i.e.,
    \[
    \frac{1}{n}\mathbb{E}\brk{\rk{\bm{T}_{n,d/n}[\vth]}}\geq \int_\varepsilon^1 h_{ds}\bc{{\alpha}^*\bc{ds}}\dif s+o_{n,P}(1)=\frac{1}{d}\int_{\varepsilon d}^d h_t\bc{\alpha^\star(t)}\dif t+o_{n,P}(1).
    \]
Taking the appropriate limits on both sides gives
\[\liminf_{P\to\infty}\liminf_{n\to\infty}\inf_{\substack{N\geq n,\\J_N\in \syN}}\frac{1}{n}\mathbb{E}\brk{\rk{\bm{T}_{n,d/n}[\vth]}}\geq\frac{1}{d}\int_{\varepsilon d}^d h_t\bc{\alpha^\star(t)}\dif t\]
and since we can choose $\varepsilon$ arbitrarily small, we conclude that
\[\liminf_{P\to\infty}\liminf_{n\to\infty}\inf_{\substack{N\geq n,\\J_N\in \syN}}\frac{1}{n}\mathbb{E}\brk{\rk{\bm{T}_{n,d/n}[\vth]}}\geq\frac{1}{d}\int_{0}^{d}h_t\bc{\alpha^\star(t)}\dif t.\]
Then indeed, as we prove in \Cref{sec_eqqdrd}, the derived integral expression coincides with the desired rank formula:
\begin{restatable}[Integral evaluation]{lemma}{qeqaulr}\label{fact_1}
For any $d\geq 0$,
\[\int_{0}^{d}h_t\bc{\alpha^\star(t)}\dif t= d \cdot R_d\bc{\alpha^\star(d)}.\]
\end{restatable}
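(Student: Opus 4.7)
The plan is to show that both sides of the identity, viewed as functions of $d$, have the same derivative and agree at $d=0$. Set
\[
F(d) := d\,R_d(\alpha^\star(d)), \qquad H(d) := \int_0^d h_t(\alpha^\star(t))\,\dif t.
\]
Trivially $F(0) = H(0) = 0$, and by the fundamental theorem of calculus $H'(d) = h_d(\alpha^\star(d))$ wherever $t \mapsto h_t(\alpha^\star(t))$ is continuous. So the goal reduces to verifying $F'(d) = h_d(\alpha^\star(d))$ on the intervals where $\alpha^\star$ is smooth.

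The key algebraic input is the defining identity $G_d(\alpha^\star(d)) = 0$, i.e. $\phi_d\bc{1-\phi_d(\alpha^\star)} = 1 - \alpha^\star$. Writing $\alpha^\star = \alpha^\star(d)$ and differentiating $F$ by the chain rule,
\[
F'(d) = R_d(\alpha^\star) + d\brk{\partial_d R_d(\alpha^\star) + \partial_\alpha R_d(\alpha^\star) \cdot (\alpha^\star)'(d)}.
\]
The last term vanishes because $\partial_\alpha R_d(\alpha) = d^2\phi_d(\alpha) G_d(\alpha)$ by \cref{eq_difr}, so $\partial_\alpha R_d(\alpha^\star) = 0$. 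This is the conceptual reason the argument works: regardless of the smoothness of $\alpha^\star$, only the partial derivative in $d$ contributes.

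Next I would compute $\partial_d R_d(\alpha^\star)$ by direct differentiation. Using $\partial_d \phi_d(\alpha) = (\alpha-1)\phi_d(\alpha)$ together with a chain-rule computation for $\phi_d(1-\phi_d(\alpha))$, one obtains
\[
\partial_d R_d(\alpha) = -\phi_d(\alpha)\phi_d\bc{1-\phi_d(\alpha)}\brk{d(1-\alpha)-1} + d(1-\alpha)^2\phi_d(\alpha).
\]
Substituting $\phi_d(1-\phi_d(\alpha^\star)) = 1 - \alpha^\star$ collapses this to $\partial_d R_d(\alpha^\star) = (1-\alpha^\star)\phi_d(\alpha^\star)$. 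Similarly the same identity reduces $R_d(\alpha^\star)$ to $1 + \alpha^\star - (1+d(1-\alpha^\star))\phi_d(\alpha^\star)$. Plugging both expressions into the display for $F'(d)$, the $d(1-\alpha^\star)\phi_d(\alpha^\star)$ contributions cancel exactly, leaving
\[
F'(d) = 1 + \alpha^\star - \phi_d(\alpha^\star) = h_d(\alpha^\star(d)) = H'(d),
\]
as required.

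The one subtlety is that $\alpha^\star(d)$ may be discontinuous at the critical value $d = \eul$ (where the number of zeros of $G_d$ in $[0,1]$ changes), so a priori the derivative identity $F' = H'$ holds only on the open intervals $(0,\eul)$ and $(\eul,\infty)$, and we can only conclude that $F-H$ is constant on each. To patch things up I would invoke \cref{extre_R}, which gives $R_d(\alpha^\star(d)) = \min_{\alpha\in[0,1]} R_d(\alpha)$. Since $(d,\alpha) \mapsto R_d(\alpha)$ is jointly continuous on $[0,\infty) \times [0,1]$ and $[0,1]$ is compact, this minimum is a continuous function of $d$, so $F$ is continuous on all of $[0,\infty)$. $H$ is continuous as an integral, hence $F - H$ is continuous; combined with $F(0) - H(0) = 0$ this forces $F \equiv H$ on $[0,\infty)$. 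The main obstacle in the proof is precisely this continuity argument at $d = \eul$, but it is mild: only the specific selector $\alpha^\star$ jumps, while the value $R_d(\alpha^\star(d))$ stays continuous because all optimizers of $R_d$ achieve the same value.
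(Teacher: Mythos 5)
Your proof is correct and follows essentially the same strategy as the paper: both show $q(0)=r(0)=0$, differentiate to verify $q'=r'$ on $(0,\eul)\cup(\eul,\infty)$ using $G_d(\alpha^\star(d))=0$, and patch across $d=\eul$ by continuity. Your observation that $\partial_\alpha R_d(\alpha^\star(d))=0$ (so the $(\alpha^\star)'(d)$ term drops instantly from the chain rule) is a cleaner way to see the cancellation that the paper verifies by hand, and the computations of $\partial_d R_d(\alpha^\star)$ and the final simplification to $h_d(\alpha^\star)$ all check out.

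Two small remarks on the patch step. First, your worry that $\alpha^\star$ might jump at $d=\eul$ is in fact unfounded: the paper's \Cref{lem_proal}, part~\ref{it_7}, establishes that $t\mapsto\alpha^\star(t)$ is continuous on all of $[0,\infty)$, and this is exactly what the paper invokes to get continuity of $r$. Your alternative route via $R_d(\alpha^\star(d))=\min_\alpha R_d(\alpha)$ and joint continuity of $(d,\alpha)\mapsto R_d(\alpha)$ is also valid and has the advantage of not needing any regularity of the selector $\alpha^\star$ itself. Second, the phrase ``regardless of the smoothness of $\alpha^\star$'' slightly overclaims: the chain rule that produces the term $d\,\partial_\alpha R_d(\alpha^\star)\cdot(\alpha^\star)'(d)$ in the first place requires $\alpha^\star$ to be differentiable at that point, so one cannot dispense with differentiability of $\alpha^\star$ on the open intervals --- you do in fact restrict to those intervals, so the argument is sound, but the remark gives the wrong impression that the critical-point property alone saves the day.
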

%
Now, the combination of 
\cref{extre_R} and \Cref{fact_1} yields that
\[\frac{1}{d}\int_{0}^{d}h_t\bc{\alpha^\star(t)}\dif t = R_d\bc{\alpha^\star(d)}=\min_{\alpha\in[0,1]}R_d(\alpha)\]
and therefore by \cref{ee1} that
\begin{equation}\label{eq_t_r}
     \liminf_{n\to \infty}\inf_{\substack{N\geq n,\\J_N\in \syN}}\frac{1}{n}\Erw\brk{\rank_\FF\bc{\bm{T}_{n,d/n}}}\geq \liminf_{P\to\infty}\liminf_{n\to\infty}\inf_{\substack{N\geq n,\\J_N\in \syN}}\frac{1}{n}\mathbb{E}\brk{\rk{\bm{T}_{n,d/n}[\vth]}}\geq \min_{\alpha\in[0,1]}R_d(\alpha).
\end{equation}
By definition of  $\bm{T}_{n,p} = \bm{T}_{n,p}^{(N)}$, we have $\rk{\bm{T}_{N,p}^{(N)}} = \rk{\bm A_{N,p}}$ and consequently
\begin{equation}\label{eq_t_a}
\inf_{J_n\in \syn}\frac{1}{n}\Erw\brk{\rank_\FF\bc{\bm{A}_{n,d/n}}}=\inf_{\substack{N= n,\\J_N\in \syN}}\frac{1}{n}\Erw\brk{\rank_\FF\bc{\bm{T}_{n,d/n}^{(N)}}}\geq \inf_{\substack{N\geq n,\\J_N\in \syN}}\frac{1}{n}\Erw\brk{\rank_\FF\bc{\bm{T}_{n,d/n}^{(N)}}}.
\end{equation}
\Cref{t1} now follows from the combination of \cref{eq_t_r,eq_t_a}. \qed

\subsection{Discussion}

The understanding of the ensemble of adjacency matrices of Erd\H{o}s-Rényi random graphs has seen major advances during the last two decades, in particular with respect to its real rank and spectral properties. Prominently, $\ln(n)/n$ is a threshold for the singularity of these matrices \cite{basak2021sharp,costello2006random}. More generally, in the regime where $p_n \in [c \ln(n)/n,1/2]$ for $c>0$ and for more general real matrix entries as considered in the current article, Costello and Vu \cite{costello2010rank} show that with high probability, the nullity of $\bm A_{n,p_n}$ is exactly equal to the number of isolated vertices in the underlying Erd\H{o}s-Rényi random graph. In the same spirit, DeMichele, Moreira and Glasgow \cite{demichele2021rank} show that for $p_n=\omega(1/n)$, with high probability, the nullity of $\bm A_{n,p_n}$ coincides with the number of isolated vertices in the graph that arises from $\bm{G}_{n,d/n}$ after an application of the Karp-Sipser algorithm described in \Cref{sec_intro}. In an associated random matrix process where edges are revealed one after the other, Addario-Berry and Eslava \cite{addario2014hitting} derive a hitting time theorem in the sense that with high probability, the matrix becomes singular at the exact moment when there are no zero rows and columns left. 

In the challenging sparse regime where $p_n=d/n$ for fixed $d>0$, much less is known. Notably, there is the asymptotic rational rank formula \cref{e01b} for $\bm A_{n,d/n}$ by Bordenave, Lelarge and Salez \cite{bordenave2011rank}. Recently, building on the machinery of \cite{bordenave2011rank}, Ferber et al. \cite{ferber2021singularity} have shown that the $k$-core for $k \geq 3$ is non-singular with high probability, thereby resolving an open conjecture of Vu from 2014.

This work has been inspired by recent advances on the rank of random matrices in the context of random constraint satisfaction problems, in particular work on the $k$-XORSAT problem \cite{ayre2020satisfiability,coja2023XORSAT,dietzfelbinger2010tight,dubois2002XORSAT,pittel2016sat} and a model inspired by random code ensembles \cite{coja2022rank}. In this context, it is natural to consider the matrices not only over the reals, but as binary matrices or more generally, matrices over finite fields. 

Correspondingly, this article crucially builds on the methodology developed in \cite{ayre2020satisfiability,coja2022rank}. However, because of the symmetry of our model, virtually all core ides have to be developed differently in comparison to \cite{ayre2020satisfiability,coja2022rank}. First of all, we modify the perturbation according to \Cref{d34}. 
While the basic idea of a perturbation as in \Cref{d34} in the context of random graphical models goes back to information theory \cite{andrea2008estimating}, it has since been successfully applied to the study of random inference problems and random factor graphs \cite{coja2021cut,coja2017info}. 
The basis for an application to asymmetric sparse random matrices, in combination with the conceptualisation of linear relations, has been laid out in \cite{coja2022rank}. In comparison to this previous application, the perturbation in \Cref{d34} is of a slightly different flavour, since it cannot be straightforwardly interpreted as the addition of unary factor nodes in the underlying graphical model. In \cite{coja2022rank} and the earlier version \cite{ayre2020satisfiability}, as well as in results on random factor graphs, the perturbation has proven to be particularly useful when combined with the Aizenman-Sims-Starr scheme from mathematical physics, which brings us to our next modification: Instead of combining the pinning operation with the Aizenman-Sims-Starr scheme, we apply the telescoping argument \cref{eq_rkdifsum0} and therefore compare the matrices $\bm{T}_{m+1,d/n}$ and $\bm{T}_{m,d/n}$ rather than $\bm{T}_{n+1,d/(n+1)} $ and $\bm{T}_{n,d/n}$. This is due to the fact that an application of the Aizenman-Sims-Starr scheme as in \cite{coja2022rank} would require knowledge about the event \cref{conj_fra} that we do not have, and comes at the price of pursuing a different route to characterise $\val_{n,t/n}$. We therefore introduce frailly frozen variables, which are probably the most essential difference between this article and the previous work on asymmetric matrices \cite{ayre2020satisfiability,coja2022sparse,coja2022rank}.


Finally, we believe that the methods developed in this article will generalise to broader symmetric matrix structures. It would also be interesting to see whether the fraction of frozen variables in the unperturbed matrix $\bm A_{n,d/n}$ satisfies an anti-concentration result as its asymmetric counterpart \cite{coja2022sparse}, or whether the two models behave differently. In hindsight, the rank formula \Cref{t0} gives us some information about the perturbed matrix and \cref{conj_fra}. The proof of \Cref{imp_pro} shows that there are essentially two cases: In the first case, 
the proportion of frailly frozen variables $\vx_{n,t/n}$ is approximately zero and the proportion of frozen variables $\val_{n,t/n}$ is approximately $\alpha_\star(d)$ or $\alpha^\star(d)$. In the second case, the proportion of frailly frozen variables $\vx_{n,t/n}$ is approximately $\alpha^{\star}(d) - \alpha_\star(d)$ and the proportion of frozen variables $\val_{n,t/n}$ is approximately $\alpha^\star(d)$. 
From simulations, it seems likely that only the first case corresponds to the actual asymptotic behaviour of the perturbed matrices under consideration, but we cannot exclude the second case at present.


\section{Matrix perturbations}\label{sec_mper}
In this short section, we prove the two most important properties of the matrix perturbation introduced in \Cref{d34}: In \Cref{sec_proof_lc1}, we construct the coupling from \Cref{lc1}, which ensures that w.h.p., for any two large square matrices that differ by one in their size, their canonical perturbation is based on the same row- and column-perturbation matrices (compare (\ref{e001})). 
In \Cref{sec_proof_p1}, we then prove \Cref{p1} on the joint deletion of short proper relations in both the perturbed $A$ and its transpose.

\subsection{Coupling of perturbation matrices: Proof of \Cref{lc1}} \label{sec_proof_lc1}

To couple the matrices $\THETA_r[\theta_r,n_1|n_2]$, we couple the locations of their non-zero entries row by row. For a given row $k$, the basic idea is to construct a coupling $(\vj_{k,n_1})_{n_1 \geq 1}$ of uniformly distributed random variables $\vj_{k,n_1} \sim \text{Unif}([n_1])$ on increasing integer intervals, such that for any two random variables, $\PP\bc{\vj_{k,n_0} \neq\vj_{k,n_1}} = \dtv{\text{Unif}([n_0]),\text{Unif}([n_1])}$. For the overall coupling, we then take the product distribution over the rows.
More precisely, let $(\vu_{k,\ell})_{k,\ell\geq 1}$ be an array of independent random variables such that for all $k, \ell \geq 1$, $\vu_{k,\ell}$ is uniformly distributed on $[\ell]$. For any $n_1 \in \NN$, set
\begin{align}
\vj_{k,n_1} = \max\{\ell \in [n_1]: \vu_{k,\ell} = \ell\}.
\end{align}
Since $\vu_{k,1}=1$, the set above is nonempty, and it is straightforward to verify that $\vj_{k,n_1} \sim$ Unif$([n_1])$.

For any $\theta_r, n_2 \in \NN$, $n_1 \in [n_2]$, let $\THETA_r[\theta_r, n_1| n_2] \in \mathbb{F}^{\theta_r \times n_2}$ be the matrix where row $k \in [\theta_r]$ has its unique non-zero entry in column $\vj_{k,n_1}$.  Since the definition of $\vj_{k,n_1}$ only depends on $n_1$, but not on $\theta_r$ or $n_2$, this coupling satisfies properties (i) and (ii). 

Consider now $n_0 \leq n_1 \leq n_2$. Then $\THETA_r[\theta_r, n_0|n_2] = \THETA_r[\theta_r, n_1|n_2] $ if and only if $\vj_{k,n_0} = \vj_{k,n_1}$ for all $k \in [\theta_r]$, or equivalently $\vu_{k,n_0+1}<n_0+1, \ldots, \vu_{k,n_1} < n_1$.  Therefore,
\[\mathbb{P}\left(\THETA_r[\theta_r,n_0|n_2]=\THETA_r[\theta_r,n_1|n_2]\right)=\prod_{k=n_0+1}^{n_1}\bc{\frac{k-1}{k}}^{\theta_r}=\bc{\frac{n_0}{n_1}}^{\theta_r},\]
so that the coupling satisfies (iii).

The coupling of $\{\THETA_c[m_1| m_2,\theta_c]: \theta_c, m_2 \geq 1, m_1\in [m_2]\}$ can be constructed along the same lines.\qed

\subsection{Perturbation eliminates most short proper relations: Proof of \Cref{p1}}\label{sec_proof_p1}

Recall the definition of the canonical perturbation from \Cref{d34}.
In this section, we prove \Cref{p1}, which ensures that for any $\delta>0$ and $\ell \in \NN_{\geq 2}$, the canonical perturbation of any (almost) square matrix $A$, as well as its transpose, are $(\delta, \ell)$-free with probability arbitrarily close to one, provided that the matrix dimension and the perturbation parameter $P$ are chosen large enough.

The main ingredient in the proof of \Cref{p1} is the following lemma:
\begin{lemma}[{\cite[Proposition 2.3]{coja2022rank}}]\label{l0}
Let $\delta>0$ and $\ell \in \NN_{\geq 2}$. Then there exists $P'=P'(\delta,\ell) \in \NN$ such that for any $P \geq P'$ the following holds: For any matrix $A \in \FF^{m\times n}$
\begin{align}\label{lem_pin}
\mathbb{P}\left(\text{$\begin{pmatrix}A\\ \THETA_r[\bm{\theta}_r, n \vert n]\end{pmatrix}$ is $(\delta,\ell)$-free}\right) \geq1-\delta,
\end{align}
provided that $\bm{\theta}_r \sim \text{Unif}([P])$ and is independent of the coupling $\{\THETA_r[\theta_r, n_1| n_2]: \theta_r, n_2 \geq 1, n_1\in [n_2]\}$. 
\end{lemma}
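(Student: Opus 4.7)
Since this lemma is quoted verbatim from \cite[Proposition 2.3]{coja2022rank}, the plan is to reproduce the high-level argument of that reference rather than re-derive it from scratch. The strategy is a pinning argument: bound the expected one-step increment of a monotone, uniformly bounded statistic (the number of frozen variables) along the process of attaching unit rows one at a time, and exploit the randomness of $\bm\theta_r$ to locate a step at which the increment is small.

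Concretely, for $k\in\{0,1,\ldots,P\}$ let $A_k$ denote $A$ augmented by the first $k$ rows of the coupled row-perturbation $\bm\Theta_r[P,n\vert n]$, and set $f(k):=\Erw\bigl[|\cF(A_k)|\bigr]$. Attaching a row can only create, never destroy, frozen variables: if $y$ represents $i\in\cF(A_k)$, then the zero-extension of $y$ represents $i$ in $A_{k+1}$. Hence $f$ is non-decreasing, and since $f(k)\leq n$ trivially, a telescoping argument yields
\[
\Erw\bigl[f(\bm\theta_r+1)-f(\bm\theta_r)\bigr]\leq \frac{n}{P}
\]
for $\bm\theta_r\sim\mathrm{Unif}([P])$ independent of the perturbation. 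In words, on average over $\bm\theta_r$ one further uniform unit row produces very few new frozen variables once $P$ is large, uniformly in $A$ and $n$.

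The combinatorial heart of the argument is the converse direction: if $A_k$ is \emph{not} $(\delta,\ell)$-free, i.e.\ it carries at least $\delta n^{\ell}$ proper relations of size $\ell$, then attaching one further uniform unit row freezes in expectation at least $c(\delta,\ell)\, n$ additional variables, for some $c(\delta,\ell)>0$. The mechanism is that a size-$\ell$ proper relation $I$ with representation $y$ satisfying $\supp{yA_k}=I$ and $I\setminus\cF(A_k)\neq\emptyset$ can, after one more random pinning event hitting some $j\in I\setminus\cF(A_k)$, be reduced by elimination of $j$ using the representation of $j$ in $A_{k+1}$, producing a strictly shorter proper relation. Iterating, size-$\ell$ proper relations cascade down to size-one proper relations, i.e.\ new elements of $\cF$. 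Combining this lower bound with $\Erw[f(\bm\theta_r+1)-f(\bm\theta_r)]\leq n/P$ and Markov's inequality gives $\PP(A_{\bm\theta_r}\text{ is not }(\delta,\ell)\text{-free})\leq \delta$ once $P$ is chosen large enough as a function of $\delta$ and $\ell$ alone.

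The main obstacle is making the cascade quantitative, because different proper relations share coordinates, so a single pinning event can extinguish many of them at once. One has to argue that a family of $\delta n^\ell$ proper relations of size $\ell$ contains enough genuinely distinct "pinning opportunities" to force $\Theta(n)$ new frozen variables per step in expectation. The induction on $\ell$ in \cite{coja2022rank} handles this through a dichotomy at each step: either a constant fraction of the proper size-$\ell$ relations genuinely reduce to proper size-$(\ell-1)$ relations after one pinning event, supplying the inductive step, or already $A_k$ has $\Theta(n^{\ell-1})$ proper size-$(\ell-1)$ relations and the inductive hypothesis applies directly to $A_k$. The base case $\ell=1$ is the definition of a frozen variable, which closes the induction.
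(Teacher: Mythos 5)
The paper does not prove this lemma at all: it is cited verbatim from \cite[Proposition 2.3]{coja2022rank}, and the only addition in the present paper is the remark immediately after the statement noting that the proof in that reference in fact gives the explicit bound $P>4\ell^3/\delta^4$. You correctly identified this, so there is no "paper's own proof" to compare against; what you have written is a reconstruction of the argument in the cited reference. The skeleton you give is the right one: the monotone, bounded potential $\Erw\bigl[|\cF(A_k)|\bigr]$, the telescoping over a uniformly random number of pinned rows yielding an $n/P$ bound on the expected one-step increment, and the converse claim that failure of $(\delta,\ell)$-freeness forces a macroscopic increment.

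Two points are imprecise, and they sit exactly on the part you flag as the "combinatorial heart." First, pinning $j\in I\setminus\cF(A_k)$ does \emph{not} automatically produce a shorter \emph{proper} relation: elimination gives a relation $I\setminus\{j\}$ of $A_{k+1}$ of size $\ell-1$, but whether it is proper requires $\bigl(I\setminus\{j\}\bigr)\setminus\cF(A_{k+1})$ to be a relation, and $\cF(A_{k+1})$ may have grown by much more than $\{j\}$ in that step. This is precisely the phenomenon the dichotomy has to control, and it is the nontrivial part of the cited proof, so stating it as automatic is a gap. Second, your stated base case $\ell=1$ does not parse: there are no proper $1$-relations, since $\{i\}$ being a relation is equivalent to $i\in\cF(A)$, which makes $\{i\}\setminus\cF(A)=\emptyset$ and hence never a relation; indeed \Cref{d2} only defines $(\delta,\ell)$-freeness for $\ell\geq2$. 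The cascade bottoms out when a pinning step creates a newly frozen variable, and the induction should start at $\ell=2$ (where a proper $2$-relation is a relation $\{i,j\}$ with both $i,j$ unfrozen, and pinning one of them freezes the other).
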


\begin{remark}
\Cref{l0} is a minor adaptation of \cite[Proposition 2.3]{coja2022rank}. While the exact wording is for $P = P'(\delta,\ell)$ rather than \textit{all} $P \geq P'(\delta,\ell)$, its proof 
shows that all choices of $P > 4\ell^3/\delta^4$ imply (\ref{lem_pin}).
\end{remark}


Before we prove \Cref{p1}, we observe the following simple consequence of \Cref{l0}:

\begin{corollary}\label{c1}
Let $\delta>0$ and $L \in \NN_{\geq 2}$. Then there exists $P'=P'(\delta,L) \in \NN$ such that for any $P \geq P'$ the following holds: For any matrix $A \in \FF^{m\times n}$ and $n_1 \in [n]$ 
\[\mathbb{P}\left(\text{$\begin{pmatrix}A\\\bm{\Theta}_r[\bm{\theta}_r,n_1|n]\end{pmatrix}$ is $(\delta,\ell)$-free for $\ 2\leq \ell\leq L$}\right) \geq\bc{\frac{n_1}{n}}^P-\delta,\]
provided that $\bm{\theta}_r \sim \text{Unif}([P])$ and is independent of the coupling $\{\THETA_r[\theta_r, n_1| n_2]: \theta_r, n_2 \geq 1, n_1\in [n_2]\}$. 
\end{corollary}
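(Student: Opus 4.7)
The plan is to reduce Corollary \ref{c1} to Lemma \ref{l0} via the coupling from \Cref{lc1}. The basic idea is that under the coupling, the perturbation matrix $\THETA_r[\bm{\theta}_r, n_1|n]$ agrees with $\THETA_r[\bm{\theta}_r, n|n]$ with high probability when $n_1$ is close to $n$; on the coincidence event, the $(\delta,\ell)$-freeness of the fully-freezable perturbation transfers verbatim to the restricted one, and \Cref{l0} supplies the former.

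Concretely, first I would apply \Cref{l0} with parameter $\delta/(L-1)$ for each $\ell \in \{2,\dots,L\}$, and let $P'(\delta,L) := \max_{2\leq \ell \leq L} P'(\delta/(L-1),\ell)$ be the maximum of the constants produced. For $P \geq P'(\delta,L)$, a union bound over $\ell \in \{2,\dots,L\}$ then yields
\begin{equation*}
\PP\Bigl(\text{$\begin{pmatrix}A\\ \THETA_r[\bm{\theta}_r, n|n]\end{pmatrix}$ is $(\delta/(L-1),\ell)$-free for all $2\leq \ell \leq L$}\Bigr) \geq 1-\delta,
\end{equation*}
and since $(\delta/(L-1),\ell)$-freeness implies $(\delta,\ell)$-freeness, I get the same bound with $\delta$ in place of $\delta/(L-1)$. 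Call this event $F$.

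Next, let $E$ denote the event that $\THETA_r[\bm{\theta}_r, n_1|n] = \THETA_r[\bm{\theta}_r, n|n]$. By \Cref{lc1}(iii), conditionally on $\bm{\theta}_r = k$, this occurs with probability $(n_1/n)^k$. Averaging over the uniform law of $\bm{\theta}_r$ on $[P]$ and using that $(n_1/n)^k \geq (n_1/n)^P$ for every $k \in [P]$ (because $n_1/n \leq 1$), I obtain
\begin{equation*}
\PP(E) = \frac{1}{P}\sum_{k=1}^P \Bigl(\frac{n_1}{n}\Bigr)^{k} \geq \Bigl(\frac{n_1}{n}\Bigr)^{P}.
\end{equation*}
On $E \cap F$, the two perturbed matrices coincide, so $\begin{pmatrix}A\\ \THETA_r[\bm{\theta}_r, n_1|n]\end{pmatrix}$ inherits $(\delta,\ell)$-freeness for all $2\leq \ell \leq L$. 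A simple Bonferroni-type estimate $\PP(E\cap F) \geq \PP(E) - \PP(F^c) \geq (n_1/n)^P - \delta$ then finishes the argument.

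There is no real obstacle here beyond bookkeeping: the only non-trivial observation is that $(n_1/n)^{\bm{\theta}_r}$, averaged over $\bm{\theta}_r \sim \text{Unif}([P])$, is bounded below by the worst-case exponent $(n_1/n)^P$, which is precisely the form in which the coupling probability appears in the statement. Everything else is a direct transfer of \Cref{l0} across the coupling of \Cref{lc1}.
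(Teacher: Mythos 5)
Your proof is correct and mirrors the paper's own argument: both reduce to Lemma \ref{l0} via a union bound over $\ell$ with an $O(\delta/L)$-sized per-$\ell$ error, and then transfer the conclusion across the coupling event $\{\THETA_r[\bm{\theta}_r, n_1|n] = \THETA_r[\bm{\theta}_r, n|n]\}$, whose probability is lower-bounded by $(n_1/n)^P$ using \Cref{lc1}(iii). The only cosmetic differences are the split $\delta/(L-1)$ versus the paper's $\delta/L$ and your direct Bonferroni bound on $\PP(E\cap F)$ versus the paper's rearrangement of $\PP(F) \leq \PP(F') + 1 - (n_1/n)^P$; these are equivalent.
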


\begin{proof}
Fix $\delta>0$ and $L \in \NN_{\geq 2}$. For any $\ell \in \{2, \ldots, L\}$, \Cref{l0} guarantees the existence of $P_\ell = P_\ell(\delta/L,\ell)\in \NN$ such that for any $P\geq P_\ell$ and $\bm{\theta}_r \sim \text{Unif}([P])$,
\begin{align}\label{eq_c1_0}
\mathbb{P}\left(\text{$\begin{pmatrix}A\\\bm{\Theta}_r[\bm{\theta}_r, n \vert n]\end{pmatrix}$ is $(\delta,\ell)$-free}\right) \geq \mathbb{P}\left(\text{$\begin{pmatrix}A\\\bm{\Theta}_r[\bm{\theta}_r, n \vert n]\end{pmatrix}$ is $(\delta/L,\ell)$-free}\right) \geq1-\delta/L.
\end{align}
Let $P'=\max_{2\leq \ell\leq L}P_{\ell}$. Then for any $P \geq P'$ and $\bm{\theta}_r \sim \text{Unif}([P])$, by (\ref{eq_c1_0}) and a union bound,
\begin{align}\label{eq_c1_1}
\mathbb{P}\left(\text{$\begin{pmatrix}A\\\bm{\Theta}_r[\bm{\theta}_r, n \vert n]\end{pmatrix}$ is $(\delta,\ell)$-free for $2\leq \ell\leq L$}\right)\geq 1-\sum_{\ell=2}^{L}\mathbb{P}\left(\text{$\begin{pmatrix}A\\\bm{\Theta}_r[\bm{\theta}_r, n \vert n]\end{pmatrix}$ is not $(\delta,\ell)$-free}\right) \geq1-\delta.
\end{align}

By \Cref{lc1} (iii), $\PP\bc{\bm{\Theta}_r[\bm{\theta}_r, n|n] = \Theta_r[\bm{\theta}_r,n_1|n]}= \Erw\brk{(n_1/n)^{\bm{\theta}_r}} \geq (n_1/n)^P$. 
Therefore,
\begin{align}\label{eq_c1_2}
\PP\bc{\begin{pmatrix}A\\\bm{\Theta}_r[\bm{\theta}_r, n \vert n]\end{pmatrix}  (\delta,\ell)\text{-free for } 2\leq \ell\leq L} 
\leq \mathbb{P}\bc{\text{$\begin{pmatrix}A\\\bm{\Theta}_r[\bm{\theta}_r,n_1|n]\end{pmatrix}$  $(\delta,\ell)$-free for $2\leq \ell\leq L$}}+1-\bc{\frac{n_1}{n}}^{P}.
\end{align}
Combining (\ref{eq_c1_1}) and (\ref{eq_c1_2}) yields the claim.
\end{proof}

\begin{proof}[Proof of \Cref{p1}]
Fix $\delta >0, L \in \mathbb{N}_{\geq 2}$ and $s \in \ZZ$. For $n, P \in \NN$, let $\bm{\theta} = (\bm{\theta}_r, \bm{\theta}_c) \sim \text{Unif}([P]^2)$ and $A\in\FF^{(n+s)\times n}$. With the coupling from \Cref{lc1} and ${\bm{A}'}:=\begin{pmatrix}A&\bm{\Theta}_c[n+s|n+s,\bm{\theta}_c]\end{pmatrix}$, 
\begin{align}\label{eq_p1_0}
A[\bm{\theta}]
=\begin{pmatrix}A&\bm{\Theta}_c[n+s|n+s, \bm{\theta}_c]\\\bm{\Theta}_r[\bm{\theta}_r, n \vert n] & 0_{\bm{\theta}_r\times \bm{\theta}_c} \end{pmatrix}
=\begin{pmatrix}{\bm{A}'}\\\bm{\Theta}_r[\bm{\theta}_r,n|n+\bm{\theta}_c]\end{pmatrix}.
\end{align}
Conditionally on $\bm{A}'$ and $\bm{\theta}_c$, because of independence of the row and column perturbations, $\bm{\Theta}_r[\bm{\theta}_r,n|n+\bm{\theta}_c]$ is distributed as the perturbation in \Cref{c1} with the ensuing choice of $n_1$ and $n$. Thus, for any $a>1$, if $ P(\delta/a, L)$ is chosen large enough, conditioning on $\bm{A}'$ and $\bm{\theta}_c$ in (\ref{eq_p1_0}) yields that for $P\geq P(\delta/a, L)$,
\begin{align}\label{eq_p1_1}
\mathbb{P}\left(\text{$A[\bm{\theta}]$ is $(\delta,\ell)$-free for $2\leq \ell\leq L$}\right) \geq \mathbb{P}\left(\text{$A[\bm{\theta}]$ is $(\delta/a,\ell)$-free for $2\leq \ell\leq L$}\right) \geq \bc{\frac{n}{n+P}}^{P}-\delta/a.
\end{align}
By an analogous argument, also for $P \geq P(\delta/a,L)$,
\[\mathbb{P}\left(\text{$A[\bm{\theta}]^T$ is $(\delta,\ell)$-free for $2\leq \ell\leq L$}\right) \geq \bc{\frac{n+s}{n+s+P}}^{P}-\delta/a.\]
Since $ \mathbb{P}(\mathfrak B_1\cap \mathfrak B_2)\geq \mathbb{P}(\mathfrak B_1)+\mathbb{P}(\mathfrak B_2)-1$ for any two events $\mathfrak B_1, \mathfrak B_2$, we conclude that
\[\mathbb{P}\left(\text{Both $A[\bm{\theta}]$ and $A[\bm{\theta}]^T$ are $(\delta,\ell)$-free for $2\leq \ell\leq L$}\right) \geq \bc{\frac{n+s}{n+s+P}}^{P}+\bc{\frac{n}{n+P}}^{P}-2\delta/a-1.\]
In particular,
\[\limsup_{P \to \infty}\limsup_{n \to \infty}\sup_{A \in \FF^{(n+s)\times n}} \mathbb{P}\left(\text{$A[\bm{\theta}]$ or $A[\bm{\theta}]^T$ is not $(\delta,\ell)$-free for some $2\leq \ell\leq L$}\right) \leq 2\delta/a.\]
Since this is upper bound holds for any $a>1$, \cref{e1_rep} follows.
\end{proof}

\begin{remark}
It is natural to wonder whether there is a possibility to perturb a symmetric matrix $A$ such that the perturbed matrix $A[\bm\theta]$ is symmetric as well \textit{and}  \Cref{p1} holds. However, simply choosing $\THETA_c[n|n,\bm \theta_c]=\THETA_r[\bm \theta_r, n|n]^T$ does not have the desired effect: For (\ref{eq_p1_1}) to hold, it is crucial that both the number of rows as well as the columns of the non-zero indices of $\THETA_r[\bm \theta_r, n|n+\bm\theta_c]$ are chosen uniformly given $\bm{A}'=\begin{pmatrix}A&\THETA_c[n|n,\bm \theta_c]\end{pmatrix}$. Thus, the above perturbation technique necessarily destroys the matrix symmetry. 
\end{remark}

\section{Frozen variables: General properties \& stability}\label{sec_prof}
 The principle aim of this section is to derive general properties of the various types of frozen variables as well as to prove stability of the proportions of types in the transition from $\bm{T}_{n,t/n}[\bm \theta]$ to $\bm{T}_{n+1,t/n}[\bm \theta]$.
 In this sense, {our main result of this section, \Cref{l91},} asserts that the proportions of the various types remain nearly unchanged when we grow the matrix from $n$ to $n+1$. 

\subsection{How the type of a variable encodes rank change under row- and column removal}\label{sec_prelifv}
We first present basic \textit{deterministic} implications of the type of a variable that are used throughout the article{, and that} indicate the significance of the types of \Cref{dxyzuv}. More specifically, we are ultimately interested in the rank decrease upon simultaneous removal of row $i$ and column $i$ from a given matrix $A \in \FF^{m \times n}$. In this section, we prove that the type of $i$ according to \Cref{dxyzuv} completely determines the ensuing rank change. 
The starting point is the following lemma on frozen variables: Living up to their name, in \Cref{rem_froz}, frozen variables were characterised as coordinates that take the value zero in any kernel vector. The following lemma shows how the rank of any given matrix changes, if a column that corresponds to a frozen variable is removed from it:

\begin{lemma}[{\cite[Lemma 4.7]{demichele2021rank}}]\label{p4}
Let $A\in \mathbb{F}^{m\times n}$ and $i\in [n]$. Then
\[i\in \mathcal{F}(A) \qquad \Longleftrightarrow  \qquad \rank\bc{A}-\rank\bc{A\abc{;i}}=1 .\]
\end{lemma}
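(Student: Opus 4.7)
The plan is to prove both implications via the standard duality between the rank drop from column removal and the existence of a row vector that annihilates all but one column.

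First I would record the elementary fact that deleting one column from a matrix changes its rank by $0$ or $1$, so that $\rank\bc{A}-\rank\bc{A\abc{;i}}\in\cbc{0,1}$, and the difference equals one if and only if the column $A(,i)$ is not contained in the $\FF$-linear span of the remaining columns $\cbc{A(,j)\colon j\neq i}$. This reduces the lemma to the equivalence ``$i\in\mathcal{F}(A)$ if and only if $A(,i)$ is linearly independent from the other columns of $A$''.

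For the forward direction, suppose $i\in\mathcal{F}(A)$ and let $y\in\FF^{1\times m}$ be a representation of $\cbc{i}$, so that $\supp{yA}=\cbc{i}$. Then $yA(,j)=0$ for every $j\neq i$ while $yA(,i)\neq 0$. If one could write $A(,i)=\sum_{j\neq i}c_j A(,j)$ with $c_j\in\FF$, applying $y$ would yield $yA(,i)=\sum_{j\neq i}c_j\,yA(,j)=0$, contradicting $yA(,i)\neq 0$. Hence $A(,i)$ lies outside the span of the remaining columns, and the rank drops by one. For the reverse direction, suppose $A(,i)$ is not contained in the subspace $W\subseteq\FF^m$ spanned by $\cbc{A(,j)\colon j\neq i}$. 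Pick a basis of $W$, append $A(,i)$ to it (which is possible precisely because $A(,i)\notin W$), and extend the resulting independent set to a basis of $\FF^m$. Define $y\in\FF^{1\times m}$ as the unique linear functional that sends $A(,i)$ to $1$ and every other chosen basis vector to $0$. By linearity, $yA(,j)=0$ for all $j\neq i$, since each such column lies in $W$, while $yA(,i)=1\neq 0$. Therefore $\supp{yA}=\cbc{i}$ and $i\in\mathcal{F}(A)$.

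I do not expect any real obstacle: the only substantive step is the construction of the separating functional $y$ in the reverse direction, which is entirely standard finite-dimensional linear algebra (in effect, the Hahn--Banach extension in its elementary algebraic form over an arbitrary field).
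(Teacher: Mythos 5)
Your proof is correct, and the plan is sound: both directions reduce the lemma to the equivalence between ``$i$ is frozen'' and ``column $i$ lies outside the span of the remaining columns,'' which is the right reformulation. The argument does take a genuinely different route from the paper's. The paper proves the same equivalence by a single rank equation: it attaches the unit row $e_n(i)$ to $A$ and counts rank in two ways. On the one hand $\rank\begin{pmatrix}A\\e_n(i)\end{pmatrix}=1+\rank\bc{A\abc{;i}}$ because the appended row makes column $i$ manifestly independent of the others, and on the other hand $\rank\begin{pmatrix}A\\e_n(i)\end{pmatrix}=\rank(A)+\ind\cbc{i\notin\cF(A)}$ because $i\in\cF(A)$ is exactly the statement that $e_n(i)$ lies in the row span of $A$; subtracting gives the lemma. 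Your version decouples this into two explicit implications: the forward direction applies the representation $y$ as a linear functional to kill a hypothetical linear combination of the other columns, and the reverse direction constructs a separating row vector by extending a basis of the span $W$ of the other columns. The paper's argument is more compact and leverages the row-attachment trick that reappears elsewhere in the article (\Cref{l8.4}, \Cref{l8}); yours is more elementary and self-contained, making both directions concrete at the cost of a few extra lines. Both are valid over any field.
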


\begin{proof}
Recall that we denote the $i$th standard unit vector in $\FF^{1 \times n}$ by $e_n(i)$. While the linear dependencies of column $i$ of $A$ with the other columns of $A$ may be intricate, attaching $e_n(i)$ at the bottom of $A$ surely renders column $i$ 
linearly independent of all the other columns. Thus
\[\rank\begin{pmatrix}A\\e_n(i)\end{pmatrix}= 
1+\rank\bc{A\abc{;i}}.\]
On the other hand, by \Cref{d2}, $i$ is frozen in $A$ if and only if $e_n(i)$ is in the row span of $A$, so
\[
\rank\begin{pmatrix}A\\e_n(i)\end{pmatrix} = \rank\bc{A} + \ind\cbc{i \notin \cF(A)}.\]
\end{proof}

The next lemma demonstrates that, generally, column removal and row addition cannot ``unfreeze'' variables:

\begin{lemma}\label{l1.1b}
Let $A\in \mathbb{F}^{m\times n}$, $b\in\mathbb{F}^{m\times 1}$, $c\in\mathbb{F}^{1\times n}$ and $i\in [n]$. Then
\begin{enumerate}[label=(\roman*)]
  \item $i\in \mathcal{F}\bc{\begin{pmatrix}A&b\end{pmatrix}} \quad \Longrightarrow \quad i\in \mathcal{F}(A)$;
  \item $i\in \mathcal{F}(A) \quad \Longrightarrow \quad  i\in \mathcal{F}\bc{\begin{pmatrix}A\\c\end{pmatrix}}$.
\end{enumerate}
\end{lemma}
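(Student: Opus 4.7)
The plan is to prove both parts of the lemma directly from \Cref{d2}, via simple extension and truncation arguments on the representation vectors. The key observation is that $i \in \cF(B)$ means there exists a row vector $y$ with $\supp(yB) = \{i\}$, and this condition can be transferred between $A$ and its augmentations by either padding or restricting $y$ appropriately.

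For part (ii), suppose $i \in \cF(A)$, and let $y \in \mathbb{F}^{1 \times m}$ be a representation of $\{i\}$ in $A$, so that $\supp(yA) = \{i\}$. I would then consider the padded vector $y' := (y, 0) \in \mathbb{F}^{1 \times (m+1)}$, for which
\[
y' \begin{pmatrix} A \\ c \end{pmatrix} = yA + 0 \cdot c = yA,
\]
so $\supp(y' \begin{pmatrix} A \\ c \end{pmatrix}) = \{i\}$, meaning $i \in \cF\bc{\begin{pmatrix}A\\ c\end{pmatrix}}$.

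For part (i), I would argue in the reverse direction. Suppose $i \in \cF((A \mid b))$ and let $y \in \mathbb{F}^{1 \times m}$ be a representation, i.e. $\supp(y(A \mid b)) = \{i\}$. Writing $y(A \mid b) = (yA, yb) \in \mathbb{F}^{1 \times (n+1)}$ and noting that $i \in [n]$ while the entry $yb$ sits in position $n+1$, the support condition forces $yb = 0$. Hence $\supp(yA) = \{i\}$, which gives $i \in \cF(A)$.

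There is no real obstacle in this lemma: both parts are routine consequences of the definition, and the only thing to be careful about is the indexing (making sure that the added column $b$ sits in position $n+1 \notin \{i\}$ for part (i), and that the padded coordinate in $y'$ corresponds to the appended row $c$ for part (ii)). One could alternatively derive part (i) via the rank characterization in \Cref{p4} by observing that if column $i$ lies in the span of the other columns of $A$, then it also lies in the span of the other columns of $(A \mid b)$, but the representation-based argument above is more direct.
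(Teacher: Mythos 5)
Your proof is correct. It differs from the paper's one-line argument in a minor but genuine way: the paper invokes \Cref{p4} (the characterization that $i$ is frozen iff column $i$ lies outside the span of the other columns) and then observes that appending a column can only enlarge that span while appending a row can only shrink it, whereas you argue directly from \Cref{d2} by manipulating the representation vector $y$ (padding it with a zero entry for the new row in part (ii), and noting that the support condition forces $yb=0$ for part (i)). Your route is marginally more elementary since it bypasses \Cref{p4} entirely and stays at the level of the original definition; the paper's phrasing is more compact once \Cref{p4} is in hand. Both are equally valid, and you even note the column-span alternative yourself, so there is nothing missing.
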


\begin{proof}
Both statements immediately follow from the characterisation of frozen variables from \Cref{p4}: variable $i$ is frozen in $A$ if and only if column $i$ does not lie in the linear span of the other columns of $A$.
\end{proof}

While \Cref{l1.1b} shows that addition of rows can only enlarge the set of frozen variables, the next lemma studies the consequences of row removal. Indeed, \Cref{l8.4} illustrates that the removal of a row has the same effect as addition of a unit vector (which effectively forbids to use the corresponding row in representations):

\begin{lemma}\label{l8.4}
For any matrix $A \in \FF^{m \times n}$, $i\in [n]$ and $j\in [m]$,
\[i\in\mathcal{F}(A\abc{j;})\qquad  \Longleftrightarrow \qquad i\in\mathcal{F}\bc{\begin{pmatrix}A&e_m(j)^T\end{pmatrix}}.\]
\end{lemma}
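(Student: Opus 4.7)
My plan is to prove the lemma directly from \Cref{d2}, by exhibiting a simple bijection between representations of $\{i\}$ in $A\abc{j;}$ and representations of $\{i\}$ in $\begin{pmatrix}A&e_m(j)^T\end{pmatrix}$. Intuitively, appending $e_m(j)^T$ as an extra column forces any representation to have a zero in its $j$th coordinate, which is exactly the restriction imposed by deleting row $j$.

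For the forward direction, suppose $i\in\mathcal{F}(A\abc{j;})$, so there exists $y\in\FF^{1\times(m-1)}$ with $\supp(y\cdot A\abc{j;})=\{i\}$. I would define $y'\in\FF^{1\times m}$ by inserting a $0$ at position $j$ (keeping the other coordinates of $y$ in order). Then $y'\cdot A=y\cdot A\abc{j;}$ because the $j$th row of $A$ is multiplied by $0$, so this product has support $\{i\}$. Moreover $y'\cdot e_m(j)^T=y'_j=0$, so the $(n+1)$st entry of $y'\cdot\begin{pmatrix}A&e_m(j)^T\end{pmatrix}$ vanishes. Hence $\supp\bigl(y'\cdot\begin{pmatrix}A&e_m(j)^T\end{pmatrix}\bigr)=\{i\}$, proving $i\in\mathcal{F}\bigl(\begin{pmatrix}A&e_m(j)^T\end{pmatrix}\bigr)$.

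For the converse, suppose $i\in\mathcal{F}\bigl(\begin{pmatrix}A&e_m(j)^T\end{pmatrix}\bigr)$, so there exists $y'\in\FF^{1\times m}$ with $\supp\bigl(y'\cdot\begin{pmatrix}A&e_m(j)^T\end{pmatrix}\bigr)=\{i\}$. Because $i\in[n]$ and the $(n+1)$st column equals $e_m(j)^T$, the $(n+1)$st entry of the product must be $0$, giving $y'_j=y'\cdot e_m(j)^T=0$. Letting $y\in\FF^{1\times(m-1)}$ denote $y'$ with its $j$th coordinate removed, the vanishing of $y'_j$ yields $y\cdot A\abc{j;}=y'\cdot A$, whose support is $\{i\}$, so $i\in\mathcal{F}(A\abc{j;})$.

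There is no real obstacle here; the entire argument rests on the single observation that the appended column $e_m(j)^T$ forces $y'_j=0$, which is precisely the effect of deleting row $j$. Alternatively, one could invoke \Cref{p4} and compare $\rank(A\abc{j;})-\rank(A\abc{j;i})$ with $\rank\bigl(\begin{pmatrix}A&e_m(j)^T\end{pmatrix}\bigr)-\rank\bigl(\begin{pmatrix}A\abc{;i}&e_m(j)^T\end{pmatrix}\bigr)$, but the bijective argument via \Cref{d2} is the cleanest route and introduces no additional bookkeeping.
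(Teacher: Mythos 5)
Your proof is correct and the converse direction is essentially identical to the paper's argument (extract $y'_j=0$ from the $(n+1)$st column and truncate). In the forward direction you construct the representation directly by zero-padding $y$ at position $j$, whereas the paper routes through the intermediate matrix $B\abc{j;}$ and invokes \Cref{l1.1b}(ii) on row addition; both are valid, and yours is arguably a touch more self-contained.
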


\begin{proof}
Throughout the proof, we abbreviate $B = \begin{pmatrix}A&e_m(j)^T\end{pmatrix}$.

Assume that $i\in \mathcal{F}\bc{A\abc{j;}}$. Since $B\abc{j;}$ only has a zero-column appended at the right in comparison to $A\abc{j;}$, $i$ is also frozen in $B\abc{j;}$. By \Cref{l1.1b} (ii), adding a row cannot unfreeze variables, so
$i\in \mathcal{F}\bc{B}.$

{Next, }assume that $i\in \mathcal{F}(B)$ and let $y=(y_1,\ldots,y_m)$ be a representation of $\cbc{i}$ in $B$. Since no row of $B$ apart from $j$ has a non-zero entry in column $n+1$, $y_j = (yB)_{n+1} =0$, which implies that $y\abc{;j}$ is a representation of $\cbc{i}$ in $A\abc{j;}$. 
%
\end{proof}

We next take a closer look at the frailly frozen variables, which were characterised as those variables that unfreeze under removal of the identically indexed row (see \Cref{dsc} (i)). Since on the other hand, variables can never freeze under row removal, we obtain the following corollary of \Cref{l1.1b} (ii), which expresses that the frailly frozen variables are exactly those variables that are classified differently in the matrix with one appropriately chosen row less than in the original matrix:

\begin{corollary}\label{r80}
For any $A \in \FF^{m \times n}$ and $i \in [m\land n]$, 
\[i \text{ is frailly frozen in } A \qquad \Longleftrightarrow \qquad i\in \mathcal{F}(A)\Delta \mathcal{F}\bc{A\abc{i;}}.\]
\end{corollary}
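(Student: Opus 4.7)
The plan is to observe that, by Definition~\ref{dsc}(i), $i$ is frailly frozen in $A$ precisely when $i \in \cF(A) \setminus \cF(A\langle i;\rangle)$. Hence the stated equivalence reduces to the set identity
\[
\cF(A) \setminus \cF(A\langle i;\rangle) \;=\; \cF(A) \,\Delta\, \cF(A\langle i;\rangle),
\]
which in turn is equivalent to the one-sided containment
\[
\cF(A\langle i;\rangle) \;\subseteq\; \cF(A).
\]
In words, I need to show that removing a row can never \emph{create} a new frozen variable; it can only unfreeze.

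To establish this containment, I would simply chain together the two preceding lemmas. Fix any $k \in \cF(A\langle i;\rangle)$. By Lemma~\ref{l8.4} applied with $j=i$, the equivalence ``$k \in \cF(A\langle i;\rangle) \iff k \in \cF\bigl(\begin{pmatrix} A & e_m(i)^T \end{pmatrix}\bigr)$'' yields $k \in \cF\bigl(\begin{pmatrix} A & e_m(i)^T \end{pmatrix}\bigr)$. Then Lemma~\ref{l1.1b}(i), invoked with $b = e_m(i)^T$, gives $k \in \cF(A)$. This proves $\cF(A\langle i;\rangle) \subseteq \cF(A)$, and the corollary follows.

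There is no real obstacle here: the whole argument is a direct two-step deduction from the lemmas stated immediately above, and amounts to reading off the intuition ``row removal $=$ column addition of a unit vector, and column addition cannot enlarge the frozen set.'' The only conceptual point worth flagging is that the hypothesis $i \in [m \wedge n]$ is what makes $A\langle i;\rangle$ meaningful as a matrix whose columns are still indexed by $[n]$, so that comparing $\cF(A)$ and $\cF(A\langle i;\rangle)$ as subsets of $[n]$ makes sense in the first place.
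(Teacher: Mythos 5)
Your proof is correct, and the key structural reduction — observing that the equivalence follows once one shows $\cF(A\langle i;\rangle)\subseteq\cF(A)$, at which point the symmetric difference collapses to the one-sided difference appearing in \Cref{dsc}(i) — is exactly the paper's intended argument. The only (cosmetic) divergence is in the chain of lemmas used to establish that containment: you route it through \Cref{l8.4} followed by \Cref{l1.1b}(i), whereas the paper's surrounding text indicates it is meant as a direct consequence of \Cref{l1.1b}(ii) alone (if $k\in\cF(A\langle i;\rangle)$, re-adding row $i$ cannot unfreeze $k$, so $k\in\cF(A)$); both deductions are immediate and equivalent in content.
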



In \Cref{dxyzuv}, we have claimed that for any matrix $A \in \FF^{m \times n}$, the set $[m \wedge n]$ can be partitioned into five types of variables. The next proposition shows that this claim is justified, since any variable in $[m \wedge n]$ is either frailly frozen, firmly frozen or unfrozen in $A$, and if it is frailly frozen in $A$, then it must also be frailly frozen in $A^T$:
\begin{proposition}\label{c81}
Let $A \in \FF^{m \times n}$ and $i \in [m\land n]$. Then 
\begin{align*}
   i \text{ is  frailly frozen in }A \qquad  \Longleftrightarrow \qquad i \text{ is  frailly frozen in } A^T.
\end{align*}
\end{proposition}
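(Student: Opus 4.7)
The plan is to translate the frailly-frozen condition into a condition on ranks via Lemma \ref{p4}, and then exploit the symmetry $\rank(A)=\rank(A^T)$ together with the basic inequality that removing one row or one column decreases the rank by $0$ or $1$. This reduces the proposition to a short arithmetic verification.

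Concretely, introduce the four quantities
\[
a=\rank(A),\quad b_c=\rank(A\langle;i\rangle),\quad b_r=\rank(A\langle i;\rangle),\quad c=\rank(A\langle i;i\rangle).
\]
Two applications of Lemma \ref{p4} give $i\in\cF(A)\Leftrightarrow a-b_c=1$ and $i\in\cF(A\langle i;\rangle)\Leftrightarrow b_r-c=1$. Using $\rank(A^T)=\rank(A)$ and the identity $(A^T)\langle;i\rangle=(A\langle i;\rangle)^T$, the analogous characterisations for the transpose read $i\in\cF(A^T)\Leftrightarrow a-b_r=1$ and $i\in\cF(A^T\langle i;\rangle)\Leftrightarrow b_c-c=1$. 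Since removing one row or one column cannot reduce the rank by more than $1$, each of the four differences $a-b_r$, $a-b_c$, $b_r-c$, $b_c-c$ lies in $\{0,1\}$. By \Cref{r80}, $i$ is frailly frozen in $A$ precisely when $a-b_c=1$ and $b_r=c$, while $i$ is frailly frozen in $A^T$ precisely when $a-b_r=1$ and $b_c=c$.

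It therefore suffices to show that the two conjunctions are equivalent. Assume first that $i$ is frailly frozen in $A$, so $b_c=a-1$ and $b_r=c$. Then $b_r=c\le b_c=a-1$, i.e.\ $a-b_r\ge 1$; combined with $a-b_r\le 1$ this forces $b_r=a-1=b_c$, hence $c=b_r=b_c$, which is exactly the frailly-frozen condition for $A^T$. The converse direction is obtained by applying the same argument to $A^T$ in place of $A$, using that $(A^T)^T=A$.

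The only subtlety I anticipate is bookkeeping the identity $(A^T)\langle;i\rangle=(A\langle i;\rangle)^T$ correctly so that the four rank characterisations are the ones claimed; once this is in place the equivalence is a three-line inequality chase driven entirely by the rank-change bound of $1$ under single row or column deletion. No new ingredients beyond Lemma \ref{p4} and \Cref{r80} are needed.
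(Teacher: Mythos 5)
Your proof is correct and follows essentially the same route as the paper: both translate frailly-frozenness into rank drops via Lemma~\ref{p4}, invoke $\rank(A)=\rank(A^T)$ together with the identification of the transposed submatrices, and close with an arithmetic argument exploiting that removing a single row or column changes the rank by $0$ or $1$. The paper packages that final arithmetic step into Lemma~\ref{l8} (applied to $A\abc{i;i}$, $A\abc{i;}$, $A\abc{;i}$, $A$), whereas you carry it out inline via the quantities $a$, $b_r$, $b_c$, $c$; the underlying reasoning is the same.
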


We prove \Cref{c81} by means of \Cref{p4} and the following observation:
\begin{lemma}\label{l8}
For any matrix $A \in \FF^{m \times n}$, {vectors} $b \in \FF^{m \times 1}$, $c \in \FF^{1 \times n}$ and $f \in \FF$,
\begin{equation}\label{el81}
\rank\begin{pmatrix}A\\c\end{pmatrix}-\rank(A)=0 \quad \text{and} \quad \rank\begin{pmatrix}A&b\\c&f\end{pmatrix}-\rank\begin{pmatrix}A&b\end{pmatrix}=1
\end{equation}
if and only if
\begin{equation}\label{el82}
\rank\begin{pmatrix}A&b\end{pmatrix}-\rank(A)=0 \quad \text{and} \quad \rank\begin{pmatrix}A&b\\c&f\end{pmatrix}-\rank\begin{pmatrix}A\\c\end{pmatrix}=1.
\end{equation}
\end{lemma}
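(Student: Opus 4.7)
\medskip

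\noindent\textbf{Proof plan.} The plan is to abbreviate the four ranks involved and observe that they fit into a very tight ``diamond'' of inequalities, from which the claim will follow by a two-line arithmetic argument; no linear-algebraic machinery beyond the fact that appending a single row or column can increase the rank by at most $1$ is needed. Introduce the shorthand
\[
r_0 := \rank(A), \qquad r_R := \rank\begin{pmatrix}A\\ c\end{pmatrix}, \qquad r_C := \rank\begin{pmatrix}A & b\end{pmatrix}, \qquad r_F := \rank\begin{pmatrix}A & b\\ c & f\end{pmatrix}.
\]
Since each of the four enlargements in the diagram $A \to \begin{pmatrix}A\\ c\end{pmatrix}$, $A \to \begin{pmatrix}A & b\end{pmatrix}$, $\begin{pmatrix}A\\ c\end{pmatrix} \to \begin{pmatrix}A & b\\ c & f\end{pmatrix}$, $\begin{pmatrix}A & b\end{pmatrix} \to \begin{pmatrix}A & b\\ c & f\end{pmatrix}$ adds exactly one row or exactly one column, each of the differences $r_R-r_0$, $r_C-r_0$, $r_F-r_R$, $r_F-r_C$ lies in $\{0,1\}$, and we have the chain $r_0 \le r_R, r_C \le r_F$.

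For the forward direction, assume \cref{el81}, i.e.\ $r_R=r_0$ and $r_F=r_C+1$. Then the chain gives
\[
r_F \;=\; r_C + 1 \;\ge\; r_0 + 1 \;=\; r_R + 1 \;\ge\; r_F,
\]
so all inequalities are equalities, which forces $r_C=r_0$ and $r_F-r_R=1$, which is exactly \cref{el82}. The backward direction is completely symmetric: assuming \cref{el82}, the identical chain with the roles of $(R,b)$ and $(C,c)$ swapped yields $r_R=r_0$ and $r_F-r_C=1$, i.e.\ \cref{el81}.

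The argument has no real obstacle; the only subtlety worth stating explicitly is that ``adding a row (resp.\ column) increases the rank by at most $1$'' is applied in all four directions, which is what allows the sandwich to close. No hypothesis on whether $c$ or $b$ is zero, or whether $f$ is zero, is needed, and the argument works over any field.
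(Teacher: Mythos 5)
Your proof is correct and is essentially the same argument as the paper's: you both exploit the two chains $r_0 \le r_C \le r_F$ and $r_0 \le r_R \le r_F$ together with the fact that each single row or column addition changes the rank by $0$ or $1$, and then squeeze to force all four increments to be pinned down once two of them are assumed. The paper phrases this by introducing the auxiliary quantity $h = r_F - r_0$ and bounding it above and below, while you phrase it as a single closed chain of inequalities, but the content is identical.
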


\begin{proof}[Proof of \Cref{l8}]
Denote $\rank\begin{pmatrix}A&b\\c&f\end{pmatrix}-\rank(A)$ by $h$ and assume that (\ref{el81}) holds. Then
\begin{align} \label{eq_l8_1}
    h=\rank\begin{pmatrix}A&b\end{pmatrix}-\rank(A)+\rank\begin{pmatrix}A&b\\c&f\end{pmatrix}-\rank\begin{pmatrix}A&b\end{pmatrix}\geq \rank\begin{pmatrix}A&b\\c&f\end{pmatrix}-\rank\begin{pmatrix}A&b\end{pmatrix}=1,
\end{align}
as well as
\begin{align} \label{eq_l8_2}
    h=\rank\begin{pmatrix}A&b\\c&f\end{pmatrix}-\rank\begin{pmatrix}A\\c\end{pmatrix}+\rank\begin{pmatrix}A\\c\end{pmatrix}-\rank(A)\leq 1+\rank\begin{pmatrix}A\\c\end{pmatrix}-\rank(A)=1.
\end{align}
Therefore, $h=1$, and we must have equality throughout (\ref{eq_l8_1}) and (\ref{eq_l8_2}). \cref{el82} then follows. 
The converse implication can be shown to be true analogously.
\end{proof}

\begin{proof}[Proof of \Cref{c81}]
The assertion is an immediate consequence of the characterisation of frozen variables in terms of rank decrease upon column removal from \Cref{p4} in combination with \Cref{l8} applied to $A\abc{i,i}, A\abc{i;}, A\abc{;i}$ and $A$, since the rank of a matrix is identical to that of its transpose.
\end{proof}

The final result of this section, \Cref{lr}, connects the five variable categories $\cX(A),\cY(A),\cZ(A),\cU(A),\cV(A)$ from \Cref{dxyzuv} to the following rank changes under symmetric row and column removal:

\begin{lemma}\label{lr}
For any $A \in \FF^{m \times n}$ and $ i\in [m\land n]$, 
\begin{enumerate}[label=(\roman*)]
  \item \makebox[14em][l]{$i \in \cY(A)$}$ \Longleftrightarrow  \qquad\rank(A)-\rank(A\abc{i;i})=2$;
  \item \makebox[14em][l]{$i \in \cX(A) \cup \cU(A) \cup \cV(A) $}$ \Longleftrightarrow \qquad \rank(A)-\rank(A\abc{i;i})=1$;
  \item \makebox[14em][l]{$i \in \cZ(A)$}$  \Longleftrightarrow  \qquad\rank(A)-\rank(A\abc{i;i})=0$.
\end{enumerate}
Thus,
\begin{equation}\label{lr1}
\begin{aligned}
\rank(A)-\rank(A\abc{i;i})&=\ind\{i \in \cX(A)\}+2\cdot \ind\{i \in \cY(A)\}+\ind\{i \in \cU(A)\}+\ind\{i \in \cV(A)\}\\
&=1+\ind\{i \in \cY(A)\}- \ind\{i \in \cZ(A)\}.
\end{aligned}
\end{equation}
\end{lemma}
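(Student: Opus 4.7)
The plan is to split the rank drop into a two-step telescope,
\[
\rank(A) - \rank(A\abc{i;i}) = \brk{\rank(A) - \rank(A\abc{i;})} + \brk{\rank(A\abc{i;}) - \rank(A\abc{i;i})},
\]
and identify each piece via \Cref{p4}. For the second bracket, \Cref{p4} applied to the $(m-1)\times n$ matrix $A\abc{i;}$ directly yields $\rank(A\abc{i;}) - \rank(A\abc{i;i}) = \ind\cbc{i \in \cF(A\abc{i;})}$. For the first bracket, since $\rank(B) = \rank(B^T)$ and $A^T\abc{;i} = A\abc{i;}^T$, \Cref{p4} applied to $A^T$ gives $\rank(A) - \rank(A\abc{i;}) = \ind\cbc{i \in \cF(A^T)}$. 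Consequently
\[
\rank(A) - \rank(A\abc{i;i}) = \ind\cbc{i \in \cF(A^T)} + \ind\cbc{i \in \cF(A\abc{i;})}.
\]

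With this identity in hand, the proof of (i)--(iii) reduces to checking, for each of the five types defined in \Cref{dxyzuv}, whether $i \in \cF(A^T)$ and whether $i \in \cF(A\abc{i;})$. The two ingredients I would invoke are: (a) \Cref{l1.1b} (ii), which gives $i \in \cF(A\abc{i;}) \Rightarrow i \in \cF(A)$ (so variables cannot become firmly frozen without already being frozen), and (b) \Cref{c81}, which ensures that frailly frozen variables of $A$ are also frailly frozen in $A^T$ and in particular lie in $\cF(A^T)$. A quick tabulation then yields: both indicators equal $1$ when $i \in \cY(A)$; both equal $0$ when $i \in \cZ(A)$; exactly the first equals $1$ when $i \in \cX(A)$ (via \Cref{c81}) or $i \in \cU(A)$; and exactly the second equals $1$ when $i \in \cV(A)$.

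Since the five types partition $[m\wedge n]$, the case analysis immediately yields both directions of (i)--(iii), and the first displayed line of (\ref{lr1}) just restates it. The rewriting as $1 + \ind\cbc{i \in \cY(A)} - \ind\cbc{i \in \cZ(A)}$ is then purely arithmetic, using that exactly one of the five indicators equals $1$. The only conceptual step is the appeal to \Cref{c81} in the $\cX(A)$ case, where one must know that frailty is symmetric under transposition; everything else is routine bookkeeping from \Cref{p4,l1.1b}.
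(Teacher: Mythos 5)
Your proposal is correct and matches the paper's own proof essentially step by step: both begin with the two-step telescope via \Cref{p4} to arrive at $\rank(A)-\rank(A\abc{i;i})=\ind\cbc{i\in \mathcal{F}(A^T)}+\ind\cbc{i\in\mathcal{F}(A\abc{i;})}$, and both then invoke \Cref{c81} (together with \Cref{l1.1b}) to read off the five cases; your write-up merely makes the final case tabulation explicit where the paper leaves it as a one-line remark.
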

\begin{proof}
Let $i\in [m\land n]$. \Cref{p4} yields the representation
\begin{align} \label{eq_lr1}
\rank(A)-\rank(A\abc{i;i})=\rank(A)-\rank(A\abc{i;})+\rank(A\abc{i;})-\rank(A\abc{i;i})=\ind\cbc{i\in \mathcal{F}(A^T)}+\ind\cbc{i\in\mathcal{F}(A\abc{i;})}.
\end{align}
Identities (i)-(iii) now follow from (\ref{eq_lr1}) by an application of \Cref{c81}.
\end{proof}

\subsection{Appending a row to a $(\delta,\ell)$-free matrix}\label{sec_aar}

In the present section, we discuss how the rank of a $(\delta,\ell)$-free matrix $A$ changes upon the attachment of a single row $\bm{b}$ with exactly $\ell$ non-zero entries, which are chosen \textit{uniformly} from a subset of the columns of $A$. Recall that in (\ref{eq00}), we had observed that for a general vector $b$ to be in the row span of $A$, it is sufficient that $\supp{b} \subseteq \cF(A)$ and necessary that either $\supp{b} \subseteq \cF(A)$ or $\supp{b}$ forms a proper relation in $A$. These considerations show that in the \textit{complete absence} of ``short'' proper relations in $A$, rank stagnation upon attachment of a vector $\bm{b}$ with $\ell$ non-zero entries can be equivalently described by the event that all variables of $\supp{\bm{b}}$ are frozen in $A$. 


\Cref{l5}, which revisits an argument from the proof of \cite[Lemma 5.4]{coja2022rank}, shows how to transfer the above reasoning to matrices with \textit{few} short proper relations, where the dominant reason for a rank stagnation upon attachment of a vector should still be the event that all variables in its support are frozen.
For convenience of the reader, we revisit the main step of the argument in \cite{coja2022rank}. For this, let $A \in \FF^{m \times n}$ and
\begin{align}
\PR_\ell(A) = \{I \subseteq [n]:  I \text{ is a proper relation of } A \text{ with } |I|= \ell\}\quad \text{and}\quad \PR(A)=\cup_{\ell=2}^{\infty}\PR_\ell(A)
\end{align}
be the set of proper relations of $A$ of size $\ell \geq 2$ as well as the set of all proper relations of $A$, respectively. 

\begin{lemma}[\cite{coja2022rank}]\label{l5}
Fix $\delta >0$, $\ell \in \NN_{\geq 2}$ and $s\in \NN_0$. For any sequence 
$((\bm{b}_{n-s,n}, \bm{b}_{n-s+1,n}, \ldots, \bm{b}_{n,n}))_{n \in \NN}$ such that for all $n$ and $ n_1 \in [n]\setminus [n-s-1]$, $\bm{b}_{n_1,n} \in \mathbb{F}^{1 \times n}$ and 
supp$(\bm{b}_{n_1,n})$ is uniformly distributed over all $\ell$-subsets of $[n_1]$,
\begin{equation}\label{el51}
\sup_{\substack{m \in \{n-s,\ldots, n+s\}\\ n_1 \in \{n-s, \ldots, n\}}} \sup_{\substack{ A\in \FF^{m \times n}:\\ A \text{ is } (\delta, \ell)-\text{free}}}\mathbb{P}\bc{\supp{\bm{b}_{n_1,n}} \in {\rm \PR_\ell}(A)} \leq \delta \ell! + o_n(1),
\end{equation}
and
\begin{equation}\label{el52}
\sup_{\substack{m \in \{n-s,\ldots, n+s\}\\ n_1 \in \{n-s, \ldots, n\}}} \sup_{\substack{ A\in \FF^{m \times n}:\\ A \text{ is } (\delta, \ell)-\text{free}}}\left|\mathbb{E}\brk{\rank_{\FF}\begin{pmatrix}A\\\bm{b}_{n_1,n}\end{pmatrix}}-\rank_{\FF}(A)-\bc{1-\bc{\frac{\abs{\mathcal{F}(A) \cap [n_1]}}{n_1}}^\ell}\right|\leq \delta \ell! + o_n(1).
\end{equation}
\end{lemma}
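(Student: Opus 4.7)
The strategy is to derive \cref{el51} by a direct counting argument against the $(\delta,\ell)$-freeness of $A$, and then to deduce \cref{el52} by combining \cref{el51} with the ``sandwich'' of events from \cref{eq00}, together with a routine comparison of binomial coefficients against powers.

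For \cref{el51}, I would note that since $\supp{\bm{b}_{n_1,n}}$ is uniformly distributed on $\binom{[n_1]}{\ell}$ and any proper relation that equals $\supp{\bm{b}_{n_1,n}}$ must in particular be a size-$\ell$ subset of $[n_1]$,
\[
\PP\bc{\supp{\bm{b}_{n_1,n}} \in \PR_\ell(A)} \;=\; \frac{\abs{\PR_\ell(A) \cap \binom{[n_1]}{\ell}}}{\binom{n_1}{\ell}} \;\leq\; \frac{\abs{\PR_\ell(A)}}{\binom{n_1}{\ell}} \;\leq\; \frac{\delta n^\ell}{\binom{n_1}{\ell}},
\]
where the last bound uses $(\delta,\ell)$-freeness. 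Since $\ell$ and $s$ are fixed and $n_1 \geq n-s$, we have $\binom{n_1}{\ell} = n_1^\ell/\ell! \cdot (1+o_n(1))$ and $(n/n_1)^\ell = 1+o_n(1)$, so the right-hand side equals $\delta\ell! + o_n(1)$ uniformly in $m, n_1$ and in $A$.

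For \cref{el52}, the rank increment is a $\{0,1\}$-valued indicator that equals $1$ exactly when $\bm{b}_{n_1,n}$ is not in the row span of $A$. Applying \cref{eq00} with $|\supp{\bm{b}_{n_1,n}}| = \ell$ gives the nested inclusions
\[
\cbc{\supp{\bm{b}_{n_1,n}} \subseteq \mathcal{F}(A)} \;\subseteq\; \cbc{\bm{b}_{n_1,n} \in \text{rowspan}(A)} \;\subseteq\; \cbc{\supp{\bm{b}_{n_1,n}} \subseteq \mathcal{F}(A)} \cup \cbc{\supp{\bm{b}_{n_1,n}} \in \PR_\ell(A)}.
\]
A union bound together with \cref{el51} then sandwich $\PP(\bm{b}_{n_1,n} \in \text{rowspan}(A))$ within $\delta\ell!+o_n(1)$ of $\PP(\supp{\bm{b}_{n_1,n}} \subseteq \mathcal{F}(A))$. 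Writing $k := |\mathcal{F}(A)\cap[n_1]|$, the uniform distribution of the support gives $\PP(\supp{\bm{b}_{n_1,n}} \subseteq \mathcal{F}(A)) = \binom{k}{\ell}/\binom{n_1}{\ell}$, and the elementary inequality $|\binom{k}{\ell}/\binom{n_1}{\ell} - (k/n_1)^\ell| = O(1/n_1)$ (obtained by expanding the falling-factorial ratio $\prod_{i=0}^{\ell-1}(k-i)/(n_1-i)$ and using $|(k-i)/(n_1-i) - k/n_1| \leq i/(n_1-i)$ for each factor, uniformly in $k \leq n_1$) converts this into $(k/n_1)^\ell + o_n(1)$. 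Combining the three estimates yields \cref{el52}.

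No step is genuinely delicate here; the entire content is essentially packaged in \cref{eq00} and Definition~\ref{d2}. The only care required is verifying that every error bound is uniform over $A \in \FF^{m\times n}$, $m \in \{n-s,\ldots,n+s\}$ and $n_1 \in \{n-s,\ldots,n\}$, which is automatic because all $o_n(1)$ terms come from $n_1 \to \infty$ with $\ell$ and $s$ fixed.
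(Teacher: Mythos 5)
Your proposal is correct and follows essentially the same route as the paper: count $\PR_\ell(A)$ against the uniform draw on $\binom{[n_1]}{\ell}$ using $(\delta,\ell)$-freeness, rewrite the rank increment as $1-\mathbb{P}(\bm b \in \text{rowspan}(A))$, sandwich via \cref{eq00}, and close with the elementary $\binom{k}{\ell}/\binom{n_1}{\ell}=(k/n_1)^\ell+O(1/n_1)$ estimate. The only cosmetic difference is that the paper states the binomial-to-power comparison directly in terms of $\alpha(A)=|\mathcal F(A)\cap[n_1]|/n_1$ rather than your $k$, and keeps the bound $\delta\ell!\bigl(n/(n-s-\ell)\bigr)^\ell$ explicit before absorbing it into $o_n(1)$.
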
  

\begin{proof}
Observe that
\begin{equation}\label{el53}
\mathbb{E}\brk{\rank_{\FF}\begin{pmatrix}A\\\bm{b}_{n_1,n}\end{pmatrix}}-\rank _{\FF}(A)=1-\mathbb{P}\bc{\text{$\bm{b}_{n_1,n}$ is in the span of the rows of $A$}}.
\end{equation}
As discussed in the beginning of the subsection, \cref{eq00} gives
\begin{equation}\label{el54}
\begin{aligned}
\mathbb{P}\bc{\supp{\bm{b}_{n_1,n}}\subseteq \mathcal{F}(A)}\leq& \mathbb{P}\bc{\text{$\bm{b}_{n_1,n}$ is in the span of the rows of $A$}}\\
\leq& \mathbb{P}\bc{\supp{\bm{b}_{n_1,n}}\in \PR_\ell(A)}+\mathbb{P}\bc{\supp{\bm{b}_{n_1,n}}\subseteq \mathcal{F}(A)}.
\end{aligned}
\end{equation}
For any $(\delta,\ell)$-free matrix $A \in \FF^{m \times n}$, $\abs{\PR_\ell(A)}\leq \delta n^\ell$, and therefore
\begin{align}\label{eq_prop}
\mathbb{P}\bc{\supp{\bm{b}_{n_1,n}}\in \PR_\ell(A)}\leq \frac{\abs{\PR_\ell(A)}}{\binom{n_1}{\ell}}\leq \delta \ell !\bc{\frac{n}{n-s-\ell}}^\ell.
\end{align}
Taking the supremum over all $(\delta,\ell)$-free matrices $A \in \FF^{m \times n}$, then $m \in \{n-s,\ldots, n+s\}$ and $n_1 \in \{n-s, \ldots, n\}$ yields (\ref{el51}).
To estimate $\mathbb{P}\bc{\supp{\bm{b}_{n_1,n}}\subseteq \mathcal{F}(A)}$, let $\alpha(A)=\abs{\mathcal{F}(A) \cap [n_1]}/n_1$ be the proportion of frozen variables of $A$ among $[n_1]$. Then 
\begin{align}\label{el55}
\abs{\mathbb{P}\bc{\supp{\bm{b}_{n_1,n}}\subseteq \mathcal{F}(A)} - \alpha(A)^\ell}=&\abs{\binom{n_1\alpha(A)}{\ell}\Big/\binom{n_1}{\ell}-\alpha(A)^\ell}  = O(1/n)
\end{align}
uniformly in $m, n_1, A$.
Combining (\ref{el53}) - (\ref{el55}) yields (\ref{el52}).
\end{proof}

\subsection{Stability of types}\label{Sec_change_stat}
As outlined in \Cref{sec_rifpm}, a central ingredient in  our proof strategy is to show that the proportion of frozen variables in $\bm{T}_{n,t/n}[\bm{\theta}]$ is close to that in $\bm{T}_{n+1,t/n}[\bm{\theta}]$, which is the core theme of this section.
Specifically, we look at the extended variable types $\mathcal{X}, \mathcal{Y}, \mathcal{Z}, \mathcal{U}$ and $\mathcal{V}$ from \Cref{dxyzuv}. For each of these types, we define the share it has among the variables of a matrix with perturbation, where the artificial row-perturbation columns are not taken into account:

\begin{definition}[Proportions of types]\label{def_proportions}
\begin{enumerate}[label=(\roman*)]
\item For $A \in \FF^{n \times n}$ and $\mathcal W \in \{\mathcal{X}, \mathcal{Y}, \mathcal{Z}, \mathcal{U}, \mathcal{V}\}$, we use the non-calligraphic lowercase letter $w$ to denote the proportion of variables $i \in [n]$ of the corresponding type:
\begin{align*}
    w(A[\bm{\theta}])=\frac{\abs{\mathcal{W}(A[\bm{\theta}]) \cap [n]}}{n}.
\end{align*}
\item For $A \in \FF^{n \times n}$, we denote the vector of \textit{all} proportions by
\begin{align*}
  \zeta(A[\bm{\theta}]) = \bc{x(A[\bm{\theta}]), y(A[\bm{\theta}]), z(A[\bm{\theta}]), u(A[\bm{\theta}]), v(A[\bm{\theta}])}.
\end{align*}
\item For $A=\bm{T}_{n,t/n}$ and $\vw_{n,t/n} \in \{ \vx_{n,t/n}, \vy_{n,t/n}, \vz_{n,t/n}, \vu_{n,t/n}, \vv_{n,t/n}, \vze_{n,t/n}\}$, we simply write
\begin{align*}
    \vw_{n,t/n}= w\bc{\bm{T}_{n,t/n}[\bm{\theta}]}.
\end{align*}
\end{enumerate}
\end{definition}

\begin{remark}[Summation of proportions]\label{ra}
 By definition, for any matrix $A \in \FF^{n \times n}$, 
\begin{align}\label{eq_sumone}
  \|\zeta(A[\bm{\theta}])\|_1=  x(A[\bm{\theta}])+ y(A[\bm{\theta}])+ z(A[\bm{\theta}])+ u(A[\bm{\theta}])+ v(A[\bm{\theta}])=1.
\end{align}
Moreover, recall that $\val_{n,p}$ and $\hval_{n,p}$ denote the proportions of frozen variables among $[n]$ in ${\bm{T}}_{n,p}[\bm{\theta}]$ and ${\bm{T}}_{n,p}[\bm{\theta}]^T$, respectively. With the above definitions,
\begin{align}\label{eq_alphaxyuv}
    \val_{n,t/n}=\vx_{n,t/n}+\vy_{n,t/n}+\vv_{n,t/n} \qquad \text{and} \qquad \hval_{n,t/n}=\vx_{n,t/n}+\vy_{n,t/n}+\vu_{n,t/n}.
\end{align}
\end{remark}

With the notation of \Cref{def_proportions}, the main result of the remainder of \Cref{sec_prof} is the following  proposition:

\begin{proposition}[Stability of types]\label{l91}
For any $d>0$, and $\vw \in \{ \vx, \vy, \vz, \vu, \vv, \vze\}$,
\[\mathbb{E}\dabs{\vw_{n,t/n}-\vw_{n+1,t/n}}_1 = o_{n,P}(1), \ \mbox{uniformly in $t\in[0,d]$.}\]
\end{proposition}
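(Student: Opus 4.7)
The plan has two stages: reduce $\Erw\|\vw_{n,t/n}-\vw_{n+1,t/n}\|_1$ to the expected number of coordinates in $[n]$ whose type differs between $\bm{T}_{n,t/n}[\bm\theta]$ and $\bm{T}_{n+1,t/n}[\bm\theta]$ on the coupling event \cref{e001}, then bound this count via the containment of frozen sets (\Cref{l1.1b}) combined with the $(\delta,\ell)$-freeness of \Cref{p1}.

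For the reduction, let $\mathfrak{C}$ denote the event \cref{e001}. By \Cref{lc1}~(iii) applied to the row and column perturbation families, $\PP(\mathfrak{C}) \geq (n/(n+1))^{2P} = 1 - o_{n,P}(1)$, and since all proportions are at most $1$ the contribution from $\mathfrak{C}^c$ to the expected $L^1$ difference is $o_{n,P}(1)$. On $\mathfrak{C}$, write $A := \bm{T}_{n,t/n}[\bm\theta]$ and $B := \bm{T}_{n+1,t/n}[\bm\theta]$, so $B$ is $A$ with row and column $n+1$ appended. For each scalar type $\mathcal{W}\in\{\mathcal X, \mathcal Y, \mathcal Z, \mathcal U, \mathcal V\}$, the identity
\[(n+1)\vw_{n+1,t/n} - n\vw_{n,t/n} = \ind\{n+1\in\mathcal{W}(B)\} + \bigl(|\mathcal{W}(B)\cap[n]| - |\mathcal{W}(A)\cap[n]|\bigr)\]
yields $|\vw_{n+1,t/n}-\vw_{n,t/n}| \leq (2 + |D_{\mathcal W}|)/(n+1)$, where $D_{\mathcal W} := \{i \in [n]: \ind\{i\in\mathcal{W}(A)\} \neq \ind\{i\in\mathcal{W}(B)\}\}$. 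The task reduces to $\Erw |D_{\mathcal W}| = o_{n,P}(n)$; the vector case $\vze$ then follows by summation over the five types.

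To bound $\Erw|D_{\mathcal W}|$, introduce the intermediate matrices $B_r := B\langle;n+1\rangle$ and $B_c := B\langle n+1;\rangle$. \Cref{l1.1b} gives the inclusions $\mathcal F(A)\cap[n] \subseteq \mathcal F(B_r)\cap[n] \supseteq \mathcal F(B)\cap[n]$ (row addition can only freeze, column addition can only unfreeze), together with the analogous inclusions for $A^T, B_c^T, B^T$. Via \Cref{l8.4}, the row-removed frozen sets $\mathcal F(A\langle i;\rangle)$ identify with $\mathcal F$ of column-augmented matrices, so the same framework applies to them as well. Since each type is decoded from the four frozen sets $\mathcal F(A), \mathcal F(A^T), \mathcal F(A\langle i;\rangle), \mathcal F(A^T\langle i;\rangle)$ (\Cref{dxyzuv}), the count $|D_{\mathcal W}|$ is dominated by nonnegative ``freezing increments'' such as $\Delta_1 := |\mathcal F(B_r)\cap[n]| - |\mathcal F(A)\cap[n]|$ and $\Delta_2 := |\mathcal F(B_r)\cap[n]| - |\mathcal F(B)\cap[n]|$, together with analogues for $A^T$ and for the column-augmented matrices that encode the row-removed frozen sets.

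It remains to show that each such increment has expectation $o_{n,P}(n)$; we sketch this for $\Delta_1$, the rest being analogous by transposition and \Cref{l8.4}. If the appended row $\vv := B(n+1,)\langle;n+1\rangle$ lies in the row span of $A$, then $\mathcal F(B_r) = \mathcal F(A)$ and $\Delta_1 = 0$. Otherwise, a direct check gives
\[\Delta_1 = \big|\{i \in [n]\setminus \mathcal F(A): \vv \in \text{rowspan}(A) + \FF\,e(i) \setminus \text{rowspan}(A)\}\big|,\]
and for two distinct such $i, j$, writing $\vv = y_i A + \mu_i e(i) = y_j A + \mu_j e(j)$ with $\mu_i, \mu_j \in \FF^*$ shows $\mu_i e(i) - \mu_j e(j) \in \text{rowspan}(A)$, making $\{i,j\}$ a proper $2$-relation of $A$. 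Double-counting pairs then yields $\Delta_1(\Delta_1 - 1) \leq 2\,|\PR_2(A)|$. Applying \Cref{p1} with $L=2$ and any fixed $\delta > 0$, $A$ is $(\delta,2)$-free with probability $1-o_{n,P}(1)$, so $\Delta_1 \leq 1+\sqrt{2\delta}\,n$ on this event, whence $\Erw\Delta_1/n = o_{n,P}(1)$ upon letting $\delta \downarrow 0$. The hardest part will be extending this template uniformly over the row-removed variants: since $A\langle i;\rangle$ varies with $i$, one must simultaneously control increments of the form $|\mathcal F(B\langle i; n+1\rangle)\cap[n]| - |\mathcal F(A\langle i;\rangle)\cap[n]|$, which requires a $(\delta,2)$-freeness statement for each column-augmented matrix arising via \Cref{l8.4}—a nontrivial adaptation of \Cref{p1} for which \Cref{c81} proves convenient, as it ensures the row-removal and column-removal pictures are compatible.
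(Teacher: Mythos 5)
Your first-stage reduction to $\Erw|D_{\mathcal W}| = o_{n,P}(n)$ on the coupling event $\mathfrak{P}_n$ from \cref{e001} is correct, and your double-counting argument for $\Delta_1$ is a genuinely different and rather elegant alternative to what the paper does. The paper uses exchangeability (\Cref{lem_exc0}, \Cref{cla_exc}) to reduce to the probability that a \emph{single} fixed coordinate changes type and then proves that probability is small (\Cref{l7}, \Cref{l71}); you instead bound the total count of changed coordinates directly. The proper-$2$-relation argument---if $i,j$ both become newly frozen upon appending a row $\vv$ to $A$ then $\{i,j\}$ is a proper $2$-relation of $A$, whence $\Delta_1(\Delta_1-1)\le 2|\PR_2(A)|$---is valid and buys a clean deterministic quadratic bound that circumvents the need for exchangeability in this sub-step.

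However, there is a genuine gap in your treatment of the row-removed comparisons, which are precisely what distinguish the types $\cX,\cY,\cU,\cV$ from one another (via firm versus frail freezing, \Cref{dsc}). You correctly flag that this is "the hardest part," but your proposed fix does not close the gap, for two separate reasons. First, the quantity you propose to control, $|\mathcal F(B\langle i; n+1\rangle)\cap[n]| - |\mathcal F(A\langle i;\rangle)\cap[n]|$, measures the \emph{total} freezing increment of the $i$-th row-removed pair, whereas what feeds into $|D_{\mathcal W}|$ is, as $i$ varies, the indicator of whether coordinate $i$ \emph{itself} changes status between $\mathcal F(A\langle i;\rangle)$ and $\mathcal F(B\langle i;\rangle)$. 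These are different quantities, and controlling the former for each fixed $i$ does not control the latter summed over $i$. Second, and more fundamentally, the double-counting template itself breaks. Passing through \Cref{l8.4} one would want: if $i\neq j$ both satisfy $i\in\mathcal F\bigl((B_r\,e(i)^T)\bigr)\setminus\mathcal F\bigl((A\,e(i)^T)\bigr)$ and the analogue for $j$, then $\{i,j\}$ is a proper $2$-relation of some fixed matrix. One does obtain from the two representations that $\{i,j\}$ is a \emph{relation} of $B_r$, but since $\mathcal F\bigl((B_r\,e(i)^T)\bigr)\subseteq\mathcal F(B_r)$ by \Cref{l1.1b}, both $i$ and $j$ lie in $\mathcal F(B_r)$, so $\{i,j\}\setminus\mathcal F(B_r)=\emptyset$ and the relation is \emph{not} proper. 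Consequently $(\delta,2)$-freeness gives no bound at all, and neither \Cref{c81} nor any adaptation of \Cref{p1} to the family $\{(A\,e(i)^T)\}_{i\in[n]}$ (which would anyway require a union bound over $n$ events, destroying the $o_{n,P}(1)$ estimate) remedies this. The paper's \Cref{l71} circumvents the varying-matrix problem by an entirely different route: it observes that $\bm{T}_{n,t/n}[\bm\theta]\langle i;i\rangle^T$ has the conditional law of $\bm{T}_{n-1,t/n}[\bm\theta]$, introduces the auxiliary stability event $\mathfrak S_{n,2}$ controlled by \Cref{l7}, and relates a status change of $i$ to $\supp(\bm{T}_{n,t/n}(,i))$ being a proper relation in the doubly-deleted transposed matrices, bounded via \Cref{l5}. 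Without something playing this role, your argument does not extend to the firm/frail distinction.
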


In light of \Cref{l91} and \cref{eq_alphaxyuv}, it is tempting to conjecture that the proportions $\val_{n,d/n}$ of frozen variables of $\bm{T}_{n,d/n}[\bm{\theta}]$ converge in a suitable sense. Unfortunately, this conjecture turns out to be {incorrect}, and one of the implications of our present proof is that $\val_{n,d/n}$ does not converge for $d>\eul$. Despite this complication, the strictly weaker statement of \Cref{l91} is sufficient for our purposes.

The rest of \Cref{Sec_change_stat} is organized as follows: In \Cref{sec_exchange}, we study the impact of symmetric row- and column relabelling on the proper relations and variable types of a given matrix. In \Cref{sec_changefrozen}, building on Sections \ref{sec_prelifv}, \ref{sec_aar} and \ref{sec_exchange}, we prove that any fixed variable is unlikely to change from frozen to unfrozen, or the other way round, under one-step matrix growth of $\bm{T}_{n,d/n}[\bm{\theta}]$ and a related matrix. Finally, we present the proof of \Cref{l91} in \Cref{sec_sop}.

\subsubsection{Row- and column exchangeability}\label{sec_exchange}
In the following proofs, exchangeability arguments play an important role. We prepare these arguments in the current section. Throughout this section, for $k \in \NN$, let $\mathcal{S}_{k}$ denote the symmetric group of $[k]$.

\begin{definition}
Let $A\in\FF^{n \times n}$.
    For a permutation $\pi \in \mathcal{S}_{n}$, define the matrix $A^\pi$ by setting
    \begin{align}
        A^\pi(i,j) = A(\pi^{-1}(i),\pi^{-1}(j)), \qquad \text{ for } i,j \in [n].
    \end{align}
\end{definition}
$A^\pi$ is the matrix that arises from $A$ through joint relabelling of the rows and columns according to $i \mapsto \pi^{-1}(i)$. 

\begin{lemma}\label{lem_exc0}
For any $\pi \in \mathcal{S}_n$ and $p \in [0,1]$, $\bm{T}_{n,p}^{\pi}[\bm{\theta}] \stackrel{\text{d}}{=} \bm{T}_{n,p}[\bm{\theta}]$. 
\end{lemma}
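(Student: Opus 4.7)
The plan is to reduce the claim to distributional invariance of the unperturbed matrix $\bm{T}_{n,p}$ under joint row and column relabelling, and then to invoke independence of the canonical perturbation from $\bm{T}_{n,p}$.

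First, I would verify that $\bm{T}_{n,p}^{\pi} \stackrel{d}{=} \bm{T}_{n,p}$. Define $\sigma_\pi \in \mathcal{S}_N$ by letting it act as $\pi^{-1}$ on $[n]$ and as the identity on $[N]\setminus[n]$, and set $\tilde{\vtu} := \vtu \circ \sigma_\pi$. Since $\vtu$ is uniform on $\mathcal{S}_N$, left-invariance of the uniform measure on the group gives $\tilde{\vtu} \stackrel{d}{=} \vtu$. Inspecting \cref{ec1_2} and using the symmetry $J_N(a,b) = J_N(b,a)$, for any $1 \le i < j \le n$ one can rewrite $\bm{T}_{n,p}^{\pi}(i,j) = \bm{T}_{n,p}(\pi^{-1}(i), \pi^{-1}(j))$ as $\ind\cbc{\bm{q}(\tilde{\vtu}(a), \tilde{\vtu}(b)) < p}\, J_N(\tilde{\vtu}(a), \tilde{\vtu}(b))$, where $(a,b)$ is an appropriately ordered pair determined by $i,j$ and the sign of $\pi^{-1}(i)-\pi^{-1}(j)$. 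Since the family $(\bm{q}(a,b))_{a,b\geq 1}$ is i.i.d., the $\binom{n}{2}$ entries of $\bm{q}$ that enter $\bm{T}_{n,p}^{\pi}$ are, just as for $\bm{T}_{n,p}$, an injectively-indexed family of i.i.d.\ $\mathrm{Unif}[0,1]$ random variables; combining this with $\tilde{\vtu} \stackrel{d}{=} \vtu$ and marginalising over the random permutation yields $\bm{T}_{n,p}^{\pi} \stackrel{d}{=} \bm{T}_{n,p}$.

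Second, I would unfold \Cref{d34}: both $\bm{T}_{n,p}[\bm{\theta}]$ and $\bm{T}_{n,p}^{\pi}[\bm{\theta}]$ are obtained by appending the \emph{same} column-perturbation block $\THETA_c[n|n,\bm{\theta}_c]$ on the right and the \emph{same} row-perturbation block $\THETA_r[\bm{\theta}_r, n|n]$ at the bottom of $\bm{T}_{n,p}$ and $\bm{T}_{n,p}^{\pi}$, respectively. By the standing assumption following \Cref{lc1}, the perturbation families $\{\THETA_r[\theta_r,n_1|n_2]\}$ and $\{\THETA_c[m_1|m_2,\theta_c]\}$ together with $\bm{\theta}$ are independent of $\bm{T}_{n,p}$, hence also of the deterministic function $\bm{T}_{n,p}^{\pi}$.

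Combining these two observations, the joint law of the quadruple $(\bm{T}_{n,p}, \bm{\theta}, \THETA_r[\bm{\theta}_r,n|n], \THETA_c[n|n,\bm{\theta}_c])$ coincides with that of $(\bm{T}_{n,p}^{\pi}, \bm{\theta}, \THETA_r[\bm{\theta}_r,n|n], \THETA_c[n|n,\bm{\theta}_c])$. Since $\bm{T}_{n,p}[\bm{\theta}]$ and $\bm{T}_{n,p}^{\pi}[\bm{\theta}]$ arise as the same measurable function of such a quadruple, the desired equality in distribution follows. I anticipate no real obstacle; the only mild subtlety is the case split on the sign of $\pi^{-1}(i)-\pi^{-1}(j)$ in step one, which is absorbed entirely by the symmetry $J_N \in \syN$ and the full i.i.d.\ structure of $\bm{q}$.
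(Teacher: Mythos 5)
Your proof is correct and follows essentially the same route as the paper's: reduce the claim to $\bm{T}_{n,p}^{\pi} \stackrel{\text{d}}{=} \bm{T}_{n,p}$ by absorbing $\pi^{-1}$ into the uniform permutation $\vtu$, then invoke independence of the perturbation ensembles from $\bm{T}_{n,p}$. You are somewhat more careful than the paper about the case split on the sign of $\pi^{-1}(i)-\pi^{-1}(j)$ and the role of the i.i.d.\ array $\bm{q}$ (the paper simply writes $\bm{T}_{n,p}(i,j)=\bm{A}_{N,p}(\vtu(i),\vtu(j))$ and moves on), but this is a cosmetic difference, not a different method.
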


\begin{proof}
Recall the definition of $\bm{T}_{n,p}$ in \cref{ec1_2}, according to which
\[\bm{T}_{n,p}(i,j)=\bm{A}_{N,p}(\vtu(i),\vtu(j)) \qquad \text{ for } i,j \in [n].\]
Hence,
\[\bm{T}^\pi_{n,p}(i,j)=\bm{T}_{n,p}(\pi^{-1}(i),\pi^{-1}(j))=\bm{A}_{N,p}(\vtu\circ \pi^{-1}(i),\vtu\circ \pi^{-1}(j)).\]
Since $\vtu$ is a uniform permutation of $[N]$, also $\vtu\circ \pi^{-1}$ is a uniform permutation of $[N]$, where we view $\pi$ as a permutation of $[N]$ that leaves $\{n+1, \ldots, N\}$ fixed. Thus,
\[\bm{T}_{n,p}^{\pi} \stackrel{\text{d}}{=} \bm{T}_{n,p}.\]
Independence of $\bm{T}_{n,p}$, $\vtu$ and the row- and column-perturbation matrices now implies that
$\bm{T}_{n,p}^{\pi}[\bm{\theta}] \stackrel{\text{d}}{=} \bm{T}_{n,p}[\bm{\theta}],$ 
as desired.
\end{proof}

\begin{corollary}\label{lem_exc}
Let $\pi\in \mathcal{S}_n$ and $I=\cbc{i_1,\ldots,i_k}\subset [n]$. Setting $I^\pi=\cbc{\pi(i_1),\ldots,\pi(i_k)}$, 
\begin{align*}
    \PP(\mbox{$I$ is a proper relation of $\bm{T}_{n,p}[\bm{\theta}]$})=\PP(\mbox{$I^\pi$ is a proper relation of $\bm{T}_{n,p}[\bm{\theta}]$}).
\end{align*}
\end{corollary}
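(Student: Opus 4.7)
The plan is to reduce the claim to the distributional invariance provided by \Cref{lem_exc0}, together with the basic observation that proper relations are covariant under joint row/column relabellings. First, I will extend $\pi \in \mathcal{S}_n$ to a permutation $\tilde\pi$ of $[n + \bm\theta_c]$ (and, on the row side, of $[n + \bm\theta_r]$) by letting $\tilde\pi$ act as the identity on the perturbation indices $\{n+1,\ldots,n+\bm\theta_c\}$ and $\{n+1,\ldots,n+\bm\theta_r\}$. Writing $B := \bm{T}_{n,p}[\bm\theta]$ and $B^{\tilde\pi}$ for the matrix obtained from $B$ by joint relabelling of its rows and columns via $\tilde\pi$, a direct inspection of \Cref{d2} shows that for any $I \subseteq [n]$, $I$ is a proper relation of $B$ if and only if $I^\pi = \tilde\pi(I)$ is a proper relation of $B^{\tilde\pi}$: if $y$ is a representation of $I$ in $B$, the vector $y'$ defined by $y'(i) = y(\tilde\pi^{-1}(i))$ satisfies $(y'B^{\tilde\pi})(j) = (yB)(\tilde\pi^{-1}(j))$, and hence is a representation of $I^\pi$ in $B^{\tilde\pi}$, with the same correspondence applying to the set of frozen variables and therefore to proper relations.

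Given this deterministic reduction, it remains to prove that $B^{\tilde\pi} \stackrel{\text{d}}{=} B$. I will decompose $B$ into its four natural blocks: the top-left block $\bm{T}_{n,p}$, the top-right perturbation block $\bm{\Theta}_c[n|n,\bm\theta_c]$, the bottom-left perturbation block $\bm{\Theta}_r[\bm\theta_r, n|n]$, and the bottom-right zero block. Since $\tilde\pi$ was chosen to fix the perturbation indices pointwise, applying $\tilde\pi$ to the rows and columns of $B$ transforms these blocks separately. The top-left block becomes $\bm{T}_{n,p}^{\pi}$, which is distributed as $\bm{T}_{n,p}$ by the same argument as in the proof of \Cref{lem_exc0}. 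The top-right block becomes $\bm{\Theta}_c[n|n,\bm\theta_c]$ with its rows permuted by $\pi$; since by \Cref{d1} the columns of $\bm{\Theta}_c[n|n,\bm\theta_c]$ are independent and each places its unique non-zero entry in a uniformly random row of $[n]$, this row permutation preserves the distribution. The bottom-left block is handled symmetrically using exchangeability of the columns of $\bm{\Theta}_r[\bm\theta_r,n|n]$, and the zero block is unchanged. Mutual independence of $\bm{T}_{n,p}$, the two perturbation ensembles, and $\bm\theta$ assembles the block-wise identities into $B^{\tilde\pi} \stackrel{\text{d}}{=} B$, which combined with the deterministic equivalence from the first paragraph yields the corollary.

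I do not anticipate substantial obstacles. The only mild subtlety lies in bookkeeping the independence structure and verifying that the random perturbation dimensions $\bm\theta_r,\bm\theta_c$ are genuinely unchanged under the relabelling; this is automatic since $\tilde\pi$ fixes the perturbation indices. Everything else is a direct application of \Cref{lem_exc0} and of the explicit uniform-and-independent description of the perturbation matrices from \Cref{d1}.
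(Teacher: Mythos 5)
Your proposal is correct and takes essentially the same approach as the paper: reduce to the covariance of proper relations under joint row/column relabelling and then invoke the distributional invariance of \Cref{lem_exc0}. You are, however, more careful than the paper's one-line argument, which asserts a deterministic equivalence between ``$I$ is a proper relation of $\bm{T}_{n,p}[\bm\theta]$'' and ``$I^\pi$ is a proper relation of $\bm{T}_{n,p}^\pi[\bm\theta]$''; strictly speaking, jointly relabelling the first $n$ row and column indices of $\bm{T}_{n,p}[\bm\theta]$ also permutes the rows of the column-perturbation block and the columns of the row-perturbation block, so one lands on your $B^{\tilde\pi}$, not literally on $\bm{T}_{n,p}^\pi[\bm\theta]$. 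Your block-wise distributional argument (exchangeability of the uniform unit-entry positions in $\bm\Theta_r$ and $\bm\Theta_c$, together with the mutual independence of $\bm{T}_{n,p}$, the perturbation ensembles and $\bm\theta$) is exactly what is needed to close this, and it is correct.
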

\begin{proof}
    Note that
    \begin{align*}
        \mbox{$I$ is a proper relation of $\bm{T}_{n,p}[\bm{\theta}]$}\quad\Longleftrightarrow\quad \mbox{$I^\pi$ is a proper relation of $\bm{T}_{n,p}^\pi[\bm{\theta}]$}.
    \end{align*}
    The desired result now follows from \Cref{lem_exc0}.
\end{proof}

\begin{lemma}\label{cla_exc}
For any $A \in \FF^{n \times n}$, $\pi \in \mathcal{S}_n$, and $\mathcal W \in \{\mathcal{X}, \mathcal{Y}, \mathcal{Z}, \mathcal{U}, \mathcal{V}\}$,
\[i\in \mathcal{W}(A[\bm{\theta}])\quad\Longleftrightarrow\quad \pi(i)\in \mathcal{W}(A^\pi[\bm{\theta}]).\]
As a consequence,
\[\zeta(A[\bm{\theta}])=\zeta(A^\pi[\bm{\theta}]).\]
\end{lemma}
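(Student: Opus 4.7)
The plan is to reduce the claim to the obvious equivariance of rank and frozen-ness under joint row–column relabeling, by exhibiting a coupling that makes $A^\pi[\bm\theta]$ into a permuted copy of $A[\bm\theta]$.

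First, I would couple the perturbations of the two matrices. For a fixed realization of $\bm\theta = (\bm\theta_r,\bm\theta_c)$ and of the perturbation matrices $\bm\Theta_c := \bm\Theta_c[n\mid n,\bm\theta_c]$ and $\bm\Theta_r := \bm\Theta_r[\bm\theta_r, n\mid n]$ appearing in $A[\bm\theta]$, define
\[
\bm\Theta_c'(i,k) := \bm\Theta_c(\pi^{-1}(i),k), \qquad \bm\Theta_r'(k,j) := \bm\Theta_r(k,\pi^{-1}(j)),
\]
and use $(\bm\Theta_r',\bm\Theta_c')$ as the perturbation in $A^\pi[\bm\theta]$. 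Since the distributions from Definition~\ref{d1} put the unique $1$ of each column (resp.\ row) at a uniformly chosen index in $[n]$ and $\pi$ is a bijection of $[n]$, the permuted matrices $\bm\Theta_c',\bm\Theta_r'$ have the same laws as $\bm\Theta_c,\bm\Theta_r$, so this is a valid coupling of the two random matrices.

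Next, extend $\pi$ to permutations $\hat\pi_r \in \mathcal S_{n+\bm\theta_r}$ and $\hat\pi_c \in \mathcal S_{n+\bm\theta_c}$ by fixing $\{n+1,\ldots,n+\bm\theta_r\}$ and $\{n+1,\ldots,n+\bm\theta_c\}$, respectively. A direct entrywise check using Definition~\ref{d34} together with the identity $A^\pi(i,j)=A(\pi^{-1}(i),\pi^{-1}(j))$ shows that, with the coupling above,
\[
A^\pi[\bm\theta](i,j) \;=\; A[\bm\theta]\bigl(\hat\pi_r^{-1}(i),\hat\pi_c^{-1}(j)\bigr) \qquad \text{for all } (i,j),
\]
so $A^\pi[\bm\theta]$ is obtained from $A[\bm\theta]$ by the joint row–column relabeling $(\hat\pi_r,\hat\pi_c)$. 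At this point the problem reduces to a deterministic equivariance statement.

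I would then verify the following general equivariance: for any $M \in \FF^{p\times q}$, any $\sigma_r\in\mathcal S_p$, $\sigma_c \in \mathcal S_q$, and any $i\in [q]$, setting $M'(a,b)=M(\sigma_r^{-1}(a),\sigma_c^{-1}(b))$, one has
\[
i\in\mathcal F(M) \;\Longleftrightarrow\; \sigma_c(i)\in\mathcal F(M'),
\]
and analogously for row removal, $i\in\mathcal F(M\langle j;\rangle) \Longleftrightarrow \sigma_c(i)\in\mathcal F(M'\langle \sigma_r(j);\rangle)$. Both follow immediately from Definition~\ref{d2}: if $y$ represents $\{i\}$ in $M$, then $y$ precomposed with $\sigma_r^{-1}$ represents $\{\sigma_c(i)\}$ in $M'$, since supports get relabeled by $\sigma_c$; the row-removal version is the same after noting that removing row $j$ of $M$ corresponds to removing row $\sigma_r(j)$ of $M'$. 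Together with the analogous statement for the transpose (which falls under the same lemma with the roles of rows and columns swapped), this equivariance transfers to each of the five categories of Definition~\ref{dxyzuv}: frail, firm, complete, and the two mixed types are all Boolean combinations of statements of the form ``$i\in\mathcal F(\cdot)$'' or ``$i\in\mathcal F(\cdot\langle i;\rangle)$'' applied to $M$ and $M^T$.

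Applying this equivariance with $M=A[\bm\theta]$, $(\sigma_r,\sigma_c)=(\hat\pi_r,\hat\pi_c)$, and using that $\hat\pi_c$ restricts to $\pi$ on $[n]$, I obtain for every $i\in[n]$ and every $\mathcal W\in\{\mathcal X,\mathcal Y,\mathcal Z,\mathcal U,\mathcal V\}$ the desired equivalence $i\in\mathcal W(A[\bm\theta]) \Longleftrightarrow \pi(i)\in\mathcal W(A^\pi[\bm\theta])$. Summing the indicators over $i\in[n]$ and dividing by $n$, the bijectivity of $\pi$ on $[n]$ gives the pathwise equality of proportions $\zeta(A[\bm\theta])=\zeta(A^\pi[\bm\theta])$. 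The only real work is the bookkeeping in the equivariance step for frail frozen variables, since that notion involves a row-removal operation whose index must be carried through the permutation correctly; but this is immediate once one writes row removal as in Lemma~\ref{l8.4}.
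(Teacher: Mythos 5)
Your proof is correct, and it is genuinely more careful than the proof in the paper. For the core equivariance you work directly from the representation-based characterisation of frozen variables in \Cref{d2}, tracking how a representation $y$ of $\{i\}$ in $M$ is carried by $\sigma_r$ to a representation of $\{\sigma_c(i)\}$ in $M'$; the paper instead routes through the rank-decrease characterisation of \Cref{p4} together with the invariance of rank under joint permutation. That difference is cosmetic.

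The substantive difference is your explicit coupling of the perturbations, and this is not an optional refinement --- it is precisely the step the paper's proof is missing. The paper establishes \cref{eqv_pi1} and \cref{eqv_pi2} for the \emph{unperturbed} matrices $A$ and $A^\pi$ and then asserts that the desired results follow, but this does not transfer to $A[\bm\theta]$ and $A^\pi[\bm\theta]$. Under \Cref{d34} these two matrices share the \emph{same} realisations of $\THETA_r[\bm\theta_r,n\vert n]$ and $\THETA_c[n\vert n,\bm\theta_c]$, so $A^\pi[\bm\theta]$ is \emph{not} a joint row--column relabeling of $A[\bm\theta]$: the inner $n\times n$ block is permuted while the pinning rows and columns are left in place. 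Read pointwise under this shared perturbation, the lemma can actually fail. For instance, take $n=3$, let $A$ be the $\{0,1\}$-adjacency matrix of the single edge $\{1,2\}$, let $\pi$ be the transposition of $1$ and $3$, and take the realisation $\bm\theta=(1,1)$, $\THETA_r[1,3\vert 3]=e_3(1)$, $\THETA_c[3\vert 3,1]=e_3(1)^T$. A direct computation gives $\mathcal{F}(A[\bm\theta])\cap[3]=\{1\}$ while $\mathcal{F}(A^\pi[\bm\theta])\cap[3]=\{1,2,3\}$, so the biconditional fails for $i=2$ and the stated equality $\zeta(A[\bm\theta])=\zeta(A^\pi[\bm\theta])$ fails too. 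Your $(\bm\Theta_r',\bm\Theta_c')$, obtained by permuting the non-zero indices by $\pi$, is exactly the coupling under which $A^\pi[\bm\theta]$ \emph{is} the relabeling $(\hat\pi_r,\hat\pi_c)$ of $A[\bm\theta]$, and since a uniform index on $[n]$ is invariant in law under a fixed permutation of $[n]$, the coupling preserves the distribution of the perturbation. This restores the pointwise equivalence and the equality of $\zeta$ under the coupling, which is exactly the form of the statement that the downstream distributional uses in \Cref{cor_proportions} and \Cref{l91} actually need. Keep the coupling explicit; if anything, the lemma as stated in the paper should be rephrased so that the coupling (or a distributional equality) is visible in the claim.
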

\begin{proof}
By the determinantal rank characterisation and the Leibniz determinant formula, the rank of $A$ stays unchanged under the permutation $\pi$.  By \Cref{p4},
\begin{align}\label{eqv_pi1}
    i\in \mathcal{F}(A)\  \Longleftrightarrow  \ \rank\bc{A}-\rank\bc{A\abc{;i}}=1\ \Longleftrightarrow  \  \rank\bc{A^\pi}-\rank\bc{A^\pi\abc{;\pi(i)}}=1 \ \Longleftrightarrow  \  \pi(i)\in \mathcal{F}(A^\pi).
\end{align}
Analogously, 
\begin{align}\label{eqv_pi2}
i\in \mathcal{F}(A\abc{i;})\quad \Longleftrightarrow \quad \pi(i)\in \mathcal{F}(A^\pi\abc{\pi(i);}).
\end{align}
The desired results now follow from \cref{eqv_pi1} and \cref{eqv_pi2}.
\end{proof}

In particular, we will make frequent use of the following corollary:

\begin{corollary}\label{cor_proportions}
    For any $i \in [n]$, $p \in [0,1]$ and $\mathcal W \in \{\mathcal{X}, \mathcal{Y}, \mathcal{Z}, \mathcal{U}, \mathcal{V}\}$,
\begin{align*}
    \PP\bc{i \in \cW\bc{\bm{T}_{n,p}[\bm{\theta}]}} = \Erw\brk{\bm w_{n,p}}.
\end{align*}
\end{corollary}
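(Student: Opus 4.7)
The plan is to deduce this corollary directly from the exchangeability structure that was already set up in \Cref{lem_exc0} and \Cref{cla_exc}, via a standard symmetrisation argument. The corollary is essentially saying that $\PP(i \in \cW(\bm{T}_{n,p}[\bm{\theta}]))$ does not depend on the choice of the coordinate $i \in [n]$, so it must equal the average proportion of type-$\cW$ variables.

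First I would fix arbitrary $i, j \in [n]$ and pick any permutation $\pi \in \mathcal{S}_n$ satisfying $\pi(j) = i$. Applying \Cref{cla_exc} to $A = \bm{T}_{n,p}$, the event $\{j \in \cW(\bm{T}_{n,p}[\bm{\theta}])\}$ coincides with $\{i \in \cW(\bm{T}_{n,p}^{\pi}[\bm{\theta}])\}$. On the other hand, \Cref{lem_exc0} yields $\bm{T}_{n,p}^{\pi}[\bm{\theta}] \stackrel{\text{d}}{=} \bm{T}_{n,p}[\bm{\theta}]$, so the probabilities of the corresponding type-$\cW$ events coincide. Combining these two facts gives
\[
\PP\bc{j \in \cW(\bm{T}_{n,p}[\bm{\theta}])} = \PP\bc{i \in \cW(\bm{T}_{n,p}[\bm{\theta}])}
\]
for every pair $i,j \in [n]$.

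Once this invariance under coordinate relabelling is established, the conclusion follows immediately by averaging over $j \in [n]$:
\[
\PP\bc{i \in \cW(\bm{T}_{n,p}[\bm{\theta}])} = \frac{1}{n}\sum_{j=1}^{n}\PP\bc{j \in \cW(\bm{T}_{n,p}[\bm{\theta}])} = \frac{1}{n}\Erw\abs{\cW(\bm{T}_{n,p}[\bm{\theta}]) \cap [n]} = \Erw\brk{\bm w_{n,p}},
\]
where the final equality is the definition of $\bm w_{n,p}$ from \Cref{def_proportions}. There is no real obstacle here: all the nontrivial work has been done in \Cref{lem_exc0} (which handles the distributional invariance of the perturbed matrix under joint row/column relabelling via the uniform permutation $\vtu$ that is part of the definition of $\bm{T}_{n,p}$) and in \Cref{cla_exc} (which shows that the type classification is equivariant with respect to such relabelling). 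The only point to double-check is that the argument applies to all five types $\cW \in \{\cX,\cY,\cZ,\cU,\cV\}$ uniformly, which is built into the statement of \Cref{cla_exc}.
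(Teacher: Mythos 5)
Your proof is correct and follows essentially the same route as the paper's: combine the distributional invariance of $\bm{T}_{n,p}^{\pi}[\bm{\theta}]$ from \Cref{lem_exc0} with the type equivariance of \Cref{cla_exc}, then average over coordinates. If anything, your version is cleaner: the paper's proof invokes ``the transposition of $n+1$ and $i$'' as an element of $\mathcal{S}_n$ and averages over $[n+1]$ with normalisation $1/(n+1)$, which does not quite match the definition $\vw_{n,p} = |\cW(\bm{T}_{n,p}[\bm{\theta}]) \cap [n]|/n$; your choice of a transposition of $i$ and $j$ within $[n]$ and the average over $[n]$ is the correct bookkeeping.
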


\begin{proof}
Let $\pi \in \mathcal S_n$ be the transposition of $n+1$ and $i$. Then Lemmas \ref{cla_exc} and \ref{lem_exc0} together imply that 
$$\PP\bc{n+1 \in \cW\bc{\bm{T}_{n,p}[\bm{\theta}]}} = \PP\bc{\pi(n+1) \in \cW\bc{\bm{T}_{n,p}^{\pi}[\bm{\theta}]}} = \PP\bc{i \in \cW\bc{\bm{T}_{n,p}[\bm{\theta}]}}$$   
and therefore 
\[\PP\bc{n+1 \in \cW\bc{\bm{T}_{n,p}[\bm{\theta}]}} =\Erw\brk{\frac{1}{n+1}\sum_{i=1}^{n+1}\ind\cbc{i \in \cW\bc{\bm{T}_{n,p}[\bm{\theta}]}}}=\Erw\brk{\vw_{n,p}}.\]
\end{proof}

\subsubsection{Freezing and unfreezing under row- and column removal}\label{sec_changefrozen}

Building upon the symmetry arguments of \Cref{sec_exchange}, we now prove the two main lemmas that are needed to attack \Cref{l91}, which makes a statement about the expected differences of the various proportions of variable types in $\bm{T}_{n,t/n}[\bm{\theta}]$ and $\bm{T}_{n+1,t/n}[\bm{\theta}]$. 
Since the type of variable $i \in [n]$ with respect to {the} matrix $A \in \FF^{n \times n}$ is defined solely in terms of the membership of $i$ in each of the sets $\cF(A), \cF(A^T)$, $\cF(A\abc{i;})$ and $\cF(A^T\abc{i;})$ (see Definitions \ref{dsc} and \ref{dxyzuv}) and  the matrices $\bm{T}_{n,t/n}[\bm{\theta}]$ and $\bm{T}_{n+1,t/n}[\bm{\theta}]$ are reasonably alike, it seems like a viable strategy to show that any given variable is unlikely to change its membership in each of the aforementioned sets of frozen variables in the transition from $\bm{T}_{n,t/n}[\bm{\theta}]$ to $\bm{T}_{n+1,t/n}[\bm{\theta}]$, which is precisely what we {show}.   
In this sense, \Cref{l7} shows that any fixed variable is unlikely to be frozen in  exactly one of the matrices 
$\bm{T}_{n,t/n}[\bm{\theta}]$ or $\bm{T}_{n+1,t/n}[\bm{\theta}]$:

\begin{lemma}[One-step matrix growth, original matrix]\label{l7}
Fix $d, \delta >0$ and $L \in \NN_{\geq 2}$. Then for any $i\in[n]$,
\[\mathbb{P}\left(i\in \mathcal{F}\bc{\bm{T}_{n,t/n}[\bm{\theta}]}\Delta\mathcal{F}\bc{\bm{T}_{n+1,t/n}[\bm{\theta}]}\right)\leq 2(L+1)!\delta+\mathbb{P}\left({\rm Po}(d)\geq L\right)+o_{n,P}(1),\ \mbox{uniformly in $t\in[0,d]$}. \]
\end{lemma}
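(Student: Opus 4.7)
The plan is to work on the intersection of two high-probability events: $\mathfrak E := \cbc{\bm T_{n+1,t/n}[\bm\theta]\abc{n+1;n+1} = \bm T_{n,t/n}[\bm\theta]}$, which holds uniformly in $t\in [0,d]$ with probability $1 - o_{n,P}(1)$ by \cref{e001}, and $\mathfrak G := \cbc{\bm T_{n,t/n}[\bm\theta] \text{ and its transpose are }(\delta,\ell)\text{-free for every }2 \leq \ell \leq L}$, which holds with the same uniform probability by \Cref{p1}. Writing $M := \bm T_{n,t/n}[\bm\theta]$ and $M' := \bm T_{n+1,t/n}[\bm\theta]$, on $\mathfrak E$ the matrix $M'$ is built from $M$ by inserting at position $n+1$ a row $b^T$ together with its symmetric column $b$, where ${\rm supp}(b)\subseteq[n]$ is the neighborhood of vertex $n+1$ in the underlying graph. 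Conditionally on $M$ and $\bm\theta$, $\bm k := |{\rm supp}(b)|$ has distribution $\bin{n-1,t/n}$, so $\PP(\bm k \geq L) \leq \PP(\poi{d}\geq L) + o_n(1)$ uniformly in $t$.

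To separate the row and column contributions, I introduce two intermediate matrices, $M_1 := \begin{pmatrix} M \\ b^T\end{pmatrix}$ and $\tilde M_1 := (M \mid c_1)$, where $c_1 := \sum_{j\in{\rm supp}(b)} b_j e(j)^T$ is the column version of $b^T$. By \Cref{l1.1b} we have $\cF(M)\subseteq \cF(M_1)$ and $\cF(\tilde M_1)\subseteq \cF(M)$, and since appending first the row $b^T$ to $\tilde M_1$ (respectively, the column $c_1$ to $M_1$) reproduces $M'$, a short diagram chase yields
\[
\cbc{i \in \cF(M')\setminus \cF(M)}\subseteq \cbc{i \in \cF(M_1)\setminus \cF(M)} \quad\text{and}\quad \cbc{i \in \cF(M)\setminus \cF(M')}\subseteq \cbc{i \in \cF(M)\setminus \cF(\tilde M_1)},
\]
so $\PP(i \in \cF(M)\,\Delta\, \cF(M')) \leq \PP(i \in \cF(M_1)\setminus \cF(M)) + \PP(i \in \cF(M)\setminus \cF(\tilde M_1))$.

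The first term is the standard setting of \Cref{l5}: on $\mathfrak G$, the event $\cbc{i \in \cF(M_1)\setminus \cF(M)}$ forces $\cbc{i}\cup{\rm supp}(b^T)$ to contain a proper relation of $M$ via the implication in \cref{eq00}. Conditioning on $\bm k = k \leq L-1$ and using joint row/column exchangeability from \Cref{lem_exc} together with $(\delta,k+1)$-freeness, the proof of \Cref{l5} gives $\PP(i \in \cF(M_1)\setminus \cF(M) \mid \bm k = k) \leq \delta(k+1)! + o_{n,P}(1)$. Summing over $k$ via $\sum_{k=0}^{L-1}(k+1)! \leq (L+1)!$ and adding the tail $\PP(\bm k \geq L)$ yields the bound $(L+1)!\delta + \PP(\poi{d}\geq L) + o_{n,P}(1)$ for this term. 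The second term is treated dually: since $c_1$ is a linear combination of the unit columns indexed by ${\rm supp}(b)$, \Cref{l8.4} gives $\cF(\tilde M_1)\supseteq \cF(M\abc{{\rm supp}(b);})$, which reduces the bound to $\PP(i \in \cF(M)\setminus \cF(M\abc{{\rm supp}(b);}))$. This latter event forces the existence of a column vector $w$ with $w_i\neq 0$ whose image $Mw$ is nonzero and supported in ${\rm supp}(b)$, that is, $\cbc{i}\cup{\rm supp}(b)$—viewed as row-indices of $M$, equivalently column-indices of $M^T$—contains a proper relation of $M^T$ whose representation uses the row corresponding to $i$ non-trivially. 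The analogous exchangeability-and-counting argument for $M^T$, using $(\delta,\ell)$-freeness of $M^T$ on $\mathfrak G$, delivers the matching bound $(L+1)!\delta + \PP(\poi{d}\geq L) + o_{n,P}(1)$. Adding the two estimates and noting that the Poisson tail need only be counted once (both arguments share the same $b$) gives $2(L+1)!\delta + \PP(\poi{d}\geq L) + o_{n,P}(1)$, as claimed.

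The main obstacle lies in the column-addition bound: because $c_1$ is not itself a unit column, \Cref{l8.4} applies only after routing through the bank of unit columns indexed by ${\rm supp}(b)$, which slightly enlarges the event under consideration; and the resulting ``removing rows unfreezes $i$'' event corresponds to a proper relation of $M^T$ rather than of $M$, so the \Cref{l5}-style counting has to be transposed carefully and aligned with the requirement that $i$ enters the representation non-trivially. It is precisely here that the symmetry of the underlying matrix and the two-sided conclusion of \Cref{p1}—the simultaneous $(\delta,\ell)$-freeness of $M$ \emph{and} $M^T$—become essential.
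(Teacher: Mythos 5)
Your decomposition into "freezing" ($i\in\cF(M')\setminus\cF(M)$) and "unfreezing" ($i\in\cF(M)\setminus\cF(M')$) matches the paper, and your treatment of the freezing term is essentially the paper's argument: peeling off the new column via \Cref{l1.1b}(i) to reduce to $M_1=\begin{pmatrix}M\\ b^T\end{pmatrix}$, observing that on $\{i\notin\supp{b}\}$ the representation in $M_1$ restricted to $M$'s rows has support exactly $\{i\}\cup\supp{b}$ (hence a proper relation of $M$ since $i\notin\cF(M)$), and counting via exchangeability and $(\delta,\ell)$-freeness. That part is correct.

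The unfreezing term, however, has a genuine gap. You reduce to $\PP\bc{i\in\cF(M)\setminus\cF\bc{M\abc{\supp{b};}}}$ via $\cF(\tilde M_1)\supseteq\cF\bc{M\abc{\supp{b};}}$ (the column-span inclusion argument behind this is fine) and then assert that this event forces $\{i\}\cup\supp{b}$ to \emph{contain a proper relation of $M^T$}. This does not follow. What the event does produce is a vector $w$ with $w_i\neq 0$ and $\emptyset\neq\supp{Mw}\subseteq\supp{b}$; so $\supp{Mw}$ is a \emph{relation} of $M^T$ with a representation touching row $i$ of $M^T$. But a relation need not be proper. If $\supp{Mw}\subseteq\cF(M^T)$, subtracting off the representations of those frozen coordinates from $w^T$ yields a kernel vector $w'$ of $M$; the hypothesis $i\in\cF(M)$ forces $w'_i=0$, which gives $w_i=\sum_j\mu_j(v_j)_i$ for the representations $v_j$, but this does not yield a contradiction and does not produce a proper relation of $M^T$ inside $\{i\}\cup\supp{b}$. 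So the crucial implication from the unfreezing event to a proper relation of $M^T$ is unproved, and the $(\delta,\ell)$-freeness counting has nothing to bite on. (Even if the implication held, the proper relation would only be some \emph{subset} of $\{i\}\cup\supp{b}$, and the counting would have to quantify over all such subsets, which your $(L+1)!\delta$ estimate does not account for.)

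The paper's unfreezing argument avoids this obstruction entirely by staying inside the \emph{larger} perturbed matrix $M'=\bm{T}_{n+1,t/n}[\bm\theta]$: from $i\in\cF(M)$ and \Cref{l1.1b}(ii), $i$ is frozen in $M'\abc{;n+1}$, whence there is a representation $y$ of $\{i\}$ there; reattaching column $n+1$ gives $\supp{yM'}=\{i,n+1\}$ (since $i\notin\cF(M')$), and one checks directly that neither $i$ nor $n+1$ is frozen in $M'$, so $\{i,n+1\}$ is a proper relation of $M'$ of size exactly two. This yields the sharp $2\delta$ contribution, using $(\delta,2)$-freeness of $M'$ (which is why $\mathfrak R_{n+1,t/n}$ rather than your $\mathfrak G$ is needed). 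The two bounds $(L+1)!\delta$ and $2\delta$ then combine into $2(L+1)!\delta$, matching the statement.

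To repair your proof, you should replace the transposed-matrix route for the unfreezing term with the paper's argument in $M'$, and add $(\delta,\ell)$-freeness of $M'$ and $(M')^T$ to your good event.
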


Conveniently, the pair $(\bm{T}_{n,t/n}[\bm{\theta}],\bm{T}_{n+1,t/n}[\bm{\theta}])$ is identically distributed to $(\bm{T}_{n,t/n}[\bm{\theta}]^T,\bm{T}_{n+1,t/n}[\bm{\theta}]^T)$, so it is enough to work with non-transposed matrices in the above considerations. In the same spirit, \Cref{l71} shows that any fixed variable is unlikely to be frozen in exactly one of the matrices 
$\bm{T}_{n,t/n}[\bm{\theta}]\abc{i;}$ or $\bm{T}_{n+1,t/n}[\bm{\theta}]\abc{i;}$:

\begin{lemma}[One-step matrix growth, row-deleted matrix]\label{l71}
Fix $d, \delta >0$ and $L \in \NN_{\geq 2}$. Then for any $i\in[n]$,
\[\mathbb{P}\left(i\in\mathcal{F}\bc{\bm{T}_{n,t/n}[\bm{\theta}]\abc{i;}}\Delta \mathcal{F}\bc{\bm{T}_{n+1,t/n}[\bm{\theta}]\abc{i;}}\right) \leq 2(L+2)!\delta+(L+2)\mathbb{P}\bc{\poi{d}\geq L}+o_{n,P}(1),\]
uniformly in $t\in[0,d]$.
\end{lemma}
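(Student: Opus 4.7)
My plan is to closely follow the template of the proof of \Cref{l7}, with one preliminary reduction via \Cref{l8.4}. Setting $\tilde{\bm A}_n:=\begin{pmatrix}\bm{T}_{n,t/n}[\bm{\theta}] & e_{n+\bm{\theta}_r}(i)^T\end{pmatrix}$ and analogously $\tilde{\bm A}_{n+1}$, \Cref{l8.4} converts the row deletion into a column augmentation, so that it suffices to bound $\PP(i\in\mathcal F(\tilde{\bm A}_n)\Delta\mathcal F(\tilde{\bm A}_{n+1}))$. Throughout, I will condition on the coupling event $\mathfrak E:=\{\bm{T}_{n+1,t/n}[\bm{\theta}]\abc{n+1;n+1}=\bm{T}_{n,t/n}[\bm{\theta}]\}$, which by \Cref{lc1}(iii) holds with probability $1-o_{n,P}(1)$. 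On $\mathfrak E$, $\tilde{\bm A}_{n+1}$ is obtained from $\tilde{\bm A}_n$ by inserting the $(n+1)$st row $\bm b$ and the $(n+1)$st column $\bm c$ of $\bm{T}_{n+1,t/n}[\bm{\theta}]$; by the joint exchangeability from \Cref{lem_exc0}, the positions of the non-zeros of $\bm b$ in $[n]$ are uniformly distributed given their cardinality, which is $\bin{n,t/n}$-distributed, and analogously for $\bm c$.

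I will then truncate $|\supp{\bm b}|,|\supp{\bm c}|\leq L$, incurring an error of at most $(L+2)\PP(\poi{d}\geq L)+o_{n,P}(1)$; the prefactor $L+2$ absorbs small corrections from the $\bm{\theta}$-many perturbation columns as well as the appended unit column. On this truncated good event, I will split $\{i\in\mathcal F(\tilde{\bm A}_n)\Delta\mathcal F(\tilde{\bm A}_{n+1})\}$ into its two one-sided sub-events. Decomposing the transition $\tilde{\bm A}_n\mapsto\tilde{\bm A}_{n+1}$ as first adjoining $\bm b$ and then adjoining $\bm c$, \Cref{l1.1b}(ii) shows that adjoining $\bm b$ only freezes variables, while \Cref{l1.1b}(i) shows that adjoining $\bm c$ only unfreezes variables. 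Hence $\{i\in\mathcal F(\tilde{\bm A}_{n+1})\setminus\mathcal F(\tilde{\bm A}_n)\}$ forces $\bm b$ to lie in the row span of $\tilde{\bm A}_n$ augmented by $e(i)$, which by the reasoning behind \cref{eq00} amounts, up to proper relations, to the event that $\supp{\bm b}\cup\{i\}$ is a proper relation of $\tilde{\bm A}_n$ of size at most $L+2$ (the extra index coming from $i$ itself). The complementary direction is treated by the symmetric argument applied to the transpose, reducing to an analogous proper-relation event in $\tilde{\bm A}_n^T$.

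Finally, I will apply \Cref{p1} with $L$ replaced by $L+2$ to $\bm{T}_{n,t/n}$ to guarantee that $\tilde{\bm A}_n$ and $\tilde{\bm A}_n^T$ are with high probability $(\delta,\ell)$-free for every $2\leq\ell\leq L+2$. The transfer from \Cref{p1} to the augmented matrices uses the observation that every proper relation of $\tilde{\bm A}_n$ either descends from one of $\bm{T}_{n,t/n}[\bm{\theta}]$ or, if it involves the appended unit column, necessarily contains $i$ and becomes a shorter proper relation of $\bm{T}_{n,t/n}[\bm{\theta}]$ once that column is removed. On the freeness event, \Cref{l5} bounds each of the two proper-relation probabilities by $(L+2)!\delta+o_n(1)$, so summing the two directions yields the $2(L+2)!\delta$ term of the statement. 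The main obstacle I anticipate is precisely this transfer: tracking proper relations through the unit-column augmentation and verifying that the canonical perturbation of $\bm{T}_{n,t/n}$ still suffices to control $(\delta,\ell)$-freeness of $\tilde{\bm A}_n$ up to the enlarged size $L+2$—having to accommodate one extra index per relation (namely $i$) is precisely what replaces the $(L+1)!$ and single $\PP(\poi{d}\geq L)$ factors of \Cref{l7} by the $(L+2)!$ and $(L+2)\PP(\poi{d}\geq L)$ factors appearing here.
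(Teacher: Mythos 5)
The reduction via \Cref{l8.4} is a sensible first step, but the core of your plan does not go through, for two related reasons.

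First and most importantly, the matrices $\tilde{\bm A}_n$ and $\tilde{\bm A}_{n+1}$ are \emph{not} fully exchangeable in $[n]$: the appended unit column singles out index $i$, so \Cref{lem_exc0} and \Cref{lem_exc} only give invariance under permutations of $[n]\setminus\{i\}$. Both of your proper-relation events (the analogue of the unfreezing pair $\{i,n+1\}$, and the freezing event $\supp{\bm b}\cup\{i\}$) are sets that are forced to contain $i$. The passage from $(\delta,\ell)$-freeness to a bound on the probability that a specific set is a proper relation is a counting-plus-exchangeability argument: $(\delta,\ell)$-freeness only controls the \emph{total} number of proper relations of size $\ell$, and in \Cref{l7} exchangeability lets one divide by $\binom{n}{\ell}$. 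Once exchangeability is broken at $i$, the $\delta n^\ell$ proper relations could all concentrate at sets through $i$, and there is no bound of the form $\delta\,\ell!+o(1)$ on $\PP(\{i\}\cup\supp{\bm b}\in\PR_\ell(\tilde{\bm A}_n))$. The paper's proof is organized precisely to sidestep this: instead of a row-indexed event in a matrix that still contains the distinguished column $i$, it rewrites (via \Cref{p4}) the frozenness of $i$ in $\bm T_{n,t/n}[\bm\theta]\langle i;\rangle$ as a statement about whether $\supp{\bm{T}_{n,t/n}(,i)}$ has a representation in $\bm T_{n,t/n}[\bm\theta]\langle i;i\rangle^T$ — a matrix from which \emph{both} the $i$th row and the $i$th column have been deleted, and which (conditionally) is distributed as a smaller copy of the model, hence fully exchangeable, and with the support of column $i$ uniform and independent of it. This makes \Cref{l5} applicable.

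Second, your transfer of $(\delta,\ell)$-freeness from $\bm T_{n,t/n}[\bm\theta]$ to $\tilde{\bm A}_n$ is not established. Your claim that proper relations of $\tilde{\bm A}_n$ with support in $[n]$ ``descend'' to proper relations of $\bm T_{n,t/n}[\bm\theta]$ is not correct as stated: because appending the unit column removes some variables from $\mathcal F$, a set $I$ with $I\setminus\mathcal F(\tilde{\bm A}_n)$ a relation of $\tilde{\bm A}_n$ need not have $I\setminus\mathcal F(\bm T_{n,t/n}[\bm\theta])$ nonempty, let alone a relation. You flag this as the ``main obstacle'' but do not resolve it. Finally, note that the paper's proof of \Cref{l71} actually \emph{invokes} \Cref{l7} (through the event $\mathfrak S_{n,2}$, which rules out status changes of the neighbours of $i$) — this is where the factor $(L+2)\,\PP(\poi{d}\geq L)$ in the claimed bound comes from. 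Your sketch derives this prefactor only by a heuristic about ``absorbing small corrections'', and no step in your argument actually produces a factor of $L$ on the Poisson tail; this is a symptom of the missing ingredient.
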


While \Cref{l71} is structurally similar to \Cref{l7}, its proof proceeds differently. This is due to the fact that the removal of row $i$ makes this index special, and the exchangeability arguments that are used in the proof of \Cref{l7} do not apply directly to the modified setting. We use \Cref{p4,l5} to overcome this problem.

Finally, we also prove a third lemma which shows that a small deterministic increase in $\bm{\theta}$ is unlikely to change whether variable $i$ is frozen or not. \Cref{l1.1} is not used in the proof of \Cref{l91} and will only become relevant in Section \ref{sec_pc}, but since it is similar in spirit to the previous two lemmas, we include it here:

\begin{lemma}[Deterministic perturbation shift]\label{l1.1}
Let $\mu=(\mu_r,\mu_c)\in \mathbb{N}^2$, $A\in \mathbb{F}^{m\times n}$ and $i\in [n]$. Then 
\[\mathbb{P}\left(i\in \mathcal{F}\bc{A[\bm{\theta}]}\Delta \mathcal{F}\bc{A[\bm{\theta}+\mu]}\right) \leq \frac{\mu_r+\mu_c}{P}.\]
\end{lemma}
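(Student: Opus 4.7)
My plan is to prove the bound by a two-step triangle inequality, treating the row-perturbation shift and the column-perturbation shift separately. The key input is a monotonicity observation that follows from the nested coupling of \Cref{lc1} together with \Cref{l1.1b}.

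First I would fix a realization of the perturbation families $\{\bm{\Theta}_r[\theta_r, n_1|n_2]\}$ and $\{\bm{\Theta}_c[m_1|m_2,\theta_c]\}$ (which, by \Cref{lc1}(ii), nest monotonically in $\theta_r$ and $\theta_c$) as well as the random value $\bm{\theta}_c$. By \Cref{l1.1b}(ii), appending a row can only enlarge the set of frozen variables in $[n]$, so the map $\theta_r \mapsto \ind\{i\in\mathcal{F}(A[(\theta_r,\bm{\theta}_c)])\}$ is non-decreasing. Hence there is a (random, $[0,\infty]$-valued) threshold $\bm K_i$, measurable with respect to $\bm{\theta}_c$ and the perturbation randomness only, such that $i\in\mathcal{F}(A[(\theta_r,\bm{\theta}_c)])$ if and only if $\theta_r\ge \bm K_i$. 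Analogously, by \Cref{l1.1b}(i), appending a column cannot freeze variables in $[n]$, so for fixed row-side randomness there is a threshold $\bm K_i'$ with $i\in\mathcal{F}(A[(\bm{\theta}_r+\mu_r,\theta_c)])$ if and only if $\theta_c\le \bm K_i'$.

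Next I apply the triangle inequality
\begin{align*}
\mathbb{P}\bc{i\in\mathcal{F}(A[\bm{\theta}])\Delta\mathcal{F}(A[\bm{\theta}+\mu])}
&\le \mathbb{P}\bc{i\in\mathcal{F}(A[\bm{\theta}])\Delta\mathcal{F}(A[\bm{\theta}+(\mu_r,0)])} \\
&\quad + \mathbb{P}\bc{i\in\mathcal{F}(A[\bm{\theta}+(\mu_r,0)])\Delta\mathcal{F}(A[\bm{\theta}+\mu])}.
\end{align*}
By the threshold characterisation, the first summand equals $\mathbb{P}(\bm{\theta}_r<\bm K_i\le \bm{\theta}_r+\mu_r)$. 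Since $\bm{\theta}_r\sim\mathrm{Unif}([P])$ is independent of $\bm K_i$ (which only depends on $\bm{\theta}_c$ and the perturbation families, by construction), conditioning on $\bm K_i$ shows this probability is at most $\mu_r/P$, because the interval $[\bm K_i-\mu_r,\bm K_i)$ contains at most $\mu_r$ integers of $[P]$. The same argument, using the second threshold $\bm K_i'$ and the independent uniform $\bm{\theta}_c$, bounds the second summand by $\mu_c/P$.

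Adding the two bounds yields the desired inequality $(\mu_r+\mu_c)/P$. I do not expect any genuine obstacle here: the only thing one has to be mildly careful about is verifying that the threshold $\bm K_i$ (resp.\ $\bm K_i'$) is indeed measurable with respect to the randomness other than $\bm{\theta}_r$ (resp.\ $\bm{\theta}_c$), which is immediate from the nesting property of the coupling \Cref{lc1} and the independence of the row and column perturbation families from the canonical random vector $\bm{\theta}$.
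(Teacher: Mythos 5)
Your proof is correct and follows essentially the same strategy as the paper: split the perturbation shift into a pure row-shift and a pure column-shift, observe that each step changes the frozen-status of $i$ monotonically thanks to \Cref{l1.1b}, and then exploit the independence of $\bm\theta_r$ (resp.\ $\bm\theta_c$) from the remaining randomness to get a bound of $\mu_r/P$ (resp.\ $\mu_c/P$). The paper performs the column shift first and phrases the final step as a total-variation estimate $\dtv{\cF(A[\bm\theta]),\cF(A[\bm\theta+(0,\mu_c)])}\le \dtv{\bm\theta,\bm\theta+(0,\mu_c)}\le \mu_c/P$, combined with monotonicity to drop the absolute value; you instead go row-first and make the monotonicity fully explicit via a threshold random variable $\bm K_i$ measurable with respect to the randomness other than $\bm\theta_r$, so that the symmetric-difference event becomes $\{\bm\theta_r<\bm K_i\le \bm\theta_r+\mu_r\}$, an interval of length $\mu_r$ for the independent uniform $\bm\theta_r$. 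This is a purely presentational variant of the same computation — the threshold version is slightly more self-contained since it does not appeal to the total-variation calculus, but both rest on the same two pillars (monotonicity from \Cref{l1.1b} and independence of the coordinates of $\bm\theta$ from the perturbation families via \Cref{lc1}). No gaps.
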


While the proofs of \Cref{l7,l71} heavily depend on the structure of $\bm{T}_{n,t/n}$,
the proof of \Cref{l1.1} only uses properties of the perturbation, and thus the result is true for arbitrary matrices.

\textbf{Two good events.} Before we turn to the proofs of \Cref{l7,l71,l1.1}, we define two \textit{good}  events that will be used here and later throughout the article. For $p \in [0,1]$, let
\begin{align}\label{event_R}
    \mathfrak{R}_{n,p}=\cbc{\text{{both} $\bm{T}_{n,p}[\bm{\theta}]$ and $\bm{T}_{n,p}[\bm{\theta}]^T$ are $(\delta,\ell)$-free for  $2\leq \ell \leq L$}},
\end{align}
and
\begin{align}\label{event_P}
\mathfrak{P}_n=\cbc{\Theta_r[\bm{\theta}_r,n|n]=\Theta_r[\bm{\theta}_r,n+1|n+1]\abc{;n+1},\Theta_c[n|n,\bm{\theta}_c]=\Theta_c[n+1|n+1,\bm{\theta}_c]\abc{n+1;}}.
\end{align}
$\mathfrak R_{n,p}$ ensures that the rank increase upon attaching rows and columns can be controlled as in \Cref{l5}, while the benefit of $\mathfrak{P}_n$ is that when growing the matrix from $n$ to $n+1$, the perturbation stays unchanged. 
By \Cref{p1} and \Cref{lc1},
\begin{equation}\label{oe}
\mathbb{P}\left({\mathfrak{R}_{n,p}}\right)\geq 1+o_{n,P}(1)
\qquad \text{and} \qquad \mathbb{P}\left({\mathfrak{P}_n}\right)= 1+o_n(1).
\end{equation}
The bound $\mathbb{P}\left({\mathfrak{R}_{n,p}^c}\right)\leq o_{n,P}(1)$ holds uniformly in $p \in [0,1]$, since it is based on \Cref{p1}.
In the following, we frequently work on the intersection of $\mathfrak R_{n,t/n}$ and $\mathfrak{P}_n$, which is a sufficiently likely event by (\ref{oe}).

We now prove \Cref{l7,l71,l1.1} in their order of appearance:

\begin{proof}[Proof of \Cref{l7}]
Since $i$ can either freeze or unfreeze when the matrix is grown, 
\begin{align*}
    &\ \PP\bc{i\in \mathcal{F}\bc{\bm{T}_{n,t/n}[\bm{\theta}]}\Delta\mathcal{F}\bc{\bm{T}_{n+1,t/n}[\bm{\theta}]}}\\
    =&\ \PP\bc{i\in \mathcal{F}\bc{\bm{T}_{n,t/n}[\bm{\theta}]}\backslash \mathcal{F}\bc{\bm{T}_{n+1,t/n}[\bm{\theta}]}} + \PP\bc{i\in \mathcal{F}\bc{\bm{T}_{n+1,t/n}[\bm{\theta}]}\backslash \mathcal{F}\bc{\bm{T}_{n,t/n}[\bm{\theta}]}}.
\end{align*}
We bound both cases separately.

\begin{enumerate}[label=(\roman*),left=1pt]
  \item \textbf{Unfreezing}: To bound the probability that $i$ is frozen in $\bm{T}_{n,t/n}[\bm{\theta}]$, but not in $\bm{T}_{n+1,t/n}[\bm{\theta}]$, 
  we first show that on $\mathfrak{P}_n$,
  \begin{align}\label{implication_unfreeze}
  i\in \mathcal{F}\bc{\bm{T}_{n,t/n}[\bm{\theta}]}\backslash \mathcal{F}\bc{\bm{T}_{n+1,t/n}[\bm{\theta}]} \qquad \Longrightarrow \qquad \cbc{i,n+1} \text{ is a proper relation in } \bm{T}_{n+1,t/n}[\bm{\theta}].\end{align}

  Assume that $\mathfrak P_n$ holds and $i \in \mathcal{F}\bc{\bm{T}_{n,t/n}[\bm{\theta}]}\backslash \mathcal{F}(\bm{T}_{n+1,t/n}[\bm{\theta}])$. Then $\bm{T}_{n+1,t/n}[\bm{\theta}]$ arises from $\bm{T}_{n,t/n}[\bm{\theta}]$ through the symmetric attachment of a row and a column, which we may break into two steps. By \Cref{l1.1b}, attaching a row cannot unfreeze $i$, i.e.,
  \[i\in \mathcal{F}\bc{\bm{T}_{n,t/n}[\bm{\theta}]} \qquad  \Longrightarrow \qquad i\in \mathcal{F}\bc{\bm{T}_{n+1,t/n}[\bm{\theta}]\abc{;n+1}}.\]
  In particular, there exists a representation $y$ of $\{i\}$ in $\bm{T}_{n+1,t/n}[\bm{\theta}]\abc{;n+1}$. Attaching column $n+1$ and using the representation $y$ on the resulting matrix $\bm{T}_{n+1,t/n}[\bm{\theta}]$ yields
  \[\cbc{i}\subseteq \supp{y\bm{T}_{n+1,t/n}[\bm{\theta}]} \subseteq \cbc{i,n+1}.\]
  Since $i\not\in \mathcal{F}\bc{\bm{T}_{n+1,t/n}[\bm{\theta}]}$ by assumption, we conclude that
  \[\supp{y\bm{T}_{n+1,t/n}[\bm{\theta}]} = \cbc{i,n+1},\]
which implies that $\{i,n+1\}$ is a proper relation in $\bm{T}_{n+1,t/n}[\bm{\theta}]$, since the existence of the representation $y$ ensures that $n+1$ cannot be frozen in $\bm{T}_{n+1,t/n}[\bm{\theta}]$ without $i$ being frozen in $\bm{T}_{n+1,t/n}[\bm{\theta}]$. This proves \cref{implication_unfreeze}.

The next step is to show that, on the  good  event $\mathfrak R_{n+1,t/n}$, the probability that $\{i,n+1\}$ forms a proper relation in $\bm{T}_{n+1,t/n}[\bm{\theta}]$ is small: 
This is an immediate consequence of \Cref{lem_exc}, which asserts that the probability to be a proper relation is the same for any pair $\{i_1,i_2\}$ for $1\leq i_1<i_2\leq n+1$ and the observation that on $\mathfrak R_{n+1,t/n}$, there are at most $\delta (n+1+P)^2$ proper relations of length two. 
Therefore,
\begin{align*}
\mathbb{P}\bc{\cbc{i,n+1} \text{ is a proper relation in } \bm{T}_{n+1,t/n}[\bm{\theta}], \mathfrak R_{n+1,t/n}}
\leq 2\delta +o_{n,P}(1),\ \mbox{uniformly in $t \in [0,d]$}
\end{align*}
and thus also
\begin{equation}\label{el701}
\mathbb{P}\left(i\in \mathcal{F}\bc{\bm{T}_{n,t/n}[\bm{\theta}]} \setminus \mathcal{F}\bc{\bm{T}_{n+1,t/n}[\bm{\theta}]}, \mathfrak P_n, \mathfrak R_{n+1,t/n}\right)\leq 2\delta+o_{n,P}(1),\ \mbox{uniformly in $t \in [0,d]$}.
\end{equation}

  \item \textbf{Freezing}: 
 To bound the probability that $i$ is frozen in $\bm{T}_{n+1,t/n}[\bm{\theta}]$, but not in $\bm{T}_{n,t/n}[\bm{\theta}]$, 
  we show that on $\mathfrak P_n$,
 \begin{align}\label{implication_freeze}
    & i\in \mathcal{F}\bc{\bm{T}_{n+1,t/n}[\bm{\theta}]}\backslash \mathcal{F}\bc{\bm{T}_{n,t/n}[\bm{\theta}]},  \quad  i\notin \supp{\bm{T}_{n+1,t/n}(n+1,)} \nonumber \\
     \Longrightarrow  \quad &\cbc{i}\cup \supp{\bm{T}_{n+1,t/n}[\bm{\theta}](n+1,)} \text{ is a proper relation in } \bm{T}_{n,t/n}[\bm{\theta}].
 \end{align}
 Assume that $\mathfrak P_n$ holds, $i \in \mathcal{F}(\bm{T}_{n+1,t/n}[\bm{\theta}])\backslash \mathcal{F}\bc{\bm{T}_{n,t/n}[\bm{\theta}]}$ and $i\notin \supp{\bm{T}_{n+1,t/n}(n+1,)}$. Then the matrix $\bm{T}_{n,t/n}[\bm{\theta}]$ arises from $\bm{T}_{n+1,t/n}[\bm{\theta}]$ through symmetric removal of a column and a row, which we may break into two steps. 
 By \Cref{l1.1b}, removing a column cannot unfreeze $i$:
  \[i\in \mathcal{F}\bc{\bm{T}_{n+1,t/n}[\bm{\theta}]} \qquad  \Longrightarrow \qquad i\in \mathcal{F}\bc{\bm{T}_{n+1,t/n}[\bm{\theta}]\abc{;n+1}}.\]
  This implies in particular that there exists a representation $y$ of $\{i\}$ in $\bm{T}_{n+1,t/n}[\bm{\theta}]\abc{;n+1}$. 
If there was a representation $y$ of $\{i\}$ in $\bm{T}_{n+1,t/n}[\bm{\theta}]\abc{;n+1}$ with $y_{n+1} =0$, {then }shortening $y$ to $y\abc{;n+1}$ would be a representation of $\{i\}$ in $\bm{T}_{n,t/n}[\bm{\theta}]$, in contrast to our assumption that $i$ is not frozen in $\bm{T}_{n,t/n}[\bm{\theta}]$. Thus, all representations $y$ of $\{i\}$ in $\bm{T}_{n+1,t/n}[\bm{\theta}]\abc{;n+1}$ have their $(n+1)$st coordinate different from zero. Since we assume that $i\notin \supp{\bm{T}_{n+1,t/n}(n+1,)}$, we conclude that
  \[\supp{y\abc{;n+1}\bm{T}_{n,t/n}[\bm{\theta}]} = \cbc{i}\cup\supp{\bm{T}_{n+1,t/n}[\bm{\theta}](n+1,)}.\]
  This implies that $\cbc{i}\cup\supp{\bm{T}_{n+1,t/n}[\bm{\theta}](n+1,)}$ is a proper relation of $\bm{T}_{n,t/n}[\bm{\theta}]$, since it contains the non-frozen variable $i$. Thus, \cref{implication_freeze} holds.

The next step is to show that on the  good event $\mathfrak R_{n,t/n}$, the probability that $\{i\}\cup\supp{\bm{T}_{n+1,t/n}[\bm{\theta}](n+1,)}$ forms a proper relation in $\bm{T}_{n,t/n}[\bm{\theta}]$ is small.
We first upper-bound the probability that row $n+1$ has too many non-zero entries, which is due to the sparsity of the matrix $\bm{T}_{n+1,t/n}$. By \cite[Theorem 2.10]{van2017random}, we can upper bound
 \begin{align*}
  \mathbb{P}\bc{\abs{\supp{\bm{T}_{n+1,t/n}[\bm{\theta}](n+1,)}}\geq L}\leq&\mathbb{P}\bc{\bin{n,t/n}\geq L}+ \PP\bc{\mathfrak P_n^c}\\
  \leq& 
  \mathbb{P}\bc{\poi{t}\geq L}+ \PP\bc{\mathfrak P_n^c}+ o_n(1) \leq \mathbb{P}\bc{\poi{d}\geq L} + o_n(1)
  \end{align*}
  uniformly in $t\in[0,d]$. 
On the other hand, by \Cref{lem_exc}, the probability to be a proper relation in $ \bm{T}_{n,t/n}[\bm{\theta}]$ is the same for any subset of $[n]$ of cardinality $\abs{\{i\}\cup\supp{\bm{T}_{n+1,t/n}[\bm{\theta}](n+1,)}}$.
If row $n+1$ has at most $L-1$ non-zero entries 
and $\mathfrak R_{n,t/n}$ holds, {then} $ \bm{T}_{n,t/n}[\bm{\theta}]$ is 
$(\delta,\abs{\{i\}\cup\supp{\bm{T}_{n+1,t/n}[\bm{\theta}](n+1,)}})$-free, and 
{\begin{align*}
&\mathbb{P}\bc{\{i\}\cup\supp{\bm{T}_{n+1,t/n}[\bm{\theta}](n+1,)}\in \PR\bc{ \bm{T}_{n,t/n}[\bm{\theta}]}, \abs{\{i\}\cup\supp{\bm{T}_{n+1,t/n}[\bm{\theta}](n+1,)}}\leq L, \mathfrak R_{n,t/n}} \\ 
& \leq L! \delta +o_{n,P}(1), \ \mbox{uniformly in $t \in [0,d]$}.
\end{align*}}
Finally, since 
 {\begin{align*}
      \PP\bc{i\in \supp{\bm{T}_{n+1,t/n}(n+1,)}}=\PP\bc{\bm{T}_{n+1,t/n}(n+1,i)= 1}=t/n\leq d/n = o_n(1)  \ \mbox{uniformly in $t \in [0,d]$},
  \end{align*}}
we conclude {from \cref{implication_freeze}} that
  \begin{equation}\label{el702}
\mathbb{P}\left(i\in \mathcal{F}\bc{\bm{T}_{n+1,t/n}[\bm{\theta}]} \setminus \mathcal{F}\bc{\bm{T}_{n,t/n}[\bm{\theta}]}, \mathfrak P_n, \mathfrak R_{n+1,t/n}\right)\leq (L+1)!\delta +\mathbb{P}\left({\rm Po}(d)\geq L\right)+o_{n,P}(1)
\end{equation}
uniformly in $t \in [0,d]$.
\end{enumerate}

Combining (\ref{oe}), (\ref{el701}) and (\ref{el702}) 
finishes the proof of \Cref{l7}.
\end{proof}

\begin{proof}[Proof of \Cref{l71}]
Again, we relate a status change of $i$ to the existence of a proper relation: On a sufficiently likely event $\mathfrak S_n$,
\begin{align}\label{eq_rel_imp}
    &i\in \mathcal{F}\bc{\bm{T}_{n,t/n}[\bm{\theta}]\abc{i;}} \Delta \mathcal{F}\bc{\bm{T}_{n+1,t/n}[\bm{\theta}]\abc{i;}} \nonumber \\
    \Longrightarrow \quad & \supp{\bm{T}_{n,t/n}(,i)} \text{ is a proper relation in } \bm{T}_{n,t/n}[\bm{\theta}]\abc{i;i}^T \text{ or in } \bm{T}_{n+1,t/n}[\bm{\theta}]\abc{i;i}^T.
\end{align}
\textit{Definition of $\mathfrak S_n$.} The event that we work on is composed of three parts: First, we define
\begin{align*}
    \mathfrak{S}_{n,1}= \mathfrak P_n \cap \cbc{\bm{T}_{n+1,t/n}[\bm{\theta}](n+1,i)=0, \Theta_r[\bm{\theta}_r, n \vert n](,i)=0_{\bm{\theta}_r \times 1}, \Theta_c[n \vert n,\bm{\theta}_c](i,k)=0_{1\times \bm{\theta}_c}}.
\end{align*}

On $\mathfrak S_{n,1}$, 
the non-zero entries of column (or equivalently row) $i$ in all involved matrices are contained in $[n]$, i.e.,
\[\supp{\bm{T}_{n,t/n}(,i)}=\supp{\bm{T}_{n,t/n}[\bm{\theta}](,i)}=\supp{\bm{T}_{n+1,t/n}[\bm{\theta}](,i)}=\supp{\bm{T}_{n+1,t/n}(,i)}.\]
From the construction of the perturbation \Cref{lc1} and the definition of $\bm{T}_{n+1,t/n}$, it is immediate that
\[\PP\bc{\mathfrak S_{n,1}^c}\leq \frac{2P}{n+1}+\frac{d}{n}+\frac{2P}{n}=o_n(1), \ \mbox{uniformly in $t \in [0,d]$}. \]


Next, let
\begin{align*}
    \mathfrak{S}_{n,2}=\cbc{\text{{for} all } j\in {\rm supp}(\bm{T}_{n,t/n}[\bm{\theta}](,i)): j \notin \mathcal{F}\bc{\bm{T}_{n,t/n}[\bm{\theta}]\abc{i;i}^T} \Delta \mathcal{F}\bc{\bm{T}_{n+1,t/n}[\bm{\theta}]\abc{i;i}^T}}
\end{align*}
be the event that no element of the support of column $i$ has a different status in $\bm{T}_{n+1,t/n}[\bm{\theta}]\abc{i;i}^T$ than in $\bm{T}_{n,t/n}[\bm{\theta}]\abc{i;i}^T$. 
Since $(\bm{T}_{n,t/n}[\bm{\theta}]\abc{i;i}^T,\bm{T}_{n+1,t/n}[\bm{\theta}]\abc{i;i}^T)$ conditionally on $\mathfrak{S}_{n,1}$ and $\bc{\bm{T}_{n-1,t/n}[\bm{\theta}],\bm{T}_{n,t/n}[\bm{\theta}]}$ conditionally on $\mathfrak{P}_{n-1}$ have the same law, by \Cref{l7} and \Cref{lem_exc},
\begin{align*}
 \PP(\mathfrak S_{n,2}^c) 
  & \leq L\mathbb{P}\left(1 \in\mathcal{F}\bc{\bm{T}_{n-1,t/n}[\bm{\theta}]}\Delta \mathcal{F}\bc{\bm{T}_{n,t/n}[\bm{\theta}]}\right) + \PP(|\text{supp}(\bm{T}_{n,t/n}[\bm{\theta}](,i))| > L) + \PP(\mathfrak{S}_{n,1}^c) +\PP\bc{\mathfrak{P}_{n-1}^c} \\
  &\leq (2+(L+1)!)L\delta+(L+1)\mathbb{P}\left({\rm Po}(d)\geq L\right)+o_{n,P}(1), \ \mbox{uniformly in $t \in [0,d]$}.
  \end{align*}
Finally, let
\begin{align*}
    \mathfrak{S}_{n,3}=\cbc{\bm{T}_{n,t/n}[\bm{\theta}]\abc{i;i}^T \text{ and } \bm{T}_{n+1,t/n}[\bm{\theta}]\abc{i;i}^T \text{ are } (\delta,\ell)\text{-free for } 2\leq \ell\leq L}.
\end{align*}
Since $(\bm{T}_{n,t/n}[\bm{\theta}]\abc{i;i}^T,\bm{T}_{n+1,t/n}[\bm{\theta}]\abc{i;i}^T)$ conditionally on $\mathfrak{S}_{n,1}$ and $\bc{\bm{T}_{n-1,t/n}[\bm{\theta}],\bm{T}_{n,t/n}[\bm{\theta}]}$ conditionally on $\mathfrak{P}_{n-1}$ have the same law, \Cref{p1} implies that
\[\PP(\mathfrak S_{n,3}^c) = o_{n,P}(1), \ \mbox{uniformly in $t \in [0,d]$}. \]
We set $\mathfrak S_n = \mathfrak S_{n,1} \cap \mathfrak S_{n,2} \cap \mathfrak S_{n,3}$, so that
\begin{align}\label{eq_l71_1}
    \PP(\mathfrak S_n^c) \leq (2+(L+1)!)L\delta+(L+1)\mathbb{P}\left({\rm Po}(d)\geq L\right)+o_{n,P}(1), \ \mbox{uniformly in $t \in [0,d]$}.
\end{align}

\textit{Proof of implication (\ref{eq_rel_imp}).} Now suppose that $\mathfrak S_n$ holds and that supp$(\bm{T}_{n,t/n}(,i))$ is neither a proper relation in $ \bm{T}_{n,t/n}[\bm{\theta}]\abc{i;i}^T$ nor in  $ \bm{T}_{n+1,t/n}[\bm{\theta}]\abc{i;i}^T$. Since the support may contain frozen variables, there are four cases:
\begin{itemize}
    \item[\bf{Case 1:}] supp$(\bm{T}_{n,t/n}(,i))$ neither has a representation in  $ \bm{T}_{n,t/n}[\bm{\theta}]\abc{i;i}^T$ nor in $ \bm{T}_{n+1,t/n}[\bm{\theta}]\abc{i;i}^T$. \\
    The non-existence of a representation of supp$(\bm{T}_{n,t/n}\abc{i;}(,i))$ in $\bm{T}_{n,t/n}[\bm{\theta}]\abc{i;i}^T$ in particular implies that column $i$ of $\bm{T}_{n,t/n}[\bm{\theta}]\abc{i;}$ is not in the linear span of the other columns of $\bm{T}_{n,t/n}[\bm{\theta}]\abc{i;}$. Thus, by \Cref{p4}, $i$ is frozen in $\bm{T}_{n,t/n}[\bm{\theta}]\abc{i;}$. The same reasoning implies that $i$ is frozen in $\bm{T}_{n+1,t/n}[\bm{\theta}]\abc{i;}$ as well. 
    \item[\bf{Case 2:}]   supp$(\bm{T}_{n,t/n}(,i))$ has a representation both in  $ \bm{T}_{n,t/n}[\bm{\theta}]\abc{i;i}^T$ and in $ \bm{T}_{n+1,t/n}[\bm{\theta}]\abc{i;i}^T$. \\
   Since we assume that supp$(\bm{T}_{n,t/n}(,i))$ is neither a proper relation in $ \bm{T}_{n,t/n}[\bm{\theta}]\abc{i;i}^T$ nor in  $ \bm{T}_{n+1,t/n}[\bm{\theta}]\abc{i;i}^T$, all variables in $\emptyset \not=$ supp$(\bm{T}_{n,t/n}(,i))$ must be frozen both in $ \bm{T}_{n,t/n}[\bm{\theta}]\abc{i;i}^T$ and in $ \bm{T}_{n+1,t/n}[\bm{\theta}]\abc{i;i}^T$. In this case, the existence of the respective representations ensures that  column $i$ of $\bm{T}_{n,t/n}[\bm{\theta}]\abc{i;}$ is contained in the linear span of the other columns of $\bm{T}_{n,t/n}[\bm{\theta}]\abc{i;}$ and that  column $i$ of $\bm{T}_{n+1,t/n}[\bm{\theta}]\abc{i;}$ is contained in the linear span of the other columns of $\bm{T}_{n+1,t/n}[\bm{\theta}]\abc{i;}$. Thus, by \Cref{p4}, $i$ is neither frozen in $\bm{T}_{n,t/n}[\bm{\theta}]\abc{i;}$ nor in $\bm{T}_{n+1,t/n}[\bm{\theta}]\abc{i;}$.
    \item[\bf{Case 3:}]  supp$(\bm{T}_{n,t/n}(,i))$ has a representation in  $ \bm{T}_{n,t/n}[\bm{\theta}]\abc{i;i}^T$, but none in $ \bm{T}_{n+1,t/n}[\bm{\theta}]\abc{i;i}^T$. \\
   Again, all variables in $\emptyset \not=$ supp$(\bm{T}_{n,t/n}(,i))$ must be frozen in $ \bm{T}_{n,t/n}[\bm{\theta}]\abc{i;i}^T$, but there must {exist} a variable that is not frozen in $ \bm{T}_{n+1,t/n}[\bm{\theta}]\abc{i;i}^T$.
    This possibility is excluded by $\mathfrak S_{n,2}$.
    \item[\bf{Case 4:}] supp$(\bm{T}_{n,t/n}(,i))$ has a representation in  $ \bm{T}_{n+1,t/n}[\bm{\theta}]\abc{i;i}^T$, but none in $ \bm{T}_{n,t/n}[\bm{\theta}]\abc{i;i}^T$. \\
    By the same reasoning as in case 3, this cannot happen on $\mathfrak S_{n,2}$.
\end{itemize}
Cases 1 to 4 imply (\ref{eq_rel_imp}), which gives
\begin{align*}
&\PP(i\in \mathcal{F}\bc{\bm{T}_{n,t/n}[\bm{\theta}]\abc{i;}} \Delta \mathcal{F}\bc{\bm{T}_{n+1,t/n}[\bm{\theta}]\abc{i;}})\\
\leq &
     \PP\bc{\mathfrak S_n, \text{supp}(\bm{T}_{n,t/n}(,i)) \text{ is a proper relation in } \bm{T}_{n,t/n}[\bm{\theta}]\abc{i;i}^T \text{ or in } \bm{T}_{n+1,t/n}[\bm{\theta}]\abc{i;i}^T} + \PP\bc{\mathfrak S_n^c}.
\end{align*}
Finally, since $\bm{T}_{n,t/n}[\bm{\theta}]\abc{i;i}^T$ and $\bm{T}_{n+1,t/n}[\bm{\theta}]\abc{i;i}^T$ are $(\delta,\ell)$-free for $2\leq \ell\leq L$ on $\mathfrak S_n$, by \Cref{l5},
\begin{equation}\label{eq_l71_2}\begin{aligned}
&\PP(\mathfrak S_n, \text{supp}(\bm{T}_{n,t/n}(,i)) \text{ is a proper relation in } \bm{T}_{n,t/n}[\bm{\theta}]\abc{i;i}^T \text{ or in } \bm{T}_{n+1,t/n}[\bm{\theta}]\abc{i;i}^T) \\
\leq &2L!\delta+\mathbb{P}\left({\rm Po}(d)\geq L\right) + o_n(1),  \ \mbox{uniformly in $t \in [0,d]$}.
\end{aligned}
\end{equation}
Combining (\ref{eq_l71_1}) and (\ref{eq_l71_2}) yields the claim.
\end{proof}

\begin{proof}[Proof of \Cref{l1.1}]
The matrix $A[\bm{\theta}+\mu]$ arises from $A[\bm{\theta}]$ through the attachment of $\mu_r$ independent unit rows and $\mu_c$ independent unit columns. We split this row- and column-attachment into two steps. Since $A$ is non-random,
\begin{align}\label{eq_l1.1_1}
\dtv{\cF(A[\bm{\theta}]),\cF(A[\bm{\theta}+(0,\mu_c)])}\leq \dtv{\bm{\theta},\bm{\theta}+(0,\mu_c)} \leq \frac{\mu_c}{P},
\end{align}
and
\begin{align}\label{eq_l1.1_2}
  \dtv{\cF(A[\bm{\theta}+(0,\mu_c)]),\cF(A[\bm{\theta}+\mu])}\leq \frac{\mu_r}{P}.  
\end{align}
By \Cref{l1.1b}, increasing the number of columns can only diminish the number of frozen variables among the first $n$. 
Therefore, for $i \in [n]$, using (\ref{eq_l1.1_1}),
\begin{align}\label{eq_l1.1_3}
\mathbb{P}\left(i\in \mathcal{F}\bc{A[\bm{\theta}]}\Delta \mathcal{F}\bc{A[\bm{\theta}+(0,\mu_c)]}\right)= \mathbb{P}\left(i\in \mathcal{F}\bc{A[\bm{\theta}]}\right)-\mathbb{P}\left(i\in\mathcal{F}\bc{A[\bm{\theta}+(0,\mu_c)]}\right) \leq \frac{\mu_c}{P}. 
\end{align}
Similarly, also by \Cref{l1.1b}, increasing the number of rows can only enlarge the number of frozen variables among the first $n$. 
Therefore, using (\ref{eq_l1.1_2}),
\begin{align}\label{eq_l1.1_4}
\mathbb{P}\left(i \in \mathcal{F}\bc{A[\bm{\theta}+(0,\mu_c)]}\Delta \mathcal{F}\bc{A[\bm{\theta}+\mu]}\right)= \mathbb{P}\left(i\in\mathcal{F}\bc{A[\bm{\theta}+\mu)]}\right) - \mathbb{P}\left(i\in\mathcal{F}\bc{A[\bm{\theta}+(0,\mu_c)]}\right) \leq \frac{\mu_r}{P}. 
\end{align}
The claim follows by combining (\ref{eq_l1.1_3}) and (\ref{eq_l1.1_4}).
\end{proof}

As the final result of this subsection, we note an immediate consequence of \Cref{l1.1}, that will be used in the proof of \Cref{l12} {below}. \Cref{l9} shows that for any $i \in [n]$, removal of a bounded number of uniformly chosen rows is unlikely to unfreeze $i$, even if row $i$ is forbidden to be among the removed rows:

\begin{corollary}[Random row-removal]\label{l9}
For any $A \in \FF^{m\times n}$, $k \in [m], i \in [n]$ and a uniformly chosen $k$-subset $\bm{\mathcal J} \subseteq [m]\backslash\cbc{i}$,
\begin{align}\label{l9_part1}
    \mathbb{P}\left(i\in\mathcal{F}\bc{A[\bm{\theta}]}\Delta \mathcal{F}\bc{A[\bm{\theta}]\abc{\bm{\mathcal J};}}\right)\leq \frac{k}{P}+\bc{1-\frac{1}{m}}^k\frac{k(k-1)}{2m} + \frac{k}{m}.
\end{align}
\end{corollary}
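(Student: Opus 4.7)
The strategy is to translate the random row removal into an attachment of $k$ unit columns, so that the comparison with $A[\bm\theta]$ can be made via \Cref{l1.1} and the coupling from \Cref{lc1}. As a preliminary reduction, I would iterate \Cref{l8.4} to establish that for any deterministic $\mathcal J \subseteq [m+\bm\theta_r]\setminus\cbc{i}$, the frozen status of $i$ in $A[\bm\theta]\abc{\mathcal J;}$ coincides with its frozen status in $\bc{A[\bm\theta]\,\vert\,E_{\mathcal J}}$, where $E_{\mathcal J}$ is the column block whose columns are the unit vectors $\cbc{e_{m+\bm\theta_r}(j)^T : j \in \mathcal J}$. Indeed, any representation of $\cbc{i}$ in $A[\bm\theta]\abc{\mathcal J;}$ lifts, by inserting zeros at positions $\mathcal J$, to a representation in the augmented matrix, and conversely any representation in the augmented matrix must vanish on coordinates $\mathcal J$ and therefore restricts back to a representation in the row-deleted matrix.

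Next, I would apply \Cref{l1.1} with the deterministic shift $\mu=(0,k)$, yielding
\[
\PP\bc{i \in \mathcal F\bc{A[\bm\theta]} \,\Delta\, \mathcal F\bc{A[\bm\theta+(0,k)]}} \leq \frac{k}{P},
\]
which accounts for the first term of the target bound. By the coupling from \Cref{lc1}, the matrix $A[\bm\theta+(0,k)]$ equals $\bc{A[\bm\theta]\,\vert\,C}$ exactly, where $C$ consists of $k$ additional unit columns $e_{m+\bm\theta_r}(\bm i_r)^T$ for $r\in[k]$, with $\bm i_1,\ldots,\bm i_k$ i.i.d.\ uniform on $[m]$ and independent of $\bm{\mathcal J}$.

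The remaining step is a coupling of $\bm{\mathcal J}$ with $\cbc{\bm i_1,\ldots,\bm i_k}$. On the event $\mathfrak E$ that $\bm i_1,\ldots,\bm i_k$ are pairwise distinct and all different from $i$, the multiset $\cbc{\bm i_1,\ldots,\bm i_k}$ is, by symmetry, uniformly distributed over $k$-subsets of $[m]\setminus\cbc{i}$, so I set $\bm{\mathcal J}=\cbc{\bm i_1,\ldots,\bm i_k}$ on $\mathfrak E$ and draw $\bm{\mathcal J}$ independently on $\mathfrak E^c$; this is a valid marginal for $\bm{\mathcal J}$. Since the vanishing constraint that the added unit columns impose on a candidate representation depends only on the set of row-positions hit, on $\mathfrak E$ one has $i \in \mathcal F\bc{A[\bm\theta]\,\vert\,C}\iff i \in \mathcal F\bc{A[\bm\theta]\,\vert\,E_{\bm{\mathcal J}}}$. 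Decomposing $\PP(\mathfrak E^c)$, a union bound gives $\PP(\exists r:\bm i_r = i) \leq k/m$, while conditionally on no $\bm i_r$ equalling $i$ (an event of probability $\bc{1-1/m}^k$) a pairwise union bound controls the collision probability by $k(k-1)/(2m)$; a triangle inequality combining this with the perturbation-shift estimate of the previous step yields the claim.

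The principal conceptual hurdle is the realisation that row removal is equivalent, from the viewpoint of frozen variables, to attaching unit columns at the removed row positions (\Cref{l8.4}), which is precisely what allows the perturbation-shift estimate of \Cref{l1.1} to enter the picture. The technical delicacy then lies in coupling the without-replacement uniform subset $\bm{\mathcal J}$ with the with-replacement i.i.d.\ row indices $\bm i_r$; handling simultaneously the possibility of some $\bm i_r$ coinciding with the forbidden index $i$ and the possibility of a collision among the $\bm i_r$ is exactly what produces the last two terms of the stated bound.
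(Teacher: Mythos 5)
Your argument is correct and follows the same route as the paper's proof: both use \Cref{l8.4} to identify row removal with unit-column attachment, invoke \Cref{l1.1} (together with the coupling of \Cref{lc1}) to account for the shift $\bm\theta\to\bm\theta+(0,k)$ via the $k/P$ term, and then couple the without-replacement subset $\bm{\mathcal J}$ with the with-replacement i.i.d.\ column indices to produce the remaining two error terms; the only difference is cosmetic, namely whether the coupling/total-variation step is performed before or after the application of \Cref{l1.1}.
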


\begin{proof}
Let $i \in [n]$. The proof is based on \Cref{l8.4,l1.1}: First, \Cref{l8.4} shows that row removal has the same effect on whether $i$ is frozen or not as addition of a unit column vector. The latter operation then can be treated as a slight change in the column perturbation, and therefore falls under the scope of \Cref{l1.1}. 

We first replace $\bm{\mathcal J}$ by a set obtained from sampling with replacement.
Let $\vj'_1, \ldots, \vj'_{k} \in [m]$ be 
i.i.d.\ uniform indices and $\bm{\mathcal J}'=\cup_{s=1}^{k}\left\{\vj'_s\right\}$ such that $\PP(\bm{\mathcal J} \not= \bm{\mathcal J}') = {\rm d}_{\rm TV}(\bm{\mathcal J},\bm{\mathcal J}')$ (i.e., we take an optimal coupling). Then
\begin{align}\label{el9.1}
\mathbb{P}\left(i\in\mathcal{F}\bc{A[\bm{\theta}]}\Delta \mathcal{F}\bc{A[\bm{\theta}]\abc{\bm{\mathcal J};}}\right)\leq \mathbb{P}\left(i\in\mathcal{F}\bc{A[\bm{\theta}]}\Delta \mathcal{F}\bc{A[\bm{\theta}]\abc{\bm{\mathcal J}';}}\right)+{\rm d}_{\rm TV}(\bm{\mathcal J},\bm{\mathcal J}').
\end{align}
Furthermore, an application of \Cref{l8.4} to (\ref{el9.1}) gives
\begin{align}\label{eq_l9_last}
  \mathbb{P}\left(i\in\mathcal{F}\bc{A[\bm{\theta}]}\Delta \mathcal{F}\bc{A[\bm{\theta}]\abc{\bm{\mathcal J};}}\right)\leq \PP(i\in \mathcal{F}\bc{A[\bm{\theta}]}\Delta \mathcal{F}\bc{A[\bm{\theta}+(0,k)]})+{\rm d}_{\rm TV}(\bm{\mathcal J},\bm{\mathcal J}').
\end{align}
Since ${\rm d}_{\rm TV}(\bm{\mathcal J},\bm{\mathcal J}') \leq (1-1/m)^k k(k-1)/(2m) + k/m$ (see \cite{freedman1977sampling} in combination with the observation that $\bm{\mathcal J}'$ samples from $[m]$ rather than $[m]\setminus \{i\}$, for example), the claim now follows from (\ref{eq_l9_last}) and \Cref{l1.1}.
\end{proof}

\subsubsection{Stability of types: Proof of \Cref{l91}}\label{sec_sop}

With \Cref{l7,l71}, we are now in the position to prove \Cref{l91}:

\begin{proof}[Proof of \Cref{l91}]
Observe that for any ${\mathcal W} \in \{\mathcal X, \mathcal Y, \mathcal Z, \mathcal U, \mathcal V\}$, the sequence $(\ind\cbc{i \in \mathcal W\bc{\bm{T}_{n,t/n}[\bm{\theta}]}}-\ind\cbc{i \in \mathcal W\bc{\bm{T}_{n+1,t/n}[\bm{\theta}]}})_{i\in [n]}$ consists of identically distributed random variables: This is a consequence of the fact that $\bm{T}_{n,t/n}$ is a submatrix of $\bm{T}_{n+1,t/n}$, \Cref{lem_exc0} and \Cref{cla_exc}.
Therefore,
\begin{align}\label{eq_l91_x1}
\mathbb{E}\abs{\vw_{n,t/n}-\vw_{n+1,t/n}}=&
\frac{1}{n}\mathbb{E}\abs{\sum_{i=1}^n \bc{\ind\cbc{i \in \mathcal W\bc{\bm{T}_{n,t/n}[\bm{\theta}]}}-\ind\cbc{i \in \mathcal W\bc{\bm{T}_{n+1,t/n}[\bm{\theta}]}}}}+o_n(1) \nonumber\\
& \leq \mathbb{E}\abs{\ind\cbc{1 \in \mathcal W\bc{\bm{T}_{n,t/n}[\bm{\theta}]}}-\ind\cbc{1 \in \mathcal W\bc{\bm{T}_{n+1,t/n}[\bm{\theta}]}}}+o_n(1).
\end{align}
We next bound \cref{eq_l91_x1} for the different types separately:

\textbf{Case 1}: ${\bm w}_{n,t/n} = \vx_{n,t/n}$.

\Cref{r80} yields the identity
\begin{align}\label{eq_l91_x2}
    \ind\cbc{1 \in \mathcal X\bc{\bm{T}_{n,t/n}[\bm{\theta}]}} =  \ind\cbc{1 \in \mathcal F\bc{\bm{T}_{n,t/n}[\bm{\theta}]} \Delta \mathcal F\bc{\bm{T}_{n,t/n}[\bm{\theta}] \abc{1;}} }.
\end{align}
Using $(\mathfrak B_1\Delta \mathfrak B_2)\Delta(\mathfrak B_3\Delta \mathfrak B_4)\subseteq (\mathfrak B_1\Delta \mathfrak B_3)\cup (\mathfrak B_2\Delta \mathfrak B_4)$ for any sets $\mathfrak B_1, \mathfrak B_2, \mathfrak B_3, \mathfrak B_4$ and plugging (\ref{eq_l91_x2}) into (\ref{eq_l91_x1}) yields the upper bound 
\begin{equation}\label{eq_l91_x3}
\begin{aligned}
 \hspace{0.1 cm}\mathbb{P}\bc{1\in\mathcal{F}\bc{\bm{T}_{n,t/n}[\bm{\theta}]}\Delta \mathcal{F}\bc{\bm{T}_{n+1,t/n}[\bm{\theta}]}} +\mathbb{P}\bc{1\in\mathcal{F}\bc{\bm{T}_{n,t/n}[\bm{\theta}]\abc{1;}}\Delta \mathcal{F}\bc{\bm{T}_{n+1,t/n}[\bm{\theta}]\abc{1;}}}+o_n(1)
\end{aligned}
\end{equation}
for $ \mathbb{E}|\vx_{n,t/n}-\vx_{n+1,t/n}|$. \Cref{l7,l71} now imply that
\begin{align}\label{eq_xn0}
\mathbb{E}|\vx_{n,t/n}-\vx_{n+1,t/n}|\leq  4(L+2)!\delta+(L+3)\mathbb{P}\bc{\poi{d}\geq L}+o_{n,P}(1),\ \mbox{uniformly in $t\in[0,d]$.}
\end{align}
In particular,
\begin{align}\label{eq_xn_lim}
\limsup_{P\to\infty}\limsup_{n\to\infty}\sup_{N\geq n,J_N\in\syN}\sup_{t\in[0,d]} \mathbb{E}|\vx_{n,t/n}-\vx_{n+1,t/n}|\leq  4(L+2)!\delta+3L\mathbb{P}\bc{\poi{d}\geq L}.
\end{align}
Since the {left} hand side of \cref{eq_xn_lim} does not depend on $L$ and $\delta$, we {can send $\delta\downarrow 0$ followed by $L\to \infty$ to} conclude that 
\begin{align}\label{eq_xn01}
\limsup_{P\to\infty}\limsup_{n\to\infty}\sup_{N\geq n,J_N\in\syN}\sup_{t\in[0,d]} \mathbb{E}|\vx_{n,t/n}-\vx_{n+1,t/n}|= 0
\end{align}
or equivalently, $\mathbb{E}|\vx_{n,t/n}-\vx_{n+1,t/n}|=o_{n,P}(1)$ uniformly in $t\in[0,d]$.

\textbf{Case 2}: ${\bm w}_{n,t/n} = \vy_{n,t/n}$.\\
\Cref{dsc} of completely frozen variables and \Cref{l1.1b} (ii) on row addition yield the identity
\begin{align}\label{eq_l91_y2}
    \ind\cbc{1 \in \mathcal Y\bc{\bm{T}_{n,t/n}[\bm{\theta}]}} =  \ind\cbc{1 \in \mathcal F\bc{\bm{T}_{n,t/n}[\bm{\theta}]\abc{1;}} \cap \mathcal F\bc{\bm{T}_{n,t/n}[\bm{\theta}]^T\abc{1;}} }.
\end{align}
Using $(\mathfrak B_1\cap \mathfrak B_2)\Delta(\mathfrak B_3\cap \mathfrak B_4)\subseteq (\mathfrak B_1\Delta \mathfrak B_3)\cup (\mathfrak B_2\Delta \mathfrak B_4)$ for any sets $\mathfrak B_1, \mathfrak B_2, \mathfrak B_3, \mathfrak B_4$ and plugging (\ref{eq_l91_y2}) into (\ref{eq_l91_x1}) yields the upper bound
\begin{equation}\label{eq_l91_y3}
\begin{aligned}
\mathbb{P}\bc{1\in\mathcal{F}\bc{\bm{T}_{n,t/n}[\bm{\theta}]\abc{1;}}\Delta \mathcal{F}\bc{\bm{T}_{n+1,t/n}[\bm{\theta}]\abc{1;}}} +\mathbb{P}\bc{1\in\mathcal{F}(\bm{T}_{n,t/n}[\bm{\theta}]^T\abc{1;})\Delta \mathcal{F}(\bm{T}_{n+1,t/n}[\bm{\theta}]^T\abc{1;})}+o_n(1)
\end{aligned}
\end{equation}
for $\mathbb{E}|\vy_{n,t/n}-\vy_{n+1,t/n}|$. 
\Cref{l71} then yields
\begin{align*}
\mathbb{E}|\vy_{n,t/n}-\vy_{n+1,t/n}|\leq  4(L+2)!\delta+2(L+2)\mathbb{P}\bc{\poi{d}\geq L}+o_{n,P}(1), \ \mbox{uniformly in $t \in [0,d]$}.
\end{align*}
Now the same limiting argument as in Case 1 yields
\begin{align*}
\mathbb{E}|\vy_{n,t/n}-\vy_{n+1,t/n}|=o_{n,P}(1),\ \mbox{uniformly in $t\in[0,d]$}.
\end{align*}

\textbf{Case 3}: ${\bm w}_{n,t/n} = \vz_{n,t/n}$.\\
\Cref{d2} of frozen variables yields the identity
\begin{align}\label{eq_l91_z2}
    \ind\cbc{1 \in \mathcal Z\bc{\bm{T}_{n,t/n}[\bm{\theta}]}} =  \ind\cbc{1 \notin \mathcal F\bc{\bm{T}_{n,t/n}[\bm{\theta}]} \cup \mathcal F\bc{\bm{T}_{n,t/n}[\bm{\theta}]^T} }.
\end{align}
Using $(\mathfrak B_1\cup \mathfrak B_2)\Delta(\mathfrak B_3\cup \mathfrak B_4)\subseteq (\mathfrak B_1\Delta \mathfrak B_3)\cup (\mathfrak B_2\Delta \mathfrak B_4)$ for any sets $\mathfrak B_1, \mathfrak B_2, \mathfrak B_3, \mathfrak B_4$ and plugging (\ref{eq_l91_z2}) into (\ref{eq_l91_x1}) yields the upper bound 
\begin{equation}\label{eq_l91_z3}
\begin{aligned}
\mathbb{P}\bc{1\in\mathcal{F}\bc{\bm{T}_{n,t/n}[\bm{\theta}]}\Delta \mathcal{F}\bc{\bm{T}_{n+1,t/n}[\bm{\theta}]}} +\mathbb{P}\bc{1\in\mathcal{F}\bc{\bm{T}_{n,t/n}[\bm{\theta}]^T}\Delta \mathcal{F}\bc{\bm{T}_{n+1,t/n}[\bm{\theta}]^T}}+o_n(1)
\end{aligned}
\end{equation}
for $\mathbb{E}|\vz_{n,t/n}-\vz_{n+1,t/n}|$.
\Cref{l7} then yields
\begin{align*}
\mathbb{E}|\vz_{n,t/n}-\vz_{n+1,t/n}|\leq  4(L+1)!\delta+2\mathbb{P}\left({\rm Po}(d)\geq L\right)+o_{n,P}(1), \ \mbox{uniformly in $t \in [0,d]$}.
\end{align*}
Now the same limiting argument as in Case 1 yields
\begin{align*}
\mathbb{E}|\vz_{n,t/n}-\vz_{n+1,t/n}|=o_{n,P}(1),\ \mbox{uniformly in $t\in[0,d]$}.
\end{align*}

\textbf{Case 4}: ${\bm w}_{n,t/n} = \vu_{n,t/n}$.\\
\Cref{d2} of frozen variables and \Cref{l1.1b} (ii) on row addition yield the identity
\begin{align}\label{eq_l91_u2}
    \ind\cbc{1 \in \mathcal U\bc{\bm{T}_{n,t/n}[\bm{\theta}]}} =  \ind\cbc{1 \in \mathcal F\bc{\bm{T}_{n,t/n}[\bm{\theta}]^T\abc{1;}} \setminus \mathcal F\bc{\bm{T}_{n,t/n}[\bm{\theta}]} }.
\end{align}
Using $(\mathfrak B_1\setminus \mathfrak B_2)\Delta(\mathfrak B_3\setminus \mathfrak B_4)\subseteq (\mathfrak B_1\Delta \mathfrak B_3)\cup (\mathfrak B_2\Delta \mathfrak B_4)$ for any sets $\mathfrak B_1, \mathfrak B_2, \mathfrak B_3, \mathfrak B_4$ and plugging (\ref{eq_l91_u2}) into (\ref{eq_l91_x1}) yields the upper bound 
\begin{equation}\label{eq_l91_u3}
\begin{aligned}
\mathbb{P}\bc{1\in\mathcal{F}\bc{\bm{T}_{n,t/n}[\bm{\theta}]}\Delta \mathcal{F}\bc{\bm{T}_{n+1,t/n}[\bm{\theta}]}} +\mathbb{P}\bc{1\in\mathcal{F}(\bm{T}_{n,t/n}[\bm{\theta}]^T\abc{1;})\Delta \mathcal{F}(\bm{T}_{n+1,t/n}[\bm{\theta}]^T\abc{1;})}+o_n(1).
\end{aligned}
\end{equation}
for $\mathbb{E}\abs{\vu_{n,t/n}-\vu_{n+1,t/n}}$. \Cref{l7,l71} then give
\begin{align*}
\mathbb{E}|\vu_{n,t/n}-\vu_{n+1,t/n}|\leq  
4(L+2)!\delta+(L+3)\mathbb{P}\bc{\poi{d}\geq L}+o_{n,P}(1), \ \mbox{uniformly in $t \in [0,d]$}.
\end{align*}
Now the same limiting argument as in Case 1 yields
\begin{align*}
\mathbb{E}|\vu_{n,t/n}-\vu_{n+1,t/n}|=o_{n,P,}(1),\ \mbox{uniformly in $t\in[0,d]$}.
\end{align*}

\textbf{Case 5}: ${\bm w}_{n,t/n} = \vv_{n,t/n}$.\\
This is completely analogous to Case 4.

\textbf{Case 6}: $\vw_{n,t/n}=\vze_{n,t/n}$.\\
This is an immediate consequence of Cases 1-5.
\end{proof}

\section{Type fixed point equations}\label{sec_pc}
As laid out in detail in \Cref{sec_rifpm}, for our lower bound on the rank to be tight, we need further means to restrict the potential values of the proportion $\bm\alpha_{n,t/n}$ of frozen variables. In this section, with the help of the stability properties of the types that we derived in \Cref{sec_prof}, we derive asymptotic fixed point equations for the proportions of finer types $\vy_{n,t/n}, \vu_{n,t/n}$ and $\vv_{t,t/n}$ in $\bm{T}_{n,t/n}[\bm{\theta}]$, as well as a lower bound for $\vz_{n,t/n}$. Correspondingly, the single main result of this section is \Cref{al11} below.

The proof of the characterisations in \Cref{al11} is based on a detailed analysis of the connection of the type of variable $n+1$ in the larger matrix $\bm{T}_{n+1,t/n}[\bm{\theta}]$ to the types of the non-zero entries of row $n+1$ in the smaller matrix $\bm{T}_{n,t/n}[\bm{\theta}]$. In this way, we can relate the proportions of types to certain functions of other proportions, that simply correspond to the choices of the non-zero entries of row $n+1$ and therefore comparatively easy to evaluate, see \Cref{sec_prosc12}. 
Of course, the details are considerably more involved, but indeed, this proof scheme is quite similar to the
deduction of the heuristic
fixed point equation in \Cref{sec_rifpm}, where we relate the type of the new coordinate to the types of its neighbours by the combination of \cref{adf,eq00}.

\subsection{Section overview}\label{sec_fe}
The main goal of this section is to derive the following fixed point equations for the types from \Cref{def_proportions}:
\begin{proposition}[Type fixed point equations]\label{al11}
For any $n\geq 0$ and $d>0$, 
\begin{align}
   & \vy_{n,t/n}=1-\phi_t\bc{\vx_{n,t/n}+\vy_{n,t/n}+\vu_{n,t/n}}-\phi_t\bc{\vx_{n,t/n}+\vy_{n,t/n}+\vv_{n,t/n}}+\phi_t\bc{\vx_{n,t/n}+\vy_{n,t/n}}+\oone;\label{eq_fpy0}\\
  &  \vu_{n,t/n}=\phi_t\bc{\vx_{n,t/n}+\vy_{n,t/n}+\vu_{n,t/n}}-\phi_t\bc{\vx_{n,t/n}+\vy_{n,t/n}}+\oone;\label{eq_fpu0}\\
 &   \vv_{n,t/n}=\phi_t\bc{\vx_{n,t/n}+\vy_{n,t/n}+\vv_{n,t/n}}-\phi_t\bc{\vx_{n,t/n}+\vy_{n,t/n}}+\oone;\label{eq_fpv0}\\
  &  \vz_{n,t/n}\geq \phi_t\bc{\vy_{n,t/n}}+\oone. \label{eq_fpz0}
\end{align}
\end{proposition}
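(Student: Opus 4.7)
My plan for \Cref{al11} is to grow $B := \bm T_{n,t/n}[\bm\theta]$ into $A := \bm T_{n+1,t/n}[\bm\theta]$ by symmetrically attaching the random row $\bm b := \bm T_{n+1,t/n}(n+1,\cdot)|_{[n]}$ and the column $\bm b^T$, to compute $\PP(n+1\in \cW(A)\mid B)$ for each type $\cW\in\{\cY,\cU,\cV,\cZ\}$, and then to transfer these $\sigma(B)$-conditional expressions into the $L^1$ identities claimed in \cref{eq_fpy0,eq_fpu0,eq_fpv0,eq_fpz0} by combining exchangeability, the stability \Cref{l91}, and the conditional-expectation-comparison proposition \Cref{pexp}. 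All calculations will be carried out on the intersection of the good events $\mathfrak P_n$ from \cref{event_P} and $\mathfrak R_{n,t/n}\cap\mathfrak R_{n+1,t/n}$ from \cref{event_R}, together with the additional $1-o_n(1)$-probability event that the non-zero entries of the perturbation of $A$ miss index $n+1$; on this event, an admissible row/column reordering puts $A$ in block form $\begin{pmatrix}B&\tilde c\\\tilde{\bm b}&0\end{pmatrix}$, where $\tilde c$ and $\tilde{\bm b}$ are the appropriate zero-paddings of $\bm b^T$ and $\bm b$.

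By \Cref{p4} together with \Cref{l8.4}, the firmly-frozen events $F_1:=\{n+1\in\cF(A\langle n+1;\rangle)\}$ and $F_2:=\{n+1\in\cF(A^T\langle n+1;\rangle)\}$ correspond to $\tilde c$ not lying in the column span of $B$ and to $\tilde{\bm b}$ not lying in the row span of $B$, respectively. The implications in \cref{eq00}, applied to the $(\delta,\ell)$-free matrices $B$ and $B^T$ courtesy of \Cref{p1} and \Cref{l5}, then reduce $F_1$ to $\{\supp{\bm b}\not\subseteq\cF(B^T)\cap[n]\}$ and $F_2$ to $\{\supp{\bm b}\not\subseteq\cF(B)\cap[n]\}$ up to $L^1$-error $\oone$. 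Since $\bm b$ is independent of $B$ with $|\supp{\bm b}|\sim\bin{n,t/n}\approx\poi{t}$ and support uniform in $[n]$, and since $\cF(B)\cap[n]$, $\cF(B^T)\cap[n]$ and $\cF(B)\cap\cF(B^T)\cap[n]$ have proportions $\val_{n,t/n}=\vx_{n,t/n}+\vy_{n,t/n}+\vv_{n,t/n}$, $\hval_{n,t/n}=\vx_{n,t/n}+\vy_{n,t/n}+\vu_{n,t/n}$, and $\vx_{n,t/n}+\vy_{n,t/n}$, conditional expectation over $\bm b$ yields $\phi_t(\val_{n,t/n})$, $\phi_t(\hval_{n,t/n})$, and $\phi_t(\vx_{n,t/n}+\vy_{n,t/n})$. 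Inclusion-exclusion combined with the identifications $\{n+1\in\cY(A)\}=F_1\cap F_2$, $\{n+1\in\cU(A)\}=\bar F_1\cap F_2$, $\{n+1\in\cV(A)\}=F_1\cap\bar F_2$ (valid because $\cX\cap F_j=\emptyset$ for $j=1,2$ by the definition of frailly frozen variables) then produces the conditional identities $\PP(n+1\in\cW(A)\mid B)=g_\cW(\vze_{n,t/n})+\oone$, with $g_\cW$ the right-hand sides of \cref{eq_fpy0,eq_fpu0,eq_fpv0}. To pass to the $L^1$ identities for $\vw_{n,t/n}$, I condition further on $\vze_{n+1,t/n}$: by tower and exchangeability, $\Erw[\ind\{n+1\in\cW(A)\}\mid\vze_{n+1,t/n}]=\vw_{n+1,t/n}$, and the Lipschitz continuity of $\phi_t$ together with the closeness $\vze_{n,t/n}\approx\vze_{n+1,t/n}$ in $L^1$ from \Cref{l91} lets \Cref{pexp} conclude $\vw_{n+1,t/n}=g_\cW(\vze_{n+1,t/n})+\oone$; one further appeal to \Cref{l91} replaces $(n+1)$-proportions by $n$-proportions.

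The lower bound \cref{eq_fpz0} does not follow algebraically from the three preceding identities (they only give $\vz_{n,t/n}+\vx_{n,t/n}=\phi_t(\vx_{n,t/n}+\vy_{n,t/n})+\oone$) and requires a separate subevent argument. I will take $\mathfrak Y:=\{\supp{\bm b}\subseteq\cY(B)\cap[n]\}$, whose conditional probability given $B$ equals $\phi_t(\vy_{n,t/n})+\oone$ by the same Poisson computation, and show that on $\mathfrak Y$ the new index $n+1$ is forced into $\cZ(A)$. On $\mathfrak Y$, both $\bar F_1$ and $\bar F_2$ hold, so $n+1\in\cX\cup\cZ$; by the rank identity from \Cref{lr}, ruling out $\cX$ reduces to showing that the Schur-complement-type scalar $\Phi(B,\bm b):=\tilde{\bm b}\,c_0$, for any $c_0$ with $Bc_0=\tilde c$, vanishes. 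Taking $c_0:=\sum_{j\in\supp{\bm b}}\bm b(j)z_j$ with $z_j$ a column representation of $j$ in $B$ satisfying $z_j(j)=0$ (which exists because $j\in\cY(B)$ is firmly frozen in $B^T$), the vanishing $\Phi=0$ reduces to the bilinear identity $\sum_{j\neq k\in\supp{\bm b}}\bm b(j)\bm b(k)z_j(k)=0$. I expect this bilinear cancellation to be the main technical obstacle, which I plan to resolve by exploiting the symmetry $\bm T_{n,t/n}=\bm T_{n,t/n}^T$ (preserved on the good event for the indices of $\supp{\bm b}\subseteq[n]$ since the perturbation avoids them there) in order to choose $z_j=y_j^T$ for $y_j$ a row representation of $j$ avoiding row $j$, and by further restricting to a sub-good-event that enforces the antisymmetric pairing $z_j(k)+z_k(j)=0$ for $j,k\in\supp{\bm b}$. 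Once $\PP(n+1\in\cZ(A)\mid B)\geq\phi_t(\vy_{n,t/n})+\oone$ is established, the same \Cref{pexp}-based transfer argument used for the three equalities promotes it to the claimed $L^1$-lower-bound for $\vz_{n,t/n}$.
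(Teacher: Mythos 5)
Your treatment of the three equalities \cref{eq_fpy0,eq_fpu0,eq_fpv0} matches the paper's: the events you call $F_1$, $F_2$ correspond exactly to the paper's basic events $\fBT$, $\fB$ from \Cref{defq}, the reduction of ``firmly frozen'' to a containment condition on $\supp{\bm b}$ via \cref{eq00} and $(\delta,\ell)$-freeness is precisely Lemma \ref{l10}, the inclusion--exclusion over $F_1,F_2$ is Lemma \ref{l11}/\ref{al1}, and the $L^1$-transfer via exchangeability, \Cref{l91} and \Cref{pexp} is Lemma \ref{apexp}. So that part is fine.

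The inequality \cref{eq_fpz0} is where you have a genuine gap, and it is precisely the step you flag as ``the main technical obstacle.'' Your Schur-complement reformulation is a valid way to pose the problem---on the event $\{\supp{\bm b}\subseteq\cY(B)\}$, $n+1$ lies in $\cX(A)\cup\cZ(A)$, and ruling out $\cX$ amounts to $\Phi(B,\bm b)=\tilde{\bm b}\,c_0=0$---but the plan to force $\Phi=0$ by choosing $z_j=y_j^T$ and ``enforcing the antisymmetric pairing $z_j(k)+z_k(j)=0$'' on a sub-good-event does not go through. Nothing in the hypotheses guarantees a choice of representations with this pairing property; the symmetry of $\bm T_{n,t/n}$ gives you $z_j^T B = e_n(j)$, not antisymmetry of the off-diagonal pairing, and the perturbation breaks the symmetry anyway. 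Moreover over a general field (e.g.\ characteristic $\neq 2$) there is no reason for the sum $\sum_{j\neq k}\bm b(j)\bm b(k)z_j(k)$ to cancel, and no stability lemma in the paper produces such a sub-event with high probability.

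The paper's Lemma \ref{l12} takes a different and simpler route that sidesteps the bilinear term entirely: rather than arranging for the cross-terms to cancel, it arranges for each $z_j$ to vanish on \emph{all} of $\supp{\bm b}$, not just at $j$. Concretely, it introduces the event $\mathfrak C$ that for every $i\in\supp{\bm h}$, $i$ is frozen in $A[\bm\theta]\abc{\supp{\bm h};}$ iff it is frozen in $A[\bm\theta]\abc{i;}$; this holds with probability $1-O(L^2/P)-\PP(\bfh\geq L)-o_n(1)$ by the random row-removal Corollary \ref{l9} (itself a consequence of \Cref{l8.4} and \Cref{l1.1}). On $\mathfrak Z_\circ\cap\mathfrak C$, each $i\in\supp{\bm h}$ is frozen in $A[\bm\theta]\abc{\supp{\bm h};}$ and therefore admits a representation $b$ with $b_k=0$ for \emph{every} $k\in\supp{\bm h}$; such a representation extends verbatim to $A^{\bm h}[\bm\theta]\abc{n+1;}$ since it pairs to zero against the new column $(\bm h\ \,0)^T$. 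Then row $n+1=(\bm h\ \,0)$ has support contained in $\cF\bc{A^{\bm h}[\bm\theta]\abc{n+1;}}$, so by the first implication of \cref{eq00} it lies in the row span, i.e.\ $n+1\notin\cF(A^{\bm h}[\bm\theta]^T)$; combined with $\mathfrak Z_\circ\subseteq\ffBT$ and \cref{eq_lem_l10_3} this puts $n+1\in\cZ$. In your language, the paper picks $c_0=\sum_j\bm b(j)z_j$ with each $z_j$ supported off $\supp{\bm b}$, so $\tilde{\bm b}z_j=0$ term-by-term and $\Phi=0$ with no cancellation. You would need to replace the antisymmetric-pairing sub-event by an analogue of $\mathfrak C$ and \Cref{l9} to close the argument.
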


For the proof of \Cref{al11}, in whose course we also work with a more general matrix model, we give names to the functions on the right hand sides of (\ref{eq_fpy0}) to (\ref{eq_fpv0}). We use the following suggestive notation:

\begin{definition}[Type functions]\label{defun}
Let $\mathcal G$ denote the set of non-decreasing functions $g:[0,1]\to [0,1]$ and $\Delta^4$ be the four-dimensional standard simplex. We then define the following three functions $Y,U,V:\Delta^4\times \mathcal{G} \to [0,1]$ by setting:
\begin{enumerate}[label=(\roman*)]
  \item $Y\bc{\zeta,g}=1-g(x+y+u)-g(x+y+v)+g(x+y)$ for $(\zeta,g) \in \Delta^4\times \mathcal{G}$;
  \item $U\bc{\zeta,g}=g(x+y+u)-g(x+y)$ for $(\zeta,g) \in \Delta^4\times \mathcal{G}$;
  \item $V\bc{\zeta,g}=g(x+y+v)-g(x+y)$ for $(\zeta,g) \in \Delta^4\times \mathcal{G}$.
\end{enumerate}
\end{definition}


The proof of \Cref{al11} is split into two main parts: \Cref{apexp} and \Cref{cs1}. First, \Cref{apexp} reduces the approximation of the types through the type functions to the separate approximation of conditional type probabilities in a larger matrix through the type functions and approximation of $\vze_{n+1,t/n}$ through $\vze_{n,t/n}$:

\begin{lemma}\label{apexp}
Let $t\in[0,d]$ with $d>0$. For any $W\in\cbc{Y,U,V}$ and $K\in \ZZ_{\geq 2}$,
\begin{align}\label{eq_eqyuv}
\mathbb{E}\abs{\vw_{n,t/n}-W\bc{\vze_{n,t/n},\phi_t}}\leq& \mathbb{E}\abs{\mathbb{P}\bc{n+1\in\mathcal{W}\bc{\bm{T}_{n+1,t/n}[\bm{\theta}]}\big|\vze_{n,t/n}}-W\bc{\vze_{n,t/n},\phi_t}} 
\\
& + (5+8K^2)\mathbb{E}\|\vze_{n,t/n}-\vze_{n+1,t/n}\|_\infty+4-4(1-1/K)^5+10\bc{1+2d}/K,\nonumber
\end{align}
and
\begin{align}\label{eq_eqz}
\mathbb{E}\brk{\bc{\vz_{n,t/n}-\phi_t\bc{\vy_{n,t/n}}}^-}\leq&\mathbb{E}\brk{\bc{\mathbb{P}\bc{n+1\in\mathcal{Z}\bc{\bm{T}_{n+1,t/n}[\bm{\theta}]} \big|\vze_{n,t/n} }-\phi_t\bc{\vy_{n,t/n}}}^-} 
\\
&+ (5+8K^2)\mathbb{E}\|\vze_{n,t/n}-\vze_{n+1,t/n}\|_\infty+4-4(1-1/K)^5+10\bc{1+2d}/K.\nonumber
\end{align}
\end{lemma}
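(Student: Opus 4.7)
The central idea is to express $\vw_{n,t/n}$ as a conditional probability in the larger matrix $\bm{T}_{n+1,t/n}[\bm{\theta}]$ and match it to the type function. Two ingredients do most of the work: the joint exchangeability of $[n+1]$ in $\bm{T}_{n+1,t/n}[\bm{\theta}]$, and the conditional-expectation comparison result \Cref{pexp} from Appendix~\ref{app_b}.

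By \Cref{lem_exc0} and \Cref{cla_exc}, the vector $\vze_{n+1,t/n}$ is invariant under any joint row-column permutation of $[n+1]$, so the same averaging argument as in \Cref{cor_proportions} yields, for every $i\in[n+1]$,
\[
\mathbb{P}\bigl(i\in\mathcal{W}(\bm{T}_{n+1,t/n}[\bm{\theta}])\bigm|\vze_{n+1,t/n}\bigr)=\vw_{n+1,t/n}.
\]
Taking $i=n+1$ and applying the triangle inequality, I would decompose
\begin{align*}
\mathbb{E}\bigl|\vw_{n,t/n}-W(\vze_{n,t/n},\phi_t)\bigr|
&\leq \mathbb{E}\bigl|\vw_{n,t/n}-\vw_{n+1,t/n}\bigr|\\
&\quad+\mathbb{E}\bigl|\mathbb{P}(n+1\in\mathcal{W}\mid\vze_{n+1,t/n})-\mathbb{P}(n+1\in\mathcal{W}\mid\vze_{n,t/n})\bigr|\\
&\quad+\mathbb{E}\bigl|\mathbb{P}(n+1\in\mathcal{W}\mid\vze_{n,t/n})-W(\vze_{n,t/n},\phi_t)\bigr|.
\end{align*}
The third summand is exactly the first term on the right of \cref{eq_eqyuv}. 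The first is bounded by $\mathbb{E}\|\vze_{n,t/n}-\vze_{n+1,t/n}\|_\infty$, since $\vw_{n,t/n}$ and $\vw_{n+1,t/n}$ are individual coordinates of $\vze_{n,t/n}$ and $\vze_{n+1,t/n}$.

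The real work lies in the middle summand, which compares the same event conditioned on two distinct but closely related $\sigma$-algebras. Here I would invoke \Cref{pexp}, which carries out exactly this comparison via a $1/K$-discretization of the five-dimensional state space of $\vze$. Its output naturally splits into three error contributions that account for the remaining terms on the right of \cref{eq_eqyuv}: a within-cell Lipschitz term of the form $cK^2\,\mathbb{E}\|\vze_{n,t/n}-\vze_{n+1,t/n}\|_\infty$ that combines with the first summand above to produce the coefficient $5+8K^2$; a cell-boundary term $4\bigl(1-(1-1/K)^5\bigr)$ obtained by union-bounding over the five coordinates the probability that the discretizations of $\vze_{n,t/n}$ and $\vze_{n+1,t/n}$ lie in different cells; and a discretization Lipschitz error $10(1+2d)/K$ coming from the modulus of continuity of $W(\cdot,\phi_t)$, which is Lipschitz in its first argument with a constant controlled by the Lipschitz constant $t\leq d$ of $\phi_t$ on $[0,1]$.

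The bound \cref{eq_eqz} for $\vz$ follows from a structurally identical argument applied to the negative part of $\vz_{n,t/n}-\phi_t(\vy_{n,t/n})$: exchangeability again gives $\mathbb{P}(n+1\in\mathcal{Z}\mid\vze_{n+1,t/n})=\vz_{n+1,t/n}$, and the same three-term triangle decomposition with absolute values replaced by negative parts, together with the same invocation of \Cref{pexp}, produces the stated one-sided estimate. The principal obstacle throughout is the middle summand: since $\vze_{n+1,t/n}$ is the richer of the two conditioning vectors, no naive tower-property argument applies, and one truly needs the discretization-based comparison from the appendix to make the error bounds scale correctly in $K$.
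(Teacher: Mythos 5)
Your identification of the two key ingredients --- the exchangeability calculation showing $\mathbb{P}(n+1\in\mathcal{W}(\bm{T}_{n+1,t/n}[\bm{\theta}])\mid\vze_{n+1,t/n})=\vw_{n+1,t/n}$, and the discretization machinery of \Cref{pexp} --- is exactly right, but the three-way triangle decomposition you propose does not lead to a well-posed application of \Cref{pexp}, and the paper in fact avoids it.

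The paper applies \Cref{pexp} \emph{once, directly to the full target}: take $k=5$, $\bm{Z}_1=\vze_{n+1,t/n}$, $\bm{Z}_2=\vze_{n,t/n}$, $\bm{X}=\ind\{n+1\in\mathcal{W}(\bm{T}_{n+1,t/n}[\bm{\theta}])\}$, $f$ the $w$-coordinate projection and $g(\zeta)=W(\zeta,\phi_t)$. Then the left side of \cref{eh0} is $\mathbb{E}|\vw_{n,t/n}-W(\vze_{n,t/n},\phi_t)|$, the error term $\mathbb{E}|f(\bm{Z}_1)-\mathbb{E}[\bm{X}\mid\bm{Z}_1]|$ vanishes by your step 1, the error term $\mathbb{E}|\mathbb{E}[\bm{X}\mid\bm{Z}_2]-g(\bm{Z}_2)|$ is precisely the conditional-probability term retained on the right of \cref{eq_eqyuv}, and the constants read off from $\sup|f|=\sup|x|=1$, $\sup\|\nabla f\|_\infty=1$, $\sup\|\nabla g\|_\infty\leq 2d$. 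No prior triangle inequality is needed.

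The gap in your plan is the middle summand $\mathbb{E}\bigl|\mathbb{E}[\bm{X}\mid\vze_{n+1,t/n}]-\mathbb{E}[\bm{X}\mid\vze_{n,t/n}]\bigr|$. You assert that \Cref{pexp} ``carries out exactly this comparison'', but its conclusion bounds $\mathbb{E}|f(\bm{Z}_2)-g(\bm{Z}_2)|$ for \emph{deterministic} functions $f,g$, not the gap between the two conditional expectations of $\bm{X}$. That comparison appears only in discretized form inside the proof of \Cref{pexp} --- namely the term $\sum_q\bigl|\mathbb{E}[\bm{X}\ind\{\bm{Z}_1-\bm{\xi}_K\in D_q(1/K)\}]-\mathbb{E}[\bm{X}\ind\{\bm{Z}_2-\bm{\xi}_K\in D_q(1/K)\}]\bigr|$, controlled via \Cref{ls1} --- and there is no choice of $f,g,\bm{X},\bm{Z}_1,\bm{Z}_2$ in the \emph{statement} of \Cref{pexp} that reproduces your middle summand (setting $f$ to the coordinate projection so that $f(\bm{Z}_1)=\mathbb{E}[\bm{X}\mid\bm{Z}_1]$ forces $g=f$ as well, which makes the left side of \Cref{pexp} identically zero). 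Without a separate lemma extracted from the interior of that proof, your decomposition cannot be closed; the unexplained ``$4$'' you need to add to the first summand's coefficient $1$ to reach $5+8K^2$ is another symptom of the mismatch. The remedy is simply to drop the triangle decomposition and apply \Cref{pexp} directly as above.
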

The extensive proof of \Cref{apexp} is deferred to Appendix \ref{app_b}, since it is separate from our main proof strategy and rather technical. 
Since thanks to \Cref{l91}, we have good control over the differences $\mathbb{E}\|\vze_{n,t/n}-\vze_{n+1,t/n}\|_\infty$ already, it only remains to take care of the conditional probabilities in \Cref{apexp}. This is exactly what the second \Cref{cs1} does by illustrating the probabilistic interpretation of the type functions: It shows that the type functions (resp. $\phi_t$) are approximations (resp. lower bounds) of the probabilities that column $n+1$ in $\bm{T}_{n+1,t/n}[\bm{\theta}]$ has the type associated with the function (resp. type $\mathcal Z$), conditionally on $\vze_{n,t/n}$: 

\begin{lemma}[Conditional probabilities and type functions]\label{cs1}
For  any $W \in \cbc{Y,U,V}$ and $d >0$,
\begin{equation}\label{eq_fixyuv}
    \mathbb{P}\bc{n+1\in\mathcal{W}\bc{\bm{T}_{n+1,t/n}[\bm{\theta}]}\big|\vze_{n,t/n}}-W\bc{\vze_{n,t/n},\phi_t}=\oone,
\end{equation}
while
\begin{equation}\label{eq_ineqz}
\mathbb{P}\bc{n+1\in\mathcal{Z}\bc{\bm{T}_{n+1,t/n}[\bm{\theta}]}\big|\vze_{n,t/n}}-\phi_t\bc{\vy_{n,t/n}} \geq \oone.
\end{equation}
\end{lemma}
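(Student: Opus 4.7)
The plan is to compute the conditional probabilities by relating the type of the new variable $n+1$ in $A:=\bm{T}_{n+1,t/n}[\bm\theta]$ to structural properties of $A':=\bm{T}_{n,t/n}[\bm\theta]$ together with the support $S:=\supp{\bm b}$ of the new row $\bm b:=\bm{T}_{n+1,t/n}(n+1,[n])$. Throughout, I condition on $\mathfrak P_n\cap\mathfrak R_{n,t/n}\cap\mathfrak R_{n+1,t/n}$ from \cref{event_R,event_P}, which by \cref{oe} has joint probability $1+o_{n,P}(1)$; on $\mathfrak P_n$ we have $A\abc{n+1;n+1}=A'$ and, by symmetry of $\bm{T}_{n+1,t/n}$ together with the shared perturbation, the non-zero entries of both row $n+1$ and column $n+1$ of $A$ within $[n]$ are given by $\bm b$, with zeros in all perturbation slots.

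Using \Cref{dxyzuv,lr,p4,l8}, the type of $n+1$ in $A$ is encoded by the tuple $(\eps_1,\eps_2,\eps_3,\eps_4):=(\ind\{n+1\in\cF(A^T\abc{n+1;})\},\ind\{n+1\in\cF(A\abc{n+1;})\},\ind\{n+1\in\cF(A)\},\ind\{n+1\in\cF(A^T)\})$. The constraints $\eps_2\leq\eps_3$, $\eps_1\leq\eps_4$ (\Cref{l1.1b}) and $\eps_1+\eps_3=\eps_2+\eps_4$ (\Cref{l8}) leave exactly five admissible tuples, in bijection with $\cZ,\cX,\cU,\cV,\cY$, namely $(0,0,0,0)$, $(0,0,1,1)$, $(1,0,0,1)$, $(0,1,1,0)$, $(1,1,1,1)$. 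The block structure of $A$ on $\mathfrak P_n$ combined with \Cref{l8.4} recasts $\eps_1=0$ as ``$(\bm b,0_{1\times\bm\theta_c})$ lies in the row span of $A'$'' and $\eps_2=0$ as ``$(\bm b,0_{1\times\bm\theta_r})$ lies in the row span of $A'^T$''. On $\mathfrak R_{n,t/n}$ both $A'$ and $A'^T$ are $(\delta,\ell)$-free for $2\leq\ell\leq L$, so by \Cref{l5} and the implications \cref{eq00} these row-span memberships become, up to error $\delta\,\ell!+o_n(1)$, the support containments $S\subseteq\cF(A')\cap[n]$ and $S\subseteq\cF(A'^T)\cap[n]$, respectively. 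Conditionally on $A'$, $S$ is a uniform random $\bin{n,t/n}$-subset of $[n]$, and $\bin{n,t/n}$ converges in total variation to $\poi{t}$; truncating $|S|$ to $L$ at cost $\mathbb{P}(\poi{t}\geq L)+o_n(1)$ and using \cref{eq_alphaxyuv}, I obtain
\begin{align*}
\PP(\eps_1=0\mid\vze_{n,t/n})&=\phi_t(x+y+v)+\oone,\\
\PP(\eps_2=0\mid\vze_{n,t/n})&=\phi_t(x+y+u)+\oone,\\
\PP(\eps_1=\eps_2=0\mid\vze_{n,t/n})&=\phi_t(x+y)+\oone,
\end{align*}
where the last identity uses $\cF(A')\cap\cF(A'^T)\cap[n]=(\cX\cup\cY)(A')$, of proportion $x+y$. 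Since the events $\{n+1\in\cY\}=\{\eps_1=\eps_2=1\}$, $\{n+1\in\cU\}=\{\eps_1=1,\eps_2=0\}$ and $\{n+1\in\cV\}=\{\eps_1=0,\eps_2=1\}$ depend only on $(\eps_1,\eps_2)$, inclusion-exclusion followed by the double limit $\delta\downarrow 0$, $L\to\infty$ (as in the proof of \Cref{l91}) yields \cref{eq_fixyuv} for $W\in\{Y,U,V\}$.

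The main obstacle is the lower bound \cref{eq_ineqz} for $\cZ$: on the level of $(\eps_1,\eps_2)$ alone, $\cZ$ and $\cX$ are indistinguishable (both sit in $\{\eps_1=\eps_2=0\}$), which only yields the \emph{upper} bound $\PP(\cZ\mid\vze_{n,t/n})\leq\phi_t(x+y)+\oone$ rather than the required lower bound $\phi_t(y)+\oone$. To bridge this gap, I would isolate a sufficient event for $n+1\in\cZ$ of conditional probability $\phi_t(y)+\oone$. Unravelling $\eps_3=\eps_4=0$ via the block decomposition of $A$, the relevant algebraic condition becomes the solvability of $A'(c,d)^T=(\bm b^T,0)^T$ subject to the additional linear constraint $c\cdot\bm b=0$, together with the transposed analogue for rows. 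The idea is that when every $i\in S$ is \emph{completely} frozen in $A'$, the two representations of $\{i\}$ furnished by the firm frozenness of $i$ in $A'$ and in $A'^T$ (via \Cref{l8.4}) supply a pair of solutions that avoid row $i$ and column $i$ respectively, and this additional freedom—reinforced by the kernel contributed by the random column perturbation $\bm\Theta_c[n\vert n,\bm\theta_c]$—suffices to absorb the extra constraint $c\cdot\bm b=0$ with probability $1-o_{n,P}(1)$. Combined with the Poisson-type computation $\PP(S\subseteq\cY(A')\mid\vze_{n,t/n})=\phi_t(y)+\oone$, this delivers \cref{eq_ineqz}. Making the kernel-flexibility argument rigorous, in particular exploiting the randomness of $\bm\Theta_c$ uniformly in $A'$, is the technical heart of the proof.
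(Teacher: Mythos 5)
Your treatment of the $Y,U,V$ cases is essentially correct and parallel to the paper's, with different bookkeeping: where you encode the type via the tuple $(\eps_1,\eps_2,\eps_3,\eps_4)$ and apply \Cref{l1.1b,l8} to reduce to the two bits $(\eps_1,\eps_2)$, the paper works with the events $\ffB=\{\supp{\bm h}\subseteq\cF(A[\bm\theta])\}$ and $\ffBT=\{\supp{\bm h}\subseteq\cF(A[\bm\theta]^T)\}$ and shows in Lemmas \ref{l10} and \ref{l11} that these predict the status of $n+1$ up to the error terms you list. Your reduction of $\eps_1=0$, $\eps_2=0$ to row-span membership via \Cref{p4,l8.4}, and then to support containments via \Cref{l5} on the $(\delta,\ell)$-free event, is the same chain of ideas, and the uniform-sampling / Poisson-approximation calculation matches Lemma \ref{al1}. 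So far, so good.

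The gap is in the $\cZ$ lower bound, and you already flag it yourself. You correctly identify that the sufficient event is $\supp{\bm b}\subseteq\cY(A')$ and correctly compute its conditional probability as $\phi_t(\vy_{n,t/n})+\oone$. But the crucial implication — that on this event, $n+1\in\cZ(A^{\bm h}[\bm\theta])$ with probability $1-o_{n,P}(1)$ — is not established, and your sketched mechanism (a pair of row/column-avoiding representations per $i$, plus ``kernel flexibility from $\Theta_c$ absorbing the extra constraint'') is not the one that closes it. The difficulty that actually has to be overcome is this: to exhibit row $n+1$ of $A^{\bm h}[\bm\theta]$ as a linear combination of the other rows, one wants to sum, over $i\in\supp{\bm h}$, representations $y^{(i)}$ of $\{i\}$ in $A^{\bm h}[\bm\theta]\abc{n+1;}$. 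For the sum to have the correct $(n+1)$st coordinate (zero), each $y^{(i)}$ must vanish on \emph{all} rows in $\supp{\bm h}$, not just on row $i$. Firm frozenness of $i$ gives a representation avoiding row $i$ only. The paper bridges this with the event $\mathfrak C=\{\text{for all }i\in\supp{\bm h}:\ i\notin\cF(A[\bm\theta]\abc{\supp{\bm h};})\Delta\cF(A[\bm\theta]\abc{i;})\}$, whose probability is controlled by \Cref{l9} (random row-removal), which in turn rests on \Cref{l8.4} (row removal $\equiv$ unit-column attachment) together with \Cref{l1.1} (a bounded deterministic shift in $\bm\theta$ is unlikely to change frozen status). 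That chain is where $\Theta_c$ genuinely enters — not as a source of extra kernel degrees of freedom to absorb $c\cdot\bm b=0$, but as the robustness engine behind \Cref{l9}. Without the event $\mathfrak C$, or some equivalent device that simultaneously makes all representations avoid $\supp{\bm h}$, the argument you outline does not go through, so the lower bound \cref{eq_ineqz} remains unproven in your proposal.
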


The proof of \Cref{cs1} is presented in  \Cref{sec_prosc1}. 
 With \Cref{cs1,apexp} in hand, we are ready to prove \Cref{al11}:

\begin{proof}[Proof of \Cref{al11} subject to \Cref{cs1,apexp}]
Let $W \in \{Y,U,V\}$ and $K \in \NN_{\geq 2}$. By \Cref{apexp}, an application of \Cref{l91} and \cref{eq_fixyuv} to the right hand side of \cref{eq_eqyuv} together with the fact $\| \cdot\|_{\infty} \leq \| \cdot\|_{1}$ gives
\begin{equation}\label{exh}\begin{aligned}
\mathbb{E}\abs{\vw_{n,t/n}-W\bc{\vze_{n,t/n},\phi_t}}\leq o_{n,P}(1) + 4 - 4(1-1/K)^{5} + 10(1+2d)/K \qquad \text{uniformly in $t \in [0,d]$}.
\end{aligned}\end{equation}
Since \cref{exh} holds true for any $K \geq 2$, and the left hand side does not depend on $K$, \cref{exh} implies that
\[\limsup_{P\to\infty}\limsup_{n\to\infty}\sup_{N\geq n,J_N\in\syN}\sup_{0\leq t\leq d}\mathbb{E}\abs{\vw_{n,t/n}-W\bc{\vze_{n,t/n},\phi_t}}=0,\]
which gives \cref{eq_fpy0} to \cref{eq_fpv0}.

Analogously, by \Cref{apexp}, an application of \Cref{l91} and \cref{eq_eqz} to the right hand side of \cref{eq_ineqz} together with the fact $\| \cdot\|_{\infty} \leq \| \cdot\|_{1}$ gives
\begin{equation}\label{exh2}\begin{aligned}
\mathbb{E}\brk{\bc{ \vz_{n,t/n}-\phi_t\bc{\vy_{n,t/n}} }^-}\leq o_{n,P}(1) + 4 - 4(1-1/K)^{5} + 10(1+2d)/K \qquad \text{uniformly in $t \in [0,d]$}.
\end{aligned}\end{equation}
As before , \cref{exh2} implies that
$\bc{\vz_{n,t/n}-\phi_t\bc{\vy_{n,t/n}}}^-=\oone$, i.e., $\vz_{n,t/n}-\phi_t\bc{\vy_{n,t/n}}\geq \oone.$
\end{proof}
It thus only remains to prove \Cref{cs1}. 

\subsection{Conditional probabilities and type functions: Proof of \Cref{cs1}}\label{sec_prosc1}
As outlined in \Cref{sec_fe}, the only ingredient in the proof of \Cref{al11} that is still lacking a proof is \Cref{cs1}. This gap is closed in the current section. In addition, we prove a more general version of \Cref{cs1}, which holds true for arbitrary square matrices and more general types of symmetric row- and column-attachment. We believe that this extension will prove useful in future applications of our strategy.


More precisely, for any positive integer $n$, let $A\in\FF^{n\times n}$ be an arbitrary square matrix and let $\bfh$ be an integer-valued random variable with probability generating function $\psi$. Given $\bfh$, let $\bm{h} \in \FF^{1\times n}$ be a random vector whose non-zero entries are chosen uniformly at random from the set $\binom{[n]}{\bfh}$. Throughout this section, we write
\[A^{\bm{h}}=\begin{pmatrix}A&\bm{h}^T\\\bm{h}&0\end{pmatrix}\]
to denote the matrix $A$ after symmetric row- and column-attachment of $\bm h$. 
Finally, we omit the explicit dependence of the proportions on the underlying matrix and simply write $\vx$ for $x(A[\bm{\theta}])$ throughout \Cref{sec_prosc1}. The quantities $\vy,\vz,\vu,\vv$ and $\vze$ are defined analogously. 

The main result of this section is the following generalised version of \Cref{cs1}:
\begin{proposition}[Approximating the type probabilities for column {$n+1$}]\label{ccs1}
For any $A\in\FF^{n\times n}$, $L\in \NN_{\geq 2}, \delta>0$ and $W \in\cbc{Y,U,V}$,
\begin{equation}\label{eq_ccs1_or}
    \mathbb{E}\abs{\mathbb{P}\bc{n+1\in\mathcal{W}\bc{A^{\bm{h}}[\bm{\theta}]}\big|\vze}-W(\vze,\psi)}\leq 6L!\delta+7\mathbb{P}\left(\bfh\geq L\right)+o_{n,P}(1),\
\end{equation}
and
\begin{equation}\label{eq_ccs1_z}
    \mathbb{E}\brk{\bc{\mathbb{P}\bc{n+1\in\mathcal{Z}\bc{A^{\bm{h}}[\bm{\theta}]}\big|\vze}-\psi\bc{\vy}}^-}\leq 2\mathbb{P}\left(\bfh\geq L\right)+o_{n,P}(1).\
\end{equation}
\end{proposition}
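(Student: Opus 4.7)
The plan is to reduce the statement to a combinatorial condition on $\supp{\bm h}$ relative to the type-partition of $A[\bm\theta]$, and then average over $\bm h$. Fix $L \geq 2$ and $\delta > 0$; truncating at $\bfh < L$ contributes the $\mathbb{P}(\bfh \geq L)$ tail terms. Throughout I would work on the good event that both $A[\bm\theta]$ and $A[\bm\theta]^T$ are $(\delta, \ell)$-free for all $2 \leq \ell \leq L$ and that the perturbation of $A^{\bm h}[\bm\theta]$ restricts compatibly to that of $A[\bm\theta]$; by \Cref{p1} and \Cref{lc1} this event has probability $1 + o_{n,P}(1)$.

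On this good event, \Cref{p4} translates the firm-frozenness of $n+1$ in $A^{\bm h}[\bm\theta]$ (resp.\ in its transpose) into the condition $\bm h^T \notin \mathrm{colspan}\,A[\bm\theta]$ (resp.\ $\bm h \notin \mathrm{rowspan}\,A[\bm\theta]$), by rewriting the associated one-step rank changes as spanning conditions. The implication chain \cref{eq00} combined with \Cref{l5} applied to $A[\bm\theta]^T$ and $A[\bm\theta]$ then shows that each of these linear-span conditions differs in probability from the corresponding support condition ``$\supp{\bm h} \subseteq \cF(A[\bm\theta]^T)$'' or ``$\supp{\bm h} \subseteq \cF(A[\bm\theta])$'' by at most $\delta \ell!$, the only obstruction being that $\supp{\bm h}$ is itself a proper relation.

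For $W \in \{Y, U, V\}$ I would then carry out inclusion-exclusion on these two Bernoulli-style events. Since $|\cF(A[\bm\theta])|/n$ is the proportion $\val = \vx + \vy + \vv$, $|\cF(A[\bm\theta]^T)|/n = \hval = \vx + \vy + \vu$, and $\cF(A[\bm\theta]) \cap \cF(A[\bm\theta]^T) = \cX \cup \cY$, conditioning on $\bfh = \ell \leq L$ one has $\mathbb{P}(\supp{\bm h} \subseteq S \mid \bfh = \ell) = (|S|/n)^\ell + O(1/n)$, which averages against $\bfh$ to $\psi(|S|/n)$. For $\cY$ this yields $1 - \psi(\val) - \psi(\hval) + \psi(\vx+\vy) = Y(\vze, \psi)$. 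For $\cU$ the extra point is that if $\bm h \notin \mathrm{rowspan}\,A[\bm\theta]$, then the affine solution set $\{c : A[\bm\theta] c = \bm h^T\}$ is hit by $c \mapsto \bm h c$ surjectively onto $\FF$ (so a compatible $c$ exists), identifying the event ``$n+1 \notin \cF(A^{\bm h}[\bm\theta])$'' with ``$\bm h^T \in \mathrm{colspan}\,A[\bm\theta]$''; \Cref{l8} formalises this rank dichotomy. The case $\cV$ is symmetric. Since at most six applications of \Cref{l5} are needed across the three types, this produces the $6L!\delta$ term in \cref{eq_ccs1_or}.

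The $\cZ$ lower bound in \cref{eq_ccs1_z} is the delicate part and, notably, its error bound has no $\delta\ell!$ term, indicating that the argument bypasses \Cref{l5}. The intended sufficient condition is $\supp{\bm h} \subseteq \cY(A[\bm\theta])$: for each $i \in \cY$, complete frozenness supplies row and column representations $y^{(i)} A[\bm\theta] = e_i$ and $A[\bm\theta] z^{(i)} = e_i^T$ with $y^{(i)}_i = z^{(i)}_i = 0$, and assembling $y := \sum_i \bm h_i y^{(i)}$, $z := \sum_i \bm h_i z^{(i)}$, the aim is to deduce $r_2 = r_0$ (hence $n+1 \in \cZ$) by exploiting the coincidence of row and column $n+1$ in $A^{\bm h}$, which is exactly what distinguishes the present symmetric setting from the asymmetric one in \cite{coja2022rank}. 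Averaging $\mathbf 1\{\supp{\bm h} \subseteq \cY(A[\bm\theta])\}$ against the law of $\bfh$ gives the lower bound $\psi(\vy)$, and the truncation contributes $2\mathbb{P}(\bfh \geq L)$. The main obstacle is precisely in executing this last step cleanly: $\bm h z$ a priori depends on the chosen representations $y^{(i)}, z^{(i)}$, and one must argue that the symmetric diagonal-zero structure of $A^{\bm h}$ together with the freedom in the perturbation suffices to force the cancellation $\bm h z = 0$ for typical $A[\bm\theta]$, up to an $o_{n, P}(1)$ error absorbed into the good event.
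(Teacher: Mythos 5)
Your architecture matches the paper's: relate the type of $n+1$ to support conditions on $\supp{\bm h}$ via \Cref{p4}, \cref{eq00} and \Cref{l5}, then average against $\bfh$. For $\cY$, $\cU$, $\cV$ your route is broadly the paper's route (the paper packages it as the type events $\mathfrak Y,\mathfrak U,\mathfrak V,\mathfrak{XZ}$ of \Cref{defq2} together with Lemmas \ref{l10} and \ref{l11}). Your ``affine solution set is hit surjectively'' reasoning for $\cU$ is a valid linear-algebraic rendering of what the paper accomplishes through \Cref{c81} (symmetry of frail frozenness under transposition), but as written it ignores that the $(n{+}1)$st column of $A^{\bm h}[\bm\theta]$ is also subject to the row $\bm h$ and the perturbation rows, so the relevant solution set is not $\{c: A[\bm\theta]c = \bm h^T\}$ but the one with those extra rows appended. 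This is the reason the paper always decouples ``firm frozenness of $n{+}1$'' (which only sees the row-deleted matrix, \Cref{l10}) from ``frail frozenness'' (handled by \Cref{c81}).

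The genuine gap is in the $\cZ$ case, and you have honestly pointed at it yourself: the obstruction is that $\bm h z$ (equivalently the coefficient on column $n{+}1$) a priori depends on the chosen representations. The paper's \Cref{l12} does \emph{not} try to force a cancellation. Instead it upgrades the representations: rather than requiring $y^{(i)}_i = 0$ only (which is what firm frozenness of $i$ gives you), it requires representations that vanish on \emph{all} of $\supp{\bm h}$ simultaneously. Such a representation of $\{i\}$ exists precisely when $i$ stays frozen after deleting every row indexed by $\supp{\bm h}$ at once, and this stronger freezing property holds with high probability thanks to the event $\mathfrak C$ in the proof of \Cref{l12}, which is controlled by \Cref{l9} (removal of a bounded number of random rows is unlikely to unfreeze a fixed variable). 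Once each representation vanishes on $\supp{\bm h}$, its action on the new column $n{+}1$ (whose support among $[n]$ is exactly $\supp{\bm h}$) is identically zero, so no cancellation argument is needed at all. This is why the $\cZ$ bound carries no $\delta\ell!$ term but does carry the $L^2/P = o_{n,P}(1)$ error from $\mathfrak C$; your proposal did not identify this mechanism.
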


\begin{remark}[Error terms]
We emphasize that the error terms in \Cref{ccs1} and the rest of this section are uniform in $A$ and $\psi$. In the current more general setting, it becomes evident that the distribution of the type of the new column $n+1$ only depends on $A$ through the proportions of the types in $A[\bm{\theta}]$.  
\end{remark}
\Cref{cs1} is now a direct consequence of \Cref{ccs1}:
\begin{proof}[Proof of \Cref{cs1} subject to \Cref{ccs1}]
In the set-up of \Cref{ccs1}, let $A=\bm{T}_{n,t/n}$ and $\bm{h}=\bm{T}_{n+1,t/n}(n+1,)\abc{;n+1}$, such that $\psi$ becomes the probability generating function of a Bin$(n,t/n)$-variable. We first look at \Cref{eq_fixyuv}. For $W \in \{Y,U,V\}$, by the triangle inequality,
\begin{equation}\label{eq_triw}
    \begin{aligned}
    &\Erw\abs{\mathbb{P}\bc{n+1\in\mathcal{W}\bc{\bm{T}_{n+1,t/n}[\bm{\theta}]}\big|\vze_{n,t/n}}-W\bc{\vze_{n,t/n},\phi_t}}\\
    \leq \hspace{0.1 cm} &\Erw\abs{\mathbb{P}\bc{n+1\in\mathcal{W}\bc{\bm{T}_{n+1,t/n}[\bm{\theta}]}\big|\vze_{n,t/n}}-W\bc{\vze_{n,t/n},\psi}}+\Erw\abs{W\bc{\vze_{n,t/n},\psi}-W\bc{\vze_{n,t/n},\phi_t}}.
    \end{aligned}
\end{equation}
By \cite[Theorem 2.10]{van2017random}, 
\begin{equation}\label{eq_binpoi}
    \begin{aligned}
\sup_{r\in[0,1]}\abs{\psi(r)-\phi_t(r)}\leq\sum_{k=0}^{\infty}\abs{\mathbb{P}\bc{\text{Bin}(n,t/n)=k}-\mathbb{P}\bc{\poi{t}=k}}=\dtv{\bin{n,t/n},\poi{t}}\leq d^2/n.
\end{aligned}
\end{equation}
Equation \Cref{eq_fixyuv} then follows from \Cref{ccs1}, \cref{eq_triw} and \cref{eq_binpoi}. Inequality \Cref{eq_ineqz} follows anagolously.
\end{proof}
In the remainder of \Cref{sec_prosc1}, we carry out the proof of \Cref{ccs1}. For this, we first introduce five \textit{type events} in \Cref{sec_ctsoa} that establish a connection between the type of $n + 1$ and the types of $\supp{\bm h}$ in the underlying matrix $A^{\bm h}[\bm \theta]$. Since the non-zero coordinates of $\bm h$ are chosen uniformly given $\bfh$, we can then estimate the probabilities of the type events in \Cref{sec_prosc12}, and complete the proof of \Cref{ccs1} in \Cref{sec_proccs1}.

\subsubsection{Type events}\label{sec_ctsoa}
As announced, in this subsection, we introduce a number of ``type'' events that are solely defined in terms of $\supp{\bm h}$ and $A[\bm \theta]$. These events capture the main causes for variable $n+1$ to belong to a particular set $\mathcal{W}\bc{A^{\bm{h}}[\bm{\theta}]}$ in terms of whether all variables in $\supp{\bm{h}}$ are frozen with respect to $A[\bm{\theta}]$ or $A[\bm{\theta}]^T$. In this sense, we show that the probability that $n + 1$ does \textit{not} have a certain type on its matching type event is small in Lemmas \ref{l11} and \ref{l12} below.


As in Section \ref{sec_changefrozen}, throughout this section, we will frequently work on two good events. The first event $\mathfrak{P}_{n}$ will be the same as in \cref{event_P}, since it only involves the perturbation and ensures that the perturbations in $A[\bm{\theta}]$ and $A^{\bm{h}}[\bm{\theta}]$ agree. By \cref{oe}, $\mathbb{P}\left({\mathfrak{P}_n}\right)= 1+o_n(1)$. Secondly, and analogously to the definition of \cref{event_R}, we define an event ${\mathfrak R }$ that is used to  make our target matrix $(\delta,\ell)$-free, so that we can apply \Cref{l5}. More precisely, we denote by $\mathfrak R=\mathfrak R(\delta,L)$ the good  event that both $A[\bm{\theta}]$ and $A[\bm{\theta}]^T$ are $(\delta,\ell)$-free for $2\leq \ell\leq L$. \Cref{p1} gives that
\begin{equation} \label{def_R}
    \mathbb{P}\bc{\mathfrak R }\geq 1+o_{n,P}(1).
\end{equation}
Before we introduce the actual type events in \Cref{defq2}, we first define two basic events that are used to decide whether variable $n+1$ is firmly frozen in $A^{\bm{h}}[\bm{\theta}]$.

\begin{definition}[Basic events]\label{defq}
 Given $A\in\FF^{n\times n}$ and $\bm h$ as above, 
 we define the following events:
\begin{align}
\ffB &= \{\supp{\bm{h}}\subseteq \mathcal{F}\bc{A[\bm{\theta}]} \},\label{def_ffB} \\ 
\ffBT &= \{ \supp{\bm{h}}\subseteq \mathcal{F}\bc{A[\bm{\theta}]^T}\} \label{def_ffBT}.
\end{align}
\end{definition}

The following preparatory lemma then shows that the main reason for the new variable $n+1$ to be \textit{firmly frozen} in $A^{\bm{h}}[\bm{\theta}]$ is that not all of the variables in $\supp{\bm{h}}$ are frozen in $A[\bm{\theta}]^T$, and thus the event $\fBT$. This observation is later used to characterise the other possible types of $n+1$ in terms of the support of $\bm{h}$.

\begin{lemma}\label{l10}
For any $\delta>0, L\in \NN_{\geq 2}$ and $A \in \FF^{n \times n}$,
\begin{align} \label{eq_lem_l10_3}
\mathbb{P}\left(\text{$n+1$ is firmly frozen in $A^{\bm{h}}[\bm{\theta}]$}, \ffBT \right) = o_n(1),
\end{align}
and
\begin{align} \label{eq_lem_l10_1} 
 \mathbb{P}\left(\text{$n+1$ is not firmly frozen in $A^{\bm{h}}[\bm{\theta}]$}, \fBT\right) \leq L!\delta+\mathbb{P}\left(\bfh\geq L\right)+o_{n,P}(1).
\end{align}
\end{lemma}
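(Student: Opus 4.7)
The plan is to reformulate the event ``$n+1$ is firmly frozen in $A^{\bm{h}}[\bm{\theta}]$'' as a linear-span statement for the smaller matrix $A[\bm{\theta}]^T$, and then to apply the key implications \cref{eq00} together with the few-short-relations bound from \Cref{l5}.

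First I would work on the good event $\mathfrak{P}_n$ from \cref{event_P}, which has probability $1 - o_n(1)$ by \cref{oe}. On this event, the coupling of \Cref{lc1} ensures that $\Theta_r[\bm{\theta}_r, n+1|n+1]$ has its last column identically zero and $\Theta_c[n+1|n+1, \bm{\theta}_c]$ has its last row identically zero, so $(A^{\bm{h}}[\bm{\theta}])\abc{n+1;}$ is precisely $A[\bm{\theta}]$ with the extra column $(\bm{h}^T, 0_{\bm{\theta}_r \times 1})^T$ inserted at position $n+1$. By \Cref{p4}, variable $n+1$ is frozen in this reduced matrix iff the inserted column is linearly independent of the columns of $A[\bm{\theta}]$, or equivalently (after transposing) iff the row vector $(\bm{h}, 0_{1 \times \bm{\theta}_r})$ does not lie in the row span of $A[\bm{\theta}]^T$. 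Note that the support of this row vector coincides with $\supp{\bm h}$.

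I would then apply \cref{eq00} to the matrix $A[\bm{\theta}]^T$ and the row vector $(\bm{h}, 0)$. The first implication in \cref{eq00} says that on $\ffBT$, the vector $(\bm{h}, 0)$ lies in the row span of $A[\bm{\theta}]^T$, so $n+1$ is deterministically not firmly frozen on $\mathfrak{P}_n \cap \ffBT$. This yields \cref{eq_lem_l10_3} via $\PP(\mathfrak{P}_n^c) = o_n(1)$. Conversely, on $\fBT$ the second implication in \cref{eq00} shows that the only way $(\bm{h}, 0)$ can lie in the row span is if $\supp{\bm{h}}$ is a proper relation of $A[\bm{\theta}]^T$, so the bound \cref{eq_lem_l10_1} reduces to controlling the probability of this latter event on $\fBT$.

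For that final step, I would restrict to the good event $\mathfrak{R}$ from \cref{def_R} (of probability $1 - o_{n,P}(1)$), on which $A[\bm{\theta}]^T$ is $(\delta, \ell)$-free for all $2 \leq \ell \leq L$, and decompose according to $\bfh$. The cases $\bfh \in \{0, 1\}$ produce no proper relation on $\fBT$ (the empty set and a singleton containing a non-frozen variable fail to be proper relations by \Cref{d2}), while the contribution of $\bfh \geq L$ is absorbed into the additive term $\PP(\bfh \geq L)$. For each $\ell \in \{2, \dots, L-1\}$, conditional on $\bfh = \ell$ the support $\supp{\bm{h}}$ is uniform on $\binom{[n]}{\ell}$, and the counting estimate from the proof of \Cref{l5} (compare with the bound leading to \cref{eq_prop}) gives $\PP(\supp{\bm{h}} \in \PR_\ell(A[\bm{\theta}]^T) \mid \bfh = \ell, \mathfrak{R}) \leq \delta \ell! + o_n(1)$. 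Summing over $\ell$ yields $L!\delta + o_n(1)$, and combining with $\PP(\mathfrak{R}^c) = o_{n,P}(1)$ and $\PP(\mathfrak{P}_n^c) = o_n(1)$ completes the proof. The main obstacle is the careful bookkeeping in the first step, which requires one to identify the perturbations of the $(n+1)\times(n+1)$ matrix $A^{\bm h}[\bm\theta]$ and the $n\times n$ matrix $A[\bm\theta]$ on $\mathfrak P_n$; once this reduction is clean, the rest is a rather direct application of \cref{eq00} and \Cref{l5}.
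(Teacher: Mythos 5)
Your proposal is correct and follows essentially the same route as the paper's proof: on $\mathfrak{P}_n$ you reformulate firm freezing of $n+1$ as a column-span condition for $A[\bm\theta]$, transpose to invoke \cref{eq00}, and bound the resulting proper-relation event via $\mathfrak R$ and \Cref{l5}. The only cosmetic differences are that the paper reargues the implications of \cref{eq00} directly rather than citing it, and invokes \Cref{l5} in one step rather than decomposing over $\bfh=\ell$; substantively the two arguments coincide.
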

%

\begin{proof}

\begin{enumerate}[label=(\roman*)]
\item We first show (\ref{eq_lem_l10_3}). 
By definition, $n+1$ is firmly frozen in $A^{\bm{h}}[\bm{\theta}]$ if and only if it is frozen in $A^{\bm{h}}[\bm{\theta}]\abc{n+1;}$. On the good event $\mathfrak{P}_n$, removal of row $n+1$ leaves us with the matrix $A[\bm \theta]$ plus the additional column $\bc{\bm{h}\quad 0_{1\times \bm{\theta}_r}}^T$. By \Cref{p4}, 
\begin{align*}
    \text{$n+1$ is firmly frozen in $A^{\bm{h}}[\bm{\theta}]$},\mathfrak{P}_n \  \Longrightarrow&\  \text{$\bc{\bm{h}\quad 0_{1\times \bm{\theta}_r}}^T$ cannot be linearly combined by the columns of $A[\bm{\theta}]$.}\\
\  \Longrightarrow&\ \text{$\supp{\bm{h}}\not\subseteq \mathcal{F}\bc{A[\bm{\theta}]^T}$.}
\end{align*}
On the other hand, on $\ffBT$, $\supp{\bm{h}}\subseteq\mathcal{F}\bc{A[\bm{\theta}]^T}$. 
Therefore, 
\[ \mathbb{P}\left(\text{$n+1$ is firmly frozen in $A^{\bm{h}}[\bm{\theta}]$}, \ffBT \right)= \mathbb{P}\left(\text{$n+1$ is firmly frozen in $A^{\bm{h}}[\bm{\theta}]$}, \ffBT, \mathfrak P_n \right) + o_n(1) = o_n(1),\]
as required.

 \item We next prove (\ref{eq_lem_l10_1}). 
    By definition, if $n+1$ is not firmly frozen in $A^{\bm{h}}[\bm{\theta}]$, it is not frozen in $A^{\bm{h}}[\bm{\theta}]\abc{n+1;}$. 
    On the good event $\mathfrak{P}_n$, 
    removal of row $n+1$ leaves us with the matrix $A[\bm \theta]$ plus the additional column $\bc{\bm{h}\quad 0_{1\times \bm{\theta}_r}}^T$. Since $n+1$ is not frozen in this matrix, \Cref{p4} gives that
\begin{align*}
    \text{$n+1$ is not firmly frozen in $A^{\bm{h}}[\bm{\theta}]$},\mathfrak{P}_n  \ \Longrightarrow\
\text{ $\bc{\bm{h}\quad 0_{1\times \bm{\theta}_r}}^T$ can be lin. combined by the columns of $A[\bm{\theta}]$.}
\end{align*}

On the other hand, on $\fBT$,  $\supp{\bm{h}}\not\subseteq\mathcal{F}\bc{A[\bm{\theta}]^T}$. This implies that both $\supp{\bm{h}}$ and $\supp{\bm{h}}\backslash \mathcal{F}\bc{A[\bm{\theta}]^T}$ are non-empty. If additionally, $\bc{\bm{h}\quad 0_{1\times \bm{\theta}_r}}^T$ can be linearly combined by the columns of $A[\bm{\theta}]$, $\supp{\bm{h}}\backslash \mathcal{F}\bc{A[\bm{\theta}]^T}$ is a relation of $A[\bm{\theta}]^T$. Hence, by \Cref{d2} (iii),
\begin{align*}
    \text{$n+1$ is not firmly frozen in $A^{\bm{h}}[\bm{\theta}]$}, \fBT, \mathfrak P_n \quad \Longrightarrow\quad
\text{$\supp{\bm{h}}$ is a proper relation of $A[\bm{\theta}]^T$.}
\end{align*}
By \Cref{l5} and (\ref{def_R}),
\[\begin{aligned}
\mathbb{P}\left(\text{$n+1$ is not firmly frozen in $A^{\bm{h}}[\bm{\theta}]$}, \fBT, \mathfrak P_n\right)
\leq& \hspace{0.1 cm }\mathbb{P}\left(\supp{\bm{h}} \text{ is a proper relation in } A[\bm{\theta}]^T\right)\\
\leq &\hspace{0.1 cm } L!\delta+\mathbb{P}\left(\bfh\geq L\right)+o_{n,P}(1),
\end{aligned}\]
as required.
\end{enumerate}
\end{proof}

With \Cref{l10}, we are now in the position to characterise the type of variable $n+1$ in terms of the role of the variables in $\supp{\bm h}$ in $A[\bm{\theta}]$ and $A[\bm{\theta}]^T$  through the following events.
\begin{definition}[Type events]\label{defq2}
With the notation of \Cref{defq}, let
\begin{align*}
{\mathfrak Y} &= \fB \cap \fBT,  \\
{\mathfrak U} &= \fB \cap \ffBT,  \\
{\mathfrak V} &= \ffB \cap \fBT, \\
{\mathfrak {XZ}} &= \ffB \cap \ffBT
\quad \text{and }\\
{\mathfrak Z_{\circ}} &= \{ \supp{\bm{h}}\subseteq \mathcal{Y}\bc{A[\bm{\theta}]}\}.
\end{align*}
\end{definition}

\begin{remark}\label{reuplus}
By construction, the four events $\mathfrak Y, \mathfrak U, \mathfrak V, \mathfrak{XZ}$ are pairwise disjoint, and their union $\mathfrak Y \uplus \mathfrak U \uplus \mathfrak V \uplus \mathfrak{XZ}$ gives the whole sample space.
\end{remark}

In the following two Lemmas \ref{l11} and \ref{l12}, we first show that for each choice of $\cW \in \{\cY, \cU, \cV, \cX\cZ\}$, the probability that $n+1$ does \textit{not} have the type corresponding to $\cW$ on ${\mathfrak W}$ is small and then that the probability that $n+1 \notin \cZ(A^{\bm h}[\bm \theta])$ on ${\mathfrak Z}_{\circ}$ is small. This offers an \textit{almost} complete description of the type of $n+1$ in terms of the events in \Cref{defq2}. \Cref{l11} deals with the simpler cases $\mathcal{W}\in\cbc{\mathcal{Y},\mathcal{U},\mathcal{V},\mathcal{X}\mathcal{Z}}$, where the type events are intersections of basic events.

\begin{lemma}\label{l11}
For any $\delta>0, L\in \NN_{\geq 2}$, $A \in \FF^{n \times n}$ and $W \in \{Y,U,V\}$,
\begin{align} \label{bound_QY}
\mathbb{P}\left(n+1\notin\mathcal{W}\bc{A^{\bm{h}}[\bm{\theta}]}, {\mathfrak W}\right) \leq 2 L!\delta+2\mathbb{P}\left(\bfh\geq L\right)+o_{n,P}(1),
\end{align}
as well as
\begin{align} \label{bound_QXZ}
\mathbb{P}\left(n+1\not\in\mathcal{X}\bc{A^{\bm{h}}[\bm{\theta}]}\cup \mathcal{Z}\bc{A^{\bm{h}}[\bm{\theta}]}, {\mathfrak{XZ}}\right) = o_n(1).
\end{align}
\end{lemma}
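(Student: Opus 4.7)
The plan is to combine Lemma \ref{l10} with its natural transpose analog (obtained by replacing $A$ with $A^T$ throughout the proof of Lemma \ref{l10}, since all ingredients used there---the good events $\mathfrak{P}_n$ and $\mathfrak{R}$, together with \Cref{p4} and \Cref{l5}---are symmetric in rows and columns), and then to use Proposition \ref{c81} to upgrade ``not firmly frozen'' into ``not frozen'' where the definition of the relevant type requires it. Concretely, the transpose analog reads
\begin{align*}
\mathbb{P}(\text{$n+1$ firmly frozen in $(A^{\bm{h}}[\bm{\theta}])^T$}, \ffB) &= o_n(1),\\
\mathbb{P}(\text{$n+1$ not firmly frozen in $(A^{\bm{h}}[\bm{\theta}])^T$}, \fB) &\leq L!\delta + \mathbb{P}(\bfh\geq L) + o_{n,P}(1),
\end{align*}
in exact parallel with \cref{eq_lem_l10_3} and \cref{eq_lem_l10_1}.

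For $W = Y$, since by \Cref{dxyzuv} $n+1 \in \mathcal{Y}(A^{\bm{h}}[\bm{\theta}])$ is by definition the conjunction of ``$n+1$ firmly frozen in $A^{\bm{h}}[\bm{\theta}]$'' and ``$n+1$ firmly frozen in $(A^{\bm{h}}[\bm{\theta}])^T$'', I would apply a union bound on $\mathfrak{Y} = \fB \cap \fBT$: the failure of the first is bounded by \cref{eq_lem_l10_1} (applied on $\fBT$) and the failure of the second by the transpose analog above (applied on $\fB$). Adding these two contributions directly yields \cref{bound_QY} for $W = Y$ with total error $2L!\delta + 2\mathbb{P}(\bfh\geq L) + o_{n,P}(1)$.

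For $W \in \{U, V\}$, I would treat $W = U$ in detail and deduce $W = V$ by the obvious symmetry. On $\mathfrak{U} = \fB \cap \ffBT$, \cref{eq_lem_l10_3} applied on $\ffBT$ gives that $n+1$ is not firmly frozen in $A^{\bm{h}}[\bm{\theta}]$ with error $o_n(1)$, while the transpose analog of \cref{eq_lem_l10_1} applied on $\fB$ gives that $n+1$ is firmly frozen in $(A^{\bm{h}}[\bm{\theta}])^T$ with error $L!\delta + \mathbb{P}(\bfh\geq L) + o_{n,P}(1)$. The main obstacle here is that ``not firmly frozen in $A^{\bm{h}}[\bm{\theta}]$'' is strictly weaker than the ``$n+1 \notin \mathcal{F}(A^{\bm{h}}[\bm{\theta}])$'' demanded by the definition of $\mathcal{U}$: a priori $n+1$ could still be frailly frozen in $A^{\bm{h}}[\bm{\theta}]$. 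This gap is closed precisely by \Cref{c81}: any variable that is frailly frozen in $A^{\bm{h}}[\bm{\theta}]$ must also be frailly frozen in the transpose, which is incompatible with being firmly frozen there. Thus on the good part of $\mathfrak{U}$ both ``not firmly frozen in $A^{\bm{h}}[\bm{\theta}]$'' and ``not frailly frozen in $A^{\bm{h}}[\bm{\theta}]$'' hold, giving $n+1 \notin \mathcal{F}(A^{\bm{h}}[\bm{\theta}])$ and hence $n+1 \in \mathcal{U}(A^{\bm{h}}[\bm{\theta}])$; combining the two error terms produces \cref{bound_QY}.

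Finally, for the $\mathcal{XZ}$ case on $\mathfrak{XZ} = \ffB \cap \ffBT$, I would apply \cref{eq_lem_l10_3} to $A^{\bm{h}}[\bm{\theta}]$ on $\ffBT$ and its transpose analog to $(A^{\bm{h}}[\bm{\theta}])^T$ on $\ffB$, each contributing an error of $o_n(1)$. Together these ensure that $n+1$ is firmly frozen in neither matrix, which simultaneously rules out membership in $\mathcal{Y}$ (requires firm freezing in both), in $\mathcal{U}$ (requires firm freezing in the transpose) and in $\mathcal{V}$ (requires firm freezing in the original), so that $n+1 \in \mathcal{X}(A^{\bm{h}}[\bm{\theta}]) \cup \mathcal{Z}(A^{\bm{h}}[\bm{\theta}])$; this delivers \cref{bound_QXZ}. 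Across the whole argument the delicate point---and the feature that distinguishes the symmetric setting from the asymmetric treatment in \cite{coja2022rank}---is the use of \Cref{c81} in the $W \in \{U,V\}$ cases to convert the easily controlled event ``$n+1$ is not firmly frozen'' into the event ``$n+1$ is not frozen'' that actually appears in \Cref{dxyzuv}.
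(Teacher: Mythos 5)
Your proposal is correct and follows essentially the same route as the paper: a union bound over the two firm-freezing conditions, feeding in \Cref{l10} and its transpose analog, with \Cref{c81} doing the one nontrivial step in the $W\in\{U,V\}$ case of upgrading ``not firmly frozen'' to ``not frozen.'' The only cosmetic difference is that you phrase the $\mathcal U$ argument as constructing a good event on which $n+1\in\mathcal U$, whereas the paper deduces the contrapositive, but these are the same argument.
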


\begin{proof}
We show the claim for each of the possible variable types separately.

    \textit{Completely frozen variables - (\ref{bound_QY}) for $W=Y$}: 
    By definition, if $n+1$ is not completely frozen in $A^{\bm{h}}[\bm{\theta}]$, then it is not firmly frozen in $A^{\bm{h}}[\bm{\theta}]$ or not firmly frozen in $A^{\bm{h}}[\bm{\theta}]^T$. Since \Cref{l10} also applies to $A^{\bm h}[\bm \theta]^T$, a union bound gives
\[\begin{aligned}
\mathbb{P}\left(n+1\not\in\mathcal{Y}\bc{A^{\bm{h}}[\bm{\theta}]}, {\mathfrak Y}\right) & \leq \mathbb{P}\left(\text{$n+1$ not firmly frozen in $A^{\bm{h}}[\bm{\theta}]$}, \fBT \right)+\mathbb{P}\left(\text{$n+1$ not firmly frozen in $A^{\bm{h}}[\bm{\theta}]^T$}, \fB \right)\\
& \leq  2L!\delta+2\mathbb{P}\left(\bfh\geq L\right)+o_{n,P}(1).
\end{aligned}\]

\textit{One-sided firmly frozen variables  - (\ref{bound_QY}) for $W \in \{U,V\}$}:
    If $n+1\not\in\mathcal{U}\bc{A^{\bm{h}}[\bm{\theta}]}$, then, by definition, either $n+1$ is not firmly frozen in $A^{\bm{h}}[\bm{\theta}]^T$, or, if this is not the case, it is  frozen in $A^{\bm{h}}[\bm{\theta}]$ \textit{and} firmly frozen in $A^{\bm{h}}[\bm{\theta}]^T$. In the latter case, the symmetry of frailly frozen variables under transposition (see \Cref{c81}) implies that $n+1$ is also firmly frozen in $A^{\bm{h}}[\bm{\theta}]$. We conclude that if $n+1\not\in\mathcal{U}\bc{A^{\bm{h}}[\bm{\theta}]}$, then either $n+1$ is not firmly frozen in $A^{\bm{h}}[\bm{\theta}]^T$ or $n+1$ is firmly frozen in $A^{\bm{h}}[\bm{\theta}]$. Again, by a union bound and \Cref{l10},
\[\begin{aligned}
\mathbb{P}\left(n+1\not\in\mathcal{U}\bc{A^{\bm{h}}[\bm{\theta}]}, {\mathfrak U}\right)
&\leq \mathbb{P}\left(\text{$n+1$ not firmly frozen in $A^{\bm{h}}[\bm{\theta}]^T$}, \fB \right)+\mathbb{P}\left(\text{$n+1$ firmly frozen in $A^{\bm{h}}[\bm{\theta}]$}, \ffBT \right)\\
&\leq L!\delta+\mathbb{P}\left(\bfh\geq L\right)+o_{n,P}(1).
\end{aligned}\]

The claim for $W = V$ follows analogously.

\textit{Frailly frozen or two-sided non-frozen variables - (\ref{bound_QXZ})}: 
If $n+1\not\in \mathcal{X}\bc{A^{\bm{h}}[\bm{\theta}]}\cup \mathcal{Z}\bc{A^{\bm{h}}[\bm{\theta}]}$, then by definition, $n+1$ is firmly frozen in $A^{\bm{h}}[\bm{\theta}]$ or $A^{\bm{h}}[\bm{\theta}]^T$. By a union bound and \Cref{l10},
\[\begin{aligned}
&\mathbb{P}\left(n+1\not\in\mathcal{X}\bc{A^{\bm{h}}[\bm{\theta}]}\cup \mathcal{Z}\bc{A^{\bm{h}}[\bm{\theta}]}, {\mathfrak{XZ}}\right) \\
\leq & \hspace{0.1 cm} \mathbb{P}\left(\text{$n+1$ firmly frozen in $A^{\bm{h}}[\bm{\theta}]$}, \ffBT \right)+\mathbb{P}\left(\text{$n+1$ firmly frozen in $A^{\bm{h}}[\bm{\theta}]^T$}, \ffB \right)=o_n(1).
\end{aligned}\]
\end{proof}

We have the following analogous lemma for the event ${\mathfrak Z}_{\circ}$:
\begin{lemma}\label{l12}
For any $L\in \NN_{\geq 2}$ and $A \in \FF^{n \times n}$,
\[ \mathbb{P}\left(n+1\not\in\mathcal{Z}\bc{A^{\bm{h}}[\bm{\theta}]}, {\mathfrak{Z}}_\circ \right) \leq \mathbb{P}\left(\bfh\geq L\right)+ o_{n,P}(1).
\]
\end{lemma}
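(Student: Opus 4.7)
On the event $\mathfrak{P}_n$ from \cref{event_P}, after relabelling rows and columns the matrix $A^{\bm{h}}[\bm{\theta}]$ takes the form
\[
\begin{pmatrix} A[\bm{\theta}] & \tilde{\bm h}^T \\ \tilde{\bm h} & 0 \end{pmatrix},
\qquad
\tilde{\bm h}:=(\bm h,0_{1\times \bm{\theta}_c}),
\]
and $A^{\bm{h}}[\bm{\theta}]\abc{n+1;n+1}=A[\bm\theta]$. Hence \Cref{lr} shows that $n+1\in\mathcal Z(A^{\bm{h}}[\bm{\theta}])$ is equivalent to $\rank A^{\bm{h}}[\bm\theta]=\rank A[\bm\theta]$. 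On $\mathfrak{Z}_\circ$ one has $\supp{\bm h}\subseteq\mathcal Y(A[\bm\theta])\subseteq\mathcal F(A[\bm\theta])\cap\mathcal F(A[\bm\theta]^T)$, so by (\ref{eq00}) the row $\tilde{\bm h}$ lies in the row span of $A[\bm\theta]$ and $\tilde{\bm h}^T$ in its column span. A Schur-complement computation (subtract $\xi_k$ times column $k$ of $A[\bm\theta]$ from the bordering column $\tilde{\bm h}^T$, then eliminate the bordering row using a row representation of $\tilde{\bm h}$) then gives, on $\mathfrak{P}_n\cap\mathfrak{Z}_\circ$,
\[
\rank A^{\bm{h}}[\bm\theta]-\rank A[\bm\theta]=\ind\cbc{\tilde{\bm h}\xi\neq 0},
\]
where $\xi$ is any solution of $A[\bm\theta]\xi=\tilde{\bm h}^T$; the scalar $\tilde{\bm h}\xi$ is independent of the choice of $\xi$ since $\tilde{\bm h}$ annihilates $\ker A[\bm\theta]$.

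To force $\tilde{\bm h}\xi=0$ it suffices to find a solution $\xi$ with $\xi_j=0$ for every $j\in\supp{\bm h}$, because then $\tilde{\bm h}\xi=\sum_{j\in\supp{\bm h}}\bm h_j\xi_j=0$. Such a $\xi$ exists iff $\tilde{\bm h}^T$ lies in the column span of $A[\bm\theta]\abc{;\supp{\bm h}}$, and since $\tilde{\bm h}^T=\sum_{j\in\supp{\bm h}}\bm h_j\,e_{n+\bm{\theta}_r}(j)^T$ it is in turn sufficient that each unit column $e_{n+\bm{\theta}_r}(j)^T$ with $j\in\supp{\bm h}$ lies in this column span, equivalently
\[
j\in\mathcal{F}\bc{A[\bm\theta]^T\abc{\supp{\bm h};}}\qquad\text{for every }j\in\supp{\bm h}.
\]
By definition of $\mathcal Y$ each such $j$ already satisfies $j\in\mathcal F(A[\bm\theta]^T\abc{j;})$, so what remains is to check that the further removal of the rows $\supp{\bm h}\setminus\{j\}$ from $A[\bm\theta]^T\abc{j;}$ does not unfreeze $j$. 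This is the key use of the \emph{firmness} built into the set $\mathcal Y$.

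This remaining step is controlled by \Cref{l9}. Conditionally on $\bfh=\ell$ and on $j\in\supp{\bm h}$, the set $\supp{\bm h}\setminus\{j\}$ is uniform over the $(\ell-1)$-subsets of $[n]\setminus\{j\}$ and independent of $A[\bm\theta]$. Identifying $A[\bm\theta]^T$ with the canonical perturbation $A^T[\bm\theta']$ (where $\bm\theta'=(\bm{\theta}_c,\bm{\theta}_r)$), and in turn $A[\bm\theta]^T\abc{j;}$ with a canonically perturbed matrix based on $(A\abc{;j})^T$, \Cref{l9} bounds the probability that $j$ becomes unfrozen under this extra row-removal by $(\ell-1)/P+O(\ell^2/n)$. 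A union bound over the at most $L$ indices in $\supp{\bm h}$, weighted by the distribution of $\bfh$ restricted to $\{\bfh<L\}$, contributes $O(L^2/P+L^3/n)=o_{n,P}(1)$, which together with $\mathbb P(\bfh\geq L)$ on the complementary event and $\mathbb P(\mathfrak{P}_n^c)=o_n(1)$ yields the claimed bound.

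The main technical hurdle is the clean identification of $A[\bm\theta]^T\abc{j;}$ as a canonically perturbed matrix so that \Cref{l9} applies verbatim: the row- and column-perturbation families are constructed independently, their dimensions shift after transposing and after removing the row indexed by $j$, and the support of $\bm h$ is drawn uniformly from $[n]$ rather than from the full row index set of $A[\bm\theta]^T\abc{j;}$, so some care is needed to absorb these discrepancies into the $o_{n,P}(1)$ error.
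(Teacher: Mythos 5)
Your proof is correct and takes a genuinely different (and in one respect cleaner) route than the paper, while resting on the same two pillars: the firm two-sided freezing built into $\mathcal Y$ and the stability of freezing under bounded random row removal (\Cref{l9}).

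The paper's proof defines the auxiliary event $\mathfrak C$ on $A[\bm\theta]$ (not its transpose), shows that on $\mathfrak Z_\circ\cap\mathfrak C\cap\mathfrak P_n$ every $i\in\supp{\bm h}$ has a representation in $A[\bm\theta]\abc{\supp{\bm h};}$ with zero coefficients on $\supp{\bm h}$, extends these to representations in $A^{\bm h}[\bm\theta]\abc{n+1;}$, applies \cref{eq00} to conclude row $n+1$ lies in the span of the other rows (i.e.\ $n+1\notin\mathcal F(A^{\bm h}[\bm\theta]^T)$), and then needs a \emph{separate} step — \cref{eq_lem_l10_3} of \Cref{l10} — to rule out the type $\cV$ and land in $\cZ$. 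Your Schur-complement formulation replaces this two-step argument with a single rank identity: on $\mathfrak Z_\circ\cap\mathfrak P_n$ both $\tilde{\bm h}$ and $\tilde{\bm h}^T$ are in the appropriate spans of $A[\bm\theta]$ by \cref{eq00}, so the bordered-matrix rank increase collapses to $\ind\{\tilde{\bm h}\xi\neq 0\}$, and by \Cref{lr}(iii) the rank equality $\rank A^{\bm h}[\bm\theta]=\rank A[\bm\theta]$ gives $n+1\in\cZ$ directly, without \Cref{l10}. The price is that your firmness argument is mirrored onto $A[\bm\theta]^T\abc{\supp{\bm h};}$ rather than $A[\bm\theta]\abc{\supp{\bm h};}$, which is fine since $\mathcal Y$-membership is symmetric by construction, and the application of \Cref{l9} to the transposed, row-deleted matrix involves the same sort of identification-up-to-$o_{n,P}(1)$ that the paper itself makes (its own invocation of \Cref{l9} is to the matrix $A[\bm\theta]\abc{i;}$, which is likewise not literally of the canonical form $A'[\bm\theta]$). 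You flag this honestly in your last paragraph; it is a genuine technicality but no more so than in the paper. One small point worth making explicit: your sufficient condition (each $e_{n+\bm\theta_r}(j)^T$ in the column span of $A[\bm\theta]\abc{;\supp{\bm h}}$) is strictly stronger than what is needed ($c$ itself in that span), so the argument is slightly more conservative than it has to be, but this costs nothing in the final bound.
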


\begin{proof}
The first (and main) step is to prove that on the intersection of $\mathfrak Z_\circ$ with a sufficiently likely event, 
the $(n+1)$st row in $A^{\bm{h}}[\bm{\theta}]$ can be linearly combined by the other rows of $A^{\bm{h}}[\bm{\theta}]$, from which it follows through \Cref{p4} that $n+1$ is not frozen in $A^{\bm h}[\bm{\theta}]^T$.

On ${\mathfrak Z}_{\circ} \cap \mathfrak P_n$, $A^{\bm h}[\bm \theta](n+1,)= (\bm h \hspace{0.1 cm} 0_{1 \times (\bm \theta_c+1)})$ and all variables in $\supp{\bm{h}} =$ supp$(A^{\bm h}[\bm \theta](n+1,))$ are firmly frozen in $A[\bm{\theta}]$. Ideally, to derive the desired linear combination of $A^{\bm h}[\bm \theta](n+1,)$  by the other rows of $A^{\bm h}[\bm{\theta}]$, we would like to take one representation for each $i \in \supp{\bm{h}}$, and then simply sum over the representations. Alas, the matrix $A^{\bm h}[\bm{\theta}]$ has one more column than $A[\bm\theta]$, and it is not clear that for the existing representations, also the entries of column $n+1$ sum to zero. Therefore, we are looking for representations of $i \in \supp{\bm{h}}$ that expressly do not use one of the rows in $\supp{\bm h}$, if such representations exist. 

In fact, on ${\mathfrak Z}_{\circ}$, since any $i\in \supp{\bm{h}}$ is firmly frozen in $A[\bm{\theta}]$, there exists a representation of $i$ that does not use row $i$. To take care of the other rows corresponding to elements of $\supp{\bm h}$, we define the event 
\[\begin{aligned}
\mathfrak C=&\cbc{\text{for all $i\in \supp{\bm{h}}$, $i \notin \mathcal{F}\bc{A[\bm{\theta}]\abc{\supp{\bm{h}};}}\Delta \mathcal{F}\bc{A[\bm{\theta}]\abc{i;}}$}}.
\end{aligned}\]
The event $\mathfrak C$ is sufficiently likely for our purposes, as 
\[\begin{aligned}
\mathbb{P}\left(\mathfrak C^c\right)
\leq& \mathbb{P}\left(\bfh\geq L\right)+\sum_{i\in[n]}\sum_{k=2}^{L-1} \frac{L}{n}\mathbb{P}\left(i\in \mathcal{F}\bc{A[\bm{\theta}]\abc{\supp{\bm{h}};} }\Delta \mathcal{F}\bc{A[\bm{\theta}]\abc{i;}} \vert i\in \supp{\bm{h}}, \bfh = k\right) \\
&\cdot\mathbb{P}\bc{i\in \supp{\bm{h}}\bfh =k} \\
\leq& \mathbb{P}\left(\bfh\geq L\right)+\frac{L^2}{P}+o_n(1).
\end{aligned}\]
Here, in the last step, we have used \Cref{l9}, which states that for any $i\in [n]$ and $k\leq L$,
\[\mathbb{P}\left(i\in \mathcal{F}\bc{A[\bm{\theta}]\abc{\supp{\bm{h}};} }\Delta \mathcal{F}\bc{A[\bm{\theta}] \abc{i;}}| i\in\supp{\bm{h}}, \bfh =k\right)\leq \frac{L}{P}+o_n(1).\]


By design, on the event
$\mathfrak C \cap {\mathfrak Z}_{\circ}$, any $i \in \supp{\bm h}$ is frozen in $A[\bm{\theta}]\abc{\supp{\bm{h}};}$. In particular, there exists a representation of $\{i\}$ in $A[\bm{\theta}]\abc{\supp{\bm{h}};}$. On the good event $\mathfrak P_n$, each such representation can be extended to a representation $b=\bc{b_1,\ldots,b_{n+\bm{\theta}_r}}$ of $\{i\}$ in $A^{\bm h}[\bm\theta]\abc{n+1;}$ such that
\begin{align}\label{eq_l12_1}
b A^{\bm h}[\bm\theta]\abc{n+1;} =e_{n+\bm{\theta}_c}(i) \qquad\text{and}
\qquad b_k=0 \text{ for $k\in \supp{\bm{h}}$}.   
\end{align}
Thus, on the event ${\mathfrak Z}_{\circ}\cap \mathfrak{C}\cap \mathfrak P_n$, any $i \in \supp{\bm{h}}$ is frozen in $A^{\bm{h}}[\bm{\theta}]\abc{n+1;}$.
We conclude that the $(n+1)$st row in $A^{\bm{h}}[\bm{\theta}]$ can be linearly combined by the other rows of $A^{\bm{h}}[\bm{\theta}]$ (this is also true if $\supp{\bm h} = \emptyset$). Therefore, by \Cref{p4}, $n+1$ is not frozen in $A^{\bm{h}}[\bm{\theta}]^T$, which only leaves the possibility $n+1\in \mathcal{V}\bc{A^{\bm{h}}[\bm{\theta}]}\cup \mathcal{Z}\bc{A^{\bm{h}}[\bm{\theta}]}$ on  ${\mathfrak Z}_{\circ}\cap \mathfrak{C}\cap \mathfrak P_n$. 

On the other hand, since $\mathfrak Z_{\circ} \subseteq \ffBT$, by (\ref{eq_lem_l10_3}), $n+1$ cannot be firmly frozen in $A^{\bm h}[\bm{\theta}]$ on the event ${\mathfrak Z}_{\circ}\cap \mathfrak{C}\cap \mathfrak P_n$.
Therefore, $n+1\in  \mathcal{Z}\bc{A^{\bm{h}}[\bm{\theta}]}$ and we arrive at
\[
    \mathbb{P}\left(n+1\not\in\mathcal{Z}\bc{A^{\bm{h}}[\bm{\theta}]}, {\mathfrak Z}_{\circ}, \mathfrak{C} \right)=o_n(1),
\]
i.e.,
\[\begin{aligned}
\mathbb{P}\left(n+1\not\in\mathcal{Z}\bc{A^{\bm{h}}[\bm{\theta}]}, {\mathfrak Z}_{\circ}\right)
\leq \mathbb{P}\left(\mathfrak C^c\right)+o_n(1)
\leq \mathbb{P}\left(\bfh\geq L\right)+\frac{L^2}{P}+o_n(1) = \mathbb{P}\left(\bfh\geq L\right)+ o_{n,P}(1).
\end{aligned}\]
This yields the claim.
\end{proof}

\subsubsection{Probabilities of type events}\label{sec_prosc12}
In \Cref{sec_ctsoa}, we have related the type of $n+1$ in $A^{\bm h}[\bm \theta]$ to the occurrence of a bunch of type events, which are formulated in terms of $\supp{\bm h}$. We now approximate the conditional probabilities of the type events through the corresponding functions $Y,U,V$ from \Cref{defun} and $\psi$. In this way, we build the connection between the event $\cbc{n+1\in\mathcal{W}\bc{A^{\bm{h}}[\bm{\theta}]}}$ and $W(\vze,\psi)$.  For the current section, recall that we use boldface letters $\vw$ to abbreviate the proportions $w(A[\bm{\theta}])$. We then show that conditionally on the vector $\vze$, for any $W\in\{Y,U,V\}$, the function $W(\vze,\psi)$ is a good approximation of the probability of ${\mathfrak W}$, while $\psi$ is a good approximation of $\mathfrak{Z}_{\circ}$. Since the type events are defined solely in terms of the membership of $\supp{h}$ in the sets $\cW(A[\bm \theta])$ and $\bm h$ is chosen independently of $A[\bm \theta]$, this basically reduces to a comparison between drawing $\supp{\bm h}$ with and without replacement.

\begin{lemma}\label{al1}
For any $L\in \NN_{\geq 2}$, $W \in \{Y, U, V\}$,
\begin{equation}\label{eq_css1_7}
        \abs{\mathbb{P}\bc{{\mathfrak W}|\vze}-W(\vze,\psi)} \leq \mathbb{P}\left(\bfh\geq L\right)+o_{n,P}(1)
\end{equation}
and
\begin{equation}\label{eq_css1_7_new}
        \abs{\mathbb{P}\bc{{\mathfrak Z}_{\circ}|\vze}-\psi\bc{\bm y}} \leq \mathbb{P}\left(\bfh\geq L\right)+o_{n,P}(1).
\end{equation}
\end{lemma}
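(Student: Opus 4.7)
The plan is to exploit two facts: (i) conditionally on $A[\bm{\theta}]$ and $\bfh=k$, the set $\supp{\bm{h}}$ is a uniformly random $k$-subset of $[n]$, independent of $A[\bm{\theta}]$, so for any $A[\bm{\theta}]$-measurable set $S\subseteq [n]$ one has $\mathbb{P}(\supp{\bm{h}}\subseteq S\mid A[\bm{\theta}],\bfh=k) = \binom{|S|}{k}/\binom{n}{k}$, depending only on $|S|$; and (ii) the typecasting of \Cref{dxyzuv}, together with \Cref{c81}, gives $|\cF(A[\bm{\theta}])\cap[n]|=(x+y+v)n$, $|\cF(A[\bm{\theta}]^T)\cap[n]|=(x+y+u)n$, $|\cF(A[\bm{\theta}])\cap\cF(A[\bm{\theta}]^T)\cap[n]|=(x+y)n$, and $|\cY(A[\bm{\theta}])\cap[n]|=yn$. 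Consequently, each of the four conditional probabilities in question depends on $A[\bm{\theta}]$ only through $\vze$.

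I would then apply inclusion-exclusion to each type event: $\ind\{\mathfrak{Y}\}=1-\ind\{\ffB\}-\ind\{\ffBT\}+\ind\{\ffB\cap\ffBT\}$, $\ind\{\mathfrak{U}\}=\ind\{\ffBT\}-\ind\{\ffB\cap\ffBT\}$, and symmetrically for $\mathfrak{V}$; $\mathfrak{Z}_\circ$ requires no inclusion-exclusion. Combined with (i)--(ii), this yields explicit closed-form expressions such as
\[
\mathbb{P}(\mathfrak{Y}\mid\vze,\bfh=k) = 1 - \frac{\binom{(x+y+v)n}{k}}{\binom{n}{k}} - \frac{\binom{(x+y+u)n}{k}}{\binom{n}{k}} + \frac{\binom{(x+y)n}{k}}{\binom{n}{k}},
\]
and analogous formulas for the remaining type events, while $\mathbb{P}(\mathfrak{Z}_\circ\mid\vze,\bfh=k)=\binom{yn}{k}/\binom{n}{k}$.

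The third step is to replace each binomial ratio by the corresponding power via the elementary bound $|a^k-\binom{an}{k}/\binom{n}{k}|\leq k^2/n$, valid for $a\in[0,1]$ with $an\in\ZZ$ (this follows from $(an-i)/(n-i)=a-i(1-a)/(n-i)$ and the mean value theorem). Summing against the law of $\bfh$, the contribution from $k\leq L-1$ to $|\mathbb{P}(\mathfrak{W}\mid\vze)-W(\vze,\psi)|$ is at most $3L^2/n=o_n(1)$, uniformly in $A$, $\psi$, and $P$. For $k\geq L$, both $\mathbb{P}(\mathfrak{W}\mid\vze,\bfh=k)$ and the corresponding polynomial lie in $[0,1]$---indeed, the latter is itself the probability of the analogous event for $k$ i.i.d.\ uniform samples from $[n]$---so each per-$k$ discrepancy is at most $1$ and the tail contribution is bounded by $\mathbb{P}(\bfh\geq L)$. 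The same reasoning handles $\mathfrak{U}$, $\mathfrak{V}$, and $\mathfrak{Z}_\circ$, establishing both \cref{eq_css1_7} and \cref{eq_css1_7_new}.

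The only real obstacle is the combinatorial bookkeeping---correctly identifying which proportion governs each containment event via \Cref{dxyzuv} and \Cref{c81}. The crucial observation that keeps the truncation term linear in $\mathbb{P}(\bfh\geq L)$ with coefficient $1$ rather than a larger multiple is that the polynomial expressions $1-(x+y+v)^k-(x+y+u)^k+(x+y)^k$, $(x+y+u)^k-(x+y)^k$, and so on retain their probabilistic interpretation for i.i.d.\ uniforms and hence lie in $[0,1]$.
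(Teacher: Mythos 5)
Your argument is essentially identical to the paper's: inclusion-exclusion on the basic events $\ffB, \ffBT$ to express $\mathbf 1\{\mathfrak W\}$, the identification of $|\cF(A[\bm\theta])\cap[n]|=(x+y+v)n$, $|\cF(A[\bm\theta]^T)\cap[n]|=(x+y+u)n$, $|\cF(A[\bm\theta])\cap\cF(A[\bm\theta]^T)\cap[n]|=(x+y)n$, $|\cY(A[\bm\theta])\cap[n]|=yn$ via \Cref{dxyzuv} and \Cref{c81}, the hypergeometric-versus-binomial sampling estimate (the paper cites Freedman's $k(k-1)/(2n)$; your elementary $k^2/n$ is a weaker constant but of the same order), and the truncation at $L$ with the tail bounded by $\PP(\bfh\geq L)$ using the fact that the per-$k$ discrepancy is a difference of two probabilities. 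Your closing observation is exactly the point the paper relies on implicitly to keep the coefficient on $\PP(\bfh\geq L)$ equal to $1$.
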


\begin{proof}
\begin{itemize}
\item[(i)] We first prove \cref{eq_css1_7} for $W=Y$.

Recall from \Cref{defq2} that ${\mathfrak Y} = \fB \cap \fBT$. By the inclusion-exclusion principle,
\begin{align}\label{eq_al1_5}
    \PP\bc{\mathfrak Y}= \PP\bc{\fB} + \PP\bc{\fBT} - \PP\bc{\fB \cup \fBT} = 1 - \PP\bc{\ffB} - \PP\bc{\ffBT} + \PP\bc{\ffB \cap \ffBT}.
\end{align}
Moreover, by Definitions \ref{defq} and \ref{dxyzuv},
\begin{enumerate}
  \item[(a)] $\ffB$ coincides with the event that $\{\supp{\bm{h}}\subseteq \mathcal{X}\bc{A[\bm{\theta}]}\cup \mathcal{Y}\bc{A[\bm{\theta}]}\cup \mathcal{V}\bc{A[\bm{\theta}]}\}$,
  \item[(b)] $\ffBT$ coincides with the event that $\{\supp{\bm{h}}\subseteq \mathcal{X}\bc{A[\bm{\theta}]}\cup \mathcal{Y}\bc{A[\bm{\theta}]}\cup \mathcal{U}\bc{A[\bm{\theta}]}\}$ and
  \item[(c)] $\ffB \cap \ffBT$ coincides with the event that $\{\supp{\bm{h}}\subseteq \mathcal{X}\bc{A[\bm{\theta}]}\cup \mathcal{Y}\bc{A[\bm{\theta}]}\}$.
\end{enumerate}
Given the \textit{number} of non-zero entries $\bfh$ of $\bm h$, the \textit{positions} of these non-zero entries are chosen uniformly at random from all $\bfh$-subsets of $[n]$, and independently of $A[\bm \theta]$. Moreover, by \cite{freedman1977sampling}, for any $k \geq 0$,
\begin{align}
\abs{ \frac{\binom{(\bm x+\bm y+\bm v)n}{n}}{\binom{n}{k}} - (\bm x+\bm y+\bm v)^k + \frac{\binom{(\bm x+ \bm y+\bm u)n}{n}}{\binom{n}{k}}  -(\bm x+\bm y+\bm u)^k - \frac{\binom{(\bm x+\bm y)n}{n}}{\binom{n}{k}}  + (\bm x+\bm y)^k}
 \leq \hspace{0.1 cm}  \frac{3k (k-1)}{2n}.
\end{align}
Thus
\begin{align*}
& \left|\mathbb{P}\bc{{\mathfrak Y}|\bm \zeta}-Y(\bm \zeta,\psi)\right| \\
 \leq & \hspace{0.1 cm} \sum_{k=0}^{\infty}\PP\bc{\bfh=k}\abs{ \frac{\binom{(\bm x+\bm y+\bm v)n}{n}}{\binom{n}{k}} - (\bm x+\bm y+\bm v)^k + \frac{\binom{(\bm x+ \bm y+\bm u)n}{n}}{\binom{n}{k}}  -(\bm x+\bm y+\bm u)^k - \frac{\binom{(\bm x+\bm y)n}{n}}{\binom{n}{k}}  + (\bm x+\bm y)^k}\\
 \leq & \hspace{0.1 cm} \mathbb{P}\left(\bfh\geq L\right) + \frac{3L(L-1)}{2n} = \mathbb{P}\left(\bfh\geq L\right) + o_{n,P}(1).
\end{align*}

\item[(ii)] We next prove \cref{eq_css1_7} for $W=U$.
 
 Recall from \Cref{defq2} that ${\mathfrak U} = \fB \cap \ffBT$. Moreover,
\begin{enumerate}
  \item[(a)] $\fB$ coincides with the event that $\{\supp{\bm{h}}\cap ( \mathcal{U}\bc{A[\bm{\theta}]}\cup \mathcal{Z}\bc{A[\bm{\theta}]}) \not= \emptyset\}$ and
  \item[(b)] $\ffBT$ coincides with the event that $\{\supp{\bm{h}}\subseteq \mathcal{X}\bc{A[\bm{\theta}]}\cup \mathcal{Y}\bc{A[\bm{\theta}]}\cup \mathcal{U}\bc{A[\bm{\theta}]}\}$.
\end{enumerate}
In other words, ${\mathfrak U}$ coincides with the event that $\supp{\bm{h}}$ is a subset of $ \mathcal{X}\bc{A[\bm{\theta}]}\cup \mathcal{Y}\bc{A[\bm{\theta}]}\cup \mathcal{U}\bc{A[\bm{\theta}]}$, but not of $\mathcal{X}\bc{A[\bm{\theta}]}\cup \mathcal{Y}\bc{A[\bm{\theta}]}$.
As before, using the total variation estimate between sampling with and without replacement of \cite{freedman1977sampling}, for any $k \geq 0$,
\begin{align}
  \abs{\frac{\binom{(\bm x+\bm y+\bm u)n}{n}}{\binom{n}{k}} - \frac{\binom{(\bm x+\bm y)n}{n}}{\binom{n}{k}} - \bc{(\bm x+\bm y+\bm u)^k-(\bm x+\bm y)^k}} \leq  \frac{2k(k-1)}{2n}.
\end{align}
Thus,
\begin{align*}
\left|\mathbb{P}\bc{{\mathfrak U}|\bm \zeta}-U(\zeta,\psi)\right|
& = \sum_{k=0}^{\infty}\PP\bc{\bfh=k}\abs{\frac{\binom{(\bm x+\bm y+\bm u)n}{n}}{\binom{n}{k}} - \frac{\binom{(\bm x+\bm y)n}{n}}{\binom{n}{k}} - \bc{(\bm x+\bm y+\bm u)^k-(\bm x+\bm y)^k}} \\
& \leq \mathbb{P}\left(\bfh\geq L\right)+ \frac{L(L-1)}{n} = \mathbb{P}\left(\bfh\geq L\right)+o_{n,P}(1).
\end{align*}

\item[(iii)] By symmetry, \cref{eq_css1_7} for $W=V$ follows as in (ii).
\item[(iv)]  We finally prove (\ref{eq_css1_7_new}).

Given the \textit{number} of non-zero entries $\bfh$ of $\bm h$, the \textit{positions} of these non-zero entries are chosen uniformly at random from all $\bfh$-subsets of $[n]$, and independently of $A[\bm \theta]$. Thus, conditionally on $\bfh$ and the proportions of types $\bm \zeta$ in $A[\bm \theta]$, the event $\mathfrak{Z}_{\circ}$ holds if and only if all of these $\bfh$ positions are chosen from the set $\cY(A[\bm \theta])$.
 Therefore, 
\begin{equation}\label{eq_al1_1}
    \mathbb{P}\bc{{\mathfrak Z}_{\circ}|\bm \zeta,\bfh}=\frac{\binom{n\bm y}{\bfh}}{\binom{n}{\bfh}}.
\end{equation}
On the other hand, by \cite{freedman1977sampling}, for any fixed $k \geq 0$,
\begin{align}\label{eq_al1_2}
   \left|\frac{\binom{n \bm y}{k}}{\binom{n}{k}}-\bm y^{k} \right| \leq \frac{k (k -1)}{2n}.
\end{align} 
Therefore, for any $L \in \NN_{\geq 2}$,
\[\begin{aligned}
 \abs{\mathbb{P}\bc{{\mathfrak Z}_{\circ}|\bm \zeta}-\psi(\bm{y})} &\leq\sum_{k=0}^{\infty} \mathbb P\bc{\bfh=k}\abs{\frac{\binom{n\bm y}{k}}{\binom{n}{k}} -\bm y^{k}}
 \leq \mathbb{P}\left(\bfh\geq L\right)+\sup_{0\leq k\leq L} \abs{\frac{\binom{n\bm y}{k}}{\binom{n}{k}} -\bm y^{k}}  \\
& \leq \mathbb{P}\left(\bfh\geq L\right) + \frac{L (L -1)}{2n} = \mathbb{P}\left(\bfh\geq L\right) + o_{n,P}(1).
\end{aligned}\]
\end{itemize}
\end{proof}

\subsubsection{Approximating the type probabilities for column {$n+1$}: Proof of \Cref{ccs1}}\label{sec_proccs1}
With the results of the previous two subsections, we are now in the position to prove \Cref{ccs1}.
\begin{proof}[Proof of \Cref{ccs1}]
\textit{At least one-sided firmly frozen variables - proof of \cref{eq_ccs1_or}:} For $W\in\cbc{Y,U,V}$, by the triangle inequality,
\begin{align} \label{eq_css1_1}
  &  \mathbb{E}\abs{\mathbb{P}\bc{n+1\in\mathcal{W}\bc{A^{\bm{h}}[\bm{\theta}]}\big|\vze}-W(\vze,\psi)} 
\leq \mathbb{E}\abs{\mathbb{P}\bc{n+1\in\mathcal{W}\bc{A^{\bm{h}}[\bm{\theta}]}\big|\vze}-\mathbb{P}\bc{{\mathfrak W}|\vze}} + \mathbb{E}\abs{\mathbb{P}\bc{{\mathfrak W}|\vze}-W(\vze,\psi)}.
\end{align}
We bound both summands on the right hand side of (\ref{eq_css1_1}) separately, beginning with the first. By conditional Jensen's inequality and the tower property,
\begin{align}\label{eq_css1_2}
\mathbb{E}\left|\mathbb{P}\bc{n+1\in\mathcal{W}\bc{A^{\bm{h}}[\bm{\theta}]}\Big|\vze}-\mathbb{P}\bc{\mathfrak W|\vze}\right|
 &\leq  \mathbb{E}\left|\teo{n+1\in\mathcal{W}\bc{A^{\bm{h}}[\bm{\theta}]}}-\ind \mathfrak W \right|  \\
 &\leq   \mathbb{P}\left(n+1\in\mathcal{W}\bc{A^{\bm{h}}[\bm{\theta}]},\mathfrak W^c\right) + \mathbb{P}\left(n+1\not\in\mathcal{W}\bc{A^{\bm{h}}[\bm{\theta}]},\mathfrak W \right). \nonumber
\end{align}
Now, let $\mathcal{I}_{W} = \{Y, U, V, XZ\} \setminus \{W\}$. Since the type events apart from ${\mathfrak Z}_{\circ}$ are pairwise disjoint (see \Cref{reuplus}), 
\begin{align}\label{eq_css1_3}
{\mathfrak W}^c = \biguplus_{\cW' \in \mathcal{I}_{\cW}} {\mathfrak W}'. 
\end{align}
Thus, with (\ref{eq_css1_3}) and using the abbreviation  $\mathcal{XZ}\bc{A^{\bm{h}}[\bm{\theta}]}$ to denote the union $\mathcal{X}\bc{A^{\bm{h}}[\bm{\theta}]} \cup \mathcal{Z}\bc{A^{\bm{h}}[\bm{\theta}]}$, we obtain
\begin{align}\label{eq_css1_4}
    &\mathbb{P}\left(n+1\in\mathcal{W}\bc{A^{\bm{h}}[\bm{\theta}]},\mathfrak W^c\right)
    \leq \sum_{W' \in \mathcal{I}_{W}}\mathbb{P}\left(n+1\in\mathcal{W}\bc{A^{\bm{h}}[\bm{\theta}]},{\mathfrak W}'\right)
    \leq \sum_{W' \in \mathcal{I}_{W}}\mathbb{P}\left(n+1\notin\mathcal{W}'\bc{A^{\bm{h}}[\bm{\theta}]},{\mathfrak W}'\right).
\end{align}
Plugging (\ref{eq_css1_4}) into (\ref{eq_css1_2}) and using \Cref{l11} on all four summands yields 
\begin{align}\label{eq_css1_6}
    &\mathbb{E}\left|\mathbb{P}\bc{n+1\in\mathcal{W}\bc{A^{\bm{h}}[\bm{\theta}]}\Big|\vze}-\mathbb{P}\bc{\mathfrak W|\vze}\right|
    \leq 6L!\delta + 6 \PP(\bfh\geq L) + o_{n,P}(1).
\end{align}
Finally, the upper bound on the second summand $\mathbb{E}\abs{\mathbb{P}\bc{{\mathfrak W}|\vze}-W(\vze,\psi)}$ on the right hand side of (\ref{eq_css1_1}) follows immediately from (\ref{eq_css1_7}) in \Cref{al1}.
Plugging the two bounds  (\ref{eq_css1_7}) and (\ref{eq_css1_6}) into (\ref{eq_css1_1}) gives \cref{eq_ccs1_or}.

\textit{Nowhere frozen variables - proof of \cref{eq_ccs1_z}:} 
 Since $(a+b)^-\leq a^- +b^-$ and $a^-\leq \abs{a}$,
\begin{align*} 
  \mathbb{E}\brk{\bc{\mathbb{P}\bc{n+1\in\mathcal{Z}\bc{A^{\bm{h}}[\bm{\theta}]}\big|\vze}-\psi\bc{\vy}}^-}
  \leq  &  \mathbb{E}\brk{\bc{\mathbb{P}\bc{n+1\in\mathcal{Z}\bc{A^{\bm{h}}[\bm{\theta}]}\big|\vze}-\mathbb{P}\bc{{\mathfrak Z}_{\circ}|\vze}}^-} + \mathbb{E}\abs{\mathbb{P}\bc{{\mathfrak Z}_{\circ}|\vze}-\psi\bc{\vy}}\\
  \leq & \mathbb{P}\left(n+1\not\in\mathcal{Z}\bc{A^{\bm{h}}[\bm{\theta}]}, {\mathfrak Z}_{\circ}\right)+\mathbb{E}\abs{\mathbb{P}\bc{{\mathfrak Z}_{\circ}|\vze}-\psi\bc{\vy}}.
\end{align*} 
Equation \cref{eq_ccs1_z} now follows from \Cref{l12} and \cref{eq_css1_7_new} in \Cref{al1}.

\end{proof}

\section{Analysis of the rank-difference}\label{sec_newapp}

In \Cref{sec_prov}, we have reduced the lower bound of \Cref{t1} to Propositions \ref{lem_rkdif_rank}, \ref{imp_pro} and \Cref{fact_1}. This section is devoted to the proof of those three results, of which \Cref{imp_pro} requires the most efforts. 
Our starting points here are \Cref{al11}, the fixed point equations for the proportions of frozen types, and \cref{eq_sumone}:
\begin{align}
   & \vy_{n,t/n}=1-\phi_t(\vx_{n,t/n}+\vy_{n,t/n}+\vu_{n,t/n})-\phi_t(\vx_{n,t/n}+\vy_{n,t/n}+\vv_{n,t/n})+\phi_t(\vx_{n,t/n}+\vy_{n,t/n})+\oone;\label{eq_fpy}\\
  &  \vu_{n,t/n}=\phi_t(\vx_{n,t/n}+\vy_{n,t/n}+\vu_{n,t/n})-\phi_t(\vx_{n,t/n}+\vy_{n,t/n})+\oone;\label{eq_fpu}\\
 &   \vv_{n,t/n}=\phi_t(\vx_{n,t/n}+\vy_{n,t/n}+\vv_{n,t/n})-\phi_t(\vx_{n,t/n}+\vy_{n,t/n})+\oone;\label{eq_fpv}\\
  &  \vz_{n,t/n}\geq \phi_t(\vy_{n,t/n})+\oone;\label{eq_fpz}\\
  & \vx_{n,t/n}+\vy_{n,t/n}+\vz_{n,t/n}+\vu_{n,t/n}+\vv_{n,t/n}=1.\label{eq_fpsum}
\end{align}
The combination of \cref{eq_fpsum,eq_fpy,eq_fpu} gives that
\begin{align}\label{eq_fpxz}
    \vx_{n,t/n}+\vz_{n,t/n}=1-\vy_{n,t/n}-\vu_{n,t/n}-\vv_{n,t/n}=\phi_t\bc{\vx_{n,t/n}+\vy_{n,t/n}}+\oone;
\end{align}
\Cref{eq_fpy,eq_fpu,eq_fpv,eq_fpz,eq_fpsum,eq_fpxz}, as well as \Cref{l91}, are the main results from the previous sections and the proofs in this section highly depend on them.

\subsection{The rank increase: Proof of \Cref{lem_rkdif_rank}}\label{sec_pr_r}
Recall the function $h_t:[0,1] \to \RR, h_t\bc{\alpha}=\alpha+1-\phi_t\bc{\alpha}$ from \cref{def_ht} as well as \Cref{lem_rkdif_rank} from \Cref{sec_rifpm}:
\randif*
\begin{proof}
Recall the good event $\mathfrak{P}_n$ from \cref{event_P}. On $\mathfrak{P}_n$, the matrix $\bm{T}_{n,t/n}[\bm{\theta}]$ arises from the matrix $\bm{T}_{n+1,t/n}[\bm{\theta}]$ through removal of the $(n+1)$st row and column, and therefore, \Cref{lr} gives the following representation of their rank difference in terms of the type of $n+1$:
\begin{align*}
    \rk{\bm{T}_{n+1,t/n}[\bm{\theta}]}-\rk{\bm{T}_{n,t/n}[\bm{\theta}]}
    =&\ind\cbc{n+1 \in \cX\bc{\bm{T}_{n+1,t/n}[\bm{\theta}]}}+2\cdot \ind\cbc{n+1 \in \cY\bc{\bm{T}_{n+1,t/n}[\bm{\theta}]}}\\
    &+\ind\cbc{n+1 \in \cU\bc{\bm{T}_{n+1,t/n}[\bm{\theta}]}}
    +\ind\cbc{n+1 \in \cV\bc{\bm{T}_{n+1,t/n}[\bm{\theta}]}}.
\end{align*}
On the other hand, in any case, $\abs{\rk{\bm{T}_{n+1,t/n}[\bm{\theta}]}-\rk{\bm{T}_{n,t/n}[\bm{\theta}]}}\leq 2P+2$, since both matrices can be obtained from $\bm{T}_{n,t/n}$ by adding at most $P+1$ rows and at most $P+1$ columns. By \cref{oe}, the above equation holds with high probability.   Hence,
 \begin{equation}\label{eq_n1sum}\begin{aligned}
    &\Erw\brk{\rk{\bm{T}_{n+1,t/n}[\bm{\theta}]}-\rk{\bm{T}_{n,t/n}[\bm{\theta}]}}\\
    =& \hspace{0.1 cm} \PP\bc{n+1 \in \cX\bc{\bm{T}_{n+1,t/n}[\bm{\theta}]}} +2\cdot \PP\bc{n+1 \in \cY\bc{\bm{T}_{n+1,t/n}[\bm{\theta}]}}+\PP\bc{n+1 \in \cU\bc{\bm{T}_{n+1,t/n}[\bm{\theta}]}}\\
    &+\PP\bc{n+1 \in \cV\bc{\bm{T}_{n+1,t/n}[\bm{\theta}]}}+o_n(1).
\end{aligned}\end{equation}
On the other hand, by \Cref{cor_proportions}, for any $\cW\in\cbc{\cX,\cY,\cZ,\cU,\cV}$ and any $i\in [n+1]$, $\PP(i \in \cW\bc{\bm{T}_{n+1,t/n}[\bm{\theta}]})=\Erw[\vw_{n+1,t/n}]$,
 and \Cref{l91} shows that \[\Erw\brk{\vw_{n+1,t/n}}=\Erw\brk{\vw_{n,t/n}}+o_{n,P}(1),\ \mbox{uniformly in $t\in[0,d]$}.\]
Therefore, \cref{eq_n1sum} reduces to
\begin{align*}
    \Erw\brk{\rk{\bm{T}_{n+1,t/n}[\bm{\theta}]}-\rk{\bm{T}_{n,t/n}[\bm{\theta}]}}
    =\Erw\brk{\vx_{n,t/n}+2\vy_{n,t/n}+\vu_{n,t/n}+\vv_{n,t/n}}+o_{n,P}(1),
\end{align*}
uniformly in $t\in[0,d]$. 
Since $\val_{n,t/n}=\vx_{n,t/n}+\vy_{n,t/n}+\vv_{n,t/n}$
as observed in in \cref{eq_alphaxyuv}, the combination of \cref{eq_fpy,eq_fpu} gives that
\begin{align*}
    \Erw\brk{\rk{\bm{T}_{n+1,t/n}[\bm{\theta}]}-\rk{\bm{T}_{n,t/n}[\bm{\theta}]}}
    =&\Erw\brk{\bc{\vx_{n,t/n}+\vy_{n,t/n}+\vv_{n,t/n}}+\bc{\vy_{n,t/n}+\vu_{n,t/n}}}+o_{n,P}(1)\\
    =&\Erw\brk{\val_{n,t/n}+1-\phi_t\bc{\val_{n,t/n}}}+o_{n,P}(1)=\Erw\brk{h_t\bc{\val_{n,t/n}}}+o_{n,P}(1),
\end{align*}
uniformly in $t\in[0,d]$, as desired.
\end{proof}

\subsection{Lower bound on the rank increase: Proof of \Cref{imp_pro}}\label{sec_pr_imp}

In this section, we prove \Cref{imp_pro}:
\imp*

The proof of \Cref{imp_pro} heavily depends on the properties of the function $G_t$ defined in (\ref{fun_g}) and its zeroes: Recall that $G_t:[0,1] \to \mathbb{R}$,
$$
    G_t(\alpha) = \alpha+ \phi_t\bc{1-\phi_t\bc{\alpha}}-1$$
and $\alpha_\star(t)$ and $\alpha^\star(t)$ were defined as the smallest and the largest zeroes of $G_t$ in $[0,1]$, respectively. Moreover, $\alpha_0(t)$ denotes the unique zero of the increasing function $\Xi_t:[0,1]\to \RR$, $\Xi_t(\alpha) = \alpha+\phi_t(\alpha)-1$, which is also always a zero of $G_t$ (see \Cref{lem_easy_fix}). With this terminology, we note the following properties of $G_t$ and its zeroes:


\begin{restatable}[Useful properties of $G_t$ and its zeroes; see {\cite[Section 3]{coja2022sparse}}]{lemma}{lemproal}\label{lem_proal}
\hspace{4 cm}

\begin{enumerate}
    \item\label{it_1} For $t\in [0,\eul]$, $G_t$ is strictly increasing and has a unique zero: $\alpha_\star(t)=\alpha_0(t)=\alpha^\star(t)$.

    \item\label{it_3} For $t \in (\eul, \infty)$, $G_t$ has exactly three distinct zeroes $\alpha_\star(t)<\alpha_0(t)<\alpha^\star(t)$, and $\alpha_0(t)\geq 1-\ln t/t$.
    
    \item\label{it_4} For all $t \geq 0$, $\alpha_\star(t)=1-\phi_t\bc{\alpha^\star(t)}$ and $\alpha^\star(t)=1-\phi_t\bc{\alpha_\star(t)}$.
    
    \item\label{it_5} For $t \in (\eul, \infty)$, $G_t$ is positive on $(\alpha_\star(t),\alpha_0(t)) \cup (\alpha^\star(t),1]$ and negative on $[0,\alpha_\star(t)) \cup (\alpha_0(t),\alpha^\star(t))$. Moreover, $G_t$ is strictly increasing on $[\alpha^\star(t),1]$.
    
    \item\label{it_6} For $t\not= \eul$, $G_t$ and $G'_t$ have no common zero. For $t=\eul$, their unique common zero is given by $\alpha_0(\eul)=1-1/\eul$.

\item\label{it_8} For all $t >0$ and $\alpha \in [0,1]\setminus\{\alpha_\star(t), \alpha^\star(t)\}$, $R_t(\alpha^\star(t))=R_t(\alpha_\star(t)) < R_t(\alpha)$.

\item\label{it_7} The functions $t \mapsto \alpha_\star(t)$, $t \mapsto \alpha_0(t)$ and $t \mapsto \alpha^\star(t)$ are differentiable on $[0,\infty)$ with continuous derivatives on $(0,\eul)\cup(\eul,\infty)$.

\item \label{it_9} Let $(\bm{b}_{n,P,N,J_N,t})_{n, P,N\in \ZZ^+, J_N \in \syN, t\in [0,d]} \subseteq [0,1]$ be an arbitrary family of 
random variables. If $ G_t(\bm{b}_{n,P,N,J_N,t})=\bar{o}_{\mathbb{P}}(1)$, then also
\begin{align*}
  \min\cbc{\abs{\bm{b}_{n,P,N,J_N,t}-\alpha_\star(t)},\abs{\bm{b}_{n,P,N,J_N,t}-\alpha_0(t)},\abs{\bm{b}_{n,P,N,J_N,t}-\alpha^\star(t)}}=\bar{o}_{\mathbb{P}}(1).
    \end{align*}
\end{enumerate}
\end{restatable}

We emphasize that a large part of \Cref{lem_proal} is covered by the results of \cite[Section 3]{coja2022sparse}. On the other hand, several of the specific properties that we need only arise in proofs, and are correspondingly difficult to cite. For the sake of completeness and easy reference, we therefore give a proof of all properties that we need in Appendix \ref{app_proal}.

As a last preparation for the proof of \Cref{imp_pro}, we prove two short lemmas on the evaluation of $h_t$ at specific points:
 The first lemma shows that $\alpha_\star(t)$ and $\alpha^\star(t)$ minimize $ h_t$ among the zeroes of $G_t$. It is a direct consequence of \Cref{lem_proal}:
\begin{lemma}\label{imp_lem}
For any $t\geq 0$, 
\begin{align}\label{eq_h_al}
    h_t\bc{\alpha_\star(t)}=h_t\bc{\alpha^\star(t)}\leq h_t\bc{\alpha_0(t)}.
\end{align}
\end{lemma}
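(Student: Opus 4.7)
The plan is to prove both the equality and the inequality by exploiting the fixed-point equations from \Cref{lem_proal}\eqref{it_4}, which give $\alpha_\star(t) = 1 - \phi_t(\alpha^\star(t))$, $\alpha^\star(t) = 1 - \phi_t(\alpha_\star(t))$, and $\alpha_0(t) = 1 - \phi_t(\alpha_0(t))$. Substituting these directly into $h_t(\alpha) = \alpha + 1 - \phi_t(\alpha)$ yields
\begin{equation*}
h_t(\alpha_\star(t)) = \alpha_\star(t) + \alpha^\star(t) = h_t(\alpha^\star(t)), \qquad h_t(\alpha_0(t)) = 2\alpha_0(t),
\end{equation*}
which immediately gives the equality in \cref{eq_h_al}. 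The inequality is therefore reduced to showing $\alpha_\star(t) + \alpha^\star(t) \leq 2\alpha_0(t)$, i.e.\ that the fixed point $\alpha_0(t)$ lies above the midpoint of the 2-cycle $\{\alpha_\star(t),\alpha^\star(t)\}$ under the map $\alpha \mapsto 1-\phi_t(\alpha)$.

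To establish this, I would introduce the auxiliary function $\psi_t:[0,1]\to[0,1]$, $\psi_t(\alpha) = 1 - \phi_t(\alpha)$, and observe two of its structural properties. First, since $\phi_t(\alpha) = \exp(t(\alpha-1))$ is convex with $\phi_t''(\alpha) = t^2 \phi_t(\alpha) > 0$, the function $\psi_t$ is concave on $[0,1]$. Second, $\psi_t$ is strictly decreasing (as $\psi_t'(\alpha) = -t\phi_t(\alpha) < 0$) and $\alpha_0(t)$ is its unique fixed point in $[0,1]$ by \Cref{lem_easy_fix}; consequently $\psi_t(\alpha) \geq \alpha$ if and only if $\alpha \leq \alpha_0(t)$.

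With these ingredients in hand, set $m = (\alpha_\star(t) + \alpha^\star(t))/2$. The fixed-point equations rewrite as $\psi_t(\alpha_\star(t)) = \alpha^\star(t)$ and $\psi_t(\alpha^\star(t)) = \alpha_\star(t)$, so concavity of $\psi_t$ gives
\begin{equation*}
\psi_t(m) \geq \tfrac{1}{2}\bigl(\psi_t(\alpha_\star(t)) + \psi_t(\alpha^\star(t))\bigr) = \tfrac{1}{2}(\alpha^\star(t) + \alpha_\star(t)) = m.
\end{equation*}
By the characterization of the fixed point in the previous paragraph, $\psi_t(m)\geq m$ forces $m \leq \alpha_0(t)$, which is the required inequality. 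The case $t \leq \eul$, where $\alpha_\star(t) = \alpha_0(t) = \alpha^\star(t)$ by \Cref{lem_proal}\eqref{it_1}, is absorbed trivially into the same argument. No step is genuinely difficult here: the only ingredient beyond pure algebra is the convexity of $\phi_t$, and the main thing to get right is bookkeeping the fixed-point identities so that $h_t$ simplifies cleanly at each of the three distinguished points.
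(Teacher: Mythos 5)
Your proof is correct. The equality part is identical in spirit to the paper's (both substitute the 2-cycle identities from \Cref{lem_proal}\eqref{it_4} into $h_t$ to get $h_t(\alpha_\star)=h_t(\alpha^\star)=\alpha_\star+\alpha^\star$), but the inequality part takes a genuinely different route. The paper uses the unimodality of $h_t$: since $h_t'(\alpha)=1-t\phi_t(\alpha)$, the function $h_t$ is increasing on $[0,1-\ln t/t]$ and decreasing on $[1-\ln t/t,1]$, and because $\alpha^\star(t)>\alpha_0(t)\geq 1-\ln t/t$ (the last bound being \Cref{lem_proal}\eqref{it_3}), both points lie in the decreasing region, giving $h_t(\alpha^\star)\leq h_t(\alpha_0)$ directly. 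You instead rewrite $h_t(\alpha_0)=2\alpha_0$, reduce the inequality to the midpoint statement $(\alpha_\star+\alpha^\star)/2\leq\alpha_0$, and obtain it from concavity of $\psi_t=1-\phi_t$ together with the fixed-point characterization $\psi_t(\alpha)\geq\alpha\iff\alpha\leq\alpha_0$. The upshot is that your argument avoids any use of the quantitative location bound $\alpha_0\geq 1-\ln t/t$ and relies only on convexity of $\phi_t$ and the 2-cycle/fixed-point structure, which is arguably cleaner and more robust (it would survive replacing $\phi_t$ by any convex decreasing-in-$\alpha$ generating function), while the paper's proof is shorter once \Cref{lem_proal}\eqref{it_3} is available and leans on machinery it has already built.
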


\begin{proof}
By \cref{it_1,it_3} in \Cref{lem_proal},
    \[
    \alpha_\star(t)= \alpha_0(t)= \alpha^\star(t)\mbox{ for $t\leq \eul \qquad $ and }\qquad \alpha^\star(t)>\alpha_0(t)\geq 1-\ln t/t \mbox{ for $t> \eul$}.
    \]
Taking the derivative of $h_t$ w.r.t.\ $\alpha$, we have $h_t'(\alpha)=1-t \phi_t(\alpha)=1-t\eul^{t(\alpha-1)}$,
so 
\begin{equation}\label{mon_ht}
    \mbox{$\alpha \mapsto h_t(\alpha)$ is a strictly increasing function on $[0,1-\ln t/t]$ and a strictly decreasing function on $[1-\ln t/t,1]$}
\end{equation}
and thus $h_t\bc{\alpha^\star(t)}\leq h_t\bc{\alpha_0(t)}$. It thus only remains to show that $h_t\bc{\alpha_\star(t)}=h_t\bc{\alpha^\star(t)}$.
By \cref{it_4} in \Cref{lem_proal}, $1-\alpha_\star(t) = \phi_t(\alpha^\star(t))$ and $1-\alpha^\star(t) = \phi_t(\alpha_\star(t)).$
It now directly follows that
\[h_t\bc{\alpha_\star(t)}=\alpha_\star(t)+1-\phi_t\bc{\alpha_\star(t)}  = \alpha^\star(t)+\alpha_\star(t)=\alpha^\star(t)+1-\phi_t\bc{\alpha^\star(t)}=h_t\bc{\alpha^\star(t)}.\]
\end{proof}

The second lemma is a consequence of the type fixed point equations (\ref{eq_fpy}) to (\ref{eq_fpxz}): 

\begin{lemma}\label{imp_lem0}
For any $d>0$, 
\begin{equation}\label{eq_eqineq}
\begin{aligned}
    h_t\bc{\vx_{n,t/n}+\vy_{n,t/n}+\vu_{n,t/n}}=& \hspace{0.1 cm}  h_t\bc{\vx_{n,t/n}+\vy_{n,t/n}+\vv_{n,t/n}}+\oone
    = h_t\bc{\vx_{n,t/n}+\vy_{n,t/n}}+\oone\\
    \leq& \hspace{0.1 cm}  h_t\bc{\vy_{n,t/n}}+\oone.
    \end{aligned}
\end{equation}
\end{lemma}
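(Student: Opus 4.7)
The plan is to deduce all three relations directly from the type fixed-point equations \eqref{eq_fpy}--\eqref{eq_fpxz} by exploiting the algebraic structure of $h_t(\alpha) = \alpha + 1 - \phi_t(\alpha)$. The crucial identity is that for any two points $\alpha,\beta\in[0,1]$,
\begin{equation*}
h_t(\beta) - h_t(\alpha) = (\beta - \alpha) - \bigl(\phi_t(\beta) - \phi_t(\alpha)\bigr),
\end{equation*}
so every comparison of $h_t$ at two points reduces to comparing an increment with the corresponding increment of $\phi_t$, which is exactly the form in which the fixed-point equations express $\vu_{n,t/n}$, $\vv_{n,t/n}$, and $\vx_{n,t/n}+\vz_{n,t/n}$.

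For the first equality, I would apply the above identity with $\beta = \vx_{n,t/n}+\vy_{n,t/n}+\vu_{n,t/n}$ and $\alpha = \vx_{n,t/n}+\vy_{n,t/n}$, so that $\beta-\alpha = \vu_{n,t/n}$. Equation \eqref{eq_fpu} says precisely that $\vu_{n,t/n}$ equals $\phi_t(\beta) - \phi_t(\alpha)$ up to $\oone$, and the difference $h_t(\beta) - h_t(\alpha)$ collapses to $\oone$. The second equality is obtained in exactly the same way using \eqref{eq_fpv} with $\beta = \vx_{n,t/n}+\vy_{n,t/n}+\vv_{n,t/n}$.

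For the inequality $h_t(\vx_{n,t/n}+\vy_{n,t/n}) \leq h_t(\vy_{n,t/n}) + \oone$, I would again apply the identity, this time with $\beta = \vx_{n,t/n}+\vy_{n,t/n}$ and $\alpha = \vy_{n,t/n}$, which reduces the claim to showing $\vx_{n,t/n} \leq \phi_t(\vx_{n,t/n}+\vy_{n,t/n}) - \phi_t(\vy_{n,t/n}) + \oone$. Here equation \eqref{eq_fpxz} rewrites $\vx_{n,t/n}$ as $\phi_t(\vx_{n,t/n}+\vy_{n,t/n}) - \vz_{n,t/n} + \oone$, while the lower bound \eqref{eq_fpz} on $\vz_{n,t/n}$ provides $-\vz_{n,t/n} \leq -\phi_t(\vy_{n,t/n}) + \oone$. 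Combining these two facts yields the required bound.

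I do not expect any real obstacle: once the relations are rearranged as above, everything is essentially substitution. The only subtlety is to be careful that the $\oone$ error terms are uniform in $t \in [0,d]$, which is guaranteed because \eqref{eq_fpy}--\eqref{eq_fpxz} are themselves stated with uniform-in-$t$ error terms, and the algebraic manipulations above only involve sums of a bounded number of such terms.
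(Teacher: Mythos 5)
Your proposal is correct and uses essentially the same argument as the paper: the equalities come from \cref{eq_fpu,eq_fpv}, and the inequality combines \cref{eq_fpxz} with \cref{eq_fpz}, as you do. The only difference is cosmetic—you phrase each comparison as an increment identity $h_t(\beta)-h_t(\alpha)=(\beta-\alpha)-(\phi_t(\beta)-\phi_t(\alpha))$, whereas the paper substitutes directly into $h_t(\vx_{n,t/n}+\vy_{n,t/n})$—but the underlying manipulations and error-term bookkeeping are identical.
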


\begin{proof}
The first and second equalities in \cref{eq_eqineq} follow directly from \cref{eq_fpu,eq_fpv}.

On the other hand, the combination of \cref{eq_fpxz,eq_fpz} gives
    \begin{align*}
            h_t\bc{\vx_{n,t/n}+\vy_{n,t/n}}=& \hspace{0.1 cm} \vx_{n,t/n}+\vy_{n,t/n}+1-\phi_t\bc{\vx_{n,t/n}+\vy_{n,t/n}}=\vy_{n,t/n}+1-\vz_{n,t/n}+\oone\\
            \leq&  \hspace{0.1 cm} \vy_{n,t/n}+1-\phi_t\bc{\vy_{n,t/n}}+\oone=h_t\bc{\vy_{n,t/n}}+\oone,
    \end{align*}
and thus the last inequality in \cref{eq_eqineq} follows.
\end{proof}


With \Cref{imp_lem,imp_lem0} in hand, we are finally in the position to prove \Cref{imp_pro}. 

\begin{proof}[Proof of \Cref{imp_pro}]
Define
    \begin{align}\label{def_vtavet}
    \bvta_n=\teo{h_t'(\vx_{n,t/n}+\vy_{n,t/n})\geq 0}\quad \mbox{and} \quad\bvet_n=1-\bvta_n=\teo{h_t'(\vx_{n,t/n}+\vy_{n,t/n})<0}.
    \end{align}
Since $\bvta_n+\bvet_n=1$, we divide equation \cref{eq_eqeqh} into two parts as follows:
\begin{align}
   &\bvta_n\bc{ h_t\bc{\alpha^\star(t)} - h_t\bc{\val_{n,t/n}}}\leq \oone,\label{eq_taur}\\
\mbox{and}\quad&\bvet_n  \bc{h_t\bc{\alpha^\star(t)}-h_t\bc{\val_{n,t/n}}}\leq \oone.\label{eq_etar}
\end{align}
In the absence of the error terms $\oone$ and under the assumption that $\bvta_n\equiv 1$ (or $\bvet_n\equiv 1$), the proof of \Cref{imp_pro} would amount to an analytic treatment of the properties of $h_t$. Unfortunately, we have to deal with the error terms and both cases. In the ensuing argument, we therefore fall back upon Taylor's Theorem with Lagrange Remainder, \cref{it_9} in \Cref{lem_proal} and the following two facts:
\begin{itemize}\label{eq_01eq}
    \item[(i)] For any function $g$ and $\upsilon\in\cbc{0,1}$,  if $\upsilon a=\upsilon b$, then $\upsilon g(a)=\upsilon g(b)$;
    \item[(ii)] For a family of differentiable functions $\bc{g_t}_{t\in[0,d]}$, if there exists a uniform bound $b$ such that $\sup_{t\in [0,d],s\in[0,1]}\abs{g'_t(s)}\leq b$, then $g_t(\bm{a}_t')=g_t(\bm{a}_t)+\oone$ for any random variables $\bm{a}_t,\bm{a}'_t\in [0,1]$, $\bm{a}_t'=\bm{a}_t+\oone\in [0,1]$ and $t\in [0,d]$,  since $\abs{g_t(\bm{a}_t')-g_t(\bm{a}_t)}\leq b \abs{\bm{a}_t'-\bm{a}_t}$.
\end{itemize}

\begin{enumerate}
    \item \begin{proof}[Proof of \Cref{eq_taur}]
  By definition of $\bvta_n$, $\bvta_n h_t'\bc{\vx_{n,t/n}+\vy_{n,t/n}}\geq 0$. 
  Fix $\varepsilon\in (0,1/d)$ and let $b=\inf_{t\in [\varepsilon d,d]}\cbc{t^2 \eul^{-t}}>0$, such that $\sup_{\alpha \in [0,1]} h_t''\bc{\alpha}\leq-b$ for $t \in [\varepsilon d, d]$. Then by Taylor's Theorem with Lagrange remainder, for $t\in [\varepsilon d,d]$,
    \begin{align*}
            \bvta_n h_t(\vy_{n,t/n}) 
            \leq& \hspace{0.1 cm} \bvta_n \bc{ h_t\bc{\vx_{n,t/n}+\vy_{n,t/n}}- h_t'\bc{\vx_{n,t/n}+\vy_{n,t/n}} \vx_{n,t/n}-\frac{b}{2} \vx_{n,t/n}^2}\\
            \leq& \hspace{0.1 cm} \bvta_n h_t\bc{\vx_{n,t/n}+\vy_{n,t/n}}
    -\bvta_n\teo{t\in [\varepsilon d,d]}\frac{b}{2}\vx_{n,t/n}^2.
    \end{align*}
 Analogously, let $c=1-\varepsilon d>0$ such that $\inf_{\alpha\in [0,1]} h_t'(\alpha)\geq c$ for $t\in [0,\varepsilon d)$. Then by Taylor's Theorem with Lagrange remainder, for $t\in [0,\varepsilon d)$,
 \begin{align*}
            \bvta_n h_t(\vy_{n,t/n})\leq&  \bvta_n\bc{ h_t\bc{\vx_{n,t/n}+\vy_{n,t/n}}- c \vx_{n,t/n}}.
    \end{align*}
On the other hand, \cref{eq_eqineq}  shows that for all $t \in [0,d]$,
\begin{align} \label{eq_noname}
    \bvta_n h_t(\vy_{n,t/n})\geq \bvta_nh_t\bc{\vx_{n,t/n}+\vy_{n,t/n}}+\oone.
\end{align}
Since $\vx_{n,t/n}\in [0,1]$, (\ref{eq_noname}) implies that
\[ \min\cbc{\frac{b}{2},c}\bvta_n\vx_{n,t/n}^2\leq \bvta_n\teo{t\in [\varepsilon d,d]}\frac{b}{2}\vx_{n,t/n}^2+\bvta_n\teo{t\in [0,\varepsilon d)}c \vx_{n,t/n}\leq \oone,\]
and we conclude that that $\bvta_n\vx_{n,t/n}^2 = \oone$. The Cauchy-Schwarz inequality $\Erw[\bvta_n\vx_{n,t/n}] \leq \Erw[\bvta_n\vx_{n,t/n}^2]^{1/2}$ then yields 
\begin{align}\label{eq_vtavx}
    \bvta_n\vx_{n,t/n}= \oone.
\end{align}
Since $\val_{n,t/n} = \vx_{n,t/n} + \vy_{n,t/n} + \vv_{n,t/n}$ and $\hval_{n,t/n} = \vx_{n,t/n} + \vy_{n,t/n} + \vu_{n,t/n}$, \cref{eq_vtavx} in combination with \cref{eq_fpy} and \cref{eq_fpv} implies that 
\begin{align*}
    \bvta_n\val_{n,t/n}=& \hspace{0.1 cm}\bvta_n\bc{\vy_{n,t/n}+\vv_{n,t/n}+\oone}=\bvta_n\bc{1-\phi_t\bc{\vx_{n,t/n}+\vy_{n,t/n}+\vu_{n,t/n}}+\oone}\\
    =& \hspace{0.1 cm} \bvta_n(1-\phi_t(\hval_{n,t/n})+\oone).
\end{align*}
Analogously, \cref{eq_vtavx} in combination with \cref{eq_fpy} and \cref{eq_fpu} implies that $\bvta_n\hval_{n,t/n}=\bvta_n\bc{1-\phi_t\bc{\val_{n,t/n}}+\oone}$.
Hence, 
    \begin{align*}
            \bvta_n\val_{n,t/n}=\bvta_n(1-\phi_t(\hval_{n,t/n})+\oone)=\bvta_n(1-\phi_t\bc{1-\phi_t\bc{\val_{n,t/n}}}+\oone),
    \end{align*}
i.e.,
\begin{align}\label{eq_Gat_al}
\bvta_n G_t\bc{\val_{n,t/n}}=\oone.
\end{align}
Let $\vbe_{n,t/n}=\bvta_n\val_{n,t/n}+\bvet_n\alpha^\star(t)$. Since $G_t(\alpha^\star(t))=0$, \cref{eq_Gat_al} implies that
    \begin{align}\label{eq_vbe}
   G_t(\vbe_{n,t/n})= \bvta_n G_t\bc{\val_{n,t/n}}+\bvet_n G_t\bc{\alpha^\star(t)}=\oone.
    \end{align}
Hence, \cref{it_9} in \Cref{lem_proal} implies that
    \[
    \min\cbc{\abs{\vbe_{n,t/n}-\alpha_\star(t)},\abs{\vbe_{n,t/n}-{\alpha_0(t)}},\abs{\vbe_{n,t/n}-\alpha^\star(t)}}=\oone.
    \]
By \Cref{imp_lem}, $h_t\bc{\alpha_\star(t)}=h_t\bc{\alpha^\star(t)}\leq h_t\bc{\alpha_0(t)}$, so 
    \[
    h_t\bc{\alpha^\star(t)}\leq h_t\bc{\vbe_{n,t/n}}+\oone=\bvta_n h_t\bc{\val_{n,t/n}}+\bvet_n h_t\bc{\alpha^\star(t)}+\oone,\]
and \cref{eq_taur} follows immediately.
\end{proof}

\item \begin{proof}[Proof of \Cref{eq_etar}]
By definition of $\bvet_n$, $\bvet_n h_t'\bc{\vx_{n,t/n}+\vy_{n,t/n}}< 0$. Since the function $h_t$ is strictly increasing on $[0,1-\ln(t)/t]$, this implies that $\bvet_n \bc{\vx_{n,t/n}+\vy_{n,t/n}}\geq \bvet_n \bc{1-\ln t/t}$, so that
    \begin{align}
    \label{eq_vallow}
    \bvet_n\val_{n,t/n}\geq\bvet_n\bc{\vx_{n,t/n}+\vy_{n,t/n}}\geq \bvet_n\bc{1-\ln t/t}.
    \end{align}

Another application of Taylor's Theorem with Lagrange remainder to \cref{eq_fpu} and \cref{eq_fpv} as in the argument leading to \cref{eq_vtavx} 
yields that $\bvet_n\vu_{n,t/n}=\oone$ and $\bvet_n\vv_{n,t/n}=\oone$. 
Hence, by \cref{eq_fpy}, 
    \begin{align}
    \label{eq_fixvet}
    \bvet_n\vy_{n,t/n}=\bvet_n(1-\phi_t(\vx_{n,t/n}+\vy_{n,t/n})+\oone)\ \mbox{and }\ \bvet_n\val_{n,t/n}=\bvet_n(\vx_{n,t/n}+\vy_{n,t/n}+\oone).
    \end{align}

Let
    \[
    \vbe_{n,t/n}'=\bvet_n\teo{\val_{n,t/n}> {\alpha^\star(t)}}\val_{n,t/n}+\bc{1-\bvet_n\teo{\val_{n,t/n}> {\alpha^\star(t)}}}\alpha^\star(t).
    \]
Then $\vbe_{n,t/n}'\geq \alpha^\star(t)$. 
Since $G_t(\alpha^\star(t))=0$, by \cref{eq_fixvet,eq_fpz,eq_fpsum},
    \[
    \begin{aligned}
    G_t(\vbe_{n,t/n}')=&\bvet_n\teo{\val_{n,t/n}> {\alpha^\star(t)}}\bc{\val_{n,t/n}+\phi_t\bc{1-\phi_t\bc{\val_{n,t/n}}}-1}\\
    \leq& \bvet_n\teo{\val_{n,t/n}> {\alpha^\star(t)}}\bc{\vx_{n,t/n}+\vy_{n,t/n}+\vz_{n,t/n}-1+\oone}\leq  \oone.
    \end{aligned}
    \]

On the other hand, by \cref{it_5} in \Cref{lem_proal}, $G_t$ is strictly increasing on $[\alpha^\star(t),1]$. Hence     \[
    G_t(\vbe_{n,t/n}')\geq G_t\bc{\alpha^\star(t)}=0, \qquad \text{so that} \qquad    G_t(\vbe_{n,t/n}')=\oone.
    \]
Then the combination of \cref{it_9} in \Cref{lem_proal} and $\vbe'_{n,t/n}\geq \alpha^\star(t)$ yields that $\vbe'_{n,t/n}=\alpha^\star(t)+\oone$, which leads to
    \begin{align}
    \label{eq_valup}
    \bvet_n\val_{n,t/n}\leq \bvet_n\vbe'_{n,t/n}=\bvet_n\bc{\alpha^\star(t)+\oone}.
    \end{align}
Hence, by \cref{mon_ht}, \cref{eq_vallow} and \cref{eq_valup}, 
    \[
    \bvet_n h_t\bc{\val_{n,t/n}}\geq \bvet_n h_t\bc{\vbe_{n,t/n}'}=\bvet_n\bc{h_t\bc{\alpha^\star(t)}+\oone},
    \]
and thus \cref{eq_etar} holds.
\end{proof}
\end{enumerate}
The combination of \cref{eq_taur,eq_etar} gives \cref{eq_eqeqh}.
\end{proof}

\subsection{Integral evaluation: Proof of \Cref{fact_1}}\label{sec_eqqdrd}
In this section, we prove 
 \Cref{fact_1} from Section \ref{sec_lower_exp}:
\qeqaulr*
\begin{proof}
Throughout the proof, we use the abbreviations
\[
    q(d) = \int_{0}^{d} h_t(\alpha^\star(t))\dif t = \int_{0}^{d}\bc{\alpha^\star(t)-\phi_t\bc{\alpha^\star(t)}+1}\dif t\]
    and
   \[ r(d) = d \cdot R_d(\alpha^\star(d)) = 2d-d\phi_d\bc{1-\phi_d(\alpha^\star(d))}-d\phi_d\bc{\alpha^\star(d)}-d^2\phi_d\bc{\alpha^\star(d)}\bc{1-\alpha^\star(d)}.
    \]
We have $q(0)=r(0)=0$. Moreover, by \cref{it_7} in \Cref{lem_proal}, the function $t \mapsto \alpha^\star(t)$ is continuous on $[0,\infty)$, which then transfers to the functions $d \mapsto q(d)$ and $d \mapsto r(d)$.
  In order to prove that $q(d)=r(d)$ for all $d \geq 0$, it is thus sufficient to certify that $q'(d)=r'(d)$ for all $d\in (0,\eul)\cup(\eul,\infty)$.

Recall that the derivative of $d \mapsto \alpha^\star(d)$  is continuous on $(0,\eul)\cup(\eul,\infty)$ by  \cref{it_7} in \Cref{lem_proal}. To derive an expression for $r'(d)$, we compute the partial derivatives of the function $(d,\alpha) \mapsto \phi_d\bc{1-\phi_d(\alpha)}$ at $(d,\alpha^\star(d))$ for $d \not= \eul$, where we use that $\alpha^\star(d)$ is a zero of $G_d$, i.e., $\phi_d\bc{1-\phi_d\bc{\alpha^\star(d)}}=1-\alpha^\star(d)$:
\begin{align*}
    \frac{\partial }{\partial d}\phi_d\bc{1-\phi_d(\alpha)}\Big|_{\alpha=\alpha^\star(d)}=&-\phi_d\bc{\alpha^\star(d)}\phi_d\bc{1-\phi_d\bc{\alpha^\star(d)}}\bc{1-d\bc{1-\alpha^\star(d)}}\\
    =&-\phi_d\bc{\alpha^\star(d)}\bc{1-\alpha^\star(d)}\bc{1-d\bc{1-\alpha^\star(d)}}
\end{align*}
as well as
\[\frac{\partial }{\partial \alpha}\phi_d\bc{1-\phi_d(\alpha)}\Big|_{\alpha=\alpha^\star(d)}=-d^2\phi_d\bc{\alpha^\star(d)}\phi_d\bc{1-\phi_d\bc{\alpha^\star(d)}}=-d^2\phi_d\bc{\alpha^\star(d)}(1-\alpha^\star(d)).\]
Hence,
\begin{align}
    \label{eq_rd}r'(d)=&2-\phi_d\bc{1-\phi_d(\alpha^\star(d))}-d\frac{\partial }{\partial d}\phi_d\bc{1-\phi_d(\alpha)}\Big|_{\alpha=\alpha^\star(d)}-d\frac{\partial }{\partial \alpha}\phi_d\bc{1-\phi_d(\alpha)}\Big|_{\alpha=\alpha^\star(d)}\frac{\dif \alpha^\star(d)}{\dif d}\\
    &- \phi_d\bc{\alpha^\star(d)}-d\phi_d\bc{\alpha^\star(d)}\bc{\alpha^\star(d)-1+d\frac{\dif \alpha^\star(d)}{\dif d}}-2d\phi_d\bc{\alpha^\star(d)}\bc{1-\alpha^\star(d)}\nonumber\\
&-d^2\phi_d\bc{\alpha^\star(d)}\bc{\alpha^\star(d)-1+d\frac{\dif \alpha^\star(d)}{\dif d}}\bc{1-\alpha^\star(d)}+d^2\phi_d\bc{\alpha^\star(d)}\frac{\dif \alpha^\star(d)}{\dif d}.\nonumber
\end{align}
Substituting the two partial derivatives of $(d,\alpha) \mapsto \phi_d\bc{1-\phi_d(\alpha)}$ into \cref{eq_rd}, we see that the sum of the terms with ${\dif \alpha^\star(d)}/{\dif d}$ on the right hand side vanishes and 
\begin{align*}
    r'(d)=2-\phi_d\bc{1-\phi_d(\alpha^\star(d))}-\phi_d\bc{\alpha^\star(d)}.
\end{align*}

On the other hand, $q'(d)=\alpha^\star(d)-\phi_d\bc{\alpha^\star(d)}+1=r'(d)$ since $\phi_d\bc{1-\phi_d\bc{\alpha^\star(d)}}=1-\alpha^\star(d)$.
Thus $r(d)=q(d)$ for all $d\geq 0$, as desired.
\end{proof}

\paragraph{\bf Acknowledgement.}This research was supported by the European Union’s Horizon 2020 research and innovation programme under the Marie Skłodowska-Curie grant agreement no. 945045, and by the NWO Gravitation project NETWORKS under grant no. 024.002.003.

\bibliographystyle{abbrv}
\DeclareRobustCommand{\VAN}[3]{#3}
\bibliography{references}

\begin{appendices}
\section{Useful properties of the functions $G_t$ and $R_t$}\label{app_proal}
For $t \geq 0$, recall the rank function $R_t:[0,1] \to \RR$,
\[R_t(\alpha)=2-\phi_t\bc{1-\phi_t(\alpha)}-(1+t(1-\alpha))\phi_t(\alpha),\]
defined in (\ref{em0}), as well as $G_t:[0,1] \to \mathbb{R}$, 
\[G_t(\alpha)=\alpha+\phi_t\bc{1-\phi_t(\alpha)}-1,\]
defined in (\ref{fun_g}). The following lemma shows that for all $t \geq 0$, $G_t$ has at least one zero $\alpha_0(t)$, that additionally satisfies the equation $\alpha_0(t) = 1 - \phi_t(\alpha_0(t))$:

\begin{lemma}[See {\cite[Section 3]{coja2022sparse}}] \label{lem_easy_fix}
For any $t \geq 0$, the function $\Xi_t:[0,1] \to \mathbb{R}$, $\Xi_t(\alpha)=\alpha+\phi_t(\alpha)-1$, has a unique zero $\alpha_0(t)$. Moreover,
\begin{align}
    G_t(\alpha_0(t)) = 0.
\end{align}
\end{lemma}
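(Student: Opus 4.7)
The plan is to establish both claims by direct analysis of $\Xi_t$, since everything reduces to elementary properties of $\phi_t(\alpha) = e^{t(\alpha-1)}$.

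First I would prove existence and uniqueness of the zero of $\Xi_t$ via the intermediate value theorem combined with monotonicity. At the endpoints of $[0,1]$, the function evaluates to $\Xi_t(0) = e^{-t} - 1 \leq 0$ and $\Xi_t(1) = 1 + \phi_t(1) - 1 = 1 > 0$, and $\Xi_t$ is continuous, so at least one zero exists. For uniqueness, I compute $\Xi_t'(\alpha) = 1 + t\phi_t(\alpha) > 0$ for all $\alpha \in [0,1]$ and $t \geq 0$, which shows $\Xi_t$ is strictly increasing, hence has a unique zero $\alpha_0(t)$.

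Next I would verify that $G_t(\alpha_0(t)) = 0$. By definition of $\alpha_0(t)$, we have $\phi_t(\alpha_0(t)) = 1 - \alpha_0(t)$. Substituting this into the definition of $G_t$ gives
\[G_t(\alpha_0(t)) = \alpha_0(t) + \phi_t(1 - \phi_t(\alpha_0(t))) - 1 = \alpha_0(t) + \phi_t(\alpha_0(t)) - 1 = \Xi_t(\alpha_0(t)) = 0,\]
which is exactly the computation already noted at equation \eqref{zero_F}.

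There is no real obstacle here; both steps are routine one-liners. The only minor subtlety is the degenerate case $t=0$, where $\phi_0 \equiv 1$ and $\Xi_0(\alpha) = \alpha$, so $\alpha_0(0) = 0$ and $G_0(0) = 0 + 1 - 1 = 0$ trivially, consistent with the general argument.
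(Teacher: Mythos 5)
Your proposal is correct and takes essentially the same approach as the paper: intermediate value theorem for existence, strict monotonicity of $\Xi_t$ for uniqueness, and a one-line substitution of the fixed-point identity $\phi_t(\alpha_0(t)) = 1-\alpha_0(t)$ to reduce $G_t(\alpha_0(t))$ to $\Xi_t(\alpha_0(t)) = 0$. The observation about the degenerate case $t=0$ is fine but unnecessary, since the general argument already covers it.
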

\begin{proof}
We have $\Xi_t'(\alpha) = 1 + t\phi_t(\alpha) >0$ as well as $\Xi_t(0)=\eul^{-t}-1 \leq 0$ and $\Xi_t(1)=1>0$. Thus, $\Xi_t$ has a unique zero $\alpha_0 \in [0,1]$. Moreover,
\begin{align*}
    G_t(\alpha_0(t)) = \alpha_0(t) - 1 + \phi_t(1-\phi_t(\alpha_0(t))) = -\phi_t(\alpha_0(t)) + \phi_t(\alpha_0(t)) = 0.
    \end{align*}
\end{proof}

Recall that $\alpha_\star(t)$ and $\alpha^\star(t)$ denote the smallest and largest zero of $G_t$, respectively. We now prove \Cref{lem_proal}:

\lemproal*

\begin{proof}

\vspace{0.1 cm}
\begin{itemize}
    \item[1.] First observe that $G_0(\alpha)=\alpha$, so $G_0$ is strictly increasing with a unique zero in $\alpha=0$. 

Taking the first and second derivative of ${\alpha\mapsto G_t}$, we have
\[G_t'(\alpha)=1-t^2\phi_t(\alpha)\phi_t\bc{1-\phi_t(\alpha)}\quad\mbox{and}\quad G_t''(\alpha)=t^3\phi_t(\alpha)\phi_t\bc{1-\phi_t\bc{\alpha}}(t\phi_t(\alpha)-1).\]
Since $t>0$, $G_t''(\alpha)<0$ precisely when $\alpha<1-\ln t/t$, and $G_t''(\alpha)>0$ precisely when $\alpha>1-\ln t/t$. Thus, for any $t>0$, the first derivative $\alpha\mapsto G_t'(\alpha)$ is strictly decreasing on $[0,1-\ln t/t)$ and strictly increasing on $(1-\ln t/t,1]$. When $t\in (0,1]$, $1-\ln t/t \geq 1$ and for all $\alpha \in [0,1]$, $G'_t(\alpha)\geq G'_t(1)=1-t^2\eul^{-t}> 0$. When $t \in (1,\eul]$, $1-\ln t/t \in (0,1)$ and for all $\alpha \in [0,1]$,
\begin{align}\label{eq_lower_dG}
G_t'(\alpha)\geq G_t'(1-\ln t/t)=1-t/\eul.
\end{align}
We conclude that for all $t\in [0,\eul)$ and $\alpha \in [0,1]$, $G'_t(\alpha)>0$ and that $G_t$ is strictly increasing. In this case, $G_t$ has at most one zero, which is given by $\alpha_0(t)$ from \Cref{lem_easy_fix}. \\
For $t=\eul$, $G_\eul'$ has exactly one zero in $\alpha = 1-1/\eul$ and is positive otherwise. Thus $G_\eul$ is strictly increasing and has a unique zero, which is given by $\alpha_0(\eul)=1-1/\eul$ from \Cref{lem_easy_fix}.

\item[2. \& 3.] 
Suppose that  $t \in (\eul,\infty)${, so} that $1-\ln t/t \in (0,1)$. We first show that $G_t$ has \textit{at most} three zeroes. The proof of \cref{it_1} shows that $G_t'$ is strictly decreasing on $[0, 1-\ln t/t)$ and strictly increasing on $(1-\ln t/t, 1]$ with $G_t'(1-\ln t/t)=1-t/\eul<0$. Moreover, 
\[G_t'(0)=1-t^2\eul^{-t}\eul^{-t\eul^{-t}}\geq 1-t^2\eul^{-t}=G'_t(1)  >0,\]
where we have used that $\eul^{t/2}= \eul\eul^{t/2-1}>\eul\bc{1+t/2-1}>t$ for $t>\eul$ in the last step. The intermediate value theorem now implies that $G_t'$ has exactly two zeroes in $[0,1]$, which we denote by $\alpha_1(t)<\alpha_2(t)$. By the above, 
\[\alpha_1(t)<1-\ln t/t<\alpha_2(t).\]
This implies for $G_t$ that
\begin{equation}\label{mon_gt}
\begin{aligned}
    \mbox{$G_t$ is strictly increasing on $[0,\alpha_1(t)) \cup (\alpha_2(t),1]$}
    \mbox{ and strictly decreasing on $(\alpha_1(t),\alpha_2(t))$.}
    \end{aligned}
\end{equation}
By the intermediate value theorem and \Cref{mon_gt}, $G_t$ has at most three zeroes in $[0,1]$. 

We next argue that for $t>\eul$, $G_t$ has \textit{exactly three} zeroes. For this, observe that 
\[G_t(0)=\eul^{-t\eul^{-t}}-1<0\quad \mbox{and}\quad G_t(1)=\eul^{-t}>0,\]
\[G_t(1-\ln t/t)=\eul^{-1}-\ln t/t=\eul^{-1}\bc{1-\ln t/\eul^{\ln t-1}}>\eul^{-1}\bc{1-\ln t/\bc{1+\ln t-1}}=0, \]
and
\[G_t(1-1/t)=-1/t+\eul^{-t\eul^{-1}}= -1/t+1/\bc{\eul\eul^{t\eul^{-1}-1}}< -1/t+1/\bc{\eul\bc{1+t\eul^{-1}-1}}=0.\]
By the intermediate value theorem, $G_t$ has at least one zero in each of the intervals $(0,1-\ln t/t)$, $(1-\ln t/t,1-1/t)$ and $(1-1/t,1)$. It follows that  $G_t$ has exactly three zeroes. 

We next show that for $t>\eul$, $\alpha_0(t)$ is neither the largest nor the smallest zero, such that the zeroes $\alpha_\star(t), \alpha^\star(t)$ and $\alpha_0(t)$ are distinct. 
Let $t \geq 0$. If $\dot{\alpha}$ is any zero of $G_t$, then 
$\dot{\alpha}=1-\phi_t(1 - \phi_t(\dot{\alpha}))$. This implies that
\[G_t(1-\phi_t(\dot{\alpha}))= 1-\phi_t(\dot{\alpha})+\phi_t(1-\phi_t(1-\phi_t(\dot{\alpha})))-1=1-\phi_t(\dot{\alpha})+\phi_t(\dot{\alpha})-1=0.\]
Therefore, $1-\phi_t(\dot{\alpha})$ is also a zero of $G_t$, for any $t \geq 0$.

For $t > \eul$, let $\alpha_M(t)$ be the zero of $G_t$ that is contained in the non-empty interval $(1-\ln t/t,1-1/t)$. Since $G_t$ has exactly the three zeros $\alpha_\star(t)< \alpha_M(t)<\alpha^\star(t)$, by the above, $1-\phi_t(\alpha^\star(t))<1-\phi_t(\alpha_M(t))<1-\phi_t(\alpha_\star(t))$ are also three distinct zeroes of $G_t(\alpha)$. Thus we must have that
\begin{align}\label{eq_alm}
   \alpha_\star(t)=1-\phi_t(\alpha^\star(t)),\quad\alpha_M(t)=1-\phi_t(\alpha_M(t))\quad\mbox{and}\quad\alpha^\star(t)=1-\phi_t(\alpha_\star(t)), 
\end{align}
which implies that $\alpha_M(t) = \alpha_0(t)$ (see \Cref{lem_easy_fix}), as well as $\alpha_0(t)\in (1-\ln t/t,1-1/t)$. In particular, the proof of \cref{it_3} is now complete.

Moreover, \cref{eq_alm} proves \cref{it_4} for $t>\eul$. For $t\in [0,\eul]$ on the other hand, \cref{it_4} follows from the fact that $\alpha_\star(t)= \alpha_0(t)=\alpha^\star(t)$ and \Cref{lem_easy_fix}.

\item[4.] As shown in the proof of \cref{it_3}, for $t>\eul$, $\alpha_\star(t) \in (0,1-\ln t/t)$, $\alpha_0(t) \in (1-\ln t/t,1-1/t)$ and $\alpha^\star(t) \in (1-1/t,1)$ with $G_t(0) <0, G_t(1-\ln t/t) > 0, G_t(1-1/t) < 0$ and $G_t(1)>0$. This implies the first part of the claim. Moreover, recall the two zeroes $\alpha_1(t)<\alpha_2(t)$ of $G_t'$ as well as observation \cref{mon_gt}. Since there can be at most one zero in each of the intervals $[0,\alpha_1(t)]$, $[\alpha_1(t),\alpha_2(t)]$ and $[\alpha_2(t),1]$ and $G_t$ has exactly three zeroes, we must have that $\alpha_\star(t)\in [0,\alpha_1(t)]$, $\alpha_0(t)\in [\alpha_1(t),\alpha_2(t)]$ and $\alpha^\star(t)\in [\alpha_2(t),1]$. The second part of \cref{it_5} now follows from \cref{mon_gt}.


\item[5.] Assume that $\bar{\alpha}(t)\in [0,1]$ is a common zero of $G_t$ and $G_t'$. Then $\bar{\alpha}(t)<1$, since $G_t(1)=\eul^{-t}$. Let $b=\phi_t(\bar{\alpha}(t))/(1-\bar{\alpha}(t))>0$. We distinguish the following two cases:
\begin{itemize}
    \item[i)] If $b=1$, then $\bar{\alpha}(t)=1-\phi_t(\bar{\alpha}(t))$. Since $\bar{\alpha}(t)$ is a zero of $G_t'$, $0 = G_t'(\bar{\alpha}(t)) = 1-t^2\phi_t^2(\bar{\alpha}(t))$, i.e. $t>0$ and $\phi_t(\bar{\alpha}(t))=1/t$. By definition of $\phi_t$, $\phi_t(\bar{\alpha}(t))=\eul^{t(\bar{\alpha}(t)-1)}$, so $\bar{\alpha}(t)=1-\ln t/t$. On the other hand, $\bar{\alpha}(t)=1-\phi_t(\bar{\alpha}(t))=1-1/t$. This is only possible for $t=\eul$. In this case, $\bar{\alpha}(\eul)=1-1/\eul = \alpha_0(\eul)$, and indeed, $G_{\eul}(\alpha_0(\eul))=G_{\eul}'(\alpha_0(\eul))=0$.
    \item[ii)] If $b\neq 1$, then 
\[\phi_t(\bar{\alpha}(t))/b=1-\bar{\alpha}(t)=\phi_t(1-\phi_t(\bar{\alpha}(t)))=\eul^{-t\phi_t(\bar{\alpha}(t))}=\bc{\eul^{t(\bar{\alpha}(t)-1)}}^b=\phi_t(\bar{\alpha}(t))^b>0,\]
where we have used $G_t(\bar{\alpha}(t))=0$ in the second step. 
Therefore,
\begin{align}\label{eq_common_zeros}
    \phi_t(\bar{\alpha}(t))=b^{-1/(b-1)}\quad \mbox{and}\quad 1-\bar{\alpha}(t)=b^{-1}\phi_t(\bar{\alpha}(t))=b^{-b/(b-1)}.
    \end{align}
Hence, by definition of $\phi_t$ and (\ref{eq_common_zeros}),
\begin{align}\label{eq_common_zeros2}
t=\bc{\ln \phi_t(\bar{\alpha}(t))}/(\bar{\alpha}(t)-1)=b^{b/(b-1)}\ln b/(b-1).
\end{align}
Since also $G_t'(\bar{\alpha}(t))=0$, we have that 
\[0=1-t^2\phi_t(\bar{\alpha}(t))\phi_t\bc{1-\phi_t(\bar{\alpha}(t))}\stackrel{G_t(\bar{\alpha}(t))=0}{\scalebox{8}[1]{=}}1-t^2\phi_t(\bar{\alpha}(t))(1-\bar{\alpha}(t))=1-b\bc{\ln b}^2/(b-1)^2,\]
where we have used (\ref{eq_common_zeros}) and (\ref{eq_common_zeros2}) in the last step. Thus $(b-1)^2-b\bc{\ln b}^2=0$. Hence $(b-1)b^{-1/2}-\ln b=0$ since $b-1$ and $\ln b $ have the same sign.

Let $l(c)=(c-1)c^{-1/2}-\ln c$ for $c>0$. Then $l(b)=0$ and $l(1)=0$. Taking the derivative of $l$, we have
\[l'(c)=c^{-1/2}/2+c^{-3/2}/2-1/c\geq 0,\]
with equality only if $c=1$. Therefore, $l$ is strictly increasing on $(0,\infty) $. Since $b \not=1$, we conclude that $l(b)\neq 0$, which is a contradiction. 
\end{itemize}

\item[6.] By \cref{it_4},
\begin{align*}
    R_t(\alpha^\star(t))&= 2-\phi_t\bc{1-\phi_t(\alpha^\star(t))}-(1+t(1-\alpha^\star(t)))\phi_t(\alpha^\star(t))\\
    &= 2-\phi_t\bc{\alpha_\star(t)}-(1+t\phi_t\bc{\alpha_\star(t)})\phi_t(\alpha^\star(t))\\
    &= 2-\phi_t\bc{\alpha_\star(t)}-\phi_t\bc{\alpha^\star(t)}-t\phi_t\bc{\alpha_\star(t)}\phi_t\bc{\alpha^\star(t)}.
\end{align*}
Analogously, $R_t(\alpha_\star(t))=2-\phi_t\bc{\alpha_\star(t)}-\phi_t\bc{\alpha^\star(t)}-t\phi_t\bc{\alpha_\star(t)}\phi_t\bc{\alpha^\star(t)}$, so $R_t(\alpha_\star(t))=R_t(\alpha^\star(t))$.

One the other hand,
\[R_t'(\alpha)=t^2\phi_t\bc{\alpha}G_t(\alpha),\]
so for $t>0$, the sign of $R_t'(\alpha)$ is equal to the sign of $G_t(\alpha)$ for $\alpha\in [0,1]$.

\begin{itemize}
    \item[i)] For $t\leq \eul$, as shown under \cref{it_1}, $G_t$ is strictly increasing with a unique zero in $\alpha^\star(t) = \alpha_\star(t)$. Therefore, $R_t$ obtains its unique minimum in $\alpha=\alpha^\star(t) = \alpha_\star(t)$.
    \item[ii)] For $t> \eul$, \cref{it_5} shows that $R_t$ is strictly decreasing on $(0,\alpha_\star(t)) \cup (\alpha_0(t),\alpha^\star(t))$ and strictly increasing on $(\alpha_\star(t),\alpha_0(t)) \cup (\alpha^\star(t),1)$. Therefore, $R_t$ attains its minimum either in $\alpha^\star(t)$ or in $\alpha_\star(t)$. 
\end{itemize}
We conclude that for all $\alpha \notin \{\alpha^\star(t),\alpha_\star(t)\}$, $R_t(\alpha) > R_t(\alpha_\star(t))=R_t(\alpha^\star(t))$.

\item[7.] We apply the implicit function theorem. Consider the two-variable function $\tilde G:\RR_{\geq 0} \times [0,1] \to \RR$, 
\[\tilde G(t, \alpha) = G_t(\alpha) = \alpha - 1 + \eul^{-t \eul^{t(\alpha - 1)}}.\]
$\tilde G$ has continuous partial derivatives and thus is differentiable on $\RR_{>0} \times (0,1)$. By Items 1 and 2, for any $t_0 \geq 0$,
\begin{align*}
    \tilde G(t_0, \alpha_\star(t_0)) = 0, \qquad  \tilde G(t_0, \alpha_0(t_0)) = 0 \qquad \text{ and } \qquad  \tilde G(t_0, \alpha^\star(t_0)) = 0,
\end{align*}
and, by \cref{it_6}, for $t_0\neq \eul$, the partial derivative $\partial_{\alpha}\tilde G$ does not vanish in the respective zeroes:
\begin{align*}
    \partial_\alpha \tilde G(t_0,\alpha^\star(t_0))\not= 0, \qquad \partial_\alpha \tilde G(t_0,\alpha_0(t_0))\not= 0 \qquad \text{and} \qquad \partial_\alpha \tilde G(t_0,\alpha_\star(t_0))\not= 0.
\end{align*}
For $t_0 \in (0,\eul) \cup (\eul,\infty)$, the implicit function theorem provides the existence of continuously differentiable functions $t \mapsto\beta_\star(t)$, $t \mapsto\beta_0(t)$ and $t \mapsto\beta^\star(t)$ defined on an open set $t_0 \in T\subset [0,\infty)$ such that
 \begin{align}\label{eq_allalbe}
 \beta_\star(t_0)=\alpha_\star(t_0),\qquad\beta_0(t_0)=\alpha_0(t_0),\qquad\beta^\star(t_0)=\alpha^\star(t_0),
  \end{align}
  and
 \begin{align}\label{eq_allgb}
\tilde G(t,\beta_\star(t))=\tilde G(t,\beta_0(t))=\tilde G(t,\beta^\star(t)) =0 \qquad \mbox{ for all $t\in T$.}
 \end{align}
Assume now that $t_0\in (0,\eul)$. In this case, \cref{eq_allgb} and \cref{it_1} imply that on $T\cap(0,\eul)$, $\beta_\star, \beta_0$ and $\beta^\star$ are identical to $\alpha_0$, since for any $t\in (0,\eul)$, $G_t$ has exactly one zero $\alpha_0(t)$. 
Therefore, for any $t_0 \in (0,\eul)$, the function $t \mapsto\alpha_0(t)$ is continuously differentiable in $t_0$.
 
 Let now $t_0\in (\eul,\infty)$. Since in this case, the three zeroes of $G_t$ are distinct by Item 2, \cref{eq_allalbe} implies that 
 \[\beta_\star(t_0)<\beta_0(t_0)<\beta^\star(t_0).\]
 Since $\beta^\star(t),\beta_0(t)$ and $\beta_\star(t)$ are continuous functions, we can further restrict $T$ such that $T\subset (\eul,\infty)$ and
$\beta_\star(t)<\beta_0(t)<\beta^\star(t)$ for all $t\in T$. Then the combination of \cref{eq_allgb} and \cref{it_3} gives that on $T$,
\[\alpha^\star(t)=\beta^\star(t),\qquad\alpha_0(t)=\beta_0(t)\qquad\mbox{and}\qquad\alpha_\star(t)=\beta_\star(t).\]
Therefore, for any $t_0 \in (\eul, \infty)$, the functions  $t \mapsto\alpha_\star(t)$, $t \mapsto\alpha_0(t)$ and $t \mapsto\alpha^\star(t)$ are continuously differentiable in $t_0$.

We finally consider continuity of $t \mapsto \alpha_\star(t)$ in the point $t=\eul$. Let $a:=\limsup_{t \to \eul}\alpha_\star(t)\in [0,1]$ and suppose that $ a \not= \alpha_\star(\eul)$. Since $\alpha_\star(\eul)$ is the only zero of $G_\eul$, $\tilde G(\eul, a) \not=0$. As $\tilde G$ is a continuous function, there exists $\delta>0$ such that for all $(t,\alpha) \in U_\delta:= [\eul-\delta, \eul+\delta] \times [a+\delta, a-\delta]$, $|\tilde G(t,\alpha) - \tilde G(\eul,a)| \leq |\tilde G(\eul,a)|$. In particular, $\tilde G(t,\alpha) \not= 0$ for all $(t,\alpha) \in U_\delta$. On the other hand, by definition of $a$, there exists $t_\delta \in  [\eul-\delta, \eul+\delta] \setminus \{\eul\}$ with $\alpha_\star(t_\delta) > a-\delta$, such that $(t_\delta,\alpha_\star(t_\delta)) \in U_\delta$. But $\tilde G(t_\delta,\alpha_\star(t_\delta))=0$, which gives the desired contradiction. We conclude that $\limsup_{t \to \eul}\alpha_\star(t) = \alpha_\star(\eul)$.

Analogously, it can be shown that $\liminf_{t \to \eul}\alpha_\star(t) = \alpha_\star(\eul)$ and therefore, $t\mapsto \alpha_\star(t)$ is continuous in $t=\eul$. 
Similarly, one can show that $t\mapsto \alpha_\star(t)$ is continuous in $t=0$. The results for $\alpha_0(t)$ and $\alpha^\star(t)$ follow along the same lines.

\item[8.] 
As in the proof of \cref{it_7}, we use $\tilde G$ to denote the two-variable function $(t,\alpha) \mapsto G_t(\alpha)$. 
For $\varepsilon\geq 0$, let \[U_\varepsilon=\{(t,\alpha)\in[0,d]\times [0,1]:|\tilde G(t,\alpha)|\leq\varepsilon\}\subseteq \RR^2,\]
such that
\[U_0=\{(t,\alpha)\in[0,d]\times [0,1]:\alpha \in \{\alpha_\star(t),\alpha_0(t),\alpha^\star(t)\}\}.\]

For $b\in \RR^2$ and $A\subset\RR^2$, let $\rd\bc{b,A}:=\inf_{a\in A}\|a-b\|_2$. We next argue that  
\begin{align}\label{duep}
    \lim_{\varepsilon\to 0}\sup_{x\in U_\varepsilon} \rd\bc{x,U_0}=: \lim_{\varepsilon\to 0} \Delta_\varepsilon  = 0.
\end{align}
Indeed, suppose that \cref{duep} does not hold. Then there exist $\delta>0$, $\varepsilon_n\downarrow 0$ and $x_n\in U_{\varepsilon_n}$ such that for all $n \geq 1$,
\[\rd\bc{x_n,U_0}\geq\delta.\]
As a uniformly bounded sequence, $\bc{x_n}_{n\geq 1}$ has a convergent subsequence $\bc{x_{n_k}}_{k\geq 1}$ with limit $x^\ast$, and 
\[\rd\bc{x^\ast,U_0}=\lim_{k\to\infty}\rd\bc{x_{n_k},U_0}\geq \delta.\]
However, since $\tilde G$ is continuous, $|\tilde G(x^\ast)|=\lim_{k\to\infty}|\tilde G\bc{x_{n_k}}|\leq \lim_{k\to\infty}\varepsilon_{n_k}=0$, i.e., $\rd\bc{x^\ast,U_0}=0$, which is a contradiction. Therefore, \cref{duep} holds. 

Recall that we assume that $G_t(\bm{b}_{n,P,N,J_N,t})= \oone$ for a family of random variables $(\bm{b}_{n,P,N,J_N,t})_{n, P,N\in \ZZ^+,J_N \in \syN,t\in [0,d]}\subseteq [0,1]$. This is equivalent to
\[ \liminf_{P\to\infty}\liminf_{n\to\infty}\inf_{\substack{N\geq n, J_N\in \syN}}\inf_{t\in [0,d]}\PP\bc{\abs{G_t(\bm{b}_{n,P,N,J_N,t})}\leq \varepsilon}=1 \qquad \text{for all }\varepsilon >0,\]
since for any $\varepsilon>0$ and $B:=\sup_{n,P,N\in \NN, J_N \in \syN, t\in [0,d]}|G_t(\bm{b}_{n,P,N,J_N,t})|<\infty$, $\varepsilon\PP\bc{\abs{G_t(\bm{b}_{n,P,N,J_N,t})}> \varepsilon}\leq \mathbb{E}\abs{G_t(\bm{b}_{n,P,N,J_N,t})}\leq  \varepsilon+B\PP\bc{\abs{G_t(\bm{b}_{n,P,N,J_N,t})}> \varepsilon}.$

We conclude that for any $\varepsilon >0$,
\[ \liminf_{P\to\infty}\liminf_{n\to\infty}\inf_{\substack{N\geq n,J_N\in \syN}}\inf_{t\in [0,d]}\PP\bc{(t,\bm{b}_{n,P,N,\syN,t}) \in U_\varepsilon} = 1. \]
The definition of $\Delta_{\varepsilon}$ then implies that for all $\varepsilon >0,$
\begin{align}\label{ineq_disdel}
    \liminf_{P\to\infty}\liminf_{n\to\infty}\inf_{\substack{N\geq n,J_N\in \syN}}\inf_{t\in [0,d]}\PP\bc{\rd\bc{(t,\bm{b}_{n,P,N,\syN,t}),U_0}\leq \Delta_\varepsilon}=1.
\end{align}
On the event $\{\rd\bc{(t,\bm{b}_{n,P,N,J_N,t}),U_0}\leq \Delta_\varepsilon\}$, there exists $(\tilde{t},\tilde{\alpha})\in U_0$ with $|t-\tilde t|\leq \Delta_\eps$ and $|\bm{b}_{n,P,N,J_N,t}-\tilde\alpha|\leq \Delta_\eps$. We next argue that for $\eps$ chosen small enough and thus $\tilde t$ close to $t$, also the value of $\tilde \alpha \in \{\alpha_\star(\tilde t), \alpha_0(\tilde t), \alpha^\star(\tilde t)\}$ is close to one of $\alpha_\star(t), \alpha_0(t)$ or $\alpha^\star(t)$: Observe that since the functions $s \mapsto\alpha_\star(s)$, $s\mapsto \alpha_0(s)$ and $s \mapsto \alpha^\star(s)$ are uniformly continuous on $[0,d]$ by \cref{it_7}, for any $\upsilon >0$, there exists $\omega>0$ such that  for any $t_1,t_2\in [0,d]$ with $\abs{t_1-t_2}\leq \omega$, 
\begin{align}\label{eq_up_al}
    \max\cbc{\abs{\alpha_0(t_1)-\alpha_0(t_2)},\abs{\alpha_\star(t_1)-\alpha_\star(t_2)},\abs{\alpha^\star(t_1)-\alpha^\star(t_2)}}\leq \upsilon/2.
\end{align}
As $\lim_{\varepsilon\downarrow 0}\Delta_\varepsilon=0$, we can choose $\varepsilon$ such that $\Delta_\varepsilon<\min\cbc{\upsilon/2,\omega}$, 
such that in particular
 \begin{align}\label{del_eps_l}
     \abs{t-\tilde{t}}\leq \omega\quad\mbox{and}\quad \abs{\bm{b}_{n,P,N,J_N,t}-\tilde{\alpha}}\leq \upsilon/2.
 \end{align}
Then by equations (\ref{eq_up_al}) and (\ref{del_eps_l}), 
\begin{align*}
    \max\cbc{\abs{\alpha_0(t)-\alpha_0(\tilde{t})},\abs{\alpha_\star(t)-\alpha_\star(\tilde{t})},\abs{\alpha^\star(t)-\alpha^\star(\tilde{t})}}\leq \upsilon/2.
\end{align*}
Combining the {above inequality} with \cref{del_eps_l}, we conclude that
\[\bm{M}_{n,P,N,J_N,t} := \min\cbc{\abs{\bm{b}_{n,P,N,J_N,t}-\alpha_0(t)},\abs{\bm{b}_{n,P,N,J_N,t}-\alpha_\star(t)},\abs{\bm{b}_{n,P,N,J_N,t}-\alpha^\star(t)}}\leq \upsilon.\]
Hence, for any $\upsilon>0$ and $\varepsilon$ sufficiently small, by \cref{ineq_disdel}, 
\begin{align*}
   & \liminf_{P\to\infty}\liminf_{n\to\infty}\inf_{\substack{N\geq n,\\J_N\in \syN}}\inf_{t\in [0,d]}\PP\bc{\bm{M}_{n,P,N,J_N,t} \leq \upsilon} \\
     \geq\ & \liminf_{P\to\infty}\liminf_{n\to\infty}\inf_{\substack{N\geq n,\\J_N\in \syN}}\inf_{t\in [0,d]}\PP\bc{\rd\bc{(t,\bm{b}_{n,P,N,J_N,t}),U_0}\leq \Delta_\varepsilon}=1,
\end{align*}
which, by the above equivalent characterisation of $\oone$-convergence, gives the claim: $\bm{M}_{n,P,N,J_N,t}  = \oone$. 
\end{itemize}
\end{proof}

\section{Upper bound via leaf-removal: Derivation of \Cref{t_upper}}\label{app_leaf}

In this section, we briefly explain how to derive \Cref{t_upper} from known results about the Karp-Sipser core of sparse Erd\H{o}s-Rényi random graphs \cite{ aronson1998maximum,karp1981maximum}. To obtain the Karp-Sipser core of a graph $G$, we iteratively remove vertices of degree one and their unique neighbors from $G$, until only isolated vertices and a subgraph of minimum degree at least two remain. 
We call the number of isolated vertices in the reduced graph $I_{\text{KS}}(G)$. The derivation of \Cref{t_upper} rests on the following result:

\begin{theorem}[\cite{ aronson1998maximum,karp1981maximum}]\label{thm_isolated}
    For any $d>0$,
    \begin{align*}
        \frac{I_{\text{KS}}(\bm{G}_{n,d/n})}{n} \stackrel{\PP}{\longrightarrow} \frac{\gamma^\star+\gamma_\star+\gamma^\star\gamma_\star}{d} - 1, \qquad n \to \infty.
    \end{align*}
Here, $\gamma_\star$ is the smallest root of the equation $x=d\exp(-d\exp(-x))$ and $\gamma^\star= d\exp(-\gamma_\star)$.
\end{theorem}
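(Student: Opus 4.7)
The plan is to establish the theorem via Wormald's differential equation method applied to the Karp--Sipser leaf-removal process, essentially following Aronson, Frieze, and Pittel, while using the local weak limit (the Po$(d)$-Galton-Watson tree) to read off the two-step fixed point structure that produces $\gamma_\star$ and $\gamma^\star$.

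The first step is to reformulate leaf-removal as a continuous-time Markov chain. At unit rate per leaf, a uniformly chosen leaf is removed together with its unique neighbor. A crucial initial observation is that $\bm{G}_{n,d/n}$ conditioned on its degree sequence is uniformly distributed, and this uniformity is (approximately) preserved under the randomized leaf-removal dynamics, so the configuration model on the evolving degree sequence provides a good surrogate for the graph throughout the process. The second step is then to track the rescaled counts $n_k(\boldsymbol{t})/n$ of vertices of degree $k \geq 0$. One checks that these are well-approximated by the solution of a coupled ODE system whose drift can be computed explicitly from configuration-model arguments. A key simplification is that throughout the process the degree distribution stays close to a Poisson law with a time-decreasing parameter $\mu(t)$, augmented by an isolated-vertex atom that grows over time. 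Wormald's theorem then upgrades the drift calculation to a uniform-in-$t$ concentration statement on any compact time interval on which the leaf-rate is bounded away from zero.

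The third and most delicate step is analyzing the termination. Leaf-removal halts at a random time $\boldsymbol{\tau}$ when no more leaves remain; one must show that $\boldsymbol{\tau}/n$ concentrates on a deterministic value $t^\star$ at which the ODE system hits a specific boundary (the ``no-leaves'' curve). The surviving non-isolated vertices then form the Karp--Sipser core. To identify $t^\star$, I would pass to the local weak limit and run a warning-propagation-style recursion on the Po$(d)$ tree: let $\gamma$ be the mean number of children that a root has which eventually become isolated-after-removal-of-root, and $\gamma^\star$ the dual quantity for ``removed-with-root'' messages. Poisson thinning gives the two coupled equations $\gamma^\star = d e^{-\gamma}$ and $\gamma = d e^{-\gamma^\star}$, i.e., the 2-cycle of the map $f(x)=de^{-x}$, which is precisely $x = de^{-de^{-x}}$. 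Converting the tree messages back to vertex densities via Poisson formulae (the density of isolated vertices equals the probability that a root has no ``surviving'' child, etc.) gives after a short computation the stated expression $(\gamma^\star+\gamma_\star+\gamma^\star\gamma_\star)/d - 1$.

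The main obstacle is pinning down \emph{which} fixed point the algorithm selects. For $d\leq e$, $f\circ f$ has a unique fixed point and the matter is routine. For $d>e$ there are multiple candidates, and one must show that the greedy nature of leaf-removal forces the dynamics to stop at the \emph{smallest} root $\gamma_\star$. This is the content of the Aronson--Frieze--Pittel analysis of the ``leaf residue'': in the critical window where $\mu(t)$ approaches $\gamma^\star$, the leaf-rate becomes small and the diffusion-scale fluctuations of the ODE approximation are no longer negligible. Handling this requires a separate second-order analysis showing that the process exhausts its leaves before $\mu(t)$ can cross past $\gamma^\star$, together with a complementary argument (via monotonicity of leaf-removal) ruling out earlier stopping. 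Once this selection principle is established, the computation of $I_{\text{KS}}(\bm{G}_{n,d/n})/n$ reduces to evaluating the Poisson boundary data at $(\gamma_\star,\gamma^\star)$, completing the proof.
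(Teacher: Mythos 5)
The paper does not actually prove this statement: \Cref{thm_isolated} is imported wholesale from Karp--Sipser and Aronson--Frieze--Pittel \cite{karp1981maximum,aronson1998maximum}, and the only work the paper does with it is the short computation in Appendix~\ref{app_leaf} translating $(\gamma_\star,\gamma^\star)$ into the zeroes $(\alpha^\star,\alpha_\star)$ of $G_d$ and hence into $\min_\alpha R_d(\alpha)$. So there is no paper-internal proof to compare against; the relevant benchmark is the argument in the cited references, and your sketch does follow that route: randomized leaf removal as a Markov chain, (approximate) uniformity of the graph given its degree sequence, the differential equation method for the degree counts with a Poisson-with-shrinking-parameter ansatz, and identification of the terminal point via the 2-cycle of $x\mapsto d\eul^{-x}$, which is exactly the equation $x=d\exp(-d\exp(-x))$ appearing in the statement.

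As a standalone proof, however, your proposal has a genuine gap precisely where the cited analysis is hard. For $d>\eul$ the map $x\mapsto d\eul^{-d\eul^{-x}}$ has three fixed points, and everything hinges on showing that the process terminates at the \emph{smallest} root $\gamma_\star$; you name this issue and then defer it (``requires a separate second-order analysis\dots''), but that critical-window control of the leaf count when the drift degenerates is the core technical content of \cite{aronson1998maximum}, not a routine appendage --- Wormald-type concentration by itself only works while the leaf-rate is bounded away from zero, which is exactly what fails near termination. Two further steps are stated but not justified: the preservation of conditional uniformity under the leaf-removal dynamics (true in an appropriate configuration-model formulation, but it needs a precise statement and proof), and the detour through the local weak limit and warning propagation to read off the fixed point, which requires an exchange-of-limits argument that neither Karp--Sipser nor Aronson--Frieze--Pittel need, since in their treatment the fixed-point equation emerges directly from solving the ODE system. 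So your text is an accurate roadmap of how the cited theorem is proved, but not a proof; for the purposes of this paper, citing \cite{karp1981maximum,aronson1998maximum} as the source of \Cref{thm_isolated} (as the authors do) is the appropriate resolution.
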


\Cref{thm_isolated} is of importance in our setting since crucially, removal of degree-one vertices and their neighbours does not change the nullity of the corresponding adjacency matrix (see \cite{bauer2001exactly}). Therefore, irrespective of the field or the matrix entries,
\begin{align*}
    \frac{\rk{\bm{A}_{n,d/n}}}{n} = 1 -  \frac{\text{nul}_{\FF}(\bm{A}_{n,d/n})}{n} \leq  1 -  \frac{I_{\text{KS}}(\bm{G}_{n,d/n})}{n} \qquad \text{a.s.}
\end{align*}
and \Cref{thm_isolated} thus implies that for any $d,\varepsilon>0$ and any field $\FF$,
\begin{equation*}
\lim_{n\to \infty}\PP\bc{\sup_{J_n\in\syn} \frac{\rk{\bm{A}_{n,d/n}}}{n}\leq 2 -\frac{\gamma^\star+\gamma_\star+\gamma^\star\gamma_\star}{d}  +\varepsilon}=1.
    \end{equation*}
Thus, it remains to relate the limit from \Cref{thm_isolated} to the rank function $R_d$.

For this, observe that any root $x^\star$ of the equation $x=d\exp(-d\exp(-x))$ satisfies $x^\star \in (0,d)$ as well as $G_d(1 - x^\star/d) = 0$.  \Cref{it_1,it_3} in \Cref{lem_proal} thus imply that
\begin{align*}
    \gamma_\star = d(1-\alpha^\star) \qquad \text{and} \qquad  \gamma^\star = d(1-\alpha_\star). 
\end{align*}
Moreover, \cref{it_4} in \Cref{lem_proal} as well as $G_d(\alpha^\star)=0$ give that
\begin{align*}
2 -\frac{\gamma^\star+\gamma_\star+\gamma^\star\gamma_\star}{d} &= 2 -  \frac{d(1-\alpha_\star)+d(1-\alpha^\star)+d^2(1-\alpha_\star)(1-\alpha^\star)}{d} \\
&= 2 - \phi_d(1-\phi_d(\alpha_\star)) - \phi_d(\alpha_\star) - d\phi_d(\alpha_\star)(1-\alpha_\star) = R_d(\alpha_\star) = \min_{\alpha \in [0,1]}R_d(\alpha).
\end{align*}
Here, in the last step, we have used \Cref{lem_proal}, part 6. This concludes the derivation of \Cref{t_upper} from \Cref{thm_isolated}.

\section{Difference approximation via conditional expectations: proof of \Cref{apexp}}\label{app_b}
For a differentiable function $f:\mathbb{R}^k\mapsto\mathbb{R}$, let $\nabla f$ be the gradient of $f$. We prove the following more general version of \Cref{apexp}: 
\begin{proposition}\label{pexp}
Fix a dimension $k \in \NN$ and $K>1$. Let $\bm{Z}_1,\bm{Z}_2$ and $\bm{X}$ be defined on the same probability space with convex codomains
$\mathcal{R}_{\bm{Z}_1},\mathcal{R}_{\bm{Z}_2}\subset \RR^k$ and $\mathcal{R}_{\bm{X}} \subset \RR$ respectively, such that $\mathcal{R}_{\bm{X}}$ is bounded. Then for any differentiable functions $f,g:\mathbb{R}^k\to\mathbb{R}$,
{\small\begin{equation}\label{eh0}
\begin{aligned}
&\mathbb{E}\abs{f(\bm{Z}_2)-g(\bm{Z}_2)}\\
\leq& \bc{\sup_{\zeta\in\mathcal{R}_{\bm{Z}_1}}\abs{f(\zeta)}+\sup_{x\in \mathcal{R}_{\bm{X}}}\abs{x}}\bc{4K^2\mathbb{E}\|\bm{Z}_1-\bm{Z}_2\|_\infty+2-2(1-1/K)^k}
+k\sup_{\zeta\in\mathcal{R}_{\bm{Z}_2}}\dabs{\nabla f(\zeta)}_\infty\mathbb{E}\|\bm{Z}_1-\bm{Z}_2\|_\infty\\
&+\mathbb{E}\abs{f(\bm{Z}_1)-\mathbb{E}\brk{\bm{X}|\bm{Z}_1}}
+\mathbb{E}\abs{\mathbb{E}\brk{\bm{X}|\bm{Z}_2}-g(\bm{Z}_2)}+\frac{2k}{K}\bc{\sup_{\zeta\in\mathcal{R}_{\bm{Z}_1}}\dabs{\nabla f(\zeta)}_\infty+\sup_{\zeta\in\mathcal{R}_{\bm{Z}_2}}\dabs{\nabla g(\zeta)}_\infty}
\end{aligned}
\end{equation}}
and
{\small\begin{equation}\label{eh0_new}
\begin{aligned}
&\mathbb{E}\brk{\bc{f(\bm{Z}_2)-g(\bm{Z}_2)}^{-}}\\
\leq& \bc{\sup_{\zeta\in\mathcal{R}_{\bm{Z}_1}}\abs{f(\zeta)}+\sup_{x\in \mathcal{R}_{\bm{X}}}\abs{x}}\bc{4K^2\mathbb{E}\|\bm{Z}_1-\bm{Z}_2\|_\infty+2-2(1-1/K)^k}
+k\sup_{\zeta\in\mathcal{R}_{\bm{Z}_2}}\dabs{\nabla f(\zeta)}_\infty\mathbb{E}\|\bm{Z}_1-\bm{Z}_2\|_\infty\\
&+\mathbb{E}\brk{\bc{f(\bm{Z}_1)-\mathbb{E}\brk{\bm{X}|\bm{Z}_1}}^-}
+\mathbb{E}\brk{\bc{\mathbb{E}\brk{\bm{X}|\bm{Z}_2}-g(\bm{Z}_2)}^-}+\frac{2k}{K}\bc{\sup_{\zeta\in\mathcal{R}_{\bm{Z}_1}}\dabs{\nabla f(\zeta)}_\infty+\sup_{\zeta\in\mathcal{R}_{\bm{Z}_2}}\dabs{\nabla g(\zeta)}_\infty}.
\end{aligned}
\end{equation}}
\end{proposition}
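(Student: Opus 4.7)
The plan is to apply a four-term triangle inequality
\begin{align*}
|f(\bm{Z}_2) - g(\bm{Z}_2)| &\leq |f(\bm{Z}_2) - f(\bm{Z}_1)| + |f(\bm{Z}_1) - \Erw[\bm{X}|\bm{Z}_1]| \\
&\quad + |\Erw[\bm{X}|\bm{Z}_1] - \Erw[\bm{X}|\bm{Z}_2]| + |\Erw[\bm{X}|\bm{Z}_2] - g(\bm{Z}_2)|,
\end{align*}
engineered so that the second and fourth terms match two of the summands on the right-hand side of \cref{eh0}. For the first term I will invoke the multivariate mean-value theorem on the convex codomain $\mathcal{R}_{\bm Z_1}$, giving the Lipschitz bound $k\sup_{\mathcal{R}_{\bm Z_2}}\|\nabla f\|_\infty\,\Erw\|\bm Z_1 - \bm Z_2\|_\infty$ upon taking expectations. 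The real work is the third term, and the main obstacle is that conditional expectations do \emph{not} depend Lipschitz-continuously on the conditioning random variable.

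To attack the third term, I will introduce a randomised discretisation: a uniform shift $\bm{\xi} \sim \mathrm{Unif}([0,1/K]^k)$ independent of $(\bm Z_1, \bm Z_2, \bm X)$, and the shifted lattice projection $\bm Z_i^{\star} := \bm{\xi} + K^{-1}\lfloor K(\bm Z_i - \bm \xi)\rfloor$. A coordinatewise union bound gives $\PP(\bm Z_1^\star \neq \bm Z_2^\star \mid \bm Z_1, \bm Z_2) \leq Kk\|\bm Z_1 - \bm Z_2\|_\infty$. Inserting two intermediate conditional expectations yields
\begin{align*}
\Erw|\Erw[\bm X|\bm Z_1] - \Erw[\bm X|\bm Z_2]| &\leq \Erw|\Erw[\bm X|\bm Z_1] - \Erw[\bm X|\bm Z_1^\star, \bm \xi]| + \Erw|\Erw[\bm X|\bm Z_1^\star, \bm \xi] - \Erw[\bm X|\bm Z_2^\star, \bm \xi]| \\
&\quad + \Erw|\Erw[\bm X|\bm Z_2^\star, \bm \xi] - \Erw[\bm X|\bm Z_2]|.
\end{align*}
The two pinning differences (first and third) are handled by the tower property together with the hypothesised approximations $\Erw[\bm X|\bm Z_i] \approx f(\bm Z_i)$ resp.\ $g(\bm Z_i)$, plus Lipschitz control of $f$ and $g$ within each $1/K$-cell. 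This step reproduces the summand $(2k/K)(\sup\|\nabla f\|_\infty + \sup\|\nabla g\|_\infty)$ of \cref{eh0}, together with extra copies of $\Erw|f(\bm Z_1) - \Erw[\bm X|\bm Z_1]|$ and $\Erw|\Erw[\bm X|\bm Z_2] - g(\bm Z_2)|$.

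For the central middle term I will split along $\{\bm Z_1^\star \ne \bm Z_2^\star\}$ versus $\{\bm Z_1^\star = \bm Z_2^\star\}$. The first piece is bounded by $2\sup|\bm X|\cdot\PP(\bm Z_1^\star \ne \bm Z_2^\star)$ from boundedness of $\bm X$. On the second event, for each cell $I$ both conditional expectations are ratios of the form $\Erw[\bm X \teo{\bm Z_j \in I}|\bm\xi]/\PP(\bm Z_j \in I|\bm\xi)$, whose difference I will decompose via the algebraic identity $a/p - b/q = (a-b)/p + b(q-p)/(pq)$, using $|b| \leq q\sup|\bm X|$ and summing over cells. The term $2 - 2(1-1/K)^k$ in \cref{eh0} will arise by singling out cells of exceptionally small probability (say $p$ of order below $K^{-k}$), whose total mass is controlled by a coordinatewise product estimate of the form $1-(1-1/K)^k$; the $4K^2$ coefficient emerges by combining the $Kk$ mismatch bound with an extra factor of $K$ incurred when inverting small cell-probabilities in the ratio representation.

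The main obstacle is precisely this inter-cell comparison: the middle term must be bounded \emph{without} routing through the naive triangle inequality $\Erw[\bm X|\bm Z_i^\star,\bm\xi] \approx f(\bm Z_i^\star) \approx g(\bm Z_i^\star)$, because such a route would reintroduce the quantity $\Erw|f(\bm Z_2^\star) - g(\bm Z_2^\star)|$ and render the argument circular with what the proposition is trying to establish. The ratio decomposition circumvents this circularity by invoking only the boundedness of $\bm X$ and the marginal laws of $\bm Z_1, \bm Z_2$. Finally, the negative-part inequality \cref{eh0_new} will follow by running the exact same decomposition with absolute values replaced by negative parts throughout, using $(a+b)^- \leq a^- + b^-$ and $(a+b)^- \leq a^- + |b|$ to propagate the one-sided control supplied by the hypotheses.
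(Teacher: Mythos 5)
Your overall skeleton (the four-term triangle inequality that peels off $\mathbb{E}\abs{f(\bm{Z}_1)-\mathbb{E}[\bm{X}\mid\bm{Z}_1]}$ and $\mathbb{E}\abs{\mathbb{E}[\bm{X}\mid\bm{Z}_2]-g(\bm{Z}_2)}$ and then a random $1/K$-grid to compare the remaining conditional expectations) is the right instinct, and the boundary-vs-separation dichotomy implicit in your mismatch bound $\PP(\bm{Z}_1^\star\neq\bm{Z}_2^\star\mid\bm{Z}_1,\bm{Z}_2)\leq Kk\|\bm{Z}_1-\bm{Z}_2\|_\infty$ is part of what makes the paper's Lemma~\ref{ls1} tick. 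However, there are two genuine gaps in the way you propose to close the argument.

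First, your treatment of the ``pinning differences'' $\mathbb{E}\abs{\mathbb{E}[\bm{X}\mid\bm{Z}_i]-\mathbb{E}[\bm{X}\mid\bm{Z}_i^\star,\bm\xi]}$ routes through the approximations $\mathbb{E}[\bm{X}\mid\bm{Z}_1]\approx f(\bm{Z}_1)$ and $\mathbb{E}[\bm{X}\mid\bm{Z}_2]\approx g(\bm{Z}_2)$, and you explicitly note this produces ``extra copies'' of $\mathbb{E}\abs{f(\bm{Z}_1)-\mathbb{E}[\bm{X}\mid\bm{Z}_1]}$ and $\mathbb{E}\abs{\mathbb{E}[\bm{X}\mid\bm{Z}_2]-g(\bm{Z}_2)}$. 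The stated inequality \cref{eh0} has coefficient exactly $1$ on each of these terms, so any route that duplicates them proves only a weaker bound. You would need to show that these extra copies can be absorbed elsewhere, and it is not clear how to do that without changing the statement. Second, your account of where $2-2(1-1/K)^k$ comes from -- ``cells of exceptionally small probability $\lesssim K^{-k}$, total mass $1-(1-1/K)^k$'' -- is not a theorem: the laws of $\bm{Z}_1,\bm{Z}_2$ are arbitrary and there is no reason they assign $(1-1/K)^k$ to the union of ``large'' cells. In the paper, this term is a distribution-free geometric quantity: conditioned on $\bm{Z}_i$, the uniform shift $\bm\xi_K$ places $\bm{Z}_i-\bm\xi_K$ within $\varepsilon$ of some coordinate cell-boundary with probability exactly $1-(1-K\varepsilon)^k$, which is what \cref{eq_b3} records; choosing $\varepsilon=1/K^2$ yields the claimed coefficient.

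The paper avoids both problems by never introducing the auxiliary conditional expectations $\mathbb{E}[\bm{X}\mid\bm{Z}_i^\star,\bm\xi]$ and never comparing $\mathbb{E}[\bm{X}\mid\bm{Z}_1]$ to $\mathbb{E}[\bm{X}\mid\bm{Z}_2]$ through any ratio identity. Instead it partitions the outer expectation over shifted cells first, and then invokes the tower property in the form
\[
\mathbb{E}\brk{\mathbb{E}[\bm{X}\mid\bm{Z}_i]\,\ensuremath{\mathds{1}}\cbc{\bm{Z}_i-\bm\xi_K\in D_q(1/K)}\mid\bm\xi_K}
=\mathbb{E}\brk{\bm{X}\,\ensuremath{\mathds{1}}\cbc{\bm{Z}_i-\bm\xi_K\in D_q(1/K)}\mid\bm\xi_K},
\]
which is \cref{eq_pb100}: the conditional expectation simply collapses against the indicator of a $\sigma(\bm{Z}_i,\bm\xi_K)$-measurable event. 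After this collapse, the two summands to be compared differ only in which indicator multiplies $\bm{X}$, and the entire remaining work is the indicator-difference estimate of Lemma~\ref{ls1}. This is cleaner, gives the constants as stated, and in particular produces exactly one copy of each of $\mathbb{E}\abs{f(\bm{Z}_1)-\mathbb{E}[\bm{X}\mid\bm{Z}_1]}$ and $\mathbb{E}\abs{\mathbb{E}[\bm{X}\mid\bm{Z}_2]-g(\bm{Z}_2)}$. I would recommend replacing your ratio decomposition and your pinning differences with this collapse argument; the rest of your plan (the mean-value bound for $\abs{f(\bm{Z}_2)-f(\bm{Z}_1)}$, the $\frac{2k}{K}$ Lipschitz terms within a cell, and the transfer to negative parts using $(a+b)^-\leq a^-+b^-$) lines up with what the paper does.
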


The philosophy behind \Cref{pexp} is that for sufficiently nice functions $f$ and $g$, good control over  $\mathbb{E}\|\bm{Z}_1-\bm{Z}_2\|_\infty$, $\mathbb{E}\abs{\mathbb{E}\brk{\bm{X}|\bm{Z}_1}-f(\bm{Z}_1)}$ and $\mathbb{E}\abs{\mathbb{E}\brk{\bm{X}|\bm{Z}_2}-g(\bm{Z}_2)}$ 
allows to bound the difference of $f(\bm{Z}_2)$ and $g(\bm{Z}_2)$ in expectation.
Indeed, \Cref{pexp} is designed to deal with situations where $\bm{Z}_1$ and $\bm{Z}_2$ are close, and it might be helpful to keep this in mind during the following proofs.

To prove \Cref{pexp}, we partition $\mathbb{R}^k$ into small hypercubes: For $K>0$, let $\ZZ^k/K=\cbc{q\in \RR^k:Kq\in \ZZ^k}$.
We define the $k$-dimensional, half-open hypercube with side-length $r>0$ and center $s=(s_1,s_2,\ldots,s_k) \in \RR^k$ as
\[D_s(r)=\left\{(t_1,t_2,\ldots,t_k): t_i-s_i\in [-r/2,r/2),i=1,2,\ldots,k\right\}.\]
The following lemma shows that if $\bm{Z}_1$ and $\bm{Z}_2$ are close, they are likely to be found within the same box, after the application of a random uniform translation. This random translation ensures that the rather arbitrary random variables $\bm{Z}_1, \bm{Z}_2$ do not always take values in the boundary of the partitioning hypercubes.

\begin{lemma}\label{ls1}
Fix a dimension $k \in \NN$ and hypercube edge length $1/K>0$. Then for any two random vectors $\bm{Z}_1,\bm{Z}_2 \in \RR^k$ that are defined on the same probability space, an independently and uniformly chosen ``shift'' vector $\bm{\xi}_K \in (0,1/K]^k$ and $0<\varepsilon<1/K$,
\begin{equation}\label{els}
\small \sum_{q\in \ZZ^k/K}\mathbb{E}\brk{\abs{\ensuremath{\mathds{1}}{\left\{\bm{Z}_1 - \bm{\xi}_K \in  D_{q}\bc{1/K}\right\}}-\ensuremath{\mathds{1}}{\left\{\bm{Z}_2- \bm{\xi}_K\in  D_{q}\bc{1/K}\right\}}}}\leq \frac{4}{\varepsilon}\mathbb{E}\|\bm{Z}_1-\bm{Z}_2\|_\infty+2-2(1-K\varepsilon)^k.
\end{equation}
\end{lemma}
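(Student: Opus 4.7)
The first observation is that the quantity on the left-hand side counts, up to a factor of two, the probability that $\bm Z_1$ and $\bm Z_2$ end up in different cells of the shifted grid $\{D_q(1/K) + \bm\xi_K : q \in \ZZ^k/K\}$. Indeed, for fixed $\bm\xi_K$ there is a unique $q(\bm Z_j) \in \ZZ^k/K$ with $\bm Z_j - \bm\xi_K \in D_{q(\bm Z_j)}(1/K)$, so the absolute difference of indicators is nonzero exactly for $q = q(\bm Z_1)$ and $q = q(\bm Z_2)$ (and only if these are distinct), contributing total $2$ on the event $\{q(\bm Z_1) \neq q(\bm Z_2)\}$ and $0$ otherwise. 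Hence the left-hand side equals $2\,\PP(q(\bm Z_1) \neq q(\bm Z_2))$.

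The second step is to split on the size of $\|\bm Z_1-\bm Z_2\|_\infty$ relative to $\varepsilon$. On the event $\{\|\bm Z_1-\bm Z_2\|_\infty > \varepsilon\}$, Markov's inequality gives $\PP(\|\bm Z_1-\bm Z_2\|_\infty > \varepsilon) \le \Erw\|\bm Z_1-\bm Z_2\|_\infty/\varepsilon$. On the complementary event I will condition on $(\bm Z_1,\bm Z_2)$ and use the independence and uniformity of $\bm\xi_K$. By the product structure of $D_q(1/K)$, the boxes agree iff they agree coordinate-wise. For each coordinate $i$, the half-open slabs $[m/K - 1/(2K),\, m/K + 1/(2K))$ are separated by the break-points $(2m+1)/(2K)$, $m \in \ZZ$; the $i$-th coordinates of $\bm Z_1 - \bm\xi_K$ and $\bm Z_2 - \bm\xi_K$ lie in different slabs iff some break-point lies in the closed interval with endpoints $Z_{1,i}-\xi_{K,i}$ and $Z_{2,i}-\xi_{K,i}$. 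Since $|Z_{1,i}-Z_{2,i}| \le \varepsilon < 1/K$ and $\xi_{K,i}$ is uniform on an interval of length $1/K$ (which is also the spacing of the break-points), the conditional probability of this event equals $K|Z_{1,i}-Z_{2,i}| \le K\varepsilon$. By independence of the coordinates of $\bm\xi_K$, the conditional probability that the two points lie in the same box is therefore at least $(1-K\varepsilon)^k$, and taking expectations gives
\[
\PP\bigl(q(\bm Z_1) \neq q(\bm Z_2),\; \|\bm Z_1-\bm Z_2\|_\infty \le \varepsilon\bigr) \le 1 - (1-K\varepsilon)^k.
\]

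Combining both cases yields
\[
\PP\bigl(q(\bm Z_1) \neq q(\bm Z_2)\bigr) \le \frac{\Erw\|\bm Z_1-\bm Z_2\|_\infty}{\varepsilon} + 1 - (1-K\varepsilon)^k,
\]
and multiplying by $2$ gives the stated bound \cref{els} (in fact a slightly sharper one, with $2/\varepsilon$ instead of $4/\varepsilon$, so the inequality holds a fortiori). The only delicate point is the per-coordinate geometric calculation: one has to check that when $|Z_{1,i}-Z_{2,i}| < 1/K$, the set of ``bad'' shifts $\xi_{K,i}$ really has length exactly $|Z_{1,i}-Z_{2,i}|$ in any period of length $1/K$, which follows because within one period there is exactly one break-point and its position relative to the interval $[Z_{1,i}-\xi_{K,i},Z_{2,i}-\xi_{K,i}]$ is determined by $\xi_{K,i}$ modulo $1/K$.
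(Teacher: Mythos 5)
Your proof is correct and takes a genuinely different, and in fact sharper, route than the paper's. The paper decomposes, for each box $D_q(1/K)$ and each $j \in \{1,2\}$, the event ``$\bm Z_j - \bm\xi_K$ lands in $D_q(1/K)$ but $\bm Z_{3-j}-\bm\xi_K$ does not'' into a \emph{separation} event (where $\bm Z_j - \bm\xi_K$ is in the inner box $D_q(1/K-\varepsilon)$, forcing $\|\bm Z_1-\bm Z_2\|_\infty \geq \varepsilon/2$) and a \emph{boundary} event (where $\bm Z_j - \bm\xi_K$ is in the $\varepsilon$-annulus $D_q(1/K)\setminus D_q(1/K-\varepsilon)$); Markov on the separation events and the uniform shift on the boundary events, summed over both $j$, yield $4/\varepsilon \cdot \Erw\|\bm Z_1-\bm Z_2\|_\infty + 2 - 2(1-K\varepsilon)^k$. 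You instead recognise the left-hand side as exactly $2\,\PP(q(\bm Z_1)\neq q(\bm Z_2))$, split on whether $\|\bm Z_1-\bm Z_2\|_\infty > \varepsilon$, and on the complementary event compute the conditional probability of landing in different cells \emph{exactly} per coordinate: the set of bad shifts in coordinate $i$ has measure $|Z_{1,i}-Z_{2,i}|$ in each period of length $1/K$, giving probability $K|Z_{1,i}-Z_{2,i}| \leq K\varepsilon$, and independence across coordinates gives $1-(1-K\varepsilon)^k$. This avoids the factor-of-two double-count from summing over $j$ and the extra factor from the crude $\varepsilon/2$ in the paper's separation bound, yielding $2/\varepsilon$ in place of $4/\varepsilon$ --- a strict improvement, which of course still proves the stated inequality. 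The geometric claim you flag as the one delicate point (that the bad shift set has measure exactly $|Z_{1,i}-Z_{2,i}|$ per period) is correct, by the $1/K$-periodicity of the break-point lattice and the half-open convention of the cells, as you explain.
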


\begin{proof}
For the sake of brevity, we omit the range of summation from $\sum_{q\in \ZZ^k/K}$ throughout this proof.

Fix any hypercube $D_{q}\bc{1/K}$ and let $j \in \{1,2\}$. For $\bm{Z}_j- \bm{\xi}_K$ to fall into $D_{q}\bc{1/K}$ and $\bm{Z}_{3-j}- \bm{\xi}_K$ to fall into a distinct box, one of the following two cases must happen: 
\begin{itemize}
 \item[(a)] $\bm{Z}_j- \bm{\xi}_K$ is in the ``inner part'' $D_{q}\bc{1/K-\varepsilon}$ of the box, but $\bm{Z}_{3-j} - \bm{\xi}_K \notin D_{q}\bc{1/K}$ (``\textbf{separation}''), or
  \item[(b)] $\bm{Z}_{j}- \bm{\xi}_K$ is in the ``$\varepsilon$-boundary''  $D_{q}\bc{1/K} \setminus D_{q}\bc{1/K-\varepsilon}$ of the box (``\textbf{boundary}'').
\end{itemize}
We call the separation event $\mathfrak{S}_q^{(j)}$, and the boundary event $\mathfrak{B}_q^{(j)}$, 
which yields the almost sure upper bound
\begin{equation}\label{eq_ls1_0}
\small\begin{aligned}
&\abs{\ensuremath{\mathds{1}}{\left\{\bm{Z}_1- \bm{\xi}_K\in  D_{q}\bc{1/K}\right\}}-\ensuremath{\mathds{1}}{\left\{\bm{Z}_2- \bm{\xi}_K\in  D_{q}\bc{1/K}\right\}}} \leq \ind \mathfrak{S}_q^{(1)} + \ind\mathfrak{B}_q^{(1)} + \ind\mathfrak{S}_q^{(2)} + \ind\mathfrak{B}_q^{(2)}.
\end{aligned}
\end{equation}
It thus remains to upper bound the right hand side of (\ref{eq_ls1_0}) in expectation and then sum over $q \in \mathbb{Z}^k/K$.

\textbf{Separation}: 
Deterministically, for $j \in \{1,2\}$,
{\small\begin{equation}\label{eq_ls1_1}
    \begin{aligned} 
\ind\mathfrak{S}_q^{(j)}
\leq \ensuremath{\mathds{1}}{\left\{\bm{Z}_j- \bm{\xi}_K\in  D_{q}\bc{1/K-\varepsilon}, \|\bm{Z}_2-\bm{Z}_1\|_\infty\geq \varepsilon/2\right\}}
\leq \ensuremath{\mathds{1}}{\left\{\bm{Z}_j- \bm{\xi}_K \in  D_{q}\bc{1/K-\varepsilon}\right\}} \frac{2}{\varepsilon}\|\bm{Z}_2-\bm{Z}_1\|_\infty.
\end{aligned}
\end{equation}}

Summing over $q \in \ZZ^k/K$ in (\ref{eq_ls1_1}) and taking expectation gives
\begin{equation}\label{eq_b1}
    \begin{aligned}
\mathbb{E}\brk{\sum \ind \mathfrak{S}_q^{(j)}} \leq \frac{2}{\varepsilon}\Erw\brk{\|\bm{Z}_2-\bm{Z}_1\|_\infty}.
\end{aligned}
\end{equation}


\textbf{Boundary}: 
This is the case where the benefit of the random translation $\bm{\xi}_K$ becomes apparent. Again, let $j \in \{1,2\}$ and fix $q \in \ZZ^k/K$. Conditionally on $\bm{Z}_j$, the random variable $\bm{Z}_j-\bm{\xi}_K-q$ is uniformly distributed over the box $\prod_{i=1}^k[\bc{\bm{Z}_j}_i-q_i-1/K,\bc{\bm{Z}_j}_i-q_i)$. Therefore,
\begin{align}\label{eq_lebe}
   \PP\bc{\mathfrak B_q^{(j)}\big \vert \bm{Z}_j} 
   = K^k \lambda\bc{\prod_{i=1}^k[\bc{\bm{Z}_j}_i-q_i-1/K,\bc{\bm{Z}_j}_i-q_i) \cap \bc{D_{0}\bc{1/K} \setminus D_{0}\bc{1/K-\varepsilon}}},
\end{align}
where $\lambda$ denotes the $k$-dimensional Lebesgue measure.

Now, since also the boxes $\prod_{i=1}^k[\bc{\bm{Z}_j}_i-q_i-1/K,\bc{\bm{Z}_j}_i-q_i)$, $q \in \ZZ^k/K$, partition $\RR^k$, (\ref{eq_lebe}) further yields that
\begin{equation}\label{eq_b3}
\begin{aligned}
\Erw\brk{\sum \ind\mathfrak B_q^{(j)}} = \Erw\brk{\sum\PP\bc{\mathfrak{B}_q^{(j)} \big \vert \bm{Z}_j}} 
= K^k \lambda\bc{D_{0}\bc{1/K} \setminus D_{0}\bc{1/K-\varepsilon}}
=1-(1-K\varepsilon)^k.
\end{aligned}
\end{equation}


The claim now follows from summing (\ref{eq_ls1_0}) over $q \in \ZZ^k/K$,  \cref{eq_b1} and \cref{eq_b3}.
\end{proof}

We next turn to the proof of \Cref{pexp}. 

\begin{proof}[Proof of \Cref{pexp}]
Again, for brevity, we omit the range of summation from $\sum_{q\in \ZZ^k/K}$ throughout the proof. As in \Cref{ls1}, let $\bm{\xi}_K \in (0,1/K]^k$ be a uniformly chosen ``shift'' vector that is independent of $(\bm{Z}_1, \bm{Z}_2, \bm{X})$. We first distinguish the possible hypercube-locations for $\bm{Z}_2-\bm{\xi}_K$ and apply the tower property to get
\begin{equation}\label{eq_n_pb1}\mathbb{E}\abs{f(\bm{Z}_2)-g(\bm{Z}_2)}=\sum\Erw\brk{\Erw\brk{\abs{f(\bm{Z}_2)-g(\bm{Z}_2)}\ensuremath{\mathds{1}}{\left\{\bm{Z}_2-\bm{\xi}_K \in  D_{q}\bc{1/K}\right\}}|\bm{\xi}_K}}.
\end{equation}

Given $\bm{\xi}_K$, on the event $\{\bm{Z}_2-\bm{\xi}_K \in D_{q}\bc{1/K}\}$, $\bm{Z}_2$ is located in the hypercube $D_{q+\bm{\xi}_K}\bc{1/K}$ of sidelength $1/K$.
Since the hypercubes are small and $f,g$ are continuous, the values of $f$ and $g$ should not fluctuate too much on $D_{q+\bm{\xi}_K}\bc{1/K}$.
More precisely, let $t \in D_{q+\bm{\xi}_K}\bc{1/K}\cap \mathcal{R}_{\bm{Z}_2}$ be arbitrary. If $D_{q+\bm{\xi}_K}\bc{1/K}\cap \mathcal{R}_{\bm{Z}_2} = \emptyset$, let $t=0$. Then by the mean value theorem,
{\[\mathbb{E}\brk{\abs{f(\bm{Z}_2)-f(t)} \ensuremath{\mathds{1}}{\left\{\bm{Z}_2-\bm{\xi}_K \in  D_{q}\bc{1/K}\right\}}|\bm{\xi}_K}\leq \frac{k}{K}\sup_{\zeta\in \mathcal{R}_{\bm{Z}_2}}\dabs{\nabla f(\zeta)}_\infty\mathbb{P}\bc{\bm{Z}_2-\bm{\xi}_K \in  D_{q}\bc{1/K}|\bm{\xi}_K},\]}
and
{\[\mathbb{E}\brk{\abs{g(\bm{Z}_2)-g(t)} \ensuremath{\mathds{1}}{\left\{\bm{Z}_2-\bm{\xi}_K \in  D_{q}\bc{1/K}\right\}}|\bm{\xi}_K}\leq \frac{k}{K}\sup_{\zeta\in \mathcal{R}_{\bm{Z}_2}}\dabs{\nabla g(\zeta)}_\infty\mathbb{P}\bc{\bm{Z}_2-\bm{\xi}_K \in  D_{q}\bc{1/K}|\bm{\xi}_K}.\]}
In the last two displays, both sides are zero if $D_{q+\bm{\xi}_K}\bc{1/K}\cap \mathcal{R}_{\bm{Z}_2} = \emptyset$. By the triangle inequality, we get
\begin{align}
&\mathbb{E}\brk{\abs{f(\bm{Z}_2)-g(\bm{Z}_2)}\ensuremath{\mathds{1}}{\left\{\bm{Z}_2-\bm{\xi}_K \in  D_{q}\bc{1/K}\right\}}|\bm{\xi}_K} \label{eq_pb8_0} \\
\leq& \bc{ \abs{f(t)-g(t)} +\frac{k}{K}\sup_{\zeta\in \mathcal{R}_{\bm{Z}_2}}\bc{\dabs{\nabla f(\zeta)}_\infty+\dabs{\nabla g(\zeta)}_\infty}}\mathbb{P}\bc{\bm{Z}_2-\bm{\xi}_K \in  D_{q}\bc{1/K}|\bm{\xi}_K} \label{eq_pb8_new} \\
\leq
&\abs{\mathbb{E}\brk{\bc{f(\bm{Z}_2)-g(\bm{Z}_2)}\ensuremath{\mathds{1}}{\left\{\bm{Z}_2-\bm{\xi}_K \in  D_{q}\bc{1/K}\right\}}|\bm{\xi}_K}} \label{eq_pb8} \\
& \quad
+\frac{2k}{K}\sup_{\zeta\in \mathcal{R}_{\bm{Z}_2}}\bc{\dabs{\nabla f(\zeta)}_\infty+\dabs{\nabla g(\zeta)}_\infty}\mathbb{P}\bc{\bm{Z}_2-\bm{\xi}_K \in  D_{q}\bc{1/K}|\bm{\xi}_K}, \label{eq_pb8_1}
\end{align}
where now the modulus is outside of the expectation in \Cref{eq_pb8} in comparison to \Cref{eq_pb8_0}.
Summing \Cref{eq_pb8} over $q \in \ZZ^k/K$ and applying the triangle inequality together yield that
\begin{align}
&\sum \abs{\mathbb{E}\brk{\bc{f(\bm{Z}_2)-g(\bm{Z}_2)}\ensuremath{\mathds{1}}{\left\{\bm{Z}_2-\bm{\xi}_K \in  D_{q}\bc{1/K}\right\}}|\bm{\xi}_K}} \label{eq_n_pb2_new} \\
\leq&\sum \mathbb{E}\brk{\abs{f(\bm{Z}_2)-f(\bm{Z}_1)}\ensuremath{\mathds{1}}{\left\{\bm{Z}_2-\bm{\xi}_K \in  D_{q}\bc{1/K}\right\}}|\bm{\xi}_K}\label{eq_n_pb2_1} \\
&+\sum\mathbb{E}\brk{\abs{f(\bm{Z}_1)}\abs{\ensuremath{\mathds{1}}{\left\{\bm{Z}_2-\bm{\xi}_K \in  D_{q}\bc{1/K}\right\}}-\ensuremath{\mathds{1}}{\left\{\bm{Z}_1-\bm{\xi}_K \in  D_{q}\bc{1/K}\right\}}}|\bm{\xi}_K} \label{eq_n_pb2_2} \\
&+\sum\abs{\Erw\brk{f(\bm{Z}_1)\ensuremath{\mathds{1}}{\left\{\bm{Z}_1-\bm{\xi}_K \in  D_{q}\bc{1/K}\right\}}-g(\bm{Z}_2)\ensuremath{\mathds{1}}{\left\{\bm{Z}_2-\bm{\xi}_K \in  D_{q}\bc{1/K}\right\}}|\bm{\xi}_K}}.\label{eq_n_pb2_3}
\end{align}
For \cref{eq_n_pb2_1}, since $\sum\ensuremath{\mathds{1}}{\left\{\bm{Z}_2-\bm{\xi}_K \in  D_{q}\bc{1/K}\right\}}=1$, again the mean value theorem implies that
{\small \begin{equation}\label{eq_pb6}
\begin{aligned}
\sum &\mathbb{E}\brk{\abs{f(\bm{Z}_2)-f(\bm{Z}_1)}\ensuremath{\mathds{1}}{\left\{\bm{Z}_2-\bm{\xi}_K \in  D_{q}\bc{1/K}\right\}}|\bm{\xi}_K}
\leq \mathbb{E}\abs{f(\bm{Z}_2)-f(\bm{Z}_1)}\leq k \sup_{\zeta \in\mathcal{R}_{\bm{Z}_2} }\dabs{\nabla f(\zeta)}_\infty \mathbb{E}\|\bm{Z}_1-\bm{Z}_2\|_\infty.
\end{aligned}
\end{equation}}
Taking expectation in \Cref{eq_n_pb2_2}, then an application of \Cref{ls1} gives that
\begin{equation}\label{eq_pb2}
\begin{aligned}
&\sum\mathbb{E}\brk{\abs{f(\bm{Z}_1)}\abs{\ensuremath{\mathds{1}}{\left\{\bm{Z}_2-\bm{\xi}_K \in  D_{q}\bc{1/K}\right\}}-\ensuremath{\mathds{1}}{\left\{\bm{Z}_1-\bm{\xi}_K \in  D_{q}\bc{1/K}\right\}}}}\\
&\leq \sup_{\zeta\in \mathcal{R}_{\bm{Z}_1}}|f(\zeta)|\bc{\frac{4}{\varepsilon}\mathbb{E}\|\bm{Z}_1-\bm{Z}_2\|_\infty+2-2(1-K\varepsilon)^k}.
\end{aligned}
\end{equation}
Finally, using the triangle inequality once more, \cref{eq_n_pb2_3} can again be divided into three sub-parts as follows:
\begin{align}
&\sum\abs{\Erw\brk{f(\bm{Z}_1)\ensuremath{\mathds{1}}{\left\{\bm{Z}_1-\bm{\xi}_K \in  D_{q}\bc{1/K}\right\}}-g(\bm{Z}_2)\ensuremath{\mathds{1}}{\left\{\bm{Z}_2-\bm{\xi}_K \in  D_{q}\bc{1/K}\right\}}|\bm{\xi}_K}}\label{eq_n_pb3_new} \\
\leq& \sum\mathbb{E}\brk{\abs{f(\bm{Z}_1)-\mathbb{E}\brk{\bm{X}|\bm{Z}_1}}\ensuremath{\mathds{1}}{\left\{\bm{Z}_1-\bm{\xi}_K \in  D_{q}\bc{1/K}\right\}}|\bm{\xi}_K}\label{eq_n_pb3_1} \\
&+\sum\mathbb{E}\brk{\abs{\mathbb{E}\brk{\bm{X}|\bm{Z}_2}-g(\bm{Z}_2)}\ensuremath{\mathds{1}}{\left\{\bm{Z}_2-\bm{\xi}_K \in  D_{q}\bc{1/K}\right\}}|\bm{\xi}_K}\label{eq_n_pb3_2} \\
&+\sum\abs{\mathbb{E}\brk{\mathbb{E}\brk{\bm{X}|\bm{Z}_1}\ensuremath{\mathds{1}}{\left\{\bm{Z}_1-\bm{\xi}_K \in  D_{q}\bc{1/K}\right\}}|\bm{\xi}_K}-\mathbb{E}\brk{\mathbb{E}\brk{\bm{X}|\bm{Z}_2}\ensuremath{\mathds{1}}{\left\{\bm{Z}_2-\bm{\xi}_K \in  D_{q}\bc{1/K}\right\}}|\bm{\xi}_K}}.\label{eq_n_pb3_3}
\end{align}
Since $\sum\ensuremath{\mathds{1}}{\left\{\bm{Z}_2-\bm{\xi}_K \in  D_{q}\bc{1/K}\right\}}=1$ and $\bm{\xi}_K$ and $\bc{\bm{Z}_1,\bm{Z}_2,\bm{X}}$ are independent, \cref{eq_n_pb3_1} and \cref{eq_n_pb3_2} reduce to
\begin{equation}\label{eq_pb4}
\begin{aligned}
&\sum\mathbb{E}\brk{\abs{f(\bm{Z}_1)-\mathbb{E}\brk{\bm{X}|\bm{Z}_1}}\ensuremath{\mathds{1}}{\left\{\bm{Z}_1-\bm{\xi}_K \in  D_{q}\bc{1/K}\right\}}|\bm{\xi}_K}
=\mathbb{E}\brk{\abs{f(\bm{Z}_1)-\mathbb{E}\brk{\bm{X}|\bm{Z}_1}}},
\end{aligned}
\end{equation}
and
\begin{equation}\label{eq_pb5}
\begin{aligned}
\sum\mathbb{E}\brk{\abs{\mathbb{E}\brk{\bm{X}|\bm{Z}_2}-g(\bm{Z}_2)}\ensuremath{\mathds{1}}{\left\{\bm{Z}_2-\bm{\xi}_K \in  D_{q}\bc{1/K}\right\}}|\bm{\xi}_K}
=\mathbb{E}\brk{\abs{\mathbb{E}\brk{\bm{X}|\bm{Z}_2}-g(\bm{Z}_2)}}.
\end{aligned}
\end{equation}
Let now $i \in \{1,2\}$. Again, since $\bm{\xi}_K$ and $\bc{\bm{Z}_1,\bm{Z}_2,\bm{X}}$ are independent, each expectation in \cref{eq_n_pb3_3} can be simplified as
\begin{align}\label{eq_pb100}
\mathbb{E}\brk{\mathbb{E}\brk{\bm{X}|\bm{Z}_i}\ensuremath{\mathds{1}}{\left\{\bm{Z}_i-\bm{\xi}_K \in  D_{q}\bc{1/K}\right\}}|\bm{\xi}_K} & = \mathbb{E}\brk{\mathbb{E}\brk{\bm{X}|\bm{Z}_i, \bm{\xi_K}}\ensuremath{\mathds{1}}{\left\{\bm{Z}_i-\bm{\xi}_K \in  D_{q}\bc{1/K}\right\}}|\bm{\xi}_K} \nonumber\\
& = \mathbb{E}\brk{\bm{X}\cdot\ensuremath{\mathds{1}}{\left\{\bm{Z}_i-\bm{\xi}_K \in  D_{q}\bc{1/K}\right\}}|\bm{\xi}_K}.
\end{align}
Plugging identity \cref{eq_pb100} into \cref{eq_n_pb3_3} and the triangle inequality yield
\begin{align}\label{eq_pb3}
&\sum\abs{\mathbb{E}\brk{\mathbb{E}\brk{\bm{X}|\bm{Z}_1}\ensuremath{\mathds{1}}{\left\{\bm{Z}_1-\bm{\xi}_K \in  D_{q}\bc{1/K}\right\}}|\bm{\xi}_K}-\mathbb{E}\brk{\mathbb{E}\brk{\bm{X}|\bm{Z}_2}\ensuremath{\mathds{1}}{\left\{\bm{Z}_2-\bm{\xi}_K \in  D_{q}\bc{1/K}\right\}}|\bm{\xi}_K}} \nonumber \\
\leq&\sum\mathbb{E}\brk{\abs{\bm{X}}\abs{\ensuremath{\mathds{1}}{\left\{\bm{Z}_1-\bm{\xi}_K \in  D_{q}\bc{1/K}\right\}}-\ensuremath{\mathds{1}}{\left\{\bm{Z}_2-\bm{\xi}_K \in  D_{q}\bc{1/K}\right\}}}|\bm{\xi}_K}.
\end{align}
Now again, by \Cref{ls1},
    \begin{align}\label{eq_pb1}
&\sum\mathbb{E}\brk{\mathbb{E}\brk{\abs{\bm{X}}\abs{\ensuremath{\mathds{1}}{\left\{\bm{Z}_1-\bm{\xi}_K \in  D_{q}\bc{1/K}\right\}}-\ensuremath{\mathds{1}}{\left\{\bm{Z}_2-\bm{\xi}_K \in  D_{q}\bc{1/K}\right\}}}|\bm{\xi}_K}} \nonumber \\
\leq &\sup_{x\in \mathcal{R}_{\bm{X}}}|x|\bc{\frac{4}{\varepsilon}\mathbb{E}\|\bm{Z}_1-\bm{Z}_2\|_\infty+2-2(1-K\varepsilon)^k}.
\end{align}
\cref{eh0} now follows by combining the bounds \cref{eq_n_pb1} -- \cref{eq_pb1} and the choice $\varepsilon = 1/K^2$.

The proof of \cref{eh0_new} follows along the same lines, since the triangle inequality $(a+b)^-\leq a^- +b^-$ and Jensen's inequality $\bc{\mathbb{E}\brk{\bm{a}}}^-\leq\mathbb{E}[\bc{\bm{a}}^-]$ hold for the negative part, as well as $a^-\leq \abs{a}$. Indeed, the only difference between the proofs is that we replace all absolute values $\abs{\cdot}$ in \cref{eq_n_pb1,eq_pb8_0,eq_pb8_new,eq_pb8,eq_n_pb2_new,eq_n_pb2_3,eq_n_pb3_new,eq_n_pb3_2,eq_pb5} by the corresponding negative parts, while we keep the absolute values in all other bounds.
\end{proof}

\begin{proof}[Proof of \Cref{apexp}]
\Cref{apexp} is an immediate consequence of \Cref{pexp}: In the notation of \Cref{pexp}, let $k=5$ and fix any $K>1$. We choose $\bm{Z}_1=\vze_{n+1,t/n}$, $\bm{Z}_2=\vze_{n,t/n}$ and  $\bm{X}=\teo{n+1\in\mathcal{W}\bc{\bm{T}_{n+1,t/n}[\bm{\theta}]}}$ for $\cW \in \{\cY, \cU, \cV\}$ with codomains $\mathcal{R}_{\bm{Z}_1}=\mathcal{R}_{\bm{Z}_2}=[0,1]^5$ and $\mathcal{R}_{\bm{X}}=[0,1]$, respectively. Next, let $f:\RR^{k} \to \RR$ be the projection onto the coordinate of $\zeta$ corresponding to $w \in \{y,u,v\}$, i.e. $f(\zeta)= f((x,y,z,u,v)) = w$, and $g:\RR^{k} \to \RR$, $g(\zeta)=W\bc{\zeta,\phi_t}$. Then \cref{eq_eqyuv} follows from \cref{eh0} by checking that
\begin{itemize}
  \item[(i)] $\mathbb{E}\abs{f(\bm{Z}_2)-g(\bm{Z}_2)}=\mathbb{E}\abs{\vw_{n,t/n}-W(\vze_{n,t/n}, \phi_t)}$;
  \item[(ii)] $\mathbb{E}\brk{\bm{X}|\bm{Z}_1}=\vw_{n+1,t/n}=f(\bm{Z}_1)$ by \Cref{cla_exc};
  \item[(iii)] $\sup_{\zeta\in[0,1]^5}\abs{f(\zeta)}=1$;
  \item[(iv)] $\sup_{x\in [0,1]}\abs{x}=1$;
  \item[(v)] $\sup_{\zeta\in[0,1]^5}\dabs{\nabla f(\zeta)}_\infty=1$;
  \item[(vi)] $\sup_{\zeta\in[0,1]^5}\dabs{\nabla g(\zeta)}_\infty\leq 2d$.
\end{itemize}

Analogously, \cref{eq_eqz} follows from \cref{eh0_new} by choosing $f\bc{\zeta}=z$, $g\bc{\zeta}=\phi_t\bc{y}$ and $\bm{X}=\teo{n+1\in\mathcal{Z}\bc{\bm{T}_{n+1,t/n}[\bm{\theta}]}}$, while the other parameters are as in the derivation of \cref{eq_eqyuv}.
\end{proof}

\end{appendices}

\end{document}